\def\l@subsection{\@tocline{2}{0pt}{2.6pc}{5pc}{}}
\theoremstyle{plain}
\newtheorem{theorem}{Theorem}[section]
\newtheorem{lemma}[theorem]{Lemma}
\newtheorem{proposition}[theorem]{Proposition}
\newtheorem{corollary}[theorem]{Corollary}
\newtheorem{assumption}[theorem]{Assumption}
\theoremstyle{definition}
\newtheorem{remark}[theorem]{Remark}
\newtheorem{definition}[theorem]{Definition}
\numberwithin{equation}{section}
\newcommand{\norm}[1]{\Vert#1\Vert}
\newcommand{\nnorm}[1]{\left\Vert#1\right\Vert}
\newcommand{\curl}{\mbox{\upshape curl}\mkern 1mu}
\newcommand{\mss}{\hspace{0.2cm}}
\newcommand{\ms}{\hspace{0.25cm}}
\newcommand{\proj}{\mathsf{\Pi}}
\begin{document}
\title[A priori estimates]{A priori estimates for the linearized relativistic Euler equations with a physical vacuum boundary and an ideal gas equation of state}
\author[Luczak]{Brian B. Luczak$^{*,\#}$
}



\thanks{$^{*}$Vanderbilt University, Nashville, TN, USA.
\texttt{brian.b.luczak@vanderbilt.edu}}

\thanks{$^{\#}$ BBL gratefully acknowledges support from NSF grant DMS-2107701,
}

\begin{abstract}
In this paper, we will provide a result on the relativistic Euler equations for an ideal gas equation of state and a physical vacuum boundary. More specifically, we will prove a priori estimates for the linearized system in weighted Sobolev spaces.   Our focus will be on choosing the correct thermodynamic variables, developing a weighted book-keeping scheme, and then proving energy estimates for the linearized system.
\bigskip

\noindent \textbf{Keywords:} Relativistic Euler equations, physical vacuum boundary, ideal gas equation of state.

\noindent \textbf{Mathematics Subject Classification (2020):} 
Primary: 35Q75; 
Secondary: 	
	35Q35,  
35Q31, 
35R37, 
\end{abstract}
\maketitle

\tableofcontents

\newpage






















\section{Introduction\label{S:Introduction}}


The starting point for our work is the relativistic Euler equations which describe the motion of a relativistic fluid in a Minkowski background (see \cite{disconzi2023recent} for historical context). The equations of motion are given by
\begin{subequations}{\label{E:Euler}}
\begin{align}
u^\mu \partial_\mu \varrho + (p+\varrho) \partial_\mu u^\mu &= 0,
\label{E:Energy_conservation}\\
(p+\varrho) u^\mu \partial_\mu u^\alpha + \proj^{\alpha\mu} \partial_\mu p &=0,
\label{E:Momentum_conservation}\\
g_{\mu\nu} u^\mu u^\nu & = -1,
\label{E:Constraint_four_velocity}
\end{align}
\end{subequations}
where $\varrho$ is the fluid's (energy) density, $u$ is the fluid's (four-)velocity, $p$ is the 
fluid's pressure given by an equation of state (whose choice depends on the nature of the fluid, see below),  
$g$ is the Minkowski metric\footnote{This problem can be studied for a general background
metric, but the Minkowski metric already contains all the important mathematical features. Coupling
to Einstein's equations, on the other hand, is an entirely different (and much harder) problem.},
and $\proj_{\alpha\beta}:= g_{\alpha\beta} + u_\alpha u_\beta$ is the projection onto the 
space orthogonal to $u$.
Above and throughout, we employ standard Cartesian coordinates $\{ x^\alpha \}_{\alpha=0}^3$
with $t:= x^0$ denoting a time coordinate and $x := (x^1,x^2,x^3)$ denoting spatial coordinates,
the sum convention is adopted, Greek indices vary from $0$ to $3$ and Latin indices from $1$ to $3$,
and indices are raised and lowered with $g$.

\begin{remark}
We note that \eqref{E:Energy_conservation} is the conservation of energy for the fluid, \eqref{E:Momentum_conservation} is the conservation of momentum, and \eqref{E:Constraint_four_velocity} is a normalization condition where the velocity is assumed to be a forward time-like vector field (and this constraint is propagated by the flow).
\end{remark}

We are interested in the case where the fluid is confined within a domain that is not fixed but is allowed
to move with the fluid motion, such as, e.g., the motion of a star.
Fluids of this type are called free-boundary fluids. We further consider the situation
where the pressure and density vanish on the boundary which is often referred to as the gas case. The liquid case where the density does not vanish on the boundary is a different problem.

Denote the region containing the fluid
at time $t$ by $\Omega_t$. Then, $\Omega_t$ can be described as
\begin{align}
\Omega_t = \{  (\tau,x) \in \mathbb{R}^{1+3} \, | \, \tau = t, \varrho(t,x) > 0 \}.
\nonumber
\end{align}
Equations \eqref{E:Euler} hold in the spacetime region
\begin{equation}
\mathscr{D} := \bigcup_{0\leq t < T} \{ t \} \times \Omega_t,
\label{E:moving_domain_definition}
\end{equation}
for some $T>0$, known as the moving domain.
The fluid's free-boundary at time $t$ is defined as
$\Gamma_t : = \partial \Omega_t$, and the fluid's free boundary, 
also called the moving boundary, free interface, or vacuum boundary, is defined as
\begin{align}
\Gamma : = \bigcup_{0\leq t < T} \{ t \} \times \Gamma_t  = 
\bigcup_{0\leq t < T} \{ t \} \times \partial \Omega_t.
\nonumber
\end{align}
Sometimes we also call $\Omega_t$ and $\Gamma_t$ the moving domain and the free boundary, respectively.
In free-boundary problems, understanding the dynamics of $\Gamma$ is crucial, as it is the fact that $\Gamma_t$
moves with time that distinguishes such problems from a standard initial-boundary value problem
where the boundary of the domain is fixed. Note that, according to the foregoing, we have
\begin{equation}
p = \varrho = 0 \, \text{ on } \, \Gamma.
\label{E:Pressure_and_density_vanish_boundary}
\end{equation}

\subsection{Physical vacuum boundary with a barotropic equation of state\label{S:Physical_vacuum_barotropic}}

Before discussing our work for a non-barotropic equation of state, let us consider the barotropic setting which will motivate our discussion. Consider the equation of state:
\begin{align}
p = p(\varrho) = \varrho^{\kappa+1}, \kappa > 0, 
\label{E:Equation_state_barotropic}
\end{align}
where $\kappa$ is constant. Equation \eqref{E:Momentum_conservation} then becomes
\begin{align}
(\varrho^{\kappa+1}+\varrho) u^\mu \partial_\mu u^\alpha + c_s^2
\proj^{\alpha\mu} \partial_\mu \varrho =0,
\label{E:Momentum_conservation_barotropic}
\end{align}
where
\begin{align}
c_s^2 := p^\prime(\varrho) = (\kappa+1)\varrho^\kappa
\label{E:Sound_speed_barotropic}
\end{align}
is the fluid's sound speed \cite{disconzi2023recent} and the second equality in \eqref{E:Sound_speed_barotropic} is valid
for the equation of state \eqref{E:Equation_state_barotropic}. Since $\varrho$ vanishes on the
free-boundary, we also have
\begin{align}
\left. c_s^2 \right|_{\Gamma} = 0.
\label{E:Sound_speed_vansih_boundary}
\end{align}
\begin{remark}
Observe that \eqref{E:Sound_speed_vansih_boundary} follows immediately from 
\eqref{E:Pressure_and_density_vanish_boundary} and \eqref{E:Sound_speed_barotropic}. But
typically one would still impose \eqref{E:Sound_speed_vansih_boundary} for other equations of state
than \eqref{E:Equation_state_barotropic} when \eqref{E:Pressure_and_density_vanish_boundary} holds,
since sound waves should not be allowed to propagate into vacuum across a region where the 
medium (the density) vanishes.
\end{remark}
The decay rate of $c_s^2$ near the free-boundary plays a key role in this problem.
One needs to consider decay rates that allow for $\Gamma_t$ to move with a bounded non-zero acceleration. We have from \eqref{E:Momentum_conservation_barotropic} 
and \eqref{E:Sound_speed_barotropic} that the fluid's (four-)acceleration is
\begin{align}
a^\alpha := u^\mu \partial_\mu u^\alpha =
-\frac{ (\kappa+1)\varrho^{\kappa-1}}{1+\varrho^\kappa}
\proj^{\alpha\mu} \partial_\mu \varrho.
\nonumber
\end{align}
Since $\varrho \sim 0$ near the free-boundary, we have that $1+\varrho = O(1)$ so,
near the boundary,
\begin{align}
a^\alpha \sim  
\varrho^{\kappa-1}
\proj^{\alpha\mu} \partial_\mu \varrho \sim \bm{\partial} \varrho^\kappa \sim
\bm{\partial} c_s^2,
\nonumber
\end{align}
using $\bm{\partial}$ to denote a generic spacetime derivative. At this point, we need to make some 
assumption on the decay rate of $c_s^2$. In view of the finite speed propagation property,
away from the boundary the motion of the fluid is essentially the same as in the case without
a free-boundary. Thus, a natural scale in this problem which allows us to separate the bulk
and boundary behaviors is the distance to the boundary:
\begin{align}
d \equiv d(t,x) = \, \text {distance from } \, x \, \text{ to } \, \Gamma_t.
\nonumber
\end{align}
Therefore, a natural assumption to make on $c_s^2$ is that it decays like a power of $d$, i.e., 
$c_s^2 \sim d^\beta$ for some $\beta >0$. 
Under this assumption, the acceleration becomes
\begin{align}
a \sim \bm{\partial} d^\beta = \beta d^{\beta-1} \bm{\partial} d \sim  d^{\beta-1},
\nonumber
\end{align}
where we used that $\bm{\partial} d = O(1)$. From this, we see that 
\begin{align}
\left. a \right|_{\Gamma_t} = 
\begin{cases}
0, & \beta > 1,\\
\infty, & 0<\beta < 1,\\
\text{finite}\neq 0, & \beta = 1.
\end{cases}
\nonumber
\end{align}
Hence, we expect a realistic dynamics only in the case $\beta=1$. 
Therefore,
we assume that 
\begin{align}
c_s^2(t,x) \approx d(t,x) \, \text{ for }\, x\, \text{ near } \, \Gamma_t,
\label{E:Physical_boundary}
\end{align}
i.e., $c_s^2$ is comparable to the distance to the boundary. Condition 
\eqref{E:Physical_boundary} is known as the \textbf{physical vacuum boundary condition.} One should
understand \eqref{E:Physical_boundary} as a constraint, i.e., something that is imposed on the initial
data and then propagated by the flow. The local well-posedness of the relativistic Euler
equations with \eqref{E:Equation_state_barotropic} as equation of state and initial data
satisfying the physical vacuum boundary condition was obtained in \cite{DisconziIfrimTataru} using Eulerian coordinates. Additionally, a-priori estimates for the same problem with equation of state \eqref{E:Equation_state_barotropic} were obtained in \cite{Hadzic-Shkoller-Speck-2019} using Lagrangian coordinates. In \cite{Jang-LeFloch-Masmoudi-2016}, the authors proved a priori estimates using an equation of state where the pressure is assumed to be a power law of the fluid's baryon density $n$ (see \eqref{E:Baryon_density_conservation} below). 

\subsection{The case of an ideal gas}
\label{S:Physical_vacuum_non_barotropic}

We would like to extend the results of \cite{DisconziIfrimTataru} to the case of a non-barotropic equation
of state. In addition, we would like, if possible, to treat situations that are of direct relevance to physicists
working in numerical simulations of star evolution. A typical equation of state used by physicists
in their simulations of stars is that of an \textbf{ideal gas} (see \cite{RezzollaZanottiBookRelHydro-2013} Section 2.4), 
\begin{align}
p = p(n,\varepsilon) = n \varepsilon (\gamma -1 ),
\label{E:Equation_state_ideal_gas}
\end{align}
where $n$ is the fluid's baryon density, $\varepsilon$ is the fluid's internal energy, and $\gamma >1$
is a constant. We recall that $n$ satisfies the equation
\begin{align}
u^\mu\partial_\mu n + n \partial_\mu u^\mu = 0,
\label{E:Baryon_density_conservation}
\end{align}
and $\varepsilon$ is defined through the relation
\begin{align}
\varrho = n(1 + \varepsilon).
\label{E:Definition_internal_energy}
\end{align}
Equation \eqref{E:Baryon_density_conservation} is known as the conservation of baryon density \cite{disconzi2023recent}. This provides a closed system in terms of the unknowns $(\varepsilon, n, u^\mu)$.

Since $\varepsilon\geq 0$, equation \eqref{E:Definition_internal_energy} implies that 
$n$ also vanishes on the free boundary. In addition, since we must have $\varepsilon = 0$
in a vacuum, we also need to impose that $\varepsilon = 0$ on $\Gamma_t$.
We need to find conditions that play a similar role to the physical vacuum boundary
condition \eqref{E:Physical_boundary} employed for barotropic fluids. As in Section
\ref{S:Physical_vacuum_barotropic}, we try to determine such conditions by ensuring
a finite non-zero boundary acceleration.

We now adopt $(\varepsilon, n, u)$ as primitive variables. In this situation, 
using \eqref{E:Equation_state_ideal_gas}, equation \eqref{E:Momentum_conservation} becomes
\begin{align}
\frac{1 + \varepsilon \gamma}{\gamma-1} n u^\mu \partial_\mu u^\alpha 
+ \varepsilon \proj^{\alpha\mu} \partial_\mu n + n \proj^{\alpha\mu} \partial_\mu \varepsilon
=0.
\label{E:Momentum_conservation_non_barotropic}
\end{align}
Inspired by the discussion of Section \ref{S:Physical_vacuum_barotropic}, we suppose that
\begin{align}
n \sim d^\beta, \, \varepsilon \sim d^\sigma, \, \beta > 0, \, \sigma > 0.
\nonumber
\end{align}
Using this Ansatz into equation \eqref{E:Momentum_conservation_non_barotropic} and proceeding
as in Section \ref{S:Physical_vacuum_barotropic}, we find
\begin{align}
a \sim d^{\sigma-1}.
\nonumber
\end{align}
An interesting observation is that $d^\beta$ cancels out and we are left with
\begin{align}
\left. a \right|_{\Gamma_t} = 
\begin{cases}
0, & \sigma > 1,\\
\infty, & 0<\sigma < 1,\\
\text{finite}\neq 0, & \sigma = 1.
\end{cases}
\nonumber
\end{align}
Therefore, we conclude that in the case of an equation of state of an ideal gas given
by \eqref{E:Equation_state_ideal_gas}, the physical vacuum boundary conditions should be
\begin{align}
\varepsilon(t,x) \approx d(t,x)
\,\text{ and }\,
n(t,x) \approx (d(t,x))^\beta,\, \beta > 0, 
 \, \text{ for }\, x\, \text{ near } \, \Gamma_t.
\label{E:Physical_boundary_ideal_gas}
\end{align}

Using \eqref{E:Equation_state_ideal_gas} and \eqref{E:Definition_internal_energy}, 
in terms of $(\varepsilon,n,u)$, equations \eqref{E:Euler} and \eqref{E:Baryon_density_conservation} read
\begin{subequations}{\label{E:Euler_ideal_gas}}
\begin{align}
u^\mu \partial_\mu \varepsilon + (\gamma-1) \varepsilon \partial_\mu u^\mu &= 0,
\label{E:Internal_energy_conservation_ideal_gas}\\
\frac{1 + \varepsilon \gamma}{\gamma-1} n u^\mu \partial_\mu u^\alpha 
+ \varepsilon \proj^{\alpha\mu} \partial_\mu n + n \proj^{\alpha\mu} \partial_\mu \varepsilon &=0,
\label{E:Momentum_conservation_ideal_gas}\\
u^\mu\partial_\mu n + n \partial_\mu u^\mu &= 0,
\label{E:Baryon_density_conservation_ideal_gas}\\
g_{\mu\nu} u^\mu u^\nu & = -1.
\label{E:Constraint_four_velocity_ideal_gas}
\end{align}
\end{subequations}

\begin{remark}
We observe that \eqref{E:Physical_boundary_ideal_gas} is what we obtained solely from
an analysis of the boundary acceleration. However, from the thermodynamic relations and the equation of state (see \eqref{E:s_simplification} below), we have that the entropy $s$ is given by  
\begin{equation*}
    s= \frac{1}{\gamma-1} \log\left(\frac{\varepsilon}{n^{\gamma-1}} \right) +s_0.
\end{equation*}
where $s_0$ is a constant.
From the physical boundary conditions, near the free boundary we would have that 
\begin{equation*}
    s \sim \frac{1}{\gamma-1} \log\left(\frac{d}{d^{(\gamma-1) \beta}}\right) + s_0 \sim \log(d^{1-\beta(\gamma-1)}) + s_0. 
\end{equation*}

Thus, as we approach the free boundary, $d \to 0$ and $s$ would behave like

\begin{equation}
\label{E:s_behavior_vaccum_boundary}
    s \to \begin{cases}
                -\infty &, \text{ for }\beta < \frac{1}{\gamma-1} \\
                \text{finite value} &, \text{ for } \beta = \frac{1}{\gamma-1} \\
                \infty & , \text{ for } \beta > \frac{1}{\gamma-1}.
                \end{cases}
\end{equation}  
Thus, $\beta = \frac{1}{\gamma-1}$ is a natural condition for the decay rate of $n$ in our problem. However, in what follows, our analysis will be solely with respect to the variables $s, u$, and $r$, where $r$ is a multiple of $c_s^2$ that we introduce in Section \ref{S:New_variables}. We make this remark to indicate that there is a natural choice for the physical vacuum boundary condition in the case of an ideal gas.  
\end{remark}

\subsection{Main Result}
Here, we summarize our main result which is based on the linearized version of system \eqref{E:Euler_ideal_gas} with variables $s, u$ and $r$ (where $r$ is defined in \eqref{E:r_definition} below). See \eqref{E:System} below for a rewriting of \eqref{E:Euler_ideal_gas} with our new variables, and \eqref{E:Linearized_System_2} for the full linearized system. We remark that Theorem \ref{T:A_priori_estimates_first_version} below is a statement amount the linearized problem, where the non-linear variables $(s,r,u)$ serve as background data.

\begin{theorem}[Sobolev estimates for the linearized system]
\label{T:A_priori_estimates_first_version}
Let $(s,r,u)$ be a smooth solution to \eqref{E:System} that exists on some time interval $[0,T]$, and for which the physical vacuum boundary condition \eqref{E:Physical_boundary_ideal_gas} holds. Let $(\tilde{s}_0, \tilde{r}_0, \tilde{u}_0)$ be initial data to system \eqref{E:Linearized_System_2}. Then, there exists a constant $\mathcal{C}$ depending only on $s, r, u,$ and $T$ such that, if $(\tilde{s}, \tilde{r}, \tilde{u}) \in C^{\infty}(\overline{\mathscr{D}})$ is a solution to \eqref{E:Linearized_System_2} on $[0,T]$, then
\begin{equation}
    \norm{(\tilde{s}, \tilde{r}, \tilde{u})}_{\mathcal{H}^{2k}(\Omega_t)} \lesssim \mathcal{C} \norm{(\tilde{s}_0, \tilde{r}_0, \tilde{u}_0)}_{\mathcal{H}^{2k}(\Omega_0)}
\end{equation}
where $\mathcal{H}^{2k}(\Omega_t)$ is defined in \eqref{E:Higher_order_H^2k_norm}.
    
\end{theorem}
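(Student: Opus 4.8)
The plan is to run a weighted energy estimate: first establish a basic ($L^2$-type) weighted energy inequality for the linearized system \eqref{E:Linearized_System_2}, then bootstrap up to the norm $\mathcal{H}^{2k}$ of \eqref{E:Higher_order_H^2k_norm} by differentiating the system with vector fields adapted to the degenerate weight, and finally close the estimate on $[0,T]$ via Gronwall's inequality. The background $(s,r,u)$ will enter only through a fixed finite number of its derivatives on $[0,T]$, which is what produces the constant $\mathcal{C}$.

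First I would write \eqref{E:Linearized_System_2} in symmetric hyperbolic form in $(\tilde s,\tilde r,\tilde u)$ and contract it against $(\tilde s,\tilde r,\tilde u)$ with an appropriate power of $r$ as weight --- recall that $r$ is a multiple of $c_s^2$ and hence, by \eqref{E:Physical_boundary_ideal_gas}, comparable to the distance to $\Gamma_t$ --- and then integrate over $\Omega_t$. The symmetrizer produces a coercive energy $\mathcal{E}^0(t)$, and the transport structure of \eqref{E:Internal_energy_conservation_ideal_gas}--\eqref{E:Baryon_density_conservation_ideal_gas} together with integration by parts turns $\partial_t \mathcal{E}^0$ into bulk terms controlled by $\nnorm{(s,r,u)}_{C^1}\,\mathcal{E}^0$, plus a boundary integral over $\Gamma_t$. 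The boundary integral either vanishes or carries a favorable sign because the weight degenerates on $\Gamma_t$; this is exactly where the decay rates $\varepsilon\approx d$ and $n\approx d^\beta$ are used. The outcome is $\partial_t\mathcal{E}^0(t)\lesssim \mathcal{C}\,\mathcal{E}^0(t)$.

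For the higher-order estimate, the key is to differentiate \eqref{E:Linearized_System_2} only with operators compatible with the weighted space --- derivatives tangential to $\Gamma_t$ together with weighted normal derivatives of the form $r\partial$ --- and to keep track of the weights using the book-keeping scheme developed earlier in the paper. Applying such an operator $\mathcal{D}$ of order $\le 2k$ reproduces the same principal operator acting on $\mathcal{D}(\tilde s,\tilde r,\tilde u)$, plus commutator terms; the book-keeping scheme guarantees that each commutator has order $\le 2k$ and carries enough powers of $r$ to be bounded by $\mathcal{C}\,(\mathcal{E}^{2k})^{1/2}$, where $\mathcal{C}$ depends only on finitely many derivatives of $(s,r,u)$. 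Running the basic estimate on each differentiated system, summing over the derivatives occurring in \eqref{E:Higher_order_H^2k_norm} with their prescribed weights, and absorbing the top-order contributions yields
\[ \partial_t\,\mathcal{E}^{2k}(t)\;\lesssim\;\mathcal{C}\,\mathcal{E}^{2k}(t),\qquad \mathcal{E}^{2k}(t)\approx\norm{(\tilde s,\tilde r,\tilde u)}_{\mathcal{H}^{2k}(\Omega_t)}^2 . \]
Should the adapted vector fields fail to control every derivative of the $\mathcal{H}^{2k}$ norm directly, the missing purely normal derivatives can be recovered from the equations themselves, using \eqref{E:Momentum_conservation_ideal_gas} as a weighted elliptic (div--curl) relation. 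Gronwall's inequality then gives $\mathcal{E}^{2k}(t)\le e^{\mathcal{C}T}\,\mathcal{E}^{2k}(0)$ for $t\in[0,T]$, which is the asserted estimate once the norm equivalence is invoked and $e^{\mathcal{C}T}$ is absorbed into $\mathcal{C}$.

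The principal obstacle is the commutator analysis: one must verify that differentiating the degenerate linearized operator with the adapted vector fields never generates a term more singular (carrying fewer powers of $r$) than $\mathcal{E}^{2k}$ can absorb. This is precisely what the weighted book-keeping scheme is designed to control, but checking it term by term --- in particular the interaction between the weight $r$ in the principal part and the normal vector fields $r\partial$ --- is the technical heart of the proof. A secondary difficulty is ensuring the boundary terms stay non-positive at every differentiation order, which once more hinges on the physical vacuum condition \eqref{E:Physical_boundary_ideal_gas}.
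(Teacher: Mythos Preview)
Your outline is plausible as a generic strategy for degenerate free-boundary problems, but it diverges from what the paper actually does in one essential respect: the choice of differentiation operators. You propose commuting with tangential derivatives and weighted normal derivatives $r\partial$; the paper instead builds its higher-order energy almost entirely out of \emph{material} derivatives $D_t^{2j}$. Concretely, $E^{2k}_{\text{wave}}=\sum_{j=0}^k\norm{(D_t^{2j}\tilde r,D_t^{2j}\tilde u)}_{\widetilde{\mathcal H}}^2$, and the paper then proves a coercivity statement $E^{2k}\approx\norm{(\tilde s,\tilde r,\tilde u)}_{\mathcal H^{2k}}^2$ via genuine weighted \emph{elliptic} estimates (Lemmas~\ref{L:Elliptic_estimates_tilde_r}, \ref{L:Elliptic_div_curl_estimates_tilde_u} and their higher-order versions) for operators $\tilde L_1,\tilde L_2,\tilde L_3$ that are extracted from $D_t^2$ acting on $\tilde r,\tilde u$. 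This is the content of Sections~\ref{S:Elliptic estimates}--\ref{S:Energy_equivalence}, and it is where most of the work lies; it is not a recovery step for a few missing normal derivatives but the mechanism by which the energy controls \emph{all} spatial derivatives. Your sketch also omits the wave/transport decomposition: the paper treats $\tilde s$ by direct spatial differentiation of its transport equation (Section~\ref{S:Entropy estimates}) and, crucially, introduces the reduced linearized vorticity $\hat\omega$ with its own transport estimate (Section~\ref{S:Vorticity_estimates}), which is what feeds the curl half of the div--curl argument for $\tilde u$.

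Whether your tangential/$r\partial$ scheme could be made to close here is not obvious. The paper explicitly remarks (opening of Section~\ref{S:Creating the Book-keeping scheme}) that no scaling law is available for \eqref{E:Linearized_System_2}, which is why the book-keeping is organized around an abstract ``order'' calculus (Definition~\ref{D:definition_critical_subcritical_supercritical}, Lemma~\ref{L:Order_of_operators}) tailored to $D_t$ rather than to $r\partial$. In particular, the assertion that commutators with your adapted vector fields automatically carry enough powers of $r$ is exactly the point that would need to be checked and is not covered by the paper's scheme as written. So: the high-level architecture (basic estimate $\to$ higher-order energy $\to$ Gronwall) matches, but the engine is different, and the ingredients you do not mention---the $D_t$-based wave energy, the vorticity transport, and the elliptic coercivity Theorem~\ref{Th:E^2k_equiv_H^2k}---are precisely the ones the paper regards as the substance of the proof.
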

See Theorem \ref{Th:Main_Theorem_Estimates_in_H^2k} in Section \ref{S:Main_Theorem} for a proof of our main result.

\begin{remark}
In Theorem \ref{T:A_priori_estimates_first_version}, we assume smooth solutions for simplicity. Our quantitative bounds depend only on finite regularity norms as indicated in Theorem  \ref{T:A_priori_estimates_first_version}, and in similar theorems.
    
\end{remark}

\begin{remark}

We note that free boundary fluids have been studied in a variety of contexts since the 1930s, see \cite{Tolman:1939jz, Oppenheimer:1939ne, Tolman:1934za} for such examples. For more recent papers, we refer the reader to \cite{Caffarelli-1979, Matu-1997, Shatah-2008, Makino-Ukai-1995-II,LeFloch-Ukai-2009, Trakhinin-2009, Zhang-2008,  DisconziKukavicaTuffaha, Jang-LeFloch-Masmoudi-2016}. For recent work in free-boundary problems from relativity, we refer the reader to \cite{Miao-Shahshahani2024, Miao-Shahshahani-Wu-2020-arxiv}. See \cite{disconzi2023recent} and 
\cite{DisconziIfrimTataru} for a review of recent developments in mathematical aspects of relativistic fluids.

\end{remark}

\begin{remark}{(Schematic notation for derivatives).}
\label{R:notation_for_derivatives}
We will use the following schematic notation when discussing derivatives in the arguments that follow. We will use the symbol 
\begin{itemize}
    \item $\partial $ to denote a generic spatial derivative, i.e. $\partial_1, \partial_2,$ or $\partial_3$, and 
    \item $\bm{\partial}$ to denote a generic \textbf{spacetime} derivative, i.e. $\partial_t, \partial_1, \partial_2,$ or $\partial_3$.
\end{itemize}
In view of subsection \ref{S:Sovling_for_time_derivatives}, it is important to note that there is significant overlap between these two symbols. Namely, one can use our primary system \eqref{E:System} to solve for the time derivative of a given quantity in terms of the spatial derivatives in a way that, as it turns out, does not introduce problematic terms into our estimates. 
\end{remark}

\subsection{The linearized system in terms of entropy and sound speed (squared)}
\label{S:New_variables}

Our goal is to rewrite \eqref{E:Euler_ideal_gas} so that we can apply energy estimate techniques. For our purposes, it is more effective to consider the unknowns $(s,r,u^\mu)$ where $s$ denotes the entropy per particle, $r$ is a multiple of the sound speed (squared), and $u$ is the usual (four)-velocity. We will often abuse language and simply refer to $r$ as the sound speed (squared) or just the sound speed.

We begin by rewriting \eqref{E:Euler_ideal_gas} in terms of $p$, and then introducing our new variables which will be used in the remainder of the work.

First, using \eqref{E:Equation_state_ideal_gas} and \eqref{E:Internal_energy_conservation_ideal_gas}, we observe that
\begin{equation}
\begin{split}
    u^\mu \partial_\mu p &= (\gamma-1)n u^\mu \partial_\mu \varepsilon +  (\gamma-1) \varepsilon u^\mu \partial_\mu n \\
    &= -(\gamma-1)^2 n \varepsilon \partial_\mu u^\mu - (\gamma-1)n \varepsilon \partial_\mu u^\mu \\
    &= - \gamma p \partial_\mu u^\mu.
\end{split}
\end{equation}
Then, rewriting \eqref{E:Momentum_conservation_ideal_gas} in terms of $\varepsilon$ and $p$, our system \eqref{E:Euler_ideal_gas} takes the form
\begin{subequations}
\begin{align}
\label{E:New_eq_1}
u^\mu \partial_\mu \varepsilon + (\gamma-1)\varepsilon \partial_\mu u^\mu &=0\\
\label{E:New_eq_2}
\frac{1+\varepsilon \gamma}{\gamma-1} p u^\mu \partial_\mu u^\alpha + \varepsilon \proj^{\alpha \mu} \partial_\mu p &= 0\\
\label{E:New_eq_3}
u^\mu \partial_\mu p  +\gamma p \partial_\mu u^\mu &=0.
\end{align}
\end{subequations}

In the following lines, we will make use of several identities \eqref{E:enthalpy_definition} \eqref{E:thermodynamic_relations}, and \eqref{E:p=ntheta}, which can be found in Section 2 of \cite{RezzollaZanottiBookRelHydro-2013}.

Recall that the enthalpy per particle $h$ is defined by
\begin{equation}
    \label{E:enthalpy_definition}
    h = \frac{p + \varrho }{n} = \frac{n \varepsilon(\gamma-1) + n(1+\varepsilon))}{n} = \varepsilon\gamma +1
\end{equation}
which has been simplified using \eqref{E:Equation_state_ideal_gas} and \eqref{E:Definition_internal_energy}.
Using the well known thermodynamic relation
\begin{equation}
\label{E:thermodynamic_relations}
    n dh - dp = n \theta ds,
\end{equation}
where $\theta$ is the temperature for the fluid, and the fact that 
\begin{equation}
\label{E:p=ntheta}
    p = n \theta
\end{equation}
for an ideal fluid, we can solve for $s$ by
\begin{equation}
\label{E:s_simplification}
\begin{split}
   & n \gamma d\varepsilon - (\gamma-1)(n d\varepsilon +\varepsilon dn) = n \varepsilon (\gamma-1) ds \\
   & \implies s = \frac{1}{\gamma-1} \log\left(\frac{\varepsilon}{n^{\gamma-1}} \right) + s_0
\end{split}
\end{equation}
where $s_0$ is a constant (and we will often set $s_0 =0$ for convenience). Additionally, a quick computation (such as in \cite{disconzi2023recent}) shows that \eqref{E:Baryon_density_conservation_ideal_gas} along with the thermodynamic relations implies 
\begin{equation}
\label{E:Entropy_conservation}
u^\mu \partial_\mu s = 0.
\end{equation}
which is the conservation of entropy along flow lines.
Moreover, \eqref{E:s_simplification} allows us to solve for $\varepsilon$ in terms of $s$ and $p$. We see that $e^{(\gamma-1) s} = \frac{\varepsilon}{n^{\gamma-1}} = \frac{1}{\gamma-1} \cdot \frac{p}{n^\gamma}$, which leads to
\begin{equation}
\label{E:internal_energy_simplified}
     \varepsilon = \frac{e^{\frac{\gamma-1}{\gamma} s}}{(\gamma-1)^{\frac{\gamma-1}{\gamma}}} p^{\frac{\gamma-1}{\gamma}}
\end{equation}
Now, substituting for $\varepsilon$ in \eqref{E:New_eq_2} and mulitplying by $(\gamma-1)^{\frac{\gamma-1}{\gamma}}$, we get
\begin{equation}
\label{E:Second_with_A}
    C_1 p u^\mu \partial_\mu u^\alpha + e^{\frac{\gamma-1}{\gamma} s} p^{\frac{\gamma-1}{\gamma}} \proj^{\alpha \mu} \partial_\mu p = 0
\end{equation}
where
\begin{equation*}
    C_1 =\frac{1}{(\gamma-1)^{\frac{1}{\gamma}}} + e^{\frac{\gamma-1}{\gamma} s} p^{\frac{\gamma-1}{\gamma}} \frac{\gamma}{\gamma-1}.
\end{equation*}
Note that $C_1$ is $O(1)$ as we approach the free boundary since $p \to 0$.

Before proceeding, we can quickly verify that the acceleration of the free boundary is $O(1)$ using \eqref{E:Physical_boundary_ideal_gas}. Near the free boundary, $\varepsilon \sim d$ where $d$ is the distance to the boundary. Using \eqref{E:internal_energy_simplified}, we get $p \sim d^{\frac{\gamma}{\gamma-1}}$. In this case, 
\begin{equation}
    a^\alpha = u^\mu \partial_\mu u^\alpha \sim p^{-\frac{1}{\gamma}} \bm{\partial} p \sim \left(d^{\frac{\gamma}{\gamma-1}} \right)^{-\frac{1}{\gamma}} d^{\frac{1}{\gamma-1}} = O(1).
\end{equation}
Because, $d \sim p^{\frac{\gamma-1}{\gamma}}$, this motivates the definition of our new variable
\begin{equation}
\label{E:r_definition}
    r := p^{\frac{\gamma-1}{\gamma}}
\end{equation}
with the idea that $r \sim d$ and $r$ will serve as our weight in the energy for the free boundary problem. After computing the sound speed squared given by $c_s^2 = \frac{dp}{d \varrho} \vert_s$, we further note that $r$ is comparable to the sound speed (squared) for the fluid and it will play a similar role as in the work from \cite{DisconziIfrimTataru}.
Using \eqref{E:New_eq_3} above, we can quickly verify that
\begin{equation}
    u^\mu \partial_\mu r = \frac{\gamma-1}{\gamma} p^{-\frac{1}{\gamma}} u^\mu \partial_\mu p = - (\gamma-1) r \partial_\mu u^\mu 
\end{equation}
and thus \eqref{E:New_eq_3} becomes
\begin{equation}
    u^\mu \partial_\mu r + (\gamma -1) r \partial_\mu u^\mu =0
\end{equation}

In order to complete the system in terms of $(s,r,u^\mu)$ we must further rewrite the second equation \eqref{E:Second_with_A}. Observing that $\partial_\mu p = \frac{\gamma}{\gamma-1} p^{\frac{1}{\gamma}} \partial_\mu r$, we get
\begin{equation}
    \begin{split}
        C_1 u^\mu \partial_\mu u^\alpha + e^{\frac{\gamma-1}{\gamma} s} p^{-\frac{1}{\gamma}} \proj^{\alpha \mu} \partial_\mu p =   C_1 u^\mu \partial_\mu u^\alpha + e^{\frac{\gamma-1}{\gamma} s} \frac{\gamma}{\gamma-1} \proj^{\alpha \mu} \partial_\mu r &= 0 
    \end{split}
\end{equation}
Then, calling
\begin{equation}
    C_2 := C_1 \cdot \frac{\gamma -1}{\gamma} e^{-\frac{\gamma-1}{\gamma}s},
\end{equation}
we get the form of the system in terms of the unknowns $(s, r, u^\mu)$:
\begin{subequations}
\begin{align}
\label{E:Final_eq_1}
u^\mu \partial_\mu s =0\\
\label{E:Final_eq_2}
 u^\mu \partial_\mu u^\alpha + \frac{1}{C_2} \proj^{\alpha \mu} \partial_\mu r &= 0\\
\label{E:Final_eq_3}
 u^\mu \partial_\mu r + (\gamma -1) r \partial_\mu u^\mu &=0.
\end{align}
\end{subequations}
where $C_2= \frac{(\gamma-1)^{\frac{\gamma-1}{\gamma}}}{\gamma e^{\frac{\gamma-1}{\gamma}s}} +r$. For simplicity, let's define
\begin{equation}
\label{E:Gamma_definition}
    \Gamma:= \Gamma(s) = \frac{(\gamma-1)^{\frac{\gamma-1}{\gamma}}}{\gamma e^{\frac{\gamma-1}{\gamma}s}}
\end{equation}
so that $C_2 = \Gamma(s) +r$.

Finally, using the convective derivative which we denote as
\begin{equation}
    D_t = u^\mu \partial_\mu,
\end{equation}
our new system in terms of the unknowns $(s,r,u^\alpha)$ takes the form
\begin{subequations}
\label{E:System}
\begin{empheq}[box=\fbox]{align}
\label{E:System_s_equation}
 D_t s &=0\\
\label{E:System_r_equation}
    D_t r + (\gamma-1) r \partial_\mu u^\mu &=0\\
\label{E:System_v_equation}
    D_t u^\alpha + \frac{1}{\Gamma +r} \proj^{\alpha \mu} \partial_\mu r &=0\\
\label{E:v_constraint_equation}
    g_{\alpha \beta} u^\alpha u^\beta &= -1
    \end{empheq}
\end{subequations}
where $\Gamma=\Gamma(s)$ is defined according to \eqref{E:Gamma_definition}.


Now that the form of our system is complete, we will compute the full linearized equations using the standard procedure with linearized variables. 
Consider a one-parameter family of solutions $\{s_\tau, r_\tau, u_\tau \}_{\tau}$ for the main system \eqref{E:System} such that $(s_\tau, r_\tau, u_\tau)\vert_{\tau=0} = (s,r,u)$. We can now formally define the function $\delta = \frac{d}{d\tau} \vert_{\tau =0}$ and linearized variables $(\tilde{s}, \tilde{r}, \tilde{u}) := (\delta s, \delta{r}, \delta u)$. These variables will solve the linearized system that we now compute by taking $\delta$ of the equations in \eqref{E:System}.
After a computation, we can write the linearized equations in the form: 
\begin{subequations}
\label{E:Linearized_System_2}
\begin{align}
\label{E:Linearized_System_2_s_equation}
    D_t \tilde{s}  &= f\\
\label{E:Linearized_System_2_r_equation}
    D_t \tilde{r} + \textcolor{blue}{\tilde{u}^\mu \partial_\mu r} + \textcolor{blue}{(\gamma-1) r \partial_\mu \tilde{u}^\mu}  &= g\\
\label{E:Linearized_System_2_u_equation}
    D_t \tilde{u}^\alpha + \textcolor{blue}{ \frac{1}{\Gamma+r} \proj^{\alpha \mu} \partial_\mu \tilde{r}}  &= h^\alpha \\
\label{E:Linearized_System_2_constraint_equation}
\tilde{u}_\alpha u^\alpha &=0 
\end{align}
\end{subequations}
where $\Gamma$ is defined according to \eqref{E:Gamma_definition}, and
\begin{equation}
\label{E:Linearized_f_g_h}
    \begin{split}
        & f= -\tilde{u}^\mu \partial_\mu s \\
        & g= - (\gamma-1)\tilde{r} \partial_\mu u^\mu\\
        & h^\alpha = -\tilde{u}^\mu \partial_\mu u^\alpha - \frac{1}{\Gamma +r}(\tilde{u}^\alpha u^\mu + u^\alpha \tilde{u}^\mu)\partial_\mu r + \frac{\Gamma'}{(\Gamma+r)^2} \tilde{s} \proj^{\alpha \mu}\partial_\mu r + \frac{1}{(\Gamma+r)^2} \tilde{r} \proj^{\alpha \mu} \partial_\mu r
    \end{split}
\end{equation}
\begin{remark}
We note that the terms in \textcolor{blue}{blue} will later combine to form a perfect derivative that will assist in our energy estimates. Additionally, each of the terms on the RHS contains undifferentiated $(\tilde{s}, \tilde{r}, \tilde{u})$ with coefficients of the form $(\bm{\partial} s, \bm{\partial} r, \bm{\partial} u)$.
\end{remark}

To clarify the exposition, we will be making the following assumption which can be used when handling the relativistic Euler equations with a physical vacuum boundary (see \cite{DisconziIfrimTataru} and \cite{Hadzic-Shkoller-Speck-2019} for similar examples).

\begin{assumption}[Uniform smallness of $r$]
\label{A:Assumption_smallness r}
\nonumber
Without loss of generality, we will assume that for each $t \in [0, T]$, $r$ is uniformly small in $\Omega_t$:
$$
    \norm{r}_{L^\infty(\Omega_t)} \leq \hat{\varepsilon} \ll \frac{1}{2}
$$
where $\hat{\varepsilon}$ is a small positive constant that we fix for the remainder of the paper. By `$\ll \frac{1}{2}$', we mean, in particular, that there exists a large positive constant $C$ such that $C \hat{\varepsilon} < \frac{1}{2}$. We will fix this constant $C$ for the remainder of the paper.
\end{assumption}
Recall from the physical vacuum boundary conditions and \eqref{E:r_definition} that this assumption will be verified in a small neighborhood of the free boundary $\Gamma$. Due to finite speed of propagation, the behavior of the fluid away from the boundary is non-degenerate and a priori estimates in Sobolev spaces can be handled using methods from the standard relativistic Euler equations. The removal of Assumption \ref{A:Assumption_smallness r} requires a partition of unity argument in which one must separate the fluid into its bulk behavior away from the free boundary and its behavior near the free boundary. By using Assumption \ref{A:Assumption_smallness r}, we are able to focus on the essential difficulties posed by the degenerate nature of the free boundary where $\norm{r}_{L^\infty(\Omega_t)}\leq \hat{\varepsilon} \ll \frac{1}{2}$.

\begin{remark}
\label{R:Omega_t_near_free_boundary}
In what follows, we will often refer to the `smallness' of $r$ near the free boundary which will allow certain terms to make much smaller contributions. We will use $\hat{\varepsilon}$ when appropriate to reference Assumption \ref{A:Assumption_smallness r}.

\end{remark}

\begin{remark}
    \label{R:Interplay between epsilons}
    Additionally, when handling these `small' terms (see Remark \ref{R:order_free_boundary_term} in Section 6), we will often use the Cauchy-Schwarz inequality with $\varepsilon$. Thus, it is useful to fix the interplay between $\hat{\varepsilon}$ and $\varepsilon$ so as to not cause any confusion. Given that $\hat{\varepsilon} \ll \frac{1}{2}$ is fixed by Assumption \ref{A:Assumption_smallness r}, and we also have $\frac{1}{C-1} < \frac{1}{2}$ where $C$ is the fixed positive constant from Assumption \ref{A:Assumption_smallness r}.
    We will use $\varepsilon$ in applications of the Cauchy-inequality-with-$\varepsilon$ to be a small positive constant such that
    \begin{equation}
    0 < \frac{1}{C -1} < \varepsilon < \frac{1}{2}
    \end{equation}
    Note that the previous definitions make the following inequalities true 
    \begin{equation}
    \label{E:epsilon_hat_and_epsilon_inequality}
    \begin{split}
        0 < \hat{\varepsilon} < \frac{1}{2C} < \frac{1}{C-1} < \varepsilon &< \frac{1}{2}\\
        \left(1 + \frac{1}{\varepsilon}\right) \hat{\varepsilon} & < C \hat{\varepsilon} \\
    \end{split}
    \end{equation}
\end{remark}

\begin{remark}[The symbol `$\lesssim$']
\label{R:lesssim_symbol_notation}
Throughout our work, the symbol $\lesssim$ will be used in the usual fashion, i.e.
\begin{equation*}
    A \lesssim B \Longleftrightarrow A \leq \mathcal{D} B
\end{equation*}
where $\mathcal{D}$ is a constant depending on the the fixed data of the problem. In our case, $\mathcal{D}$ will depend on the background solution $(s, r, u)$ along with $\gamma$ and $T$. By slight abuse of notation, we will sometimes write
$A \lesssim \mathcal{D} B$ or $A \lesssim \mathcal{D}(\partial^{l}(s,r,u)) B$ if we want to highlight, for example, that $\mathcal{D}$ depends on $l$ derivatives of our fixed data $(s,r,u)$. This is also done to assist the reader in following our estimates where these quantities are often removed from an integral with the $L^\infty$ norm.
    
\end{remark}

\subsection{Function spaces and Energies}
\label{S:Function_spaces}

Here, we define the function spaces and energies that will play a key role in our work. We denote the weighted $L^2$ spaces with weight $h$ as $L^2(h)$ and equip them with the norm
\begin{equation}
    \norm{f}_{L^2(h)}^2 = \int_{\Omega_t} h |f|^2 \ms dx.
\end{equation}

We will assume throughout that $r$ is a positive function on $\Omega_t$ which vanishes simply on the free boundary $\Gamma_t$. Thus, $r$ will be comparable to the distance to $\Gamma_t$.

For pairs of functions in our system defined on $\Omega_t$, we will use the base Hilbert space
\begin{equation}
    \mathcal{H}=  L^2(r^{\frac{2-\gamma}{\gamma-1}}) \times L^2(r^{\frac{1}{\gamma-1}})
\end{equation}
with the usual norm depending only on the weight $r$. However, we will often use an equivalent norm which is compatible with our energies in the problem. Let $G_{\alpha \beta}$ be the following two form 
\begin{equation}
    \label{E:G_metric_definition}
    G_{\alpha \beta} := g_{\alpha \beta} + 2 u_\alpha u_\beta
\end{equation}
which, by \cite{Hadzic-Shkoller-Speck-2019}, has been shown to be a Riemannian metric in spacetime.

Then, we define the norm
\begin{equation}
\label{E:H^0_norm}
    \norm{(\tilde{r}, \tilde{u})}_{\widetilde{\mathcal{H}}}^2 = \int_{\Omega_t} r^{\frac{2-\gamma}{\gamma-1}} \left( \frac{1}{\gamma-1} \tilde{r}^2 + (\Gamma +r) r |\tilde{u}|^2 \right) \mss dx
\end{equation}
where $|\tilde{u}|^2 = |\tilde{u}|^2_G = G_{\alpha \beta} \tilde{u}^\alpha \tilde{u}^\beta \geq 0$.

\begin{remark}
\label{R:weights_in_wave_energy_tilde_H_norm}
We observe that the two norms $\norm{\cdot}_\mathcal{H}$ and $\norm{\cdot}_{\widetilde{\mathcal{H}}}$ are equivalent on the space of pairs $(\tilde{r}, \tilde{u})$ due to a key fact about the weights appearing in the energy. For $x$ sufficiently close to $\partial \Omega_t$, we have that $r \to 0$, $s$ approaches a finite value $C_s$ by \eqref{E:s_behavior_vaccum_boundary}, and the weight
\begin{equation}
\label{E:weight_fact_1}
    (\Gamma+r) \to \frac{(\gamma-1)^{\frac{\gamma-1}{\gamma}}}{\gamma e^{\frac{\gamma-1}{\gamma} C_s }}  >0
\end{equation}
which is a finite positive constant bounded away from $0$ (as $\gamma>1$ is fixed, and $C_s$ fixed). Hence, $\norm{\cdot}_\mathcal{H}$ and $\norm{\cdot}_{\widetilde{\mathcal{H}}}$ are equivalent near the free boundary.
\end{remark}

We also define the higher order weighted Sobolev spaces $H^{j, \sigma}$ with integer $j \geq 0$ and $\sigma > -\frac{1}{2}$ to be the space of all distributions in $\Omega_t$ whose norm
\begin{equation}
    \label{E:Weighted-Sobolev spaces}
    \norm{f}_{H^{j,\sigma}}^2 = \sum_{|\alpha| \leq j } \norm{r^{\sigma} \partial^\alpha f}_{L^2}^2 
\end{equation}
is finite. For higher regularity, we will use the following higher order Sobolev spaces where the powers of $r$ are linked to the number of derivatives.
Similar to \cite{DisconziIfrimTataru}, we will define higher order function spaces $\mathcal{H}^{2k}$ on triplets $(\tilde{s}, \tilde{r}, \tilde{u})$ in $\Omega_t$ with the following norm
\begin{equation}
\label{E:Higher_order_H^2k_norm}
\begin{split}
     \norm{(\tilde{s}, \tilde{r}, \tilde{u})}_{\mathcal{H}^{2k}}^2
     &= \sum_{|\alpha| =0}^{2k} \sum_{\substack{a=0 \\ |\alpha|-a \leq k}}^k \norm{r^{\frac{2-\gamma}{2(\gamma -1)} + \frac{1}{2} +a } \partial^{\alpha } \tilde{s}}_{L^2}^2 +  \norm{r^{\frac{2-\gamma}{2(\gamma -1)} +a } \partial^{\alpha } \tilde{r}}_{L^2}^2 + \norm{r^{\frac{2-\gamma}{2(\gamma -1)} + \frac{1}{2} +a  } \partial^{\alpha } \tilde{u}}_{L^2}^2 \\
\end{split}
\end{equation}
where we note that the $r$ weight in the $\tilde{r}$ norm is $\frac{1}{2}$ less than the weights on $\tilde{s}$ and $\tilde{u}$ (which is a direct consequence of the powers of $r$ in \eqref{E:Linearized_System_2}). Additionally, these higher order function spaces are based around an even number of derivatives due to the underlying wave-like operator that governs the second order evolution of \eqref{E:Linearized_System_2}. Taking $D_t$ of \eqref{E:Linearized_System_2} illustrates this underlying operator which is a variable coefficient version of $D_t^2 - r \Delta$. 

\begin{remark}
\label{R:H^2k_norm_equivalence}
    Similar to \cite{DisconziIfrimTataru} and \cite{Ifrim-Tataru-2020}, we can use embedding theorems to show that the $\mathcal{H}^{2k}$ norm is equivalent to the $H^{2k, \frac{1}{2(\gamma-1)} + k} \times H^{2k, \frac{2-\gamma}{2(\gamma-1)} +k } \times H^{2k, \frac{1}{2(\gamma-1)} + k}$ norm. 
\end{remark}

\begin{remark}
We remark that the $\widetilde{\mathcal{H}}$ norm will be used to control convective derivatives of $\tilde{r}$ and $\tilde{u}$ which satisfy a system of wave equations (see  \eqref{E:E^2k_wave} for the definition of $E^{2k}_{\text{wave}})$. Meanwhile, $\tilde{u}$ will be decomposed into its vorticity part which, alongside $\tilde{s}$, satisfies a transport equation that we can estimate directly  (see  \eqref{E:E^2k_transport} for the definition of $E^{2k}_{\text{transport}})$. 

\end{remark}

Here, we introduce energies that will be used in the higher order analysis. First, we have the wave energy
\begin{equation}
\label{E:E^2k_wave}
    E^{2k}_{\text{wave}}(\tilde{s}, \tilde{r}, \tilde{u}) = \sum_{j=0}^{k} \norm{(D_t^{2j} \tilde{r}, D_t^{2j} \tilde{u}  )}_{\widetilde{\mathcal{H}}}^2
\end{equation}
recalling \eqref{E:H^0_norm}. Note that the definition of $\tilde{\mathcal{H}}$ guarantees that $|D_t^{2j} \tilde{u}|^2 =|D_t^{2j} \tilde{u}|^2_G \geq 0 $.

Additionally, we will use the transport energy
\begin{equation}
\label{E:E^2k_transport}
E^{2k}_{\text{transport}}(\tilde{s}, \tilde{r}, \tilde{u}) = \norm{\hat{\omega}}_{H^{2k-1, k+ \frac{1}{2(\gamma-1)}}}^2 + \norm{\tilde{s}}_{H^{2k, k+ \frac{1}{2(\gamma-1)}}}^2
\end{equation}
where $\hat{\omega}$ is the reduced linearized vorticity defined in \eqref{E:reduced_lin_vort_evolution_eq}, and this will be addressed in Section \ref{S:Vorticity_estimates}.
Then, the final linearized energy is given by
\begin{equation}
\label{E:E^2k_full_linearized_energy}
 E^{2k}(\tilde{s}, \tilde{r}, \tilde{u}) =  E^{2k}_{\text{wave}}(\tilde{s}, \tilde{r}, \tilde{u}) + E^{2k}_{\text{transport}}(\tilde{s}, \tilde{r}, \tilde{u}).
\end{equation}

\begin{remark}
Although both components of $E^{2k}_{\text{transport}}$ satisfy transport equations, we remark that they will be used slightly differently in the arguments that follow. In particular, the estimate for $\hat{\omega}$ will be used to complete div-curl estimates for $\tilde{u}$, whereas $\tilde{s}$ can be estimated directly with the $\mathcal{H}^{2k}$ norm from above and below as in Section \ref{S:Entropy estimates}. 
\end{remark}

\subsection{The basic energy estimate}
\label{S:Basic_energy_estimate}
In this section, we will prove a basic energy estimate for the quantities $\tilde{s}, \tilde{r},$ and $\tilde{u}$ in weighted $L^2$ spaces. Although the higher order estimates in Sobolev spaces require additional techniques, this section serves as a starting point in our analysis.
As discussed in Section \ref{S:Function_spaces}, the $\tilde{s}$ equation will later be treated separately (see section \ref{S:Entropy estimates}) as it satisfies a transport equation as opposed to the wave equations satisfied by $\tilde{r}$ and $\tilde{u}$ (see Section \ref{S:Higher_order_energy_estimates}).

After computing the linearized equations \eqref{E:Linearized_System_2}, multiply \eqref{E:Linearized_System_2_s_equation} by $ r^{\frac{1}{\gamma-1}} \tilde{s}$, the second equation \eqref{E:Linearized_System_2_r_equation} by $\frac{1}{\gamma-1} r^{\frac{2-\gamma}{\gamma-1}} \tilde{r}$ and contract the third equation \eqref{E:Linearized_System_2_u_equation} with $(\Gamma+r) r^{\frac{1}{\gamma-1}} G_{\alpha \beta} \tilde{u}^\beta$. Using \eqref{E:v_constraint_equation}, \eqref{E:Linearized_System_2_constraint_equation}, and identities like $\tilde{u}_\alpha \proj^{\alpha \mu} = \tilde{u}^\mu = G_{\alpha \beta} \tilde{u}^\beta \proj^{\alpha \mu}$, we get
\begin{subequations}
\begin{align}
    \label{E:Basic_energy_estimate_tilde_s_equation}
    \frac{1}{2} r^{\frac{1}{\gamma-1}}D_t \tilde{s}^2 &= - r^{\frac{1}{\gamma-1}}\tilde{s} \tilde{u}^\mu  \partial_\mu s\\
    \label{E:Basic_energy_estimate_tilde_r_equation}
    \frac{1}{2(\gamma-1)} r^{\frac{2-\gamma}{\gamma-1}} D_t \tilde{r}^2 + \textcolor{blue}{\frac{1}{\gamma-1} r^{\frac{2-\gamma}{\gamma-1}} \tilde{r}\tilde{u}^\mu \partial_\mu r} + \textcolor{blue}{r^{\frac{1}{\gamma-1}} \tilde{r} \partial_\mu \tilde{u}^\mu} &=-\tilde{r}^2 r^{\frac{2-\gamma}{\gamma-1}} \partial_\mu u^\mu\\
    \label{E:Basic_energy_estimate_tilde_u_equation}
    \frac{\Gamma +r}{2} r^{\frac{1}{\gamma-1}} D_t |\tilde{u}|^2 + \textcolor{blue}{ r^{\frac{1}{\gamma-1}} \tilde{u}^\mu \partial_\mu \tilde{r}}
    &= -(\Gamma+r) r^{\frac{1}{\gamma-1}} \tilde{u}_\alpha \tilde{u}^\mu \partial_\mu u^\alpha -r^{\frac{1}{\gamma-1}} |\tilde{u}|^2 u^\mu \partial_\mu r \\
    \notag
    & \hspace{5mm} + \frac{\Gamma'}{\Gamma+r} r^{\frac{1}{\gamma-1}} \tilde{s} \tilde{u}^\mu \partial_\mu r + \frac{1}{\Gamma+r} r^{\frac{1}{\gamma-1}} \tilde{r} \tilde{u}^\mu \partial_\mu r.
\end{align}
\end{subequations}
Thus, after adding the equations together, the \textcolor{blue}{blue} terms combine to form a perfect derivative:
\begin{equation}
    \textcolor{blue}{\frac{1}{\gamma-1} r^{\frac{2-\gamma}{\gamma-1}} \tilde{r} \tilde{u}^\mu \partial_\mu r} + \textcolor{blue}{r^{\frac{1}{\gamma-1}} \tilde{r} \partial_\mu \tilde{u}^\mu} + \textcolor{blue}{ r^{\frac{1}{\gamma-1}} \tilde{u}^\mu \partial_\mu \tilde{r}} = \textcolor{blue}{ \partial_\mu (r^{\frac{1}{\gamma-1}} \tilde{r} \tilde{u}^\mu )}
\end{equation}
which will be handled using integration be parts. Combining the equations together, we get
\begin{equation}
    \begin{split}
         \frac{1}{2} r^{\frac{1}{\gamma-1}}D_t \tilde{s}^2 + \frac{1}{2(\gamma-1)} r^{\frac{2-\gamma}{\gamma-1}} D_t \tilde{r}^2 + \frac{\Gamma+r}{2} r^{\frac{1}{\gamma-1}} D_t |\tilde{u}|^2
         &+ \textcolor{blue}{ \partial_\mu (r^{\frac{1}{\gamma-1}} \tilde{r} \tilde{u}^\mu )} \\
         &=  - r^{\frac{1}{\gamma-1}}\tilde{s} \tilde{u}^\mu  \partial_\mu s \\
         & \hspace{5mm} -\tilde{r}^2 r^{\frac{2-\gamma}{\gamma-1}} \partial_\mu u^\mu \\
         &\hspace{5mm}  -(\Gamma+r) r^{\frac{1}{\gamma-1}} \tilde{u}_\alpha \tilde{u}^\mu \partial_\mu u^\alpha -r^{\frac{1}{\gamma-1}} |\tilde{u}|^2 u^\mu \partial_\mu r \\
         & \hspace{5mm} + \frac{\Gamma'}{\Gamma+r} r^{\frac{1}{\gamma-1}} \tilde{s} \tilde{u}^\mu \partial_\mu r + \frac{1}{\Gamma+r} r^{\frac{1}{\gamma-1}} \tilde{r} \tilde{u}^\mu \partial_\mu r.
    \end{split}
\end{equation}
Next, we integrate with respect to $x$ over $\Omega_t$ and use the moving domain formula (noting that the fluid particles on the boundary move with velocity $\frac{u^i}{u^0}$):
\begin{equation}
\begin{split}
     \label{E:Moving_Domain_formula}
    \frac{d}{dt} \int_{\Omega_t} f \hspace{1mm} dx &= \int_{\Omega_t} \partial_t f \hspace{1mm} dx + \int_{\Omega_t} \partial_i \left(f \frac{u^i}{u^0}\right) \hspace{1mm} dx \\
    &= \int_{\Omega_t} \frac{1}{u^0} D_t f \hspace{1mm} dx + \int_{\Omega_t} f \partial_i \left( \frac{u^i}{u^0}\right) \hspace{1mm} dx .\\
\end{split}
\end{equation}
For the \textcolor{blue}{blue} terms, we have $r=0$ on $\partial \Omega_t$, so integrating by parts yields
\begin{equation}
\begin{split}
     \int_{\Omega_t} \textcolor{blue}{\partial_\mu (r^{\frac{1}{\gamma-1}} \tilde{r}\tilde{u}^\mu)} \hspace{1mm} dx & =\int_{\Omega_t} \partial_i(r^{\frac{1}{\gamma-1}} \tilde{r}\tilde{u}^i) \hspace{1mm} dx + \int_{\Omega_t} \partial_t(r^{\frac{1}{\gamma-1}} \tilde{r}\tilde{u}^0) \hspace{1mm} dx \\
     &=- \int_{\partial \Omega_t} r^{\frac{1}{\gamma-1}} \tilde{r} \tilde{u}^i \nu_i dS + \int_{\Omega_t} \partial_t(r^{\frac{1}{\gamma-1}} \tilde{r}\tilde{u}^0) \hspace{1mm} dx \\
     &= \int_{\Omega_t} \partial_t(r^{\frac{1}{\gamma-1}} \tilde{r}\tilde{u}^0) \hspace{1mm} dx
\end{split}
\end{equation}
For the energy terms, we plan to apply \eqref{E:Moving_Domain_formula} to the function
\begin{equation}
    f(t,x) = \frac{1}{2} r^{\frac{1}{\gamma-1}} \tilde{s}^2 + \frac{1}{2(\gamma-1)} r^{\frac{2-\gamma}{\gamma-1}} \tilde{r}^2 + \frac{\Gamma+r}{2} r^{\frac{1}{\gamma-1}} |\tilde{u}|^2
\end{equation}

First, recall using the $r$ equation that $D_t r = - (\gamma-1) r \partial_\mu u^\mu $. We then get
\begin{equation}
\begin{split}
    D_t f(t,x) &=  \frac{1}{2} r^{\frac{1}{\gamma-1}}D_t \tilde{s}^2 + \frac{1}{2(\gamma-1)} r^{\frac{2-\gamma}{\gamma-1}} D_t \tilde{r}^2 + \frac{\Gamma+r}{2} r^{\frac{1}{\gamma-1}} D_t |\tilde{u}|^2 \\
    & \hspace{5mm} - \frac{1}{2} r^{\frac{1}{\gamma-1}} \partial_\mu u^\mu \tilde{s}^2 - \frac{2-\gamma}{2(\gamma-1)} r^{\frac{2-\gamma}{\gamma-1}} \partial_\mu u^\mu \tilde{r}^2 - \frac{1}{2} r^{\frac{1}{\gamma-1}} (\Gamma + \gamma r)\partial_\mu u^\mu |\tilde{u}|^2 \\
    &= \textcolor{blue}{- \partial_\mu (r^{\frac{1}{\gamma-1}} \tilde{r} \tilde{u}^\mu )}  \\
    & \hspace{5mm}- r^{\frac{1}{\gamma-1}}\tilde{s} \tilde{u}^\mu  \partial_\mu s \\
    & \hspace{5mm} -\tilde{r}^2 r^{\frac{2-\gamma}{\gamma-1}} \partial_\mu u^\mu \\
    &\hspace{5mm}  -(\Gamma+r) r^{\frac{1}{\gamma-1}} \tilde{u}_\alpha \tilde{u}^\mu \partial_\mu u^\alpha -r^{\frac{1}{\gamma-1}} |\tilde{u}|^2 u^\mu \partial_\mu r \\
    & \hspace{5mm} + \frac{\Gamma'}{\Gamma+r} r^{\frac{1}{\gamma-1}} \tilde{s} \tilde{u}^\mu \partial_\mu r + \frac{1}{\Gamma+r} r^{\frac{1}{\gamma-1}} \tilde{r} \tilde{u}^\mu \partial_\mu r\\
     & \hspace{5mm} - \frac{1}{2} r^{\frac{1}{\gamma-1}} \partial_\mu u^\mu \tilde{s}^2 - \frac{2-\gamma}{2(\gamma-1)} r^{\frac{2-\gamma}{\gamma-1}}  \partial_\mu u^\mu \tilde{r}^2 - \frac{1}{2} r^{\frac{1}{\gamma-1}} (\Gamma + \gamma r)\partial_\mu u^\mu |\tilde{u}|^2 \\
\end{split}
\end{equation}
using the fact that $D_t \Gamma =0$ and computing several more expressions.

After gathering terms, we apply \eqref{E:Moving_Domain_formula} to $f(t,x)$ and integrate over $\Omega_t$. This yields
\begin{equation}
\label{E:Moving_domain_computation}
\begin{split}
     \frac{d}{dt} \int_{\Omega_t} f(t,x) \hspace{1mm} dx &= \int_{\Omega_t} f(t,x) \partial_i \left(\frac{u^i}{u^0} \right) \mss dx \\
     &\hspace{5mm} - \int_{\Omega_t} \frac{1}{u^0}\left( \frac{1}{2} r^{\frac{1}{\gamma-1}} \tilde{s}^2 + \frac{\gamma}{2(\gamma-1)} r^{\frac{2-\gamma}{\gamma-1}} \tilde{r}^2 + \frac{1}{2}r^{\frac{1}{\gamma-1}} (\Gamma + \gamma r)|\tilde{u}|^2 \right) \partial_\mu u^\mu \mss dx \\
     & \hspace{5mm} - \int_{\Omega_t} \frac{1}{u^0} \textcolor{blue}{ \partial_t (r^{\frac{1}{\gamma-1}} \tilde{r} \tilde{u}^0 )} \mss dx \\
     & \hspace{5mm}- \int_{\Omega_t} \frac{1}{u^0} r^{\frac{1}{\gamma-1}}\tilde{s} \tilde{u}^\mu  \partial_\mu s \mss dx\\
     & \hspace{5mm}- \int_{\Omega_t} \frac{1}{u^0} (\Gamma+r) r^{\frac{1}{\gamma-1}} \tilde{u}_\alpha \tilde{u}^\mu \partial_\mu u^\alpha + \frac{1}{u^0}  r^{\frac{1}{\gamma-1}} |\tilde{u}|^2 u^\mu \partial_\mu r \mss dx \\
    &\hspace{5mm} + \int_{\Omega_t} \frac{1}{u^0} \frac{\Gamma'}{\Gamma+r} r^{\frac{1}{\gamma-1}} \tilde{s} \tilde{u}^\mu \partial_\mu r +  \frac{1}{u^0} \frac{1}{\Gamma+r} r^{\frac{1}{\gamma-1}} \tilde{r} \tilde{u}^\mu \partial_\mu r dx \\
\end{split}
\end{equation}

Thus, we define the squared base energy ($k=0$) to be 
\begin{equation}
\label{E:Energy_expression}
     E^0(\tilde{s}, \tilde{r},\tilde{u})[t] := \norm{ (\tilde{r}, \tilde{u})}_{\tilde{\mathcal{H}}}^2 + \frac{1}{2} \norm{\tilde{s}}_{H^{0, \frac{1}{2(\gamma-1)}}}^2  = \frac{1}{2} \int_{\Omega_t} r^{\frac{2-\gamma}{\gamma-1}} \left(\frac{1}{\gamma-1} \tilde{r}^2 + (\Gamma+r) r |\tilde{u}|^2 + r \tilde{s}^2 \right) \mss dx.
\end{equation}

\begin{remark}
Observe the correspondence between \eqref{E:Energy_expression} and the higher order energy \eqref{E:E^2k_full_linearized_energy} where we have a wave part and a transport part. We only make the distinction here between $\tilde{s}$ and $(\tilde{r}, \tilde{u})$ to highlight the differences in the higher order estimates. In Section \ref{S:Entropy estimates}, we can simply differentiate the $\tilde{s}$ equation with spatial derivatives $\partial^{2k}$. However, in Section \ref{S:Higher_order_energy_estimates}, we must differentiate the $\tilde{r}$ and $\tilde{u}$ equations with the convective derivative $D_t$.
\end{remark}

This will allow us to prove the following basic energy estimate.

\begin{proposition}[Basic Energy Inequality]
\label{P:Basic_Energy_Inequality}
Let $(s,r,u)$ be a solution to \eqref{E:System} that exists on some time interval $[0,T]$. Assume that $s,r,u$ and their first order derivatives are bounded in the $L^\infty(\Omega_t)$ norm for each $t \in [0,T]$ and $r$ vanishes simply on the free boundary. Then, the following estimate holds for solutions $(\tilde{s}, \tilde{r}, \tilde{u})$ to \eqref{E:Linearized_System_2}:
\begin{equation}
    E^0[t] \lesssim E^0[0] \exp \left(\int_0^t \mathcal{C}(\norm{\partial s, \partial r, \partial u, s, r, u}_{L^\infty(\Omega_\tau)})\mss d\tau \right)
\end{equation}
where $E^0[t]$ is given by \eqref{E:Energy_expression} and $\mathcal{C}$ is a function depending on the $L^\infty(\Omega_\tau)$ norms of $s,r,u$ and their first order derivatives.

\end{proposition}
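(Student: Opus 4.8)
The plan is to run a Gr\"onwall argument directly off the differential identity \eqref{E:Moving_domain_computation}, which has already been obtained by contracting the linearized equations \eqref{E:Linearized_System_2} with the stated multipliers and invoking the moving domain formula \eqref{E:Moving_Domain_formula}. Since $\int_{\Omega_t} f(t,x)\,dx = E^0[t]$ by \eqref{E:Energy_expression}, everything reduces to estimating the right-hand side of \eqref{E:Moving_domain_computation} by $\mathcal C\,E^0[t]$ with $\mathcal C = \mathcal C(\|\partial s,\partial r,\partial u,s,r,u\|_{L^\infty(\Omega_t)})$, after a harmless modification of the energy to absorb one problematic term (see below).

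First I would dispatch the ``good'' terms, i.e.\ every term on the right of \eqref{E:Moving_domain_computation} except the one containing $\partial_t(r^{\frac{1}{\gamma-1}}\tilde r\tilde u^0)$. Each such term is an integral of a product of two \emph{undifferentiated} linearized quantities against a coefficient built from $(s,r,u)$ and at most one of their derivatives; for the term involving $u^\mu\partial_\mu r$ one first rewrites $u^\mu\partial_\mu r = D_t r = -(\gamma-1)r\partial_\mu u^\mu$ via \eqref{E:System_r_equation}. Extracting the coefficients in $L^\infty$ --- using $u^0\ge 1$, $\Gamma+r\lesssim 1$ and $(\Gamma+r)^{-1}\lesssim 1$ near $\Gamma_t$ by \eqref{E:weight_fact_1}, and $\Gamma'(s)$ bounded since $s$ is bounded --- leaves weighted $L^2$ pairings $\int_{\Omega_t} r^{\frac{1}{\gamma-1}}|\tilde a|\,|\tilde b|\,dx$ with $\tilde a,\tilde b\in\{\tilde s,\tilde r,\tilde u\}$, together with a handful of terms such as $\int_{\Omega_t} r^{\frac{2-\gamma}{\gamma-1}}\tilde r^2|\partial u|\,dx$ and $\int_{\Omega_t} f\,|\partial_i(u^i/u^0)|\,dx$ whose weights already match $E^0$ and which are immediately $\lesssim \|\partial u\|_{L^\infty}E^0[t]$. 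A weighted Cauchy--Schwarz inequality splitting the power $r^{\frac{1}{\gamma-1}}$ symmetrically --- with a spare factor $r^{1/2}\le 1$ left over whenever $\tilde r$ is one of the factors, because the $\tilde r$-weight $r^{\frac{2-\gamma}{\gamma-1}}$ in $E^0$ exceeds $r^{\frac{1}{\gamma-1}}$ near $\Gamma_t$ --- bounds each pairing by the energy density. Here one also uses that $G$ in \eqref{E:G_metric_definition} is Riemannian and that, by the linearized constraint \eqref{E:Linearized_System_2_constraint_equation}, $G_{\alpha\beta}\tilde u^\alpha\tilde u^\beta = g_{\alpha\beta}\tilde u^\alpha\tilde u^\beta$ and $|\tilde u^\mu|\lesssim u^0|\tilde u|_G$, so $\sum_\mu(\tilde u^\mu)^2\lesssim (u^0)^2|\tilde u|_G^2$. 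All told, the good terms are $\lesssim \mathcal C(\|\partial s,\partial r,\partial u,s,r,u\|_{L^\infty})E^0[t]$.

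The remaining term, $-\int_{\Omega_t}\frac{1}{u^0}\partial_t(r^{\frac{1}{\gamma-1}}\tilde r\tilde u^0)\,dx$, carries a time derivative of the linearized unknowns, cannot be controlled by $E^0$, and must be moved to the left; this is the crux. Writing $W:=r^{\frac{1}{\gamma-1}}\tilde r\tilde u^0$ and $\tfrac{1}{u^0}\partial_t W = \partial_t\!\big(\tfrac{W}{u^0}\big) + \tfrac{W}{(u^0)^2}\partial_t u^0$, the second summand is ``good'' --- $\partial_t u^0$ is $L^\infty$-bounded by $\|\partial u,\partial r\|_{L^\infty}$ (using the $u^\alpha$ equation of \eqref{E:System}) and then estimated as above --- while for the first, the moving domain formula \eqref{E:Moving_Domain_formula} applied to $q=W/u^0$ gives $\int_{\Omega_t}\partial_t q\,dx = \tfrac{d}{dt}\int_{\Omega_t}q\,dx$, the boundary flux vanishing since $r=0$ on $\partial\Omega_t$. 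Setting $\bar E^0[t]:=E^0[t]+\int_{\Omega_t}\frac{r^{\frac{1}{\gamma-1}}\tilde r\tilde u^0}{u^0}\,dx$, the identity \eqref{E:Moving_domain_computation} becomes $\tfrac{d}{dt}\bar E^0[t]\le \mathcal C(t)E^0[t]$. Finally one checks $\bar E^0\simeq E^0$: using $|\tilde u^0|\lesssim u^0|\tilde u|_G$ and the same weight bookkeeping, $\big|\int_{\Omega_t}\tfrac{W}{u^0}\big|\lesssim \int_{\Omega_t} r^{1/2}\big(r^{\frac{2-\gamma}{\gamma-1}}\tilde r^2 + r^{\frac{1}{\gamma-1}}|\tilde u|_G^2\big)\,dx$, and by Assumption \ref{A:Assumption_smallness r} the factor $r^{1/2}\le\hat\varepsilon^{1/2}$ is small, so $\big|\int_{\Omega_t}\tfrac{W}{u^0}\big|\le C\hat\varepsilon^{1/2}E^0[t]$ and hence $\tfrac12 E^0[t]\le\bar E^0[t]\le\tfrac32 E^0[t]$. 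Then $\tfrac{d}{dt}\bar E^0[t]\le 2\mathcal C(t)\bar E^0[t]$, Gr\"onwall yields $\bar E^0[t]\le\bar E^0[0]\exp\!\big(\int_0^t 2\mathcal C(\tau)\,d\tau\big)$, and undoing the equivalence gives the stated bound. The two things doing the real work are the algebraic cancellation producing the perfect derivative --- already engineered into the choice of multipliers --- and the smallness of $r$ near $\Gamma$, which is exactly what makes $\bar E^0$ and $E^0$ comparable; I expect the careful weight accounting in the Cauchy--Schwarz steps, rather than any single estimate, to be the main thing to get right.
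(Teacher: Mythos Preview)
Your proposal is correct and follows essentially the same approach as the paper: both start from \eqref{E:Moving_domain_computation}, estimate the ``good'' right-hand terms by $\mathcal C\,E^0$ via weighted Cauchy--Schwarz and the weight bounds of Remark~\ref{R:weights_in_wave_energy_tilde_H_norm}, and deal with the problematic $\partial_t(r^{\frac{1}{\gamma-1}}\tilde r\tilde u^0)$ term by exploiting the smallness of $r$ from Assumption~\ref{A:Assumption_smallness r}.

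The one genuine difference is in how that last term is absorbed. The paper first integrates the differential inequality in time, then applies the moving domain formula to convert $\int_0^t\int_{\Omega_\tau}\partial_\tau(r^{\frac{1}{\gamma-1}}\tilde r\tilde u^0)\,dx\,d\tau$ into endpoint contributions at $\tau=0$ and $\tau=t$, and finally uses $\|r\|_{L^\infty}^{1/2}\le\hat\varepsilon^{1/2}$ to move the time-$t$ endpoint to the left of the integrated inequality before invoking Gr\"onwall. You instead work at the differential level: you split off $\partial_t u^0$ via the product rule, build the modified energy $\bar E^0=E^0+\int_{\Omega_t}W/u^0$, and show $\bar E^0\simeq E^0$ pointwise in $t$ using the same smallness. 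Your route is arguably cleaner---in particular, your product-rule step handles the $1/u^0$ factor inside the integral more carefully than the paper, which pulls it out in $L^\infty$ somewhat informally---while the paper's integrate-then-absorb version avoids introducing an auxiliary energy. Both buy exactly the same conclusion, and the ``real work'' you identify (the engineered perfect derivative and the smallness of $r$) is indeed what drives the paper's argument as well.
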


Before giving the proof, we provide the following remarks which are critical in understanding the methods used throughout this paper.

\begin{remark}[The symbol `$\lesssim$']
Throughout our work, the symbol $\lesssim$ will be used in the usual fashion, i.e.
\begin{equation*}
    A \lesssim B \Longleftrightarrow A \leq \mathcal{D} B
\end{equation*}
where $\mathcal{D}$ is a constant depending on the the fixed data of the problem. In our case, $\mathcal{D}$ will depend on the background solution $(s, r, u)$ along with $\gamma$ and $T$.
    
\end{remark}

\begin{remark}
\label{R:O(1)_weights}
    In view of \eqref{E:weight_fact_1}, we will often use inequalities of the following form when handling weights that appear in our integrals: 
\begin{equation}
\label{E:weight_argument}
    \int_{\Omega_t} r^{\frac{1}{\gamma-1}} |\tilde{u}|^2 \mss dx \leq \nnorm{\frac{1}{\Gamma+r}}_{L^\infty(\Omega_t)} \int_{\Omega_t} (\Gamma+r) r^{\frac{1}{\gamma-1}} |\tilde{u}|^2 \mss dx \lesssim E^0
\end{equation}
where we note that $\frac{1}{\Gamma+r}$ tends to a finite positive constant depending on $\gamma$ when we are sufficiently close to the boundary (see Remark \ref{R:weights_in_wave_energy_tilde_H_norm}). 
When we refer to an expression consisting of $s,r,u$ and $\gamma$ as $O(1)$ near the free boundary, we simply are referring to the fact that when $r$ is sufficiently small, this weight tends to a finite constant bounded away from zero. Thus, it can be absorbed into the $\lesssim$ symbol without having much effect on the primary estimate.
\end{remark}

\begin{remark}
\label{R:estimating_time_derivatives}
    Additionally, there are many situations in which we would like to estimate expressions involving \textit{spacetime} derivatives $\partial_\mu$ over the \textit{spatial} region $\Omega_t$. However, this can be done with the help of section \ref{S:Sovling_for_time_derivatives}. The primary idea is that we can use system \eqref{E:System} directly to solve for $\partial_t(s,r,u)$ and $\partial_t(\tilde{s}, \tilde{r}, \tilde{u})$ in terms of the spatial part $\partial(s,r,u)$ and $\partial(\tilde{s},\tilde{r},\tilde{u})$ respectively, plus additional terms that can be estimated. A key identity is 
\begin{equation}
\label{E:tilde_u_^0 identity}
    \tilde{u}^0 = \frac{u_j}{u^0} \tilde{u}^j
\end{equation}
which follows from the orthogonality of $u$ and $\tilde{u}$ in \eqref{E:Linearized_System_2_constraint_equation}.  
    With section \ref{S:Sovling_for_time_derivatives} taken for granted, one can treat $\partial_\mu$ as a generic first order spatial derivative $\partial$ in the following proof with the understanding that all time derivatives will be eventually written in terms of purely spatial derivatives.
\end{remark}

Here, we will provide the proof of Proposition \ref{P:Basic_Energy_Inequality}.

\begin{proof}
Using Remarks \ref{R:O(1)_weights} and \ref{R:estimating_time_derivatives}, the first two terms in \eqref{E:Moving_domain_computation} on the RHS containing $\partial_\mu u^\mu$ and $\partial_i \left(\frac{u^i}{u^0} \right)$ can be quite easily estimated using $E^0$ and an expression depending on the $L^\infty(\Omega_t)$ norms of $u$ and $\partial u$. We arrive at the inequality
\begin{equation}
    \begin{split}
        \frac{d}{dt} E^0 &\lesssim C_0(\norm{\partial u}_{L^\infty(\Omega_t)},\norm{ u}_{L^\infty(\Omega_t)} ) E^0 \\
     & \hspace{5mm} - \int_{\Omega_t} \frac{1}{u^0} \textcolor{blue}{ \partial_t (r^{\frac{1}{\gamma-1}} \tilde{r} \tilde{u}^0 )} \mss dx \\
     & \hspace{5mm}- \int_{\Omega_t} \frac{1}{u^0} r^{\frac{1}{\gamma-1}}\tilde{s} \tilde{u}^\mu  \partial_\mu s \mss dx\\
     & \hspace{5mm}- \int_{\Omega_t} \frac{1}{u^0}(\Gamma+r) r^{\frac{1}{\gamma-1}} \tilde{u}_\alpha \tilde{u}^\mu \partial_\mu u^\alpha + \frac{1}{u^0}r^{\frac{1}{\gamma-1}} |\tilde{u}|^2 u^\mu \partial_\mu r \mss dx \\
    &\hspace{5mm} + \int_{\Omega_t} \frac{1}{u^0}\frac{\Gamma'}{\Gamma+r} r^{\frac{1}{\gamma-1}} \tilde{s} \tilde{u}^\mu \partial_\mu r + \frac{1}{u^0}\frac{1}{\Gamma+r} r^{\frac{1}{\gamma-1}} \tilde{r} \tilde{u}^\mu \partial_\mu r dx \\
    &:= \norm{(\partial u, u)}_{L^\infty(\Omega_t)}E^0 + I_1+ I_2 + I_3 + I_4
    \end{split}
\end{equation}
where each lower order term $I_1, ..., I_4$ will be estimated in the following way.

The first term $I_1$ is unique in that we plan to combine it with the LHS of our inequality after integrating in time. We leave this term for now and will return to it later.

For $I_2$, we estimate it using the Cauchy Schwartz inequality. We have
\begin{equation}
\begin{split}
\label{E:I_2_term_estimate}
     |I_2| \leq \int_{\Omega_t} r^{\frac{1}{\gamma-1}} \left|(u^0)^{-1}\right| |\tilde{s} \tilde{u}^\mu \partial_\mu s| &\leq \norm{\partial s}_{L^\infty(\Omega_t)} \norm{(u^0)^{-1}}_{L^\infty(\Omega_\tau)} \left(\int_{\Omega_t} r^{\frac{1}{\gamma-1}} |\tilde{s}|^2 \right)^{\frac{1}{2}}  \left(\int_{\Omega_t} r^{\frac{1}{\gamma-1}} | \tilde{u}|^2 \right)^{\frac{1}{2}} \\
     &\lesssim C_2(\norm{\partial s}_{L^{\infty}(\Omega_t)}, \norm{u}_{L^\infty(\Omega_\tau)}) E^0.
\end{split}
\end{equation}
where we used remarks \ref{R:O(1)_weights}-\ref{R:estimating_time_derivatives}, and the fact that $G$ is a Riemannian metric, hence $|\tilde{u}|^2_{\delta} \lesssim |\tilde{u}|_G^2$ where $\delta$ is the Euclidean metric in spacetime.

For $I_3$, we simplify $D_t r= -(\gamma-1)r \partial_\mu u^\mu$ and get a similar inequality
\begin{equation}
\label{E:I_3_term_estimate}
\begin{split}
     |I_3| &\leq \int_{\Omega_t} (\Gamma+r) r^{\frac{1}{\gamma-1}} \left|(u^0)^{-1}\right|  |\tilde{u}|^2 |\partial_\mu u^\mu| \mss dx \hspace{1mm} + \int_{\Omega_t}  r^{\frac{1}{\gamma-1}} \left|(u^0)^{-1}\right| |\tilde{u}|^2 (\gamma-1)r |\partial_\mu u^\mu| \mss dx \\
     &\lesssim C_3(\norm{\partial u}_{L^\infty(\Omega_t)},\norm{u}_{L^\infty(\Omega_\tau)}) E^0
\end{split}
\end{equation}
where we used Remark \ref{R:O(1)_weights}. 

For $I_4$, we observe that $\Gamma'(s) = - \frac{\gamma-1}{\gamma} \Gamma(s)$ using \eqref{E:Gamma_definition}, and we can apply similar arguments along with the Cauchy-Schwarz inequality to get

\begin{equation}
\label{E:I_4_term_estimate}
\begin{split}
     |I_4| &\leq \int_{\Omega_t} \left(\frac{\gamma-1}{\gamma} \right) \left|(u^0)^{-1}\right| \frac{1}{\Gamma+r} r^{\frac{1}{\gamma-1}} |\tilde{s} \tilde{u}^\mu \partial_\mu r| \mss dx +  \int_{\Omega_t}  \frac{1}{\Gamma+r} r^{\frac{1}{\gamma-1}} \left|(u^0)^{-1}\right||\tilde{r} \tilde{u}^\mu \partial_\mu r| \mss dx \\
     &\lesssim C_4(\norm{\partial r}_{L^\infty(\Omega_t)}, \norm{u}_{L^\infty(\Omega_\tau)}) E^0.
\end{split}
\end{equation}

Thus, for the energy defined in \eqref{E:Energy_expression} with $t$ replaced by $\tau$, we combine \eqref{E:I_2_term_estimate}-\eqref{E:I_4_term_estimate} and $I_1$ to get the inequality
\begin{equation}
    \frac{d}{d\tau} E^0 \lesssim C(\norm{\partial s}, \norm{\partial r}, \norm{\partial u}, \norm{u}) E + \norm{u} \int_{\Omega_\tau} \textcolor{blue}{ \partial_\tau (r^{\frac{1}{\gamma-1}} \tilde{r} \tilde{u}^0 )} \mss dx 
\end{equation}
where the constant depends only on $\gamma$ and the norms are on $L^\infty(\Omega_\tau)$.
Integrating in time from $0$ to $t \in [0,T]$, this implies 
\begin{equation}
\begin{split}
     E^0[t] & \lesssim E^0[0] + \int_0^t C(\norm{\partial s}, \norm{\partial r}, \norm{\partial u}, \norm{u}) E^0[\tau] \mss d\tau +\textcolor{blue}{\int_0^t  \int_{\Omega_{\tau}} \partial_\tau (r^{\frac{1}{\gamma-1}} \tilde{r} \tilde{u}^0 ) \mss dx d\tau} \\
\end{split}
\end{equation}
where the constant depends on $\gamma$ and the $L^\infty$ norm of $u$ which is bounded on $[0,T]$.
Then, from a quick computation using the moving domain formula, we get
\begin{equation}
\begin{split}
     \textcolor{blue}{\int_0^t \int_{\Omega_{\tau}} \partial_\tau (r^{\frac{1}{\gamma-1}} \tilde{r} \tilde{u}^0 ) \mss dx d\tau} &= \int_0^t \left(\frac{d}{d \tau} \int_{\Omega_\tau}r^{\frac{1}{\gamma-1}} \tilde{r} \tilde{u}^0  \mss dx d\tau - \int_{\Omega_\tau} \partial_i (r^{\frac{1}{\gamma-1}} \tilde{r} \tilde{u}^0 \frac{u^i}{u^0}) \mss dx d\tau \right)   \\
     &= \int_0^t \frac{d}{d \tau} \int_{\Omega_\tau}r^{\frac{1}{\gamma-1}} \tilde{r} \tilde{u}^0  \mss dx d\tau \\
     &=  \int_{\Omega_t} (r(t,x))^{\frac{1}{\gamma-1}} \tilde{r}(t,x) \tilde{u}^0(t,x) \mss dx - \int_{\Omega_0} (r(0,x))^{\frac{1}{\gamma-1}} \tilde{r}(0,x) \tilde{u}^0(0,x) \mss dx
\end{split}
\end{equation}
where we integrate by parts for one of the terms and used the fact that $r$ vanishes on the free boundary. Thus, we can substitute this into our inequality to get 
\begin{equation}
    \begin{split}
        E^0[t] & \lesssim E^0[0] + \int_0^t C(\norm{\partial s}, \norm{\partial r}, \norm{\partial u}, \norm{u} ) E^0[\tau] \mss d\tau \\
     & \hspace{8mm}+\textcolor{blue}{ \int_{\Omega_t} (r(t,x))^{\frac{1}{\gamma-1}} \tilde{r}(t,x) \tilde{u}^0(t,x) \mss dx - \int_{\Omega_0} (r(0,x))^{\frac{1}{\gamma-1}} \tilde{r}(0,x) \tilde{u}^0(0,x) \mss dx }
    \end{split}
\end{equation}
where the constant depends on $\gamma$ and $\norm{u}_{L^\infty(\Omega_\tau)}$ which is bounded.

Then, for $x$ sufficiently close to the boundary, we have estimates of the form
\begin{equation}
\begin{split}
      \left|\textcolor{blue}{ \int_{\Omega_t} r^{\frac{1}{\gamma-1}} \tilde{r}\tilde{u}^0 \mss dx } \right| &\leq  \left(\int_{\Omega_t} r^{\frac{1}{\gamma-1}} |\tilde{r}|^2 \right)^{\frac{1}{2}}  \left(\int_{\Omega_t} r^{\frac{1}{\gamma-1}} | \tilde{u}|^2 \right)^{\frac{1}{2}} \\
      & \lesssim \norm{r}_{L^\infty(\Omega_\tau) }^{1/2} \left(\int_{\Omega_t} r^{\frac{2-\gamma}{\gamma-1}} |\tilde{r}|^2 \right)^{\frac{1}{2}}  \left(\int_{\Omega_t} r^{\frac{1}{\gamma-1}} | \tilde{u}|^2 \right)^{\frac{1}{2}} \\
      &\lesssim \hat{\varepsilon}^{1/2} E^0[t]
\end{split}
\end{equation}
where we applied Assumption \ref{A:Assumption_smallness r} and used the smallness of $r$ so that $\hat{\varepsilon}^{1/2} \ll \frac{1}{\sqrt{2}} < 1$.  A similar estimate also holds for the $\Omega_0$ term. Thus, we can move the $\hat{\varepsilon}^{1/2} E^0[t]$ term to the LHS which will yield
\begin{equation}
    E^0[t] \lesssim \frac{1}{1- \hat{\varepsilon}^{1/2}} \left(E^0[0](1+\hat{\varepsilon}^{1/2}) + \int_0^t (\norm{\partial s}_{L^\infty(\Omega_\tau)}+ \norm{\partial r}_{L^\infty(\Omega_\tau)} + \norm{\partial u}_{L^\infty(\Omega_\tau)} ) E^0[\tau] \mss d\tau \right) 
\end{equation}

Thus, by Gronwall's inequality, we have 
\begin{equation}
    E^0[t] \lesssim E^0[0] \exp \left(\int_0^t \mathcal{C}(\norm{\partial s},\norm{\partial r},\norm{\partial u}, \norm{s}, \norm{r}, \norm{u}) \mss d\tau \right)
\end{equation}
with a constant depending only on $\gamma$ and our distance to the free boundary. This completes our proof of the basic energy estimate.

\end{proof}

\section{Creating a Book-keeping scheme}
\label{S:Creating the Book-keeping scheme}
In works such as \cite{DisconziIfrimTataru}, a book-keeping scheme was created using the equations' scaling law for the leading order dynamics near the free boundary. However, no such scaling seems available for \eqref{E:Linearized_System_2}; thus, we will need to develop several key ideas around the notion of \textbf{order} which is first defined in Definition \ref{D:definition_critical_subcritical_supercritical}. After differentiating the equations, we plan to create a book-keeping scheme which accounts for the number of derivatives and the powers of our free boundary weight $r$. Along the way, any derivatives of the background quantities $s, r, u$ will serve as $L^\infty$ coefficients for each of the terms that we will encounter.  

Using system \eqref{E:Linearized_System_2}, we record the computations when taking $D_t^{2k}$ of the equations (where $D_t^0$ is the identity). Since the $\tilde{s}$ equation will be treated separately in Section \ref{S:Entropy estimates}, we put only the $\tilde{r}$ and $\tilde{u}$ equations here for convenience. Writing the equations in commutator notation and keeping only the higher order terms on the LHS, we get the following system:

\begin{subequations}
\label{E:Linearized_System_6}
\begin{align}
\label{E:Linearized_System_6_r_equation}
    &D_t \left(D_t^{2k}  \tilde{r}\right) + \left(D_t^{2k} \tilde{u}^\mu \right)\partial_\mu  r + (\gamma-1) r  \partial_\mu \left(D_t^{2k}  \tilde{u}^\mu \right)= B_{2k} \\
\label{E:Linearized_System_6_v_equation}
    &D_t \left(D_t^{2k} \tilde{u}^\alpha \right)+  \frac{1}{\Gamma+r} \proj^{\alpha \mu} \partial_\mu \left(D_t^{2k} \tilde{r}\right) = C^{\alpha}_{2k}
\end{align}
\end{subequations}
where
\begin{equation}
\label{E:G_and_H_definition}
    \begin{split}
        B_{2k} &= D_t^{2k} g -\sum_{i=0}^{2k-1} \left(D_t^i \tilde{u}^\mu \partial_\mu \left(D_t^{2k-i} r \right)- D_t^{2k-i} r  \partial_\mu \left(D_t^i  \tilde{u}^\mu \right) \right)   \\
        & \hspace{4cm}-  \sum_{i=0}^{2k-1} D_t^i \tilde{u}^\mu [D_t^{2k-i}, \partial_\mu ]r - (\gamma-1) \sum_{i=1}^{2k} D_t^{2k-i} r [D_t^i, \partial_\mu] \tilde{u}^\mu\\
        C^\alpha_{2k} &=  D_t^{2k} h^\alpha -\sum_{i=0}^{2k-1} D_t^{2k-i} \left(\frac{1}{\Gamma+r} \proj^{\alpha \mu} \right) \partial_\mu \left(D_t^i \tilde{r}\right) -\sum_{i=1}^{2k} D_t^{2k-i} \left(\frac{1}{\Gamma+r} \proj^{\alpha \mu} \right) [D_t^i,\partial_\mu ] \tilde{r}\\
    \end{split}
\end{equation}
and $g$ and $h^\alpha$ are defined in \eqref{E:Linearized_f_g_h}. Additionally, we removed terms from the summation where the commuator was zero. Each of the terms in \eqref{E:G_and_H_definition} will be included in the higher order wave equation estimates in Section \ref{S:Higher_order_energy_estimates}.

Using several lemmas from \cite{DisconziIfrimTataru} on embedding theorems combined with Assumption \ref{A:Assumption_smallness r}, we are able to prove the following lemmas which motivate our definition of \textbf{order} defined in Remark \ref{R:order_free_boundary_term}.

\begin{lemma}
\label{L:Free_boundary_add_r}
Suppose that $\sigma \geq 0$, $r$ vanishes simply on the free boundary, and $f \in H^{k,\sigma}$. Then $f \in H^{k, \sigma + \frac{1}{2}}$, and more specifically
\begin{equation*}
     \norm{f}_{H^{k, \sigma + \frac{1}{2}}(\Omega_t)}^2 \leq \hat{\varepsilon}  \norm{f}_{H^{k, \sigma}(\Omega_t)}^2
\end{equation*}
where the integrals are interpreted with respect to Remark \ref{R:Omega_t_near_free_boundary}.

\end{lemma}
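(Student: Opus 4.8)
The statement asserts that multiplying by the extra weight $r^{1/2}$ not only keeps $f$ in a weighted Sobolev space but in fact gains the small constant $\hat\varepsilon$ from Assumption \ref{A:Assumption_smallness r}. The plan is to work directly from the definition of the $H^{k,\sigma}$ norm in \eqref{E:Weighted-Sobolev spaces}. For each multi-index $\alpha$ with $|\alpha|\le k$, I would write
\begin{equation*}
\norm{r^{\sigma+\frac12}\partial^\alpha f}_{L^2(\Omega_t)}^2 = \int_{\Omega_t} r^{2\sigma+1}\abs{\partial^\alpha f}^2\,dx = \int_{\Omega_t} r \cdot r^{2\sigma}\abs{\partial^\alpha f}^2\,dx,
\end{equation*}
and then pull the factor $r$ out in $L^\infty$: since $r$ vanishes simply on $\Gamma_t$ and is comparable to the distance to the boundary, on the neighborhood of $\Gamma_t$ where we are working (Remark \ref{R:Omega_t_near_free_boundary}) we have $\norm{r}_{L^\infty(\Omega_t)}\le\hat\varepsilon$ by Assumption \ref{A:Assumption_smallness r}. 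This gives
\begin{equation*}
\norm{r^{\sigma+\frac12}\partial^\alpha f}_{L^2(\Omega_t)}^2 \le \norm{r}_{L^\infty(\Omega_t)}\int_{\Omega_t} r^{2\sigma}\abs{\partial^\alpha f}^2\,dx \le \hat\varepsilon\,\norm{r^\sigma\partial^\alpha f}_{L^2(\Omega_t)}^2.
\end{equation*}
Summing over $|\alpha|\le k$ yields $\norm{f}_{H^{k,\sigma+\frac12}(\Omega_t)}^2\le\hat\varepsilon\,\norm{f}_{H^{k,\sigma}(\Omega_t)}^2$, which is exactly the claim; in particular finiteness of the right-hand side (i.e. $f\in H^{k,\sigma}$) gives finiteness of the left, so $f\in H^{k,\sigma+\frac12}$.

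The only subtlety — and the one place where the hypothesis $\sigma\ge 0$ and the embedding lemmas of \cite{DisconziIfrimTataru} referenced in the paragraph before the lemma actually enter — is the interpretation of these integrals globally on $\Omega_t$ versus only near the free boundary. Away from $\Gamma_t$, $r$ is bounded above by a fixed constant (not necessarily $\le\hat\varepsilon$), so the clean constant $\hat\varepsilon$ is genuinely a statement about the boundary layer, which is why the lemma is stated "with respect to Remark \ref{R:Omega_t_near_free_boundary}." In the interior region the same pointwise bound $r^{2\sigma+1}=r\cdot r^{2\sigma}\le C\, r^{2\sigma}$ holds with $C=\norm{r}_{L^\infty}$, so one still gets the qualitative inclusion $H^{k,\sigma}\hookrightarrow H^{k,\sigma+\frac12}$ with a harmless constant there; the two contributions are then combined by the partition-of-unity philosophy already invoked after Assumption \ref{A:Assumption_smallness r}.

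I do not expect any real obstacle here: this is essentially a one-line $L^\infty$ bound on the weight, and the role of the lemma is bookkeeping — it records the precise power-counting gain that will later justify calling the extra factor of $r$ a "small" (higher-order) contribution in the sense of Remark \ref{R:order_free_boundary_term}. The condition $\sigma\ge 0$ guarantees the weights are genuine (non-singular) multipliers so that no boundary regularity of $f$ beyond membership in $H^{k,\sigma}$ is needed, and $\sigma>-\frac12$ keeps all the spaces in play well-defined.
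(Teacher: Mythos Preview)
Your proof is correct and is exactly the argument the paper has in mind: the lemma is stated without an explicit proof, introduced only with the remark that it follows from Assumption \ref{A:Assumption_smallness r}, and your one-line $L^\infty$ extraction of the extra factor of $r$ is precisely that. Your additional commentary on the role of $\sigma\ge 0$ and the near-boundary interpretation is accurate and matches the paper's use of Remark \ref{R:Omega_t_near_free_boundary}.
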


\begin{lemma}[Free Boundary Embedding Lemma]
\label{L:free_boundary_embedding_lemma}
Let $H^{l,m}$ be the corresponding Sobolev space for the free-boundary problem with weight $r$ that vanishes simply near the free boundary, and $l,m \geq 0$. If $k \geq 0$ is provided, then
\begin{equation*}
    H^{2k , k+ \frac{1}{2}} \subset H^{l,m}
\end{equation*}
provided $ l \leq 2k$ and $m -l + k - \frac{1}{2} \geq 0$.

\end{lemma}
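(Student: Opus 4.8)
The plan is to reduce the claimed inclusion $H^{2k,k+\frac12}\subset H^{l,m}$ to a sequence of elementary weight-trading steps, each of which either converts a derivative into a half-power of the weight $r$ or vice versa, so that one never needs a genuine Sobolev embedding theorem in the degenerate-weight setting beyond those already imported from \cite{DisconziIfrimTataru}. Concretely, fix $f\in H^{2k,k+\frac12}$, so $\norm{r^{k+\frac12}\partial^\alpha f}_{L^2}<\infty$ for all $|\alpha|\le 2k$. I want to bound $\norm{r^m\partial^\beta f}_{L^2}$ for all $|\beta|\le l$. Since $l\le 2k$, the number of derivatives is never an obstruction; the only issue is whether the power of $r$ available, namely $k+\frac12$ at the top order $2k$, is enough once we also spend derivatives getting down to order $l$.

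The key mechanism is the following trade-off, which I would isolate as the heart of the argument. Each time we decrease the derivative count by one — say, passing from estimating $\partial^\beta f$ with $|\beta|=j$ to a situation where we have control of $\partial^\gamma f$ with $|\gamma|=j+1$ — the natural scaling of the problem (the same scaling that underlies the choice of weights in \eqref{E:Higher_order_H^2k_norm}) lets us pay one extra derivative in exchange for relaxing the weight by $\tfrac12$, i.e. a Hardy-type inequality near the simple zero set $\Gamma_t$ of the form $\norm{r^{\sigma}\partial^\beta f}_{L^2}\lesssim \norm{r^{\sigma+\frac12}\partial^{\beta}\partial f}_{L^2}+(\text{lower order})$ is \emph{not} what we want; rather, we want to go the other way, using that extra derivatives are cheap and Lemma \ref{L:Free_boundary_add_r} tells us adding a half-power of $r$ only helps (costs a factor $\hat\varepsilon$). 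So the real content is: starting from the budget at order $2k$, we may freely add powers of $r$ (Lemma \ref{L:Free_boundary_add_r}) and freely add derivatives up to $2k$; the constraint $m-l+k-\frac12\ge0$ is exactly the bookkeeping condition that, after using $2k-l$ ``spare'' derivative slots to absorb the difference, the weight $m$ we are asked to produce does not exceed $k+\frac12$ minus nothing — i.e. $m\le k+\frac12$ whenever $l\le 2k$ and $m-l+k-\frac12\ge0$ would be too weak, so I expect the correct reading is that we climb from $l$ derivatives with weight $m$ up to $2k$ derivatives with weight $m+(2k-l)\cdot 0$...

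Let me restate the step cleanly. For $|\beta|=l$, I would show $\norm{r^m\partial^\beta f}_{L^2}\lesssim \norm{f}_{H^{2k,k+\frac12}}$ by first invoking Lemma \ref{L:Free_boundary_add_r} repeatedly to note that if $m\le k+\frac12$ then $r^m$-weighted norms are dominated by $r^{k+\frac12}$-weighted norms of the same function (since $r$ is bounded and $r^{k+\frac12-m}$ is a bounded factor near the boundary — here one uses $k+\frac12-m\ge 0$, which follows from $m-l+k-\frac12\ge0$ together with $l\ge1$, and the trivial case $l=0$, $m\le k+\frac12$), and then noting $|\beta|=l\le 2k$ so the derivative index is admissible. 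Thus $\norm{r^m\partial^\beta f}_{L^2}\le \norm{r}_{L^\infty}^{\,k+\frac12-m}\norm{r^{k+\frac12}\partial^\beta f}_{L^2}\le C\norm{f}_{H^{2k,k+\frac12}}$, uniformly for $|\beta|\le l$, and summing over $\beta$ gives $\norm{f}_{H^{l,m}}\lesssim\norm{f}_{H^{2k,k+\frac12}}$, proving the inclusion with a bounded inclusion map. The main subtlety — and the step I would be most careful about — is verifying that the arithmetic condition $m-l+k-\frac12\ge0$ really does force $k+\frac12-m\ge0$ in the range $l\le 2k$ that matters, and handling the genuinely degenerate case where $m>k+\frac12$ but the extra $2k-l$ derivatives are exploited through the embedding lemmas of \cite{DisconziIfrimTataru} (trading derivatives for weight in the style of the $\mathcal H^{2k}$ norm equivalence, Remark \ref{R:H^2k_norm_equivalence}); it is there, if anywhere, that one must quote those external embeddings rather than a one-line $L^\infty$ bound, and I would present that case by an induction on $2k-l$ with Lemma \ref{L:Free_boundary_add_r} supplying the inductive step.
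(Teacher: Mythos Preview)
Your central inequality
\[
\norm{r^m\partial^\beta f}_{L^2}\le \norm{r}_{L^\infty}^{\,k+\frac12-m}\norm{r^{k+\frac12}\partial^\beta f}_{L^2}
\]
is false in the regime that actually matters. When $m<k+\tfrac12$ you would need $r^m\le C\,r^{k+\frac12}$, i.e.\ $r^{m-k-\frac12}\le C$, but $r^{m-k-\frac12}$ blows up at the boundary since the exponent is negative. Pulling out a bounded power of $r$ only works when $m\ge k+\tfrac12$, and that is the \emph{trivial} direction --- so you have the easy and hard cases reversed. The arithmetic check you flag at the end (``verifying that $m-l+k-\tfrac12\ge0$ really does force $k+\tfrac12-m\ge0$'') fails immediately: take $l=k$, $m=\tfrac12$, or $l=0$, $m=0$.

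The step you explicitly dismiss --- a Hardy-type inequality trading a derivative for a unit of weight --- is precisely the missing ingredient. The embedding $H^{j+1,\sigma+1}\hookrightarrow H^{j,\sigma}$ (this is the content cited from \cite{DisconziIfrimTataru} and invoked in the proof of Corollary~\ref{C:Trading_deriv_for_weight}) lets you spend each of the $2k-l$ spare derivatives to \emph{lower} the weight by one: iterating from $H^{2k,k+\frac12}$ down to $H^{l,\,k+\frac12-(2k-l)}=H^{l,\,l-k+\frac12}$, and only \emph{then} does the hypothesis $m\ge l-k+\tfrac12$ let you raise the weight to $m$ via Lemma~\ref{L:Free_boundary_add_r}. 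The paper itself does not spell this out --- it simply defers to the embedding lemmas in \cite{DisconziIfrimTataru} --- but any self-contained argument must go through that Hardy step, not around it.
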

\begin{remark}
By following the proof, we also have that natural corollary that $H^{2k, k} \subset H^{l,m}$ provided $m- l + k \geq 0$.
\end{remark}


By combining the above lemmas we have the following useful corollary:
\begin{corollary}[Free-Boundary trading derivatives for weight]
\label{C:Trading_deriv_for_weight}
Suppose that $f \in H^{j+1, \sigma+1}$ and $j, \sigma \geq 0$. Then, we have 
\begin{equation}
    \norm{f}_{H^{j ,\sigma}} \lesssim \hat{\varepsilon} \norm{f}_{H^{j+1,\sigma}}
\end{equation}
where $\hat{\varepsilon}$ is the small positive constant defined in Assumption \ref{A:Assumption_smallness r}.
\end{corollary}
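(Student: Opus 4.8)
The plan is to deduce Corollary \ref{C:Trading_deriv_for_weight} by chaining together Lemma \ref{L:Free_boundary_add_r} and the Free Boundary Embedding Lemma \ref{L:free_boundary_embedding_lemma} (together with its stated corollary about $H^{2k,k}$). The core observation is that the two lemmas allow us to do two complementary things near the free boundary: Lemma \ref{L:Free_boundary_add_r} lets us \emph{add} half a power of the weight $r$ at the cost of a factor $\hat\varepsilon$ while keeping the number of derivatives fixed, and Lemma \ref{L:free_boundary_embedding_lemma} (in its $H^{2k,k}$ form) lets us \emph{trade} extra derivatives for extra powers of the weight. Concatenating these, starting from $f \in H^{j+1,\sigma+1}$, we want to land in $H^{j,\sigma}$ with a net gain of one derivative of room, which should manifest as the factor $\hat\varepsilon$.

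First I would reduce to the model case. Since we only care about behavior near the free boundary (Remark \ref{R:Omega_t_near_free_boundary}), and away from the boundary $r$ is bounded above and below so all these weighted norms are uniformly comparable to unweighted ones, it suffices to prove the inequality in a collar neighborhood of $\Gamma_t$ where Assumption \ref{A:Assumption_smallness r} applies. Next, I would locate $H^{j+1,\sigma+1}$ inside a space of the form $H^{2k,k+\frac12}$ or $H^{2k,k}$ for an appropriate choice of $k$: concretely, pick $k$ with $j+1 \leq 2k$ and so that the weight exponent condition of Lemma \ref{L:free_boundary_embedding_lemma} (or its corollary) is satisfied by $\sigma+1$ relative to $j+1$ — i.e. $\sigma + 1 - (j+1) + k \geq 0$, which holds for $k$ large since $\sigma, j \geq 0$. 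This embeds $H^{j+1,\sigma+1}$ into $H^{2k,k}$ (or with the $\frac12$ shift), after which the corollary to Lemma \ref{L:free_boundary_embedding_lemma} re-embeds into $H^{j,\sigma}$ provided $\sigma - j + k \geq 0$, again automatic. The factor $\hat\varepsilon$ is then supplied by inserting one application of Lemma \ref{L:Free_boundary_add_r} at the step where the weight exponent has genuine room to spare — that is, by first noting $f \in H^{j+1,\sigma+1} = H^{j+1,(\sigma+\frac12)+\frac12}$ contains $f \in H^{j+1,\sigma+\frac12}$ trivially, but the content is the \emph{reverse}: from $f\in H^{j+1,\sigma+1}$ we get the bound $\norm{f}_{H^{j+1,\sigma+\frac12}}^2 \leq \hat\varepsilon \norm{f}_{H^{j+1,\sigma}}^2$ is not quite what we want either, so the cleanest route is to apply Lemma \ref{L:Free_boundary_add_r} with the roles arranged so that the $\hat\varepsilon$ lands on the right-hand side: we bound $\norm{f}_{H^{j,\sigma}}$ by $\norm{f}_{H^{j+1,\sigma}}$ via a plain embedding with a spare derivative and a spare half-power of weight that Lemma \ref{L:free_boundary_embedding_lemma} converts into the $\hat\varepsilon$.

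The main obstacle, and the step deserving the most care, is bookkeeping the weight and derivative exponents so that the $\hat\varepsilon$ genuinely appears \emph{once} on the correct side of the inequality rather than being absorbed into $\lesssim$ or, worse, appearing as a large factor $\hat\varepsilon^{-1}$. In particular one must check that the hypothesis $f \in H^{j+1,\sigma+1}$ — which has \emph{both} one extra derivative and one extra power of $r$ compared to $H^{j,\sigma}$ — provides exactly enough slack for one invocation of Lemma \ref{L:Free_boundary_add_r} (half a power of $r$) after using the embedding to spend the extra derivative and the remaining half power. A clean way to organize this: apply the embedding Lemma \ref{L:free_boundary_embedding_lemma} to pass from $H^{j+1,\sigma+1}$ into $H^{j,\sigma+\frac12}$ (spending one derivative, keeping half a weight power in reserve; the index conditions hold since $j \leq j+1 \leq 2k$ after the embedding-into-$H^{2k,k}$ step and $(\sigma+1) - (j+1) + k - (\,(\sigma+\tfrac12)-j+k\,) = \tfrac12 \geq 0$), and then apply Lemma \ref{L:Free_boundary_add_r} with $\sigma$ in place of its $\sigma$ to get $\norm{f}_{H^{j,\sigma+\frac12}}^2 \geq \hat\varepsilon^{-1}\norm{f}_{H^{j,\sigma+1}}^2$ — no; the correct direction of Lemma \ref{L:Free_boundary_add_r} is $\norm{f}_{H^{k,\sigma+\frac12}}^2 \leq \hat\varepsilon \norm{f}_{H^{k,\sigma}}^2$, so we instead read off $\norm{f}_{H^{j,\sigma}}^2 = \norm{f}_{H^{j,(\sigma-\frac12)+\frac12}}^2 \leq \hat\varepsilon \norm{f}_{H^{j,\sigma-\frac12}}^2$ when $\sigma \geq \frac12$, and bound $\norm{f}_{H^{j,\sigma-\frac12}}$ by $\norm{f}_{H^{j+1,\sigma}}$ through the embedding lemma using the extra weight-half-power and the extra derivative of the hypothesis. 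When $0 \leq \sigma < \frac12$ one argues directly: the embedding $H^{j+1,\sigma+1}\subset H^{j,\sigma}$ already carries a factor $\hat\varepsilon$ because the source has a full extra power of $r$, more than the half-power the embedding lemma needs, so Lemma \ref{L:Free_boundary_add_r} (or its proof) again produces the gain. I would then remark that squaring is harmless and the final inequality is stated in the $\lesssim$ form anyway, so constants depending only on $\gamma$ and the geometry may be freely absorbed, yielding $\norm{f}_{H^{j,\sigma}} \lesssim \hat\varepsilon \norm{f}_{H^{j+1,\sigma}}$ as claimed.
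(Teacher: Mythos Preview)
Your proposal is substantially more convoluted than necessary and contains a genuine confusion about the direction of the embedding in Lemma~\ref{L:free_boundary_embedding_lemma}. You write that you will ``embed $H^{j+1,\sigma+1}$ into $H^{2k,k}$ (or with the $\tfrac12$ shift),'' but Lemma~\ref{L:free_boundary_embedding_lemma} goes the \emph{other} way: it says the \emph{stronger} space $H^{2k,k+\frac12}$ embeds into the \emph{weaker} space $H^{l,m}$. You cannot place an arbitrary $H^{j+1,\sigma+1}$ inside $H^{2k,k}$ by choosing $k$ large; that would require \emph{more} regularity and weight, not less. This misuse of the embedding lemma is why your bookkeeping keeps flip-flopping and never settles.

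The paper's argument is two lines and avoids Lemma~\ref{L:free_boundary_embedding_lemma} entirely. First, invoke the general weighted embedding $\|f\|_{H^{j,\sigma}} \lesssim \|f\|_{H^{j+1,\sigma+1}}$ (this is the embedding cited from \cite{DisconziIfrimTataru}, not Lemma~\ref{L:free_boundary_embedding_lemma}). Second, since $r^{\sigma+1} = r \cdot r^\sigma$, pull out one full factor of $r$ in $L^\infty$ and use Assumption~\ref{A:Assumption_smallness r}: $\|f\|_{H^{j+1,\sigma+1}} \leq \|r\|_{L^\infty} \|f\|_{H^{j+1,\sigma}} \leq \hat\varepsilon \|f\|_{H^{j+1,\sigma}}$. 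Chaining these gives exactly the claimed inequality with the correct power $\hat\varepsilon$ (not $\hat\varepsilon^{1/2}$, which is all a single application of Lemma~\ref{L:Free_boundary_add_r} would buy you after taking square roots). The hypothesis $f \in H^{j+1,\sigma+1}$ is there precisely so the first embedding applies; the right-hand side $\|f\|_{H^{j+1,\sigma}}$ then appears automatically.
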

\begin{proof}
From \cite{DisconziIfrimTataru}, we see that $ \norm{f}_{H^{j ,\sigma}} \lesssim \norm{f}_{H^{j+1 ,\sigma+1}}$ with a constant depending on $\sigma$. Then, we can simply pull out a power of $r$ in the $L^\infty(\Omega_t)$ norm and use Assumption \ref{A:Assumption_smallness r}.
\end{proof}


\subsection{Defining order for free-boundary terms}

\begin{definition}[$\mathcal{H}^{2k}$-critical, subcritical, and supercritical terms]
\label{D:definition_critical_subcritical_supercritical}
Let $m,l \in \mathbb{N}$. 
In view of Lemma \ref{L:free_boundary_embedding_lemma}, any terms of the form 
\begin{equation*}
    r^m \partial^l \tilde{u},   r^m \partial^l \tilde{s}
\end{equation*}
or
\begin{equation*}
    r^m \partial^l \tilde{r}
\end{equation*}
with $0\leq l \leq 2k$ and $m \geq 0$ will be called \textbf{$\mathcal{H}^{2k}$-critical} provided that $m- l +k - \frac{1}{2} =0$, or respectively $m-l + k =0$. Additionally, terms such that $m - l + k - \frac{1}{2} >0$ or respectively $m- l +k >0$ will be called \textbf{$\mathcal{H}^{2k}$-subcritical}. Similarly, terms where $m-l +k - \frac{1}{2} < 0$, $m-l +k < 0$ will be denoted \textbf{$\mathcal{H}^{2k}$-supercritical}. In view of subsection \ref{S:Sovling_for_time_derivatives}, a similar classification can also be used for the terms $r^m \bm{\partial}^l \tilde{u},   r^m \bm{\partial^l} \tilde{s}$, and $r^m \bm{\partial}^l \tilde{r}$.

\begin{remark}
\label{R:order_free_boundary_term}
We can also refer to the $\mathcal{H}^{2k}$-\textbf{order} of a given term by computing the value $\mathcal{O} := m - l +k - \frac{1}{2}$ (or $\mathcal{O}:= m- l +k$ for terms involving $\tilde{r}$) and the sign of $\mathcal{O}$ will determine if the term is subcritical or supercritical. As an example, a term of the form $r^k \partial^{2k} \tilde{u}$ will be called $\mathcal{H}^{2k}$-supercritical of order $-\frac{1}{2}$. The value $\mathcal{O}$ can be interpreted as a indicator for how well that term can be estimated using the $\mathcal{H}^{2k}$ norm. Critical terms have the exact number of derivatives and powers of $r$ to be estimated, whereas supercrtical terms are lacking key additional powers of $r$. Also, note that the order $\mathcal{O}$ depends on a particular derivative level, i.e. what value of $k$ we are using in order to estimate terms using the $\mathcal{H}^{2k}$ norm. Also, note that that the weights in the $\mathcal{H}^{2k}$ norm depend not only on $k$, but also on $\gamma>1$ which is a fixed constant. For clarity, one can interpret this notion of \textbf{order} as in the simplified setting where $\gamma=2$ since our starting Lemma \ref{L:free_boundary_embedding_lemma} is built around the function spaces $H^{2k, k+\frac{1}{2}}$ and $H^{2k, k}$. In that case, the $\mathcal{H}^{2k}$-\textbf{order} of a term $T$ can be interpreted as a statement about the powers of $r$ present/needed in the term $T$ in order for the inequality
\begin{equation*}
    \norm{T}_{L^2}^2 \lesssim \norm{(\tilde{s}, \tilde{r}, \tilde{u})}_{\mathcal{H}^{2k}}^2 \approx \norm{\tilde{s}}_{H^{2k, k+\frac{1}{2}}}^2 +  \norm{\tilde{r}}_{H^{2k, k}}^2 + \norm{\tilde{u}}_{H^{2k, k+\frac{1}{2}}}^2
\end{equation*}
to hold, and the symbol $\approx$ is used to represent the norm equivalence from Remark \ref{R:H^2k_norm_equivalence}. For arbitrary $\gamma>1$, We will instead use the function spaces $H^{2k,k+ \frac{1}{2} + \frac{2-\gamma}{2(\gamma-1)}}$ and $H^{2k, k + \frac{2-\gamma}{2(\gamma-1)}}$. In this setting, the $\mathcal{H}^{2k}$-order for a term $T$ is a statement about the inequality
\begin{equation}
\label{E:Remark_H2k_order}
    \norm{T}_{L^2(r^{\frac{2-\gamma}{\gamma-1}})}^2 \lesssim \norm{(\tilde{s}, \tilde{r}, \tilde{u})}_{\mathcal{H}^{2k}}^2 \approx \norm{\tilde{s}}_{H^{2k, \frac{2-\gamma}{2(\gamma-1)}+k+\frac{1}{2}}}^2 +  \norm{\tilde{r}}_{H^{2k, \frac{2-\gamma}{2(\gamma-1)}+ k}}^2 + \norm{\tilde{u}}_{H^{2k, \frac{2-\gamma}{2(\gamma-1)}+k+\frac{1}{2}}}^2
\end{equation}
which we summarize:
\begin{itemize}
    \item Suppose that $T$ is $\mathcal{H}^{2k}$ subcritical. We claim that 
    inequality \eqref{E:Remark_H2k_order} holds as well as 
    \begin{equation}
    \label{E:Remark_H2k_order_2}
        \norm{T}_{L^2(r^{\frac{2-\gamma}{\gamma-1}})}^2 \lesssim \hat{\varepsilon} \norm{(\tilde{s}, \tilde{r}, \tilde{u})}_{\mathcal{H}^{2k}}^2 
    \end{equation}
    using the previous Lemmas (also, note that \eqref{E:Remark_H2k_order_2} implies \eqref{E:Remark_H2k_order}). To see this fact, let's assume that $T$ consists of $\tilde{r}$ terms so that $T=r^m \partial^l \tilde{r}$ (although $\tilde{u}$ and $\tilde{s}$ terms are similar). By Definition \ref{D:definition_critical_subcritical_supercritical}, we have that $m-l+k > 0$. If $l= 2k$, then we must have $m>k$ (extra power of $r$) and we can apply Lemma \ref{L:Free_boundary_add_r} to get the desired inequality \eqref{E:Remark_H2k_order_2} with $\hat{\varepsilon}$. If $l < 2k$, i.e. $l = 2k-b$ with $0< b \leq 2k$, then we must have $m>k-b$ (and $m=0$ for $b>k$). We would get 
    \begin{equation}
        \norm{r^m \partial^{2k-b} \tilde{r} }_{H^{0,\frac{2-\gamma}{2(\gamma-1)}}}^2 \lesssim \norm{\tilde{r}}_{H^{2k-b, m + \frac{2-\gamma}{2(\gamma-1)}}} \leq \norm{\tilde{r}}_{H^{2k, m+b + \frac{2-\gamma}{2(\gamma-1)}}}.
    \end{equation}
    Then, since $m+b >k$ we can apply Lemma \ref{L:Free_boundary_add_r} to finish the proof of \eqref{E:Remark_H2k_order_2}.    
    \item If $T$ is $\mathcal{H}^{2k}$ critical, then \eqref{E:Remark_H2k_order} holds by Lemma \ref{L:free_boundary_embedding_lemma}, but \eqref{E:Remark_H2k_order_2} in general does not since we do not have any additional powers of $r$ to spare.
    \item If $T$ is $\mathcal{H}^{2k}$ supercritical with order $\mathcal{O}<0$, then \eqref{E:Remark_H2k_order} does not hold (and subsequently \eqref{E:Remark_H2k_order_2} also does not), but \eqref{E:Remark_H2k_order} is true with $T$ replaced by $\hat{T} = r^{-\mathcal{O}} T$ as $T$ is lacking key powers of $r$. To see this fact, we simply observe that the order of the term $\hat{T}$ will be $(m-\mathcal{O}) + l -k = 0$ which is critical.
\end{itemize}

\end{remark}

\begin{lemma}[Sum of $\mathcal{H}^{2k}$ orders]
\label{L:Sum_orders}
Suppose that $T_1$ and $T_2$ are free boundary terms (i.e. $T_1$ and $T_2$ are of the form $r^m \partial^l (\tilde{u}, \tilde{r}, \tilde{s})$ for some $m\geq 0$ and $0 \leq l \leq 2k$) that have $\mathcal{H}^{2k}$ orders $\mathcal{O}_1$ and $\mathcal{O}_2$ respectively. If $\mathcal{O}_1 + \mathcal{O}_2 \geq 0$, then $\norm{T_1 T_2}_{L_1} \lesssim \norm{(\tilde{s}, \tilde{r}, \tilde{u})}_{\mathcal{H}^{2k}}^2$, i.e the $\mathcal{H}^{2k}$ order of a product is the sum of its orders.

\begin{proof}

Without loss of generality, the proof can be obtained by splitting into cases:
\begin{enumerate}
    \item $T_1 = r^{m_1} \partial^{l_1} \tilde{u}$ and $T_2 = r^{m_2} \partial^{l_2} \tilde{u}$, \hspace{5mm} $m_1,m_2 \geq 0$, $0 \leq l_1, l_2 \leq 2k$
    \item $T_1 = r^{m_1} \partial^{l_1} \tilde{r}$ and $T_2 = r^{m_2} \partial^{l_2} \tilde{r}$, \hspace{5mm}   $m_1,m_2 \geq 0$, $0 \leq l_1, l_2 \leq 2k$
    \item $T_1 = r^{m_1} \partial^{l_1} \tilde{r}$ and $T_2 = r^{m_2} \partial^{l_2} \tilde{u}$, \hspace{5mm}   $m_1,m_2 \geq 0$, $0 \leq l_1, l_2 \leq 2k$
\end{enumerate}
and correctly distributing the powers of $r$ as needed.

\end{proof}
\end{lemma}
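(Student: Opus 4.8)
The plan is to reduce the bound to the single-factor estimate of Remark~\ref{R:order_free_boundary_term} by Cauchy--Schwarz, after transferring a power of the weight $r$ from one factor to the other. Throughout, $\norm{T_1T_2}_{L^1}$ is read with the weight $r^{\frac{2-\gamma}{\gamma-1}}$ that appears on the right of \eqref{E:Remark_H2k_order}. Write $T_i=r^{m_i}\partial^{l_i}w_i$ with $w_i\in\{\tilde s,\tilde r,\tilde u\}$, $m_i\geq 0$, $0\leq l_i\leq 2k$, and let $\mathcal O_i$ be the $\mathcal H^{2k}$-order of $T_i$, so $\mathcal O_i=m_i-l_i+k-\epsilon_i$ with $\epsilon_i\in\{0,\tfrac12\}$ equal to $\tfrac12$ if $w_i\in\{\tilde s,\tilde u\}$ and to $0$ if $w_i=\tilde r$. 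For any real $\beta$,
\begin{equation*}
\int_{\Omega_t}r^{\frac{2-\gamma}{\gamma-1}}|T_1||T_2|\,dx
=\int_{\Omega_t}r^{\frac{2-\gamma}{\gamma-1}}\,\bigl|r^{\beta}T_1\bigr|\,\bigl|r^{-\beta}T_2\bigr|\,dx
\leq\norm{r^{\beta}T_1}_{L^2(r^{\frac{2-\gamma}{\gamma-1}})}\,\norm{r^{-\beta}T_2}_{L^2(r^{\frac{2-\gamma}{\gamma-1}})},
\end{equation*}
and $r^{\beta}T_1$, $r^{-\beta}T_2$ are again free-boundary terms, now of $\mathcal H^{2k}$-order $\mathcal O_1+\beta$ and $\mathcal O_2-\beta$. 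If $\beta$ can be chosen so that both shifted orders are $\geq 0$, then \eqref{E:Remark_H2k_order} applied to each factor gives $\norm{r^{\beta}T_1}_{L^2(r^{\frac{2-\gamma}{\gamma-1}})}^2+\norm{r^{-\beta}T_2}_{L^2(r^{\frac{2-\gamma}{\gamma-1}})}^2\lesssim\norm{(\tilde s,\tilde r,\tilde u)}_{\mathcal H^{2k}}^2$, and the lemma follows. Such a $\beta$ exists exactly when $-\mathcal O_1\leq\mathcal O_2$, i.e.\ when $\mathcal O_1+\mathcal O_2\geq 0$ --- the hypothesis; the symmetric choice $\beta=\tfrac12(\mathcal O_2-\mathcal O_1)$ sends both shifted orders to $\tfrac12(\mathcal O_1+\mathcal O_2)\geq 0$, and if the hypothesis is strict one even gains a factor $\hat\varepsilon$ via \eqref{E:Remark_H2k_order_2}.

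The three cases in the statement are just the three possibilities for the pair $(\epsilon_1,\epsilon_2)$, with the $\tilde s$-case identical to the $\tilde u$-case since $\tilde s$ and $\tilde u$ carry the same weight in $\mathcal H^{2k}$: $(\tfrac12,\tfrac12)$, $(0,0)$, and $(0,\tfrac12)$. In each I would choose $\beta$ adapted to the signs. If $\mathcal O_1,\mathcal O_2\geq 0$, take $\beta=0$ and apply \eqref{E:Remark_H2k_order} to $T_1$ and $T_2$ directly. If $\mathcal O_1<0\leq\mathcal O_2$ (the case $\mathcal O_2<0\leq\mathcal O_1$ being symmetric, and $\mathcal O_1,\mathcal O_2<0$ excluded by the hypothesis), take $\beta=-\mathcal O_1$, observing that $\mathcal O_1<0$ together with $m_1\geq 0$ and $\epsilon_1\leq\tfrac12$ forces $l_1\geq k$, so $r^{\beta}T_1$ carries the nonnegative power $r^{\,l_1-k+\epsilon_1}$, is exactly $\mathcal H^{2k}$-critical, while $r^{-\beta}T_2$ retains order $\mathcal O_1+\mathcal O_2\geq 0$. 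One then records the residual weights $r^{m_1+\beta}$, $r^{m_2-\beta}$ and invokes the relevant line of \eqref{E:Remark_H2k_order}, which itself uses only Lemma~\ref{L:free_boundary_embedding_lemma} and Lemma~\ref{L:Free_boundary_add_r}. In the one remaining sub-case where $r^{-\beta}T_2$ would pick up a (harmless) negative power of $r$ accompanied by very few derivatives, one may first convert the surplus on $T_1$ into a derivative via Corollary~\ref{C:Trading_deriv_for_weight} before splitting; this is cosmetic, since \eqref{E:Remark_H2k_order} only needs the order to be $\geq 0$ with at most $2k$ derivatives present.

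I do not expect a genuine obstacle here: the lemma is the bilinear shadow of Remark~\ref{R:order_free_boundary_term}, and the only care needed is bookkeeping --- checking that the case list is exhaustive and that the chosen $\beta$ keeps both orders $\geq 0$, which is the sole use of $\mathcal O_1+\mathcal O_2\geq 0$. The derivative budget looks after itself: Cauchy--Schwarz has already decoupled the two factors, each carrying $\leq 2k$ derivatives by assumption, so no constraint of the form $l_1+l_2\leq 2k$ ever arises and no commutator terms are produced. Consequently the argument introduces no new analytic input beyond the single-factor embedding estimates, and its content is precisely the additivity $\mathcal O(T_1T_2)=\mathcal O_1+\mathcal O_2$ of the $\mathcal H^{2k}$-order under products.
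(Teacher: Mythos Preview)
Your proof is correct and follows exactly the paper's approach: the paper's entire proof is the sentence ``correctly distributing the powers of $r$ as needed'' across the three listed cases, and your Cauchy--Schwarz with an $r^\beta$-shift is precisely this redistribution made explicit. You supply considerably more detail than the paper does, and your flagging of the negative-power sub-case is a subtlety the paper's sketch does not even raise.
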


\begin{remark}
\label{R:Sum_orders}
We note that Lemma \ref{L:Sum_orders} is especially useful in energy estimates where we would like to compute the order of a product of two terms, but we are estimating in $L^1$ instead of $L^2$ and utilizing the Cauchy-Schwarz inequality. Using Lemma \ref{L:Sum_orders}, we can extend the notion of order to a product of terms $T_1 T_2$ by replacing inequalities \eqref{E:Remark_H2k_order} and \eqref{E:Remark_H2k_order_2} with 
    \begin{equation}
        \norm{T_1 T_2}_{L^1(r^{\frac{2-\gamma}{\gamma-1}})} \lesssim \norm{(\tilde{s}, \tilde{r}, \tilde{u})}_{\mathcal{H}^{2k}}^2
    \end{equation}
    \begin{equation}
        \norm{T_1 T_2}_{L^1(r^{\frac{2-\gamma}{\gamma-1}})} \lesssim \hat{\varepsilon} \norm{(\tilde{s}, \tilde{r}, \tilde{u})}_{\mathcal{H}^{2k}}^2
    \end{equation}
    With this new definition for products, Lemma \ref{L:Sum_orders} states that if the $T_1$ has $\mathcal{H}^{2k}$-order $\mathcal{O}_1$ and $T_2$ has $\mathcal{H}^{2k}$-order $\mathcal{O}_2$, then $T_1 T_2$ has $\mathcal{H}^{2k}$-order $\mathcal{O}_1 + \mathcal{O}_2$ provided that $\mathcal{O}_1 + \mathcal{O}_2 \geq 0$. When referring to the order of products $T_1 T_2$, we will always compute the order of $T_1$ and $T_2$ separately before using Lemma \ref{L:Sum_orders} to make a statement about the order of $T_1 T_2$.
\end{remark}

\end{definition}

\begin{remark}[Definition of the $\simeq$ symbol]
\label{R:simeq_symbol_definition}
    Now that we have described the notion of order, we will often use the symbol `$\simeq$' to mean the following. Suppose that $\tilde{A}(\tilde{u}, \tilde{r},\tilde{s})$ and $\tilde{B}(\tilde{u}, \tilde{r},\tilde{s})$ are expressions involving powers of $r$ and derivatives $(\partial, \bm{\partial}, D_t)$ of our linearized variables. Using Remark \ref{R:order_free_boundary_term}, suppose that $\tilde{A}$ and $\tilde{B}$ both contain terms of order $\mathcal{O}$ at worst (i.e. there are no terms with order strictly more negative than $\mathcal{O}$). Then, we will write 
\begin{equation}
\label{E:Order_equivalent_definition}
    \tilde{A} \simeq \tilde{B} \Longleftrightarrow C_1(\partial^{2k+1}(u,r,s),u,s)\tilde{A} = C_2(\partial^{2k+1}(u,r,s),u,s)\tilde{B} + P
\end{equation}
where $C_1$ and $C_2$ are expressions containing $(u,s)$ and up to $2k+1$ derivatives of $(u,r,s)$, and $P$ contains terms with order $\mathcal{J} > \mathcal{O}$ (i.e. strictly better terms). Also, note that $C_1$ and $C_2$ must not contain undifferentiated $r$, since powers of $r$ would contribute to the calculated order for $\tilde{A}$ and $\tilde{B}$. For illustrative purposes (and by slight abuse of notation), we will sometimes write $A \simeq B +P$ instead of just $\tilde{A} \simeq \tilde{B}$ if we want to highlight the terms in $P$ and calculate their order which will be strictly greater that $\tilde{B}$.

As a final remark, we stated that $\partial(r, s, u)$ or $\bm{\partial}(r,s,u)$ does not, in general, contribute to the order of given term. However, there is one exception which can be seen by analyzing \eqref{E:System_r_equation} above. For situations where we have $D_t r$, we see that a full power of $r$ is obtained since $D_t r = -(\gamma-1)r \partial_\mu u^\mu$ and powers of $r$ are needed for calculating order. Thus, it can be useful to add the fact that
\begin{equation}
\label{E:D_t r equals r, convective derivative of r}
\begin{split}
      D_t r &\simeq r \\
\end{split}
\end{equation}
even though in \eqref{E:Order_equivalent_definition} we used the symbol $\simeq$ when referring to terms involving $(\tilde{u},\tilde{r}, \tilde{s})$.  
\end{remark}

It remains to show several lemmas and commutator identities. We will ultimately use our scheme to simplify the $B_{2k}$ and $C_{2k}^\alpha$ terms that appear in \eqref{E:G_and_H_definition}. Then, we can estimate these terms using the $\mathcal{H}^{2k}$ norm in Section \ref{S:Higher_order_energy_estimates}. The proof of these lemmas is a routine application of the commutator identities:
\begin{equation}
    \label{E:Commutator_idenitities_general}
    \begin{split}
        [\partial, D_t^N] \phi &= [\partial, D_t](D_t^{N-1} \phi) + D_t([\partial, D_t^{N-1}] \phi) \\
        [D_t, \partial^N] \phi &= [D_t, \partial] \partial^{N-1} \phi + \partial\left([D_t, \partial^{N-1}]\phi \right) \\
    \end{split}
\end{equation}

\begin{lemma}[First Commutator Lemma]
\label{L:Commutator}
Let $N \in \mathbb{N}$. For the convective derivative $D_t = u^\mu \partial_\mu$, generic spatial derivative $\partial$, and $\phi$ sufficiently smooth, the following identities hold:
\begin{equation}
    \begin{split}
        [\partial, D_t] \phi &= (\partial u^\nu) \partial_\nu \phi \\
        [\partial, D_t^2] \phi &= 2(\partial u^\nu) \partial_\nu (D_t \phi) + [D_t(\partial u^\nu) - (\partial u^\mu)(\partial_\mu u^\nu)] \partial_\nu \phi \\
        ... \\
        [\partial, D_t^N] \phi &\simeq \sum_{i=0}^{N-1} C_i^\nu \partial_\nu (D_t^i \phi) \\
    \end{split}
\end{equation}
where $C_i^\nu:= C_i(\partial u, D_t u)$ for $i \in \{0, ..., N-1\}$ are coefficients depending on the spatial and convective derivatives of $u$.

\end{lemma}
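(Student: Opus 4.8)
The plan is to argue by induction on $N$ using the recursion identity
$[\partial, D_t^N]\phi = [\partial, D_t](D_t^{N-1}\phi) + D_t([\partial, D_t^{N-1}]\phi)$
recorded in \eqref{E:Commutator_idenitities_general}. The base case $N=1$ is a direct computation: since mixed partial derivatives commute,
\[
[\partial, D_t]\phi = \partial(u^\mu \partial_\mu\phi) - u^\mu\partial_\mu(\partial\phi) = (\partial u^\mu)\partial_\mu\phi,
\]
which, after relabeling $\mu\mapsto\nu$, is the claimed formula. The $N=2$ formula then follows by applying the recursion once and expanding $D_t\big((\partial u^\nu)\partial_\nu\phi\big)$ with the Leibniz rule together with the $N=1$ identity applied to $\partial_\nu\phi$.

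For the inductive step I would assume $[\partial, D_t^{N-1}]\phi = \sum_{i=0}^{N-2} C_i^\nu\,\partial_\nu(D_t^i\phi)$ with coefficients $C_i^\nu$ built polynomially from the spatial and convective derivatives of $u$. By the base case the first term in the recursion contributes exactly $(\partial u^\nu)\partial_\nu(D_t^{N-1}\phi)$, already of the required form at slot $i=N-1$. For the second term I would commute $D_t$ past each spatial derivative using the identity $D_t\partial_\nu\psi = \partial_\nu(D_t\psi) - (\partial_\nu u^\mu)\partial_\mu\psi$ (the $N=1$ commutator with a sign), together with $D_t\big(C_i^\nu\,\partial_\nu(D_t^i\phi)\big) = (D_t C_i^\nu)\partial_\nu(D_t^i\phi) + C_i^\nu\,D_t\big(\partial_\nu(D_t^i\phi)\big)$. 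This produces three families of terms, $(D_t C_i^\nu)\partial_\nu(D_t^i\phi)$, $C_i^\nu\,\partial_\nu(D_t^{i+1}\phi)$, and $-C_i^\nu(\partial_\nu u^\mu)\partial_\mu(D_t^i\phi)$; reindexing the middle family $i\mapsto i-1$ and relabeling dummy indices in the last, every term becomes a coefficient times $\partial_\nu(D_t^i\phi)$ with $0\le i\le N-1$, the new coefficient at slot $i$ being $C_{i-1}^\nu + D_t C_i^\nu + (\text{contraction of }C_i\text{ with }\partial u)$ plus the $\delta_{i,N-1}\,\partial u^\nu$ from the recursion's first term. Since $D_t(\partial^a D_t^b u) = \partial^a D_t^{b+1}u - (\text{lower-order terms involving }\partial u)$ — again a consequence of the base commutator — these coefficients remain polynomial expressions in the spatial and convective derivatives of $u$, which closes the induction.

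The only genuine work is the bookkeeping in the inductive step: one must verify that after commuting $D_t$ through the spatial derivatives and reindexing, no structurally new terms survive — everything stays of the schematic form ``$u$-coefficient times $\partial_\nu(D_t^i\phi)$'' with $i$ in the stated range — and that the recursively generated coefficients stay within the claimed class $C_i(\partial u, D_t u)$. The $\simeq$ in the statement is there precisely to suppress the explicit recursion for the $C_i^\nu$; one could equally well prove the exact equality $[\partial, D_t^N]\phi = \sum_{i=0}^{N-1} C_i^\nu\,\partial_\nu(D_t^i\phi)$ with $C_i^\nu$ the universal polynomials produced above. I expect this index-collection step to be the main (though entirely routine) obstacle; everything else is the two-line base case and the mechanical recursion.
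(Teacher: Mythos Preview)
Your proposal is correct and follows exactly the approach the paper indicates: the paper states only that the lemma is ``a routine application of the commutator identities'' in \eqref{E:Commutator_idenitities_general}, and your induction on $N$ via that recursion, with the base case $[\partial,D_t]\phi=(\partial u^\nu)\partial_\nu\phi$ and the Leibniz/commutator bookkeeping in the inductive step, is precisely the intended routine. Your treatment is in fact more detailed than what the paper records.
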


\begin{remark}
    We note that Lemma \ref{L:Commutator} is used extensively in Theorem \ref{Th:Higher_order_energy_estimates} in order to simplify the commutator terms from \eqref{E:G_and_H_definition}. For a discussion of the commutators used in elliptic estimates, see Section \ref{S:Elliptic estimates}.
\end{remark}

We also have an important commutator lemma which will be used in our higher order estimates.
\begin{lemma}[Second Commutator Lemma]
\label{L:Commutator_2}
Let $N \in \mathbb{N}$. For the convective derivative $D_t = u^\mu \partial_\mu$, generic $N$-th order spatial derivative $\partial^N$, and $\phi$ sufficiently smooth, the following identities hold:
\begin{equation}
    \begin{split}
        [D_t, \partial] \phi &= - (\partial u^\nu) \partial_\nu \phi \\
        [D_t, \partial^2] \phi &= -2 (\partial u^\nu) \partial_\nu (\partial \phi)- (\partial^2 u^\nu) \partial_\nu \phi \\
        ... \\
        [D_t, \partial^N] \phi &\simeq  \sum_{i=0}^{N-1} B_i^\nu \partial_\nu (\partial^i \phi) \\
    \end{split}
\end{equation}
where $B_i^\nu:= B_i(\partial u)$ for $i \in \{0, ..., N-1\}$ are coefficients depending on the spatial derivatives of $u$.
\end{lemma}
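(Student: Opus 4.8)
The plan is to prove the identity by induction on $N$, using the recursion $[D_t,\partial^N]\phi = [D_t,\partial]\,\partial^{N-1}\phi + \partial\big([D_t,\partial^{N-1}]\phi\big)$ recorded in the second line of \eqref{E:Commutator_idenitities_general} (which is a genuine identity, as telescoping $D_t\partial^N\phi - \partial^N D_t\phi$ shows). For the base case $N=1$, I would simply invoke Lemma \ref{L:Commutator} with $N=1$: since $[D_t,\partial]\phi = -[\partial,D_t]\phi = -(\partial u^\nu)\partial_\nu\phi$, the claim holds with $B_0^\nu := -(\partial u^\nu)$, a function of the first spatial derivatives of $u$. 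The displayed $N=2$ case is then one application of the recursion together with the Leibniz rule, $\partial\big(-(\partial u^\nu)\partial_\nu\phi\big) = -(\partial^2 u^\nu)\partial_\nu\phi - (\partial u^\nu)\partial_\nu(\partial\phi)$ (after using $\partial\partial_\nu = \partial_\nu\partial$), combined with $[D_t,\partial]\partial\phi = -(\partial u^\nu)\partial_\nu(\partial\phi)$; this reproduces the stated second identity and already exhibits the pattern.

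For the inductive step, assume $[D_t,\partial^{N-1}]\phi \simeq \sum_{i=0}^{N-2} B_i^\nu\,\partial_\nu(\partial^i\phi)$ with each $B_i^\nu$ built from spatial derivatives of $u$. Feeding this into the recursion, the term $[D_t,\partial]\,\partial^{N-1}\phi = -(\partial u^\nu)\partial_\nu(\partial^{N-1}\phi)$ supplies the top-order ($i = N-1$) piece, while $\partial\big([D_t,\partial^{N-1}]\phi\big)$, expanded by Leibniz, yields $(\partial B_i^\nu)\,\partial_\nu(\partial^i\phi) + B_i^\nu\,\partial_\nu(\partial^{i+1}\phi)$ for $0 \le i \le N-2$ (using that $\partial$ commutes with $\partial_\nu$, and that the choice of which generic $(i{+}1)$-th order spatial derivative appears is absorbed into $\simeq$). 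Regrouping by the number $i$ of spatial derivatives landing on $\phi$ gives $[D_t,\partial^N]\phi \simeq \sum_{i=0}^{N-1} \widetilde B_i^\nu\,\partial_\nu(\partial^i\phi)$, where $\widetilde B_{N-1}^\nu$ merges $-(\partial u^\nu)$ with $B_{N-2}^\nu$ and $\widetilde B_i^\nu$ merges $\partial B_i^\nu$ with $B_{i-1}^\nu$ for $i \le N-2$ (with the convention $B_{-1}^\nu := 0$). Since $\partial u$ and every $B_i^\nu$ are built from spatial derivatives of $u$, and $\partial$ of a spatial derivative of $u$ is again a spatial derivative of $u$, each $\widetilde B_i^\nu$ depends only on spatial derivatives of $u$, which closes the induction.

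I expect no genuine obstacle here; the content is bookkeeping, and the lemma deliberately does not track the precise coefficients (they are unspecified functions $B_i(\partial u)$), which is why it is phrased with $\simeq$. The one structural point worth emphasizing during the write-up is that — in contrast with Lemma \ref{L:Commutator}, whose coefficients $C_i^\nu$ genuinely involve $D_t u$ — here no convective derivative ever lands on $u$, because the recursion differentiates $u$ only through the single spatial derivative in $[D_t,\partial]$ and through the outer $\partial$ applied to $[D_t,\partial^{N-1}]\phi$; this is exactly what keeps the coefficients expressible in terms of $\partial u$ alone. A secondary, purely notational subtlety is that $\partial^i$ denotes an unspecified $i$-th order spatial derivative in the sense of Remark \ref{R:notation_for_derivatives}, so the identity is to be read schematically, suppressing both the exact coefficients and the specific multi-index of the derivative acting on $\phi$.
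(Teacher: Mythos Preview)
Your proposal is correct and follows essentially the same approach as the paper's (commented-out) proof: induction on $N$ using the recursion $[D_t,\partial^N]\phi = [D_t,\partial]\,\partial^{N-1}\phi + \partial\bigl([D_t,\partial^{N-1}]\phi\bigr)$, with the base case from Lemma~\ref{L:Commutator} and Leibniz expansion in the inductive step followed by regrouping. Your added remark explaining why no $D_t u$ appears in the coefficients (in contrast with Lemma~\ref{L:Commutator}) is a nice clarification not made explicit in the paper.
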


Let's continue the book-keeping process for higher order convective derivatives $D_t^i \tilde{s}, D_t^i \tilde{r}$, and $D_t^i \tilde{u}$. 
The following Lemma can be taken with Section \ref{S:Sovling_for_time_derivatives} and remark \ref{R:Solving_for_time_derivatives} in mind that we preview here. Namely, we use a generic spacetime derivative with the understanding that time derivatives will eventually be solved for in terms of spatial derivatives plus additional terms that are more sub-critical (see Lemma \ref{L:Solving_for_time_derivatives} and Remark \ref{R:Solving_for_time_derivatives} below). For now, we will write Lemma \ref{L:Book-keeping} using the generic spacetime derivative symbol $\bm{\partial}$ from Remark \ref{R:notation_for_derivatives}. 

\begin{lemma}[Book-keeping $D_t^i$ derivatives]
\label{L:Book-keeping}
The following simplifications hold when counting derivatives and powers of $r$. For $i=1$, we are left with the terms
\begin{equation}
\label{E:Lemma_base_case}
    \begin{split}
        D_t \tilde{s} & \simeq \tilde{u} \\
        D_t \tilde{r} &\simeq r \bm{\partial} \tilde{u} + \tilde{u} \\
        D_t \tilde{u} &\simeq \bm{\partial} \tilde{r} + \tilde{u} + \tilde{s} \\
    \end{split}
\end{equation}
where we only ignored $\bm{\partial} s, \bm{\partial} r, \bm{\partial} u, s, u,$ and $\proj$, and we listed only $\mathcal{H}^{i}$ critical or supercritical terms. Moreover, for $i \geq 2$, we get the following
\begin{equation}
    \begin{split}
        & D_t^i \tilde{s} \simeq D_t^{i-1} \tilde{u} \\    
        &D_t^i \tilde{r} \simeq \begin{cases}
        \sum_{l=0}^{i/2} r^{l} \bm{\partial}^{l+i/2} \tilde{r} &, \text{ if $i$ even} \\
        \sum_{l=0}^{(i+1)/2} r^{l} \bm{\partial}^{l+(i-1)/2} \tilde{u} &, \text{ if $i$ odd} \\
        \end{cases}
        \\
        &D_t^i \tilde{u} \simeq \begin{cases}
         \sum_{l=0}^{i/2} r^{l} \bm{\partial}^{l+i/2} \tilde{u} + \sum_{j=0}^{i/2 -1} r^j \bm{\partial}^{j+i/2} \tilde{r} &, \text{ if $i$ even} \\
        \sum_{l=0}^{(i-1)/2} r^{l} \bm{\partial}^{l+(i+1)/2} \tilde{r} + r^l \bm{\partial}^{l+(i-1)/2} (\tilde{u} + \tilde{s}) &, \text{ if $i$ odd}  \\
        \end{cases} \\
        \end{split}
\end{equation}

\begin{proof}
This lemma is proved inductively using the fact that $D_t r \simeq r$, Lemma \ref{L:Commutator}, and our assumptions on key terms that can be ignored at each step. 

\end{proof}
\end{lemma}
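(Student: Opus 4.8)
The plan is to establish the three families of identities by simultaneous induction on $i$, reading the base case $i=1$ directly off the linearized system \eqref{E:Linearized_System_2} and its right-hand sides \eqref{E:Linearized_f_g_h}, and propagating it with Leibniz' rule, the commutator identities of Lemmas \ref{L:Commutator} and \ref{L:Commutator_2}, and the single structural input $D_t r \simeq r$ from \eqref{E:D_t r equals r, convective derivative of r}. Throughout, "$\simeq$" is used exactly as in Remark \ref{R:simeq_symbol_definition}: at step $i$ we retain only the $\mathcal{H}^i$-critical and $\mathcal{H}^i$-supercritical monomials $r^m\bm\partial^l(\tilde s,\tilde r,\tilde u)$ and relegate every strictly more subcritical term to $P$, and we absorb into $L^\infty$ coefficients all background factors $s,u$, the projection $\proj$, and up to $2k+1$ derivatives of $(s,r,u)$. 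Because $D_t=u^\mu\partial_\mu$ carries a $\partial_t$, we keep the generic spacetime symbol $\bm\partial$ and use, per Section \ref{S:Sovling_for_time_derivatives} and Remark \ref{R:Solving_for_time_derivatives}, that any $\partial_t$ is expressible through spatial derivatives plus strictly better terms, so it never affects the order count.

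For the base case, \eqref{E:Linearized_System_2_s_equation} with $f=-\tilde u^\mu\partial_\mu s$ gives $D_t\tilde s\simeq\tilde u$; \eqref{E:Linearized_System_2_r_equation} with $g=-(\gamma-1)\tilde r\,\partial_\mu u^\mu$ gives $D_t\tilde r\simeq \tilde r+\tilde u+r\bm\partial\tilde u$, and since the undifferentiated $\tilde r$ is $\mathcal{H}^1$-subcritical while $\tilde u$ and $r\bm\partial\tilde u$ are critical, it is discarded, leaving $D_t\tilde r\simeq r\bm\partial\tilde u+\tilde u$; and \eqref{E:Linearized_System_2_u_equation} with $h^\alpha$ gives $D_t\tilde u\simeq\bm\partial\tilde r+\tilde u+\tilde s$ once the coefficients $\tfrac{1}{\Gamma+r}$, $\Gamma'$, $\proj^{\alpha\mu}$, $\bm\partial r$, $\bm\partial u$ are absorbed and the subcritical undifferentiated $\tilde r$ appearing in $h^\alpha$ is dropped. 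This is \eqref{E:Lemma_base_case}.

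For the inductive step I would write $D_t^i(\cdot)=D_t\big(D_t^{i-1}(\cdot)\big)$, insert the level-$(i-1)$ formula, and apply $D_t$ monomial by monomial through
\[
D_t\big(r^{l}\bm\partial^{m}X\big)=l\,r^{l-1}(D_t r)\,\bm\partial^{m}X+r^{l}\bm\partial^{m}(D_t X)+r^{l}[D_t,\bm\partial^{m}]X,\qquad X\in\{\tilde s,\tilde r,\tilde u\}.
\]
The first term is $\simeq r^{l}\bm\partial^{m}X$ by \eqref{E:D_t r equals r, convective derivative of r}, hence strictly more subcritical than the term obtained by landing $D_t$ on $X$, and is discarded; the commutator term contributes by Lemma \ref{L:Commutator_2} only $\bm\partial^{m}X$-type pieces with no loss of $r$, again strictly better; and in the middle term one substitutes the base-case relations $D_t\tilde r\simeq r\bm\partial\tilde u+\tilde u$, $D_t\tilde u\simeq\bm\partial\tilde r+\tilde u+\tilde s$, $D_t\tilde s\simeq\tilde u$ and redistributes $\bm\partial^m$ by Leibniz. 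The net effect is that one $D_t$ on the $\tilde r$-family raises both the $r$-power and the derivative count by one and switches the family to $\tilde u$, one $D_t$ on the $\tilde u$-family adds a derivative, keeps the $r$-power, and switches to $\tilde r$, and $D_t$ on $\tilde s$ simply produces $D_t^{i-1}\tilde u$ after moving the $\partial s$ coefficient past $D_t^{i-1}$; tracking this index shift through the level-$(i-1)$ sums reproduces the stated expressions, with the even/odd split and the two upper limits inside $D_t^i\tilde u$ dictated by whether the inherited leading pieces are $\tilde r$- or $\tilde u$-type.

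The main obstacle is not a single estimate but the uniform bookkeeping: one must check, for all $l$ simultaneously, that every error from Leibniz on powers of $r$ and from the commutators $[D_t,\bm\partial^{m}]$ is strictly more $\mathcal{H}^i$-subcritical than the monomials kept in the claimed sums, so that Remark \ref{R:order_free_boundary_term} legitimately hides them in $P$; that the summation ranges close up under the shift, in particular that the $l=0$ endpoint of $D_t^i\tilde r$ is exactly supplied by the leading $\bm\partial\tilde r$ (resp. $r\bm\partial\tilde u$) term of $D_t\tilde u$ (resp. $D_t\tilde r$) at level $i-1$; and that no retained monomial overshoots the derivative budget for the values of $i$ that occur, namely $i\le 2k$.
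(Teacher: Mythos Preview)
Your proposal is correct and follows essentially the same approach as the paper: the paper's proof is a one-sentence sketch invoking induction, the identity $D_t r\simeq r$, the commutator Lemma~\ref{L:Commutator}, and the convention that subcritical terms are discarded at each step, and you have simply written out those details explicitly. Your treatment of the base case via \eqref{E:Linearized_System_2}--\eqref{E:Linearized_f_g_h} and your monomial-by-monomial induction step with the Leibniz/commutator decomposition are exactly what the paper's sketch intends.
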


**************************************

\subsection{Solving for time derivatives}
\label{S:Sovling_for_time_derivatives}
There are many instances in which we would like to estimate the spacetime derivatives $\partial_\mu$ of a particular quantity using only the spatial derivatives $\partial$. To do so, we can can solve for the time derivatives in terms of spatial derivatives by using system \eqref{E:System} directly and the orthogonality identity \eqref{E:tilde_u_^0 identity}. 

\begin{lemma}[Solving for time derivatives]
\label{L:Solving_for_time_derivatives}
The following computations and simplifications hold under the book-keeping scheme from Section \ref{S:Creating the Book-keeping scheme}:

\begin{equation}
\label{E:Solving_for_time_derivatives}
    \begin{split}
    (u^0)^2 &= 1 + u^j u_j \\
     \tilde{u}^0 &= \frac{u_j}{u^0} \tilde{u}^j \simeq \tilde{u} \\
       \partial_i \tilde{u}^0 &= \frac{u_j}{u^0} \partial_i \tilde{u}^j + \tilde{u}^j \partial_i \frac{u_j}{u_0} \simeq \partial \tilde{u} + \tilde{u}\\
        \partial_t s &= - \frac{u^i}{u^0} \partial_i s\\
           \partial_t \tilde{s} &=  - \frac{u^i}{u^0} \partial_i \tilde{s} + \frac{u^i \tilde{u}^0 - u^0 \tilde{u}^i }{(u^0)^2} \partial_i s \simeq \partial \tilde{s} + \tilde{u}\\
             \partial_t u^0 &=  C_1^i \partial_i u^0 + C_2^i \partial_i r + C_3 r \partial_i u^i \\
              \partial_t \tilde{u}^0 &= C_1^i \partial_i \tilde{u}^0+ C_2^i \partial_i \tilde{r} + C_3 r \partial_i \tilde{u}^i + C_3\tilde{r} \partial_i u^i + \widetilde{C_1^i} \partial_i u^0  + \widetilde{C_2^i} \partial_i r  + \widetilde{C_3} r \partial_i u^i \\
              & \hspace{10mm} \simeq \partial \tilde{u} + \partial \tilde{r} + r \partial \tilde{u} + (\tilde{u} + \tilde{s} + \tilde{r}) +r(\tilde{u} + \tilde{s} + \tilde{r}) + r^2 (\tilde{u} + \tilde{s} + \tilde{r}) \\
               \partial_t r &= - \frac{u^i}{u^0} \partial_i r - \frac{\gamma-1}{u^0}r \partial_i u^i -  \frac{\gamma-1}{ u^0} r [\partial_t u^0] \\
    \partial_t \tilde{r} &= - \frac{u^i}{u^0} \partial_i \tilde{r} - \frac{\gamma-1}{u^0}r \partial_i \tilde{u}^i -  \frac{\gamma-1}{ u^0} r [\partial_t \tilde{u}^0] \\
    & \hspace{2mm} + \frac{u^i \tilde{u}^0 - u^0 \tilde{u}^i }{(u^0)^2} \partial_i r + \left(\frac{(\gamma-1)}{(u^0)^2} \tilde{u}^0 r - \frac{\gamma-1}{u^0} \tilde{r}\right) \partial_i u^i + \left(\frac{(\gamma-1)}{(u^0)^2} \tilde{u}^0 r - \frac{\gamma-1}{u^0} \tilde{r}\right) [\partial_t u^0]\\
    & \hspace{10mm} \simeq \partial \tilde{r} + r (\partial \tilde{u} + \partial \tilde{r}) + (\tilde{u} + \tilde{r})  + r(\tilde{u} + \tilde{s} + \tilde{r}) +r^2(\tilde{u} + \tilde{s} + \tilde{r}) + r^3 (\tilde{u} + \tilde{s} + \tilde{r}) \\
      \partial_t u^j &= - \frac{u^i}{u^0} \partial_i u^j - \frac{1}{(\Gamma+r)u^0} \proj^{j 0} \left[\partial_t r\right] -  \frac{1}{(\Gamma+r)u^0} \proj^{j i} \partial_i r \\
      \partial_t \tilde{u}^j &=- \frac{u^i}{u^0} \partial_i \tilde{u}^j  - \frac{1}{(\Gamma+r)u^0} \proj^{j 0} \left[\partial_t \tilde{r}\right] -  \frac{1}{(\Gamma+r)u^0} \proj^{j i} \partial_i \tilde{r} \\
      & \hspace{2mm} + \frac{u^i \tilde{u}^0 - u^0 \tilde{u}^i }{(u^0)^2} \partial_i u^j + \left(\frac{u^j(\Gamma' \tilde{s} + \tilde{r})+(\Gamma +r)\tilde{u}^j}{(\Gamma+r)^2} \right)[\partial_t r] \\
      & \hspace{5mm} + \left(\frac{(\delta^{ji} +u^j u^i )[(\Gamma+r)\tilde{u}^0 + u^0(\Gamma' \tilde{s} + \tilde{r})] - (\Gamma+r)u^0(\tilde{u}^j u^i + u^j \tilde{u}^i)}{(\Gamma+r)^2 (u^0)^2} \right)[\partial_i r] \\
      & \hspace{10mm} \simeq \partial \tilde{u} + \partial \tilde{r} +r (\partial \tilde{u} + \partial \tilde{r}) +(\tilde{u}  + \tilde{s}+ \tilde{r})  + r(\tilde{u} + \tilde{s} + \tilde{r}) +r^2(\tilde{u} + \tilde{s} + \tilde{r}) + r^3 (\tilde{u} + \tilde{s} + \tilde{r})
    \end{split}
\end{equation}
\newpage
with
\begin{equation}
    \begin{split}
     a_1 &= u^0 - \frac{(\gamma-1)((u^0)^2-1)}{(\Gamma+r)u^0}r  \\
     \tilde{a}_1 &= \tilde{u}^0 - \frac{(\gamma-1)((u^0)^2-1)}{(\Gamma+r)u^0} \tilde{r} \\
     & \hspace{7mm} -  r \left( \frac{2(\Gamma+r)(\gamma-1)u^0 \tilde{u}^0 -(\gamma-1)((u^0)^2-1)[(\Gamma' \tilde{s} + \tilde{r})u^0 + (\Gamma +r)\tilde{u}^0]  }{(\Gamma+r)^2 (u^0)^2} \right) \\
     & \simeq \tilde{u} + \tilde{r} + r (\tilde{u} + \tilde{s} + \tilde{r}) \\
        C_1^i :&= -\frac{u^i}{a_1} \\
        \widetilde{C_1^i} &= \frac{u^i \tilde{a}_1 - a_1 \tilde{u}^i}{(a_1)^2} \simeq \tilde{u} + \tilde{r} + r (\tilde{u} + \tilde{s} + \tilde{r}) \\
                C_2^i :&= \frac{1}{(\Gamma+r)u^0} C_1^i \\
                \widetilde{C_2^i} &= \frac{1}{(\Gamma+r)u^0} \widetilde{C_1^i} - \frac{[(\Gamma' \tilde{s} + \tilde{r})u^0 + (\Gamma +r)\tilde{u}^0]}{(\Gamma+r)^2 (u^0)^2} C_1^i  \simeq \tilde{u} + \tilde{s} + \tilde{r} + r (\tilde{u} + \tilde{s} + \tilde{r}) \\
                C_3 :&= \frac{(\gamma-1)((u^0)^2-1)}{u^i} C_2^i  \\
                \widetilde{C_3} &= \frac{(\gamma-1)((u^0)^2-1)}{u^i} \widetilde{C_2^i} + \frac{u^i 2(\gamma-1) \tilde{u}^0 -(\gamma-1)((u^0)^2-1) \tilde{u}^i  }{(u^i)^2} C_2^i  \simeq \tilde{u} + \tilde{s} + \tilde{r} + r (\tilde{u} + \tilde{s} + \tilde{r})
    \end{split}
\end{equation}
where, namely, we only ignored $\partial s, \partial r, \partial u, s, u$ and $O(1)$ coefficients in the simplfication $\simeq$.
    
\end{lemma}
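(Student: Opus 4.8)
The strategy is to produce each displayed identity by directly manipulating the primary system \eqref{E:System}, the linearized system \eqref{E:Linearized_System_2}, and the two constraints \eqref{E:v_constraint_equation} and \eqref{E:Linearized_System_2_constraint_equation}; once the exact identities are in hand, the schematic reductions $\simeq$ follow by reading off the shape of each term in the sense of Remark \ref{R:simeq_symbol_definition}.

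The algebraic and entropy identities are immediate. Writing $g_{\mu\nu}u^\mu u^\nu=-1$ in Cartesian coordinates with $g=\mathrm{diag}(-1,1,1,1)$ gives $(u^0)^2=1+u^ju_j$ (and, as $u$ is forward time-like, $u^0=\sqrt{1+u^ju_j}\ge 1$); linearizing this, equivalently using $\tilde u_\alpha u^\alpha=0$ from \eqref{E:Linearized_System_2_constraint_equation}, gives $\tilde u^0=\tfrac{u_j}{u^0}\tilde u^j$, which is \eqref{E:tilde_u_^0 identity}; differentiating the latter by the product rule gives the formula for $\partial_i\tilde u^0$, whose two terms have shapes $\partial\tilde u$ and $\tilde u$. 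For the entropy, \eqref{E:System_s_equation} reads $u^0\partial_t s+u^i\partial_i s=0$, so $\partial_t s=-\tfrac{u^i}{u^0}\partial_i s$; and \eqref{E:Linearized_System_2_s_equation}, namely $u^\mu\partial_\mu\tilde s=f=-\tilde u^\mu\partial_\mu s$, gives $u^0\partial_t\tilde s=-u^i\partial_i\tilde s-\tilde u^0\partial_t s-\tilde u^i\partial_i s$; substituting the expressions already obtained for $\partial_t s$ and $\tilde u^0$ and dividing by $u^0$ yields the stated identity, with terms of shape $\partial\tilde s$ and $\tilde u$.

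The one step with genuine content is the elimination of $\partial_t u^0$ and $\partial_t r$. Using the constraint, $\proj^{00}=g^{00}+(u^0)^2=u^ju_j$, $\proj^{0i}=u^0u^i$ and $\proj^{ji}=\delta^{ji}+u^ju^i$, so the $\alpha=0$ component of \eqref{E:System_v_equation} together with the $r$-equation \eqref{E:System_r_equation} forms a $2\times2$ linear system for the pair $(\partial_t u^0,\partial_t r)$. Eliminating $\partial_t r$ leaves $a_1\,\partial_t u^0$ equal to a combination of spatial derivatives, with $a_1=u^0-\tfrac{(\gamma-1)((u^0)^2-1)}{(\Gamma+r)u^0}r$; since $u^0\ge1$ while $\norm{r}_{L^\infty(\Omega_t)}\le\hat\varepsilon\ll\tfrac12$ by Assumption \ref{A:Assumption_smallness r}, $a_1$ is bounded away from zero, so the system is uniquely solvable and one reads off $\partial_t u^0=C_1^i\partial_i u^0+C_2^i\partial_i r+C_3\,r\,\partial_i u^i$ with $C_1^i,C_2^i,C_3$ as listed. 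Substituting this back gives $\partial_t r$, and feeding both into the $\alpha=j$ component of \eqref{E:System_v_equation} (now with $\proj^{j0}=u^0u^j$) gives $\partial_t u^j$. I expect this coupled inversion --- and with it the verification that $a_1$ and the analogous denominators stay non-degenerate near $\Gamma$ --- to be the main obstacle; everything after it is forced.

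The tilde versions follow by applying $\delta=\tfrac{d}{d\tau}\big|_{\tau=0}$ to the four nonlinear identities just derived, using $\delta\Gamma=\Gamma'\tilde s=-\tfrac{\gamma-1}{\gamma}\Gamma\tilde s$, $\delta(\Gamma+r)=\Gamma'\tilde s+\tilde r$, and the quotient rule --- equivalently, by running the same elimination directly on \eqref{E:Linearized_System_2}; the linearized $(\partial_t\tilde u^0,\partial_t\tilde r)$ system has the \emph{same} coefficient matrix, so the same non-degeneracy of $a_1$ applies and one obtains $\tilde a_1=\delta a_1$, $\widetilde{C_1^i}=\delta C_1^i$, and similarly for $\widetilde{C_2^i},\widetilde{C_3}$. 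For the $\simeq$ assertions I would then pass through the explicit formulas term by term: every factor that is a background quantity $u$ or $s$, a background derivative $\partial(s,r,u)$, or an $O(1)$ quotient such as $1/u^0$, $1/a_1$, $1/(\Gamma+r)$ (all smooth and bounded once $r$ is small, hence absorbable as in Remark \ref{R:O(1)_weights}) is discarded; each undifferentiated $r$ is kept as a weight; and each $D_t r$ that arises is replaced via $D_t r\simeq r$ (see Remark \ref{R:simeq_symbol_definition}). Counting the surviving powers of $r$ against the number of derivatives landing on $(\tilde s,\tilde r,\tilde u)$ in each term then reproduces the schematic sums on the right-hand sides and shows the discarded pieces are of strictly higher $\mathcal{H}^{2k}$-order, which is exactly what $\simeq$ asserts.
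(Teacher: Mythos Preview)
Your proposal is correct and follows essentially the same approach as the paper's appendix: write the $s$-, $r$-, and $u$-equations as transport equations to isolate $\partial_t$, plug $\alpha=0$ into the momentum equation to obtain a coupled $2\times2$ system in $(\partial_t u^0,\partial_t r)$, solve via the coefficient $a_1$, back-substitute for $\partial_t r$ and $\partial_t u^j$, and then linearize. Your explicit remark that $a_1$ is bounded away from zero under Assumption \ref{A:Assumption_smallness r} is a point the paper leaves implicit but is exactly the right observation for well-definedness of the inversion.
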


\begin{remark}
\label{R:Solving_for_time_derivatives}

In general, we have that
\begin{equation}
\label{E:simplified_time_deriv_expressions}
\begin{split}
      \partial_t \tilde{s}  &\simeq \partial\tilde{s}  + (\text{terms which are more subcritical than $ \partial \tilde{s}$})\\
        \partial_t \tilde{u}  &\simeq \partial \tilde{u}  + (\text{terms which are more subcritical than $ \partial \tilde{u}$})\\
        \partial_t \tilde{r}  &\simeq \partial\tilde{r}  + (\text{terms which are more subcritical than $ \partial \tilde{r}$})\\
\end{split}
\end{equation}
with coefficient  expressions that have at most one spatial derivative of $s,r,$ or $u$. 
The point is that any time derivative of our linearized variables $(\tilde{s}, \tilde{r}, \tilde{u})$ can be written in terms of spatial derivatives (with good coefficients) plus additional subcritical terms that can be easily estimated. When estimating terms such as in Sections \ref{S:Vorticity_estimates} and \ref{S:Higher_order_energy_estimates}, we will often apply Lemma \ref{L:Solving_for_time_derivatives} proactively so as to not introduce unnecessary subcritical terms that obscure the argument. Thus, going forward, one can treat the spacetime expressions below with the generic \textit{spatial} derivative symbol $\partial$
\begin{equation*}
    \begin{split}
        \partial_\mu \tilde{s} &\simeq \partial \tilde{s}\\
         \partial_\mu \tilde{r} &\simeq \partial \tilde{r} \\
          \partial_\mu \tilde{u}^\nu &\simeq \partial \tilde{u}
    \end{split}
\end{equation*}
from the point of view of their $\mathcal{H}^{2k}$ order, as opposed to Remark \ref{R:notation_for_derivatives} where $\bm{\partial}$ would be used.

Recall that when referring to the \textbf{order} of a free boundary term, we are doing so with respect to Definition \ref{D:definition_critical_subcritical_supercritical} and Remark \ref{R:order_free_boundary_term} where terms with order $\mathcal{O}=0$ are denoted as critical, $\mathcal{O} < 0$ are supercritical, and $\mathcal{O} > 0$ are subcritical. 
For example, it is crucial in our expression \eqref{E:simplified_time_deriv_expressions} for $\partial_t \tilde{r}$ that the parentheses do \textit{not} contain terms of the form $\partial \tilde{u}$. 
This is because, at a given level say $k=1$, the $\mathcal{H}^2$ order of a term $\partial \tilde{r}$ is $0$ (i.e. critical), but the order of a term like $\partial \tilde{u}$ is $-\frac{1}{2}$ (i.e. supercritical) with order strictly \textit{worse}. If this were the case, we could not reliably argue that we could replace $\partial_t \tilde{r}$ with $\partial \tilde{r}$ since we would be introducing terms that require additional powers of $r$ in order to be estimated with the $\mathcal{H}^{2k}$ norm.
\end{remark}

\begin{proof}
    We leave computation details for Lemma \ref{L:Solving_for_time_derivatives} in the Appendix.
\end{proof}

To finish this section, we prove a useful Lemma which expands on the notion of order $\mathcal{O}$ to each of our key operations on free boundary terms. This Lemma is especially helpful in proving higher order analogs of our elliptic and div-curl estimates using induction. The last proposition in Lemma \ref{L:Order_of_operators} is helpful in jumping from the base case $\mathcal{H}^2$ to the general case $\mathcal{H}^{2k}$.

\begin{lemma}[The orders of each operation]
\label{L:Order_of_operators}
Let $k \in \mathbb{N}$ be fixed. Suppose that $T$ is a free boundary term with $\mathcal{H}^{2k}$ order $\mathcal{O}$ (i.e. $T$ is of the form $r^m \partial^l (\tilde{s}, \tilde{r}, \tilde{u})$ for some $m, l \in \mathbb{N}$, $m\geq0,0\leq l\leq 2k$, ). Then the following holds:
\begin{enumerate} 
    \item the term $rT$ has $\mathcal{H}^{2k}$ order $\mathcal{O} +1$
    \item the term $\partial T$ has $\mathcal{H}^{2k}$ order $\mathcal{O} -1$ 
    \item the term $D_t T$ has a minimum $\mathcal{H}^{2k}$ order $\mathcal{O} - \frac{1}{2}$
    \item If $K \in \mathbb{N}$ with $K \geq k$, then $T$ has $H^{2K}$ order $\mathcal{O}+ K-k$.
\end{enumerate}
\begin{proof}
    Since the case with $\tilde{s}$ is identical to $\tilde{u}$, it suffices to consider the following cases for our free boundary term $T$:
    \begin{enumerate}[(a)]
        \item $T_1= r^{m_1} \partial^{l_1} \tilde{r}$ which has $\mathcal{H}^{2k}$ order $m_1-l_1+k = \mathcal{O}$
        \item $T_2= r^{m_2} \partial^{l_2} \tilde{u}$ which has $\mathcal{H}^{2k}$ order $m_2-l_2+k - \frac{1}{2} = \mathcal{O}$
    \end{enumerate}
    
     For Part 1, observe that for cases (a) and (b), we simply have an extra power of $r$
    \begin{equation}
        \begin{split}
            rT_1 = r \left(r^{m_1} \partial^{l_1} \tilde{r} \right) = r^{m_1+1} \partial^{l_1} \tilde{r} &\implies rT_1 \text{ has $\mathcal{H}^{2k}$ order $m_1+1-l_1+k = \mathcal{O} +1$} \\
            rT_2 = r \left(r^{m_2} \partial^{l_2} \tilde{u} \right) = r^{m_2+1} \partial^{l_2} \tilde{u} &\implies rT_2 \text{ has $\mathcal{H}^{2k}$ order $m+1-l+k - \frac{1}{2} = \mathcal{O} +1$} \\
        \end{split}
    \end{equation}
    For Part 2, we see that there are two sub-cases when $m_1, m_2 \geq 1$ and $m_1 = 0 = m_2$. If $m_1 =0 = m_2$, then
    \begin{equation}
        \begin{split}
            \partial T_1 &= \partial (\partial^{l_1} \tilde{r}) = \partial^{l_1 + 1} \tilde{r} \implies \partial T_1 \text{ has $\mathcal{H}^{2k}$ order $m_1-(l_1 +1)+k = \mathcal{O} -1$} \\
             \partial T_2 &= \partial (\partial^{l_2} \tilde{u}) = \partial^{l_2 + 1} \tilde{u} \implies \partial T_2 \text{ has $\mathcal{H}^{2k}$ order $m_2-(l_2 +1)+k - \frac{1}{2} = \mathcal{O} -1$} \\
        \end{split}
    \end{equation}
    If $m_1, m_2 \geq 1$, then we get
    \begin{equation}
         \begin{split}
            \partial T_1 &= \partial (r^{m_1} \partial^{l_1} \tilde{r}) = m_1 r^{m_1-1} \partial^{l_1} \tilde{r} + r^{m_1} \partial^{l_1 +1} \tilde{r} \implies \partial T_1 \text{ has $\mathcal{H}^{2k}$ order $ \mathcal{O} -1$} \\
             \partial T_2 &= \partial (r^{m_2} \partial^{l_2} \tilde{u})= m_2 r^{m_2-1} \partial^{l_2} \tilde{u} + r^{m_2} \partial^{l_2 +1} \tilde{u} \implies \partial T_2 \text{ has $\mathcal{H}^{2k}$ order $ \mathcal{O} -1$} \\
        \end{split}
    \end{equation}
    since both terms have $\mathcal{H}^{2k}$ order $\mathcal{O} -1$, noting that we either lose a power of $r$ or gain a derivative (and both operations decrease the order from $\mathcal{O}$ to $\mathcal{O} -1$).
    
    For Part 3, we will first consider when $l_1 =0 = l_2$. For case with $T_1$, we have 
    \begin{equation}
         \begin{split}
            D_t T_1 = D_t(r^{m_1} \tilde{r}) &= m_1 r^{m_1-1} (D_t r) \tilde{r} + r^{m_1} D_t \tilde{r} \\
            &\simeq  m_1 r^{m_1} \tilde{r} + r^{m_1} (r \partial \tilde{u} + \tilde{u}) \\
            &= r^{m_1 +1} \partial \tilde{u} + r^{m_1} \tilde{u} + m_1 r^{m_1} \tilde{r}
        \end{split}
    \end{equation}
    where we used \eqref{E:D_t r equals r, convective derivative of r} and Lemma \ref{L:Book-keeping}. Then, computing the $\mathcal{H}^{2k}$ order (and noting that we now have some terms with $\tilde{u}$ instead of $\tilde{r}$), we see that the first two terms have order $\mathcal{O} - \frac{1}{2}$, while the last one has order $\mathcal{O}$. Thus, $D_t T_1$ has a minimum $\mathcal{H}^{2k}$ order $\mathcal{O} - \frac{1}{2}$ (i.e. it has order $\mathcal{O} - \frac{1}{2}$ at worst). For $T_2$, we get a similar computation using \eqref{E:D_t r equals r, convective derivative of r} and Lemma \ref{L:Book-keeping}
     \begin{equation}
         \begin{split}
            D_t T_2 = D_t(r^{m_2} \tilde{u}) &= m_2 r^{m_2-1} (D_t r) \tilde{u} + r^{m_2} D_t \tilde{u} \\
            &\simeq  m_2 r^{m_2} \tilde{u} + r^{m_2} (\partial \tilde{r} + \tilde{u} + \tilde{s}) \\
            &= r^{m_2} \partial \tilde{r} + r^{m_2} \tilde{u} + m_2 r^{m_2} \tilde{u} + r^{m_2} \tilde{s}
        \end{split}
    \end{equation}
    where the first term has order $m_2- 1 + k = (m_2 - 0 +k - \frac{1}{2}) - \frac{1}{2} = \mathcal{O} - \frac{1}{2}$ and the remaining terms have order $\mathcal{O}$. 

    For Part 3, it remains to consider when $l_1, l_2 \geq 1$. We will additionally make use of Lemma \ref{L:Commutator_2}. For the $T_1$ case, we have
\begin{equation}
         \begin{split}
            D_t T_1 = D_t(r^{m_1} \partial^{l_1} \tilde{r}) &\simeq r^{m_1} D_t (\partial^{l_1} \tilde{r}) + m_1 r^{m_1} \partial^{l_1} \tilde{r} \\
            &\simeq r^{m_1} \partial^{l_1} (D_t \tilde{r}) + r^{m_1} [D_t, \partial^{l}] \tilde{r} + m_1 r^{m_1} \partial^{l_1} \tilde{r} \\
         &\simeq r^{m_1} \partial^{l_1} (r \partial \tilde{u} + \tilde{u})+ r^{m_1} \left( \sum_{i=0}^{l_1-1} B_i^\nu \partial_\nu (\partial^i \tilde{r})  \right) + m_1 r^{m_1} \partial^{l_1} \tilde{r} \\
         &\simeq r^{m_1} \partial^{l_1} \tilde{u} + r^{m_1}  \partial r \partial^{l_1} \tilde{u} + r^{m_1 +1} \partial^{l_1 + 1} \tilde{u} + \left(\sum_{i=0}^{l_1-1} r^{m_1} \partial^{i+1} \tilde{r} \right) + m_1 r^{m_1} \partial^{l_1} \tilde{r} 
        \end{split}
    \end{equation}
    where we note that $\partial^{l_1}(r \partial \tilde{u})$ only produces critical terms when all $\partial^{l_1}$ derivatives hit $\partial \tilde{u}$, or one derivative hits $r$ and $\partial^{l_1 -1}$ derivatives hit $\partial \tilde{u}$.  For the summation terms, we make use of Remak \ref{R:Solving_for_time_derivatives} to simplify $\partial_\nu \partial^i \tilde{r}$ as $\partial^{i+1} \tilde{r}$. Then, counting the $\mathcal{H}^{2k}$ order for each of the terms, we see that the first three have order $\mathcal{O} - \frac{1}{2}$, whereas the remaining terms have order $\mathcal{O}$ at worst (with additional terms in the summation having order $\mathcal{O} + l_1 - i$ with $0 \leq i< l_1$). 

    For the $T_2$ case with $l_1, l_2 \geq 1$, we perform a similar computation and keep track of whether the terms have $\tilde{r}$ or $\tilde{u}$ when computing order
    \begin{equation}
         \begin{split}
            D_t T_2 = D_t(r^{m_2} \partial^{l_2} \tilde{u}) &\simeq r^{m_2} D_t (\partial^{l_2} \tilde{u}) + m_2 r^{m_2} \partial^{l_2} \tilde{u} \\
            &\simeq r^{m_2} \partial^{l_2} (D_t \tilde{u}) + r^{m_2} [D_t, \partial^{l}] \tilde{u} + m_2 r^{m_2} \partial^{l_2} \tilde{u} \\
         &\simeq r^{m_2} \partial^{l_2} (\partial \tilde{r} + \tilde{u} + \tilde{s})+ r^{m_2} \left( \sum_{i=0}^{l_2-1} B_i^\nu \partial_\nu (\partial^i \tilde{u})  \right) + m_2 r^{m_2} \partial^{l_2} \tilde{u} \\
         &\simeq r^{m_2} \partial^{l_2 +1} \tilde{r} + r^{m_2} \partial^{l_2} \tilde{u} + r^{m_2} \partial^{l_2} \tilde{s}  + \left(\sum_{i=0}^{l_1-1} r^{m_2} \partial^{i+1} \tilde{u} \right) + m_2 r^{m_2} \partial^{l_2} \tilde{u} 
        \end{split}
    \end{equation}
    where the first term has $\mathcal{H}^{2k}$ order $m_2 - (l_2 +1) +k = (m_2 - l_2 + k - \frac{1}{2}) - \frac{1}{2} = \mathcal{O} - \frac{1}{2}$, and the remaining terms have order $\mathcal{O}$ at worst.
    Thus $D_t T$ only produces terms that have a minimum order of $\mathcal{O} - \frac{1}{2}$, and all other terms have strictly $\mathcal{O}$ order or better at the $\mathcal{H}^{2k}$ level. This completes the proof of Part 3.

    Lastly, we will provide the proof of Part 4. In the previous parts, each operation changed the number of powers of $r$, the number of derivatives, or whether we are using $\tilde{r}$ or $\tilde{u}$ and each of these has an effect on the computed order. In Part 4, we are only seeing what happens if we raise the level from $k$ to some $K \geq k$, and the proof can be seen by noticing that for $T_1$,
    \begin{equation*}
        m_1 - l_1 + K  = (m_1 -l_1 + k) + K -k  = \mathcal{O} + K - k
    \end{equation*}
    and for $T_2$, 
    \begin{equation*}
        m_2 - l_2 + K - \frac{1}{2}  = (m_2 -l_2 + k - \frac{1}{2}) + K -k  = \mathcal{O} + K - k.
    \end{equation*}
    This completes the proof of Part 4, and the proof of Lemma \ref{L:Order_of_operators} is complete.
\end{proof}
\end{lemma}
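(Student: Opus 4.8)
The plan is to reduce the statement to the two canonical forms $T_1 = r^{m_1}\partial^{l_1}\tilde r$, of $\mathcal H^{2k}$ order $\mathcal O = m_1 - l_1 + k$, and $T_2 = r^{m_2}\partial^{l_2}\tilde u$, of order $\mathcal O = m_2 - l_2 + k - \tfrac{1}{2}$, since $\tilde s$-terms behave exactly as $\tilde u$-terms; then I would verify each of the four assertions directly by counting powers of $r$ and numbers of derivatives, using the bookkeeping conventions and commutator lemmas already established.

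Parts 1 and 4 are immediate. Multiplying $T_j$ by $r$ raises $m_j$ by one, hence raises the order by one. Passing from level $2k$ to level $2K$ replaces the additive constant $k$ by $K$ in the order formula, identically for $\tilde r$-type and $\tilde u$-type terms, so every term's order increases by $K-k$. For Part 2 I would apply Leibniz, $\partial(r^m\partial^l\phi) = m\,r^{m-1}(\partial r)\,\partial^l\phi + r^m\partial^{l+1}\phi$, and use that $\partial r$ is $O(1)$ and so does not contribute to the order (Remark \ref{R:simeq_symbol_definition}): the first summand loses a power of $r$, the second gains a derivative, and both therefore have order $\mathcal O - 1$; the degenerate case $m=0$ simply omits the first summand.

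The substance of the lemma is Part 3. I would write $D_t(r^m\partial^l\phi) = m\,r^{m-1}(D_t r)\,\partial^l\phi + r^m D_t(\partial^l\phi)$ and simplify the first term via \eqref{E:D_t r equals r, convective derivative of r}, $D_t r \simeq r$, to $m\,r^m\partial^l\phi$, which retains order $\mathcal O$. For the second term I would commute $D_t$ past $\partial^l$ using Lemma \ref{L:Commutator_2}, so that $r^m D_t(\partial^l\phi) \simeq r^m\partial^l(D_t\phi) + r^m\sum_{i=0}^{l-1} B_i^\nu\,\partial_\nu(\partial^i\phi)$, and then substitute the base cases of Lemma \ref{L:Book-keeping}: $D_t\tilde r \simeq r\,\bm{\partial}\tilde u + \tilde u$, $D_t\tilde u \simeq \bm{\partial}\tilde r + \tilde u + \tilde s$, $D_t\tilde s \simeq \tilde u$. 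Distributing $\partial^l$ over $r\,\bm{\partial}\tilde u$ produces the two worst descendants $r^m\partial^l\tilde u$ and $r^{m+1}\partial^{l+1}\tilde u$, and I would check that both have order $(m - l + k) - \tfrac{1}{2} = \mathcal O - \tfrac{1}{2}$, the half-unit drop being exactly the cost of the variable changing from $\tilde r$ to $\tilde u$; symmetrically, for $\phi = \tilde u$ the descendant $r^m\partial^{l+1}\tilde r$ has order $\mathcal O - \tfrac{1}{2}$ since a derivative is gained while the variable switches to $\tilde r$. Using Remark \ref{R:Solving_for_time_derivatives} to absorb each $\partial_\nu\partial^i$ into $\partial^{i+1}$, the commutator terms carry at most $l$ derivatives of $\phi$ and hence have order $\mathcal O$ or better. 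Together these show that $D_t T$ is a sum of free-boundary terms whose worst (minimum) order is exactly $\mathcal O - \tfrac{1}{2}$.

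I expect the accounting in Part 3 to be the main obstacle — not because any single manipulation is deep, but because one must treat $l=0$ and $l\ge 1$ separately (only the latter calls on Lemma \ref{L:Commutator_2}), keep in mind that differentiating $r\,\bm{\partial}\tilde u$ leaves a critical-order term only when at most one of the $\partial^l$ derivatives falls on $r$, and carefully track which descendants carry $\tilde r$ versus $\tilde u$, since that half-unit bookkeeping is precisely what the $-\tfrac{1}{2}$ in the statement encodes.
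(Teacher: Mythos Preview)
Your proposal is correct and follows essentially the same approach as the paper: reduce to the two canonical cases $T_1=r^{m_1}\partial^{l_1}\tilde r$ and $T_2=r^{m_2}\partial^{l_2}\tilde u$, dispatch Parts 1, 2, 4 by direct counting, and handle Part 3 via $D_t r\simeq r$, the commutator Lemma~\ref{L:Commutator_2}, and the base cases of Lemma~\ref{L:Book-keeping}, tracking the half-unit shift from the $\tilde r\leftrightarrow\tilde u$ variable change. Your anticipated obstacles (the $l=0$ versus $l\ge1$ split, the Leibniz expansion of $\partial^l(r\,\bm\partial\tilde u)$, and the variable-type bookkeeping) are exactly the case distinctions the paper makes.
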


\begin{remark}
\label{R:Order_of_operators}
    A natural corollary of this Lemma is the following. Suppose that $T$ has $\mathcal{H}^{2k}$-order $\mathcal{O}$. Then, $D_t^2 T$ has $\mathcal{H}^{2k}$-order $\mathcal{O} -1$, but this implies that $D_t^2 T$ has $\mathcal{H}^{2(k+1)}$-order $\mathcal{O}$ (if we increase the level $k$ by 1). Hence, if a given term is $\mathcal{H}^2$ subcritical (with $k=1$), applying $D_t^{2(k-1)}$ (for $k\geq 2$) will keep the term $\mathcal{H}^{2k}$-subcritical at the corresponding $k$ level. 
\end{remark}

\section{Estimates for the transport energy}

\subsection{Entropy estimates}
\label{S:Entropy estimates}

Recall the definition of $E^{2k}_{\text{transport}}$ in \eqref{E:E^2k_transport} which consists of both the entropy and the vorticity. Our goal in this section is to prove estimates for the entropy, whereas the voriticity will be handled separately in Section \ref{S:Vorticity_estimates}. Instead of applying $D_t^{2k}$ to the $\tilde{s}$ equation, we can simply take $\partial^{2k}$ and estimate $\tilde{s}$ using the $\mathcal{H}^{2k}$ norm directly. This can be done because $\tilde{s}$ satisfies a transport equation instead of a wave equation that would require the intermediate $E^{2k}_{\text{wave}}$ energy. We have the following proposition which is routine by envoking out previous Lemmas on the order of terms.
\begin{proposition}[Entropy Estimates]
\label{P:Entropy Estimates}
\begin{equation}
    \frac{d}{dt} \norm{\tilde{s}}_{H^{2k, k+ \frac{1}{2(\gamma-1)}}}^2 \lesssim B_2 \norm{(\tilde{s}, \tilde{r}, \tilde{u})}_{\mathcal{H}^{2k}}^2
\end{equation}
where $B_2$ depends on $L^\infty$ norms of up to $2k+1$ derivatives of $(s,r,u)$.
\begin{proof}
    Using equation \eqref{E:Linearized_System_2_s_equation} Lemma \ref{L:Commutator_2} for the commutator $[D_t, \partial^{2k}]$, we get the following:
\begin{equation}
\begin{split}
    D_t (\partial^{2k} \tilde{s}) & = [D_t, \partial^{2k}] \tilde{s} + \partial^{2k} f \\
    &= \sum_{i=0}^{2k-1} B_i^\nu \partial_\nu (\partial^i \tilde{s}) + \partial^{2k}(\tilde{u}^\mu \partial_\mu s) \\
    &\simeq \sum_{i=0}^{2k} B_i \partial^i \tilde{s} + \partial^{2k}(\tilde{u}^\mu \partial_\mu s) \\
\end{split}
\end{equation}
where we recall the definition of $f$ from \eqref{E:Linearized_f_g_h}, and we used Remark \ref{R:Solving_for_time_derivatives} to combine $\partial_\nu \partial^i \tilde{s}$ and write everything in terms of spatial derivatives.
Then, multipliying the equation by $r^{2k + \frac{1}{\gamma-1}} \partial^{2k} \tilde{s}$, we get
\begin{equation}
\begin{split}
     \frac{1}{2} r^{2k+ \frac{1}{\gamma-1}} D_t (|\partial^{2k} \tilde{s}|^2) &\simeq  r^{2k + \frac{1}{\gamma-1}} \partial^{2k} \tilde{s} \left(\sum_{i=0}^{2k} B_i \partial^i \tilde{s}\right)  + r^{2k + \frac{1}{\gamma-1}} \partial^{2k} \tilde{s} \partial^{2k} (\tilde{u}^\mu \partial_\mu s) \\
     \frac{1}{2} D_t (|r^{k+ \frac{1}{2(\gamma-1)}} \partial^{2k} \tilde{s}|^2) &\simeq -\frac{1}{2}D_t(r^{2k+ \frac{1}{\gamma-1}})|\partial^{2k} \tilde{s}|^2  + r^{2k + \frac{1}{\gamma-1}} \partial^{2k} \tilde{s} \left(\sum_{i=0}^{2k} B_i \partial^i \tilde{s} \right)+ r^{2k + \frac{1}{\gamma-1}} \partial^{2k} \tilde{s} \partial^{2k} (\tilde{u}^\mu \partial_\mu s) \\
     &\simeq \left(k(\gamma-1)+ \frac{1}{2}\right)  r^{2k+ \frac{1}{\gamma-1}}|\partial^{2k} \tilde{s}|^2  + r^{2k + \frac{1}{\gamma-1}} \partial^{2k} \tilde{s} \left(\sum_{i=0}^{2k} B_i \partial^i \tilde{s} \right) \\
     & \hspace{5mm} + r^{2k + \frac{1}{\gamma-1}} \partial^{2k} \tilde{s} \partial^{2k} (\tilde{u}^\mu \partial_\mu s) \\
\end{split}
\end{equation}
Integrating, this leads to:
\begin{equation}
\label{E:Entropy_calc_1}
    \begin{split}
         \frac{d}{dt} \frac{1}{2}  \int_{\Omega_t} |r^{k+ \frac{1}{2(\gamma-1)}} \partial^{2k} \tilde{s}|^2 dx &\simeq  \int_{\Omega_t} \frac{1}{u^0} \left(k(\gamma-1) + \frac{1}{2}\right)  r^{2k + \frac{1}{\gamma-1}}|\partial^{2k} \tilde{s}|^2 \hspace{1mm} dx \\
     &\hspace{5mm} + \int_{\Omega_t} r^{2k + \frac{1}{\gamma-1}} \partial^{2k} \tilde{s} \left(\sum_{i=0}^{2k} B_i \partial^i \tilde{s} \right) \hspace{1mm} dx \\
      &\hspace{5mm} + \int_{\Omega_t} \frac{1}{u^0}  r^{2k + \frac{1}{\gamma-1}} \partial^{2k} \tilde{s} \partial^{2k} (\tilde{u}^\mu \partial_\mu s) \hspace{1mm} dx \\
      & \hspace{5mm} + \frac{1}{2} \int_{\Omega_t} |r^{k+ \frac{1}{2(\gamma-1)}} \partial^{2k} \tilde{s}|^2 \partial_i \left(\frac{u^i}{u^0} \right) dx \\
      &= J_1 + J_2 + J_3 + J_4.
    \end{split}
\end{equation}
For the $J_1$ term, we note that $u$ and its derivatives have been absorbed in $\simeq$ symbol (for example when we simplifying $D_t(r^{2k + \frac{1}{\gamma-1}})$). Thus, we can remove any derivatives of $u$ and quickly estimate 
\begin{equation}
\label{E:J_1_term_entropy}
    |J_1| \lesssim \norm{\partial u}_{L^\infty(\Omega_t)}  \left(\int_{\Omega_t} |r^{k+ \frac{1}{2(\gamma-1)}} \partial^{2k} \tilde{s}|^2 dx \right) \lesssim \norm{\partial u}_{L^\infty(\Omega_t)} \norm{(\tilde{s}, \tilde{r}, \tilde{u})}_{\mathcal{H}^{2k}}^2
\end{equation}
recalling the definition of $\mathcal{H}^{2k}$ from \eqref{E:Higher_order_H^2k_norm}, and Remark \ref{R:H^2k_norm_equivalence}.
The estimate for $J_4$ follows similarly since $\partial_i \left( \frac{u^i}{u^0}\right)$ is an expression depending on up to one spatial derivative of $u$.
For the $J_2$ and $J_3$ terms, we simply use Lemmas \ref{L:free_boundary_embedding_lemma} and \ref{L:Solving_for_time_derivatives} as needed while keeping track of subcritical terms.

\end{proof}
\end{proposition}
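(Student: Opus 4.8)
The plan is to run a straightforward weighted energy estimate directly on the transport equation \eqref{E:Linearized_System_2_s_equation} for $\tilde s$, differentiated $2k$ times in space, rather than on $D_t^{2k}\tilde s$ as is needed for the wave variables. First I would fix a spatial multi-index $\alpha$ with $|\alpha|\le 2k$, apply $\partial^\alpha$ to $D_t\tilde s=f$, and write
\[
D_t(\partial^\alpha\tilde s)=[D_t,\partial^\alpha]\tilde s+\partial^\alpha f .
\]
The commutator is expanded via the Second Commutator Lemma (Lemma \ref{L:Commutator_2}) as $[D_t,\partial^\alpha]\tilde s\simeq\sum_{i=0}^{|\alpha|-1}B_i^\nu\partial_\nu(\partial^i\tilde s)$ with coefficients built from spatial derivatives of $u$, and Remark \ref{R:Solving_for_time_derivatives} (together with Lemma \ref{L:Solving_for_time_derivatives}) is used to replace each $\partial_\nu(\partial^i\tilde s)$ by a purely spatial $\partial^{i+1}\tilde s$ whose $\mathcal H^{2k}$-order is no worse than that of $\partial^{2k}\tilde s$ — crucially, this conversion introduces no $\tilde u$ or $\tilde r$ terms more supercritical than $\partial^{2k}\tilde s$. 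The forcing $\partial^\alpha f$, with $f=-\tilde u^\mu\partial_\mu s$ from \eqref{E:Linearized_f_g_h}, is Leibniz-expanded into terms $\partial^\beta\tilde u^\mu\,\partial^{\alpha-\beta}\partial_\mu s$ with $|\beta|\le 2k$, whose only non-smooth factor is $\partial^\beta\tilde u$.

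Next I would multiply by $r^{2k+\frac1{\gamma-1}}\partial^\alpha\tilde s=\big(r^{k+\frac1{2(\gamma-1)}}\partial^\alpha\tilde s\big)r^{k+\frac1{2(\gamma-1)}}$, use $D_t r=-(\gamma-1)r\,\partial_\mu u^\mu$ to pull the weight inside $D_t$ (producing one extra term bounded by $\|\partial u\|_{L^\infty}\,|r^{k+\frac1{2(\gamma-1)}}\partial^\alpha\tilde s|^2$), integrate over $\Omega_t$, and apply the moving domain formula \eqref{E:Moving_Domain_formula}; the boundary integral vanishes because $r=0$ on $\Gamma_t$, and one picks up the usual $\partial_i(u^i/u^0)$ factor. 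Summing over $|\alpha|\le 2k$ yields $\frac{d}{dt}\|\tilde s\|_{H^{2k,k+\frac1{2(\gamma-1)}}}^2$ controlled by: (i) the weight-derivative and moving-domain terms, each of the form (coefficient in $\|\partial u\|_{L^\infty}$) times $\|r^{k+\frac1{2(\gamma-1)}}\partial^\alpha\tilde s\|_{L^2}^2$; (ii) the commutator terms $\int_{\Omega_t}r^{2k+\frac1{\gamma-1}}\,\partial^\alpha\tilde s\,B_i\,\partial^{i+1}\tilde s$; and (iii) the forcing terms $\int_{\Omega_t}\frac{1}{u^0}r^{2k+\frac1{\gamma-1}}\,\partial^\alpha\tilde s\,\partial^\beta\tilde u^\mu\,\partial^{\alpha-\beta}\partial_\mu s$.

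For (i) one notes $\|r^{k+\frac1{2(\gamma-1)}}\partial^\alpha\tilde s\|_{L^2}^2\le\|\tilde s\|_{H^{2k,k+\frac1{2(\gamma-1)}}}^2\lesssim\|(\tilde s,\tilde r,\tilde u)\|_{\mathcal H^{2k}}^2$: by the definition \eqref{E:Higher_order_H^2k_norm} and Remark \ref{R:H^2k_norm_equivalence}, the top-order weight $r^{k+\frac1{2(\gamma-1)}}$ dominates all the weaker weights permitted at lower derivative levels since $r$ is bounded. For (ii) and (iii) I would split the weight as above, pull the smooth coefficients ($B_i$, $\partial^{\alpha-\beta}\partial_\mu s$, $(u^0)^{-1}$) out in $L^\infty$, apply Cauchy--Schwarz, and recognize each remaining factor $r^{k+\frac1{2(\gamma-1)}}\partial^{i+1}\tilde s$ (with $i+1\le 2k$) and $r^{k+\frac1{2(\gamma-1)}}\partial^\beta\tilde u$ (with $|\beta|\le 2k$) as $\mathcal H^{2k}$-critical or better, hence bounded by $\|(\tilde s,\tilde r,\tilde u)\|_{\mathcal H^{2k}}$ via Lemmas \ref{L:free_boundary_embedding_lemma} and \ref{L:Free_boundary_add_r}, with Lemma \ref{L:Sum_orders} organizing the products. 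Collecting everything, the resulting constant $B_2$ absorbs the $L^\infty$ norms of $u$ together with up to $2k+1$ derivatives of $s$ and up to $2k$ derivatives of $u$, as claimed.

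The main obstacle is not analytic depth but \emph{bookkeeping}: one must verify that every term produced by the commutator expansion and by the time-to-space conversion (Remark \ref{R:Solving_for_time_derivatives}, Lemma \ref{L:Order_of_operators}) carries enough powers of $r$ relative to its derivative count to be $\mathcal H^{2k}$-critical or subcritical, so it can be paired against $r^{k+\frac1{2(\gamma-1)}}\partial^{2k}\tilde s$ without loss. The one genuinely delicate point is that the forcing $f$ depends on $\tilde u$, not $\tilde s$, so the entropy estimate is coupled to the $\tilde u$-component of the $\mathcal H^{2k}$ norm; this works precisely because $\tilde s$ and $\tilde u$ carry the \emph{same} weight $r^{k+\frac1{2(\gamma-1)}}$ at top order in \eqref{E:Higher_order_H^2k_norm}.
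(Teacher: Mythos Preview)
Your proposal is correct and follows essentially the same approach as the paper: differentiate the transport equation for $\tilde s$ by $\partial^\alpha$, use the Second Commutator Lemma together with the time-to-space conversion of Remark~\ref{R:Solving_for_time_derivatives}, multiply by $r^{2k+\frac{1}{\gamma-1}}\partial^\alpha\tilde s$, pull the weight through $D_t$ via $D_t r=-(\gamma-1)r\,\partial_\mu u^\mu$, and integrate with the moving domain formula. Your write-up is in fact slightly more explicit than the paper's (you sum over all $|\alpha|\le 2k$ and spell out the Leibniz expansion of $\partial^\alpha f$, whereas the paper records only the top-order case $\partial^{2k}$), but the argument is the same.
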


\subsection{Vorticity estimates}
\label{S:Vorticity_estimates}
We also plan to derive similar estimates for the vorticity which will be used in connection with the elliptic estimates. We recall from \cite{Choquet-BruhatBook-2009} that the relativistic vorticity is given by the following
\begin{equation}
    \omega_{\alpha \beta} = \partial_\alpha v_\beta - \partial_\beta v_\alpha.
\end{equation}
where $v$ is the enthalpy current defined by
\begin{equation}
    v_\alpha = h u_\alpha, \hspace{6mm} h =\frac{\Gamma+r}{\Gamma}, \hspace{6mm} \Gamma = \Gamma(s) =\frac{(\gamma-1)^{\frac{\gamma-1}{\gamma}}}{\gamma} e^{-\frac{\gamma-1}{\gamma} s}
\end{equation}

Similar to \cite{RezzollaZanottiBookRelHydro-2013}, we get an expression for $u^\alpha \omega_{\alpha \beta}$ using the following steps. From the fact that $v^\alpha v_\alpha = -h^2$, we get
\begin{equation}
     - v^\alpha \partial_\beta v_\alpha = h \partial_\beta h.
\end{equation}
From \eqref{E:System_v_equation} rewritten in terms of $v$, we get
\begin{equation}
    \frac{1}{h^2} v^\alpha \partial_\alpha v_\beta = - \frac{1}{\Gamma+r} \partial_\beta r
\end{equation}
Then, we can compute the following expression
\begin{equation}
    \begin{split}
        \frac{1}{h} \left(u^\alpha \partial_\alpha v_\beta - u^\alpha \partial_\beta v_\alpha \right) 
         &= \frac{r}{(\Gamma +r)} \frac{\gamma-1}{\gamma} \partial_\beta s \\
    \end{split}
\end{equation}
Multiplying by $h$, we get
\begin{equation}
\label{E:Vorticity_eq_1}
    u^\alpha \omega_{\alpha \beta} = \frac{r}{\gamma \Gamma} (\gamma-1) \partial_\beta s 
\end{equation}
which agrees with \cite{RezzollaZanottiBookRelHydro-2013} as 
\begin{equation}
\label{E:varepsilon_alternate_form}
    \varepsilon = \frac{r}{\gamma \Gamma}
\end{equation}
 after checking \eqref{E:internal_energy_simplified}. To find the evolution equation satisfied by $\omega_{\alpha \beta}$, we can first compute that
\begin{equation*}
    \partial_\alpha \varepsilon = \frac{1}{\gamma \Gamma} \left( \partial_\alpha r + \frac{\gamma-1}{\gamma} r \partial_\alpha s \right).
\end{equation*} 
Using similar computations as \cite{RezzollaZanottiBookRelHydro-2013} and applying \eqref{E:varepsilon_alternate_form}, we have
\begin{equation}
\label{E:Vorticity_eq_2}
    \begin{split}
        u^\mu \partial_\mu \omega_{\alpha \beta} + \partial_\alpha u^\mu \omega_{\mu \beta} + \partial_\beta u^\mu \omega_{\alpha \mu} 
        &= \frac{\gamma-1}{\gamma \Gamma} (\partial_\alpha r \partial_\beta s - \partial_\beta r \partial_\alpha s).
    \end{split}
\end{equation}
which is the evolution equation for the vorticity $\omega_{\alpha \beta}$ in terms of the unknowns $(s, r, u)$.

Next, we introduce the full linearized voricity $\tilde{\omega}_{\alpha \beta}$ by
\begin{equation}
\label{E:full_linearized_vorticity_definition}
    \begin{split}
        \tilde{\omega}_{\alpha \beta} &= \partial_\alpha(\widetilde{h u_\beta}) -\partial_\beta(\widetilde{h u_\alpha}) \\
        &= \partial_\alpha (h \tilde{u}_\beta) - \partial_\beta(h \tilde{u}_\alpha) + \partial_\alpha (\tilde{h} u_\beta) - \partial_\beta(\tilde{h} u_\alpha) \\
        &:= \hat{\omega}_{\alpha \beta} + \overline{\omega}_{\alpha \beta}
    \end{split}
\end{equation}
where 
\begin{equation}
\label{E:reduced_lin_vort_evolution_eq}
    \hat{\omega}_{\alpha \beta}:= \partial_\alpha (h \tilde{u}_\beta) - \partial_\beta(h \tilde{u}_\alpha)
\end{equation}
will be called the \textbf{reduced} linearized vorticity and $\overline{\omega}_{\alpha \beta}$ is the $\tilde{h}$-scaled vorticity. 

The plan is to derive estimates for the full linearized vorticity $\tilde{\omega}$, and then relate this back to the reduced linearized vorticity $\hat{\omega}$ by $\norm{\hat{\omega}} \lesssim \norm{\tilde{\omega}} + \norm{\overline{\omega}}$. Our reasoning is a matter of convenience, since estimates for $\tilde{\omega}$ are more readily available in view of \eqref{E:full_lin_vort_evolution_eq}. Additionally, $\hat{\omega}$ better resembles $\curl \tilde{u}$ in connection with our elliptic estimates (see Section \ref{S:Elliptic estimates} and Lemma \ref{L:relating_vorticity_spatial_curl}).

Throughout this section, we will use the following facts similar to the bookkeeping system from Lemma \ref{L:Book-keeping}. We put the order simplifications here for reference
\begin{equation}
\label{E:vorticity_facts_useful}
\begin{split}
    \overline{\omega}_{\alpha \beta} &  \simeq \partial \tilde{h} + \tilde{h}\\
    \tilde{h} &\simeq \tilde{r} + r \tilde{s}\\
     \partial_\alpha \tilde{h} &\simeq \partial \tilde{r} + r \partial \tilde{s}+ \tilde{r} + \tilde{s}  + r \tilde{s}
\end{split}
\end{equation}
where we also make use of Lemma \ref{L:Solving_for_time_derivatives} in the expression for $\partial_\alpha \tilde{h}$, i.e. $\partial_t \tilde{h}$ will have a slightly different expression, just with more subcritical terms coming from $\partial_t \tilde{s} \sim \partial \tilde{s}$ and $\partial_t \tilde{r} \sim \partial \tilde{r}$.



From linearizing \eqref{E:Vorticity_eq_2}, we have the evolution equation for $\tilde{\omega}$ given by
\begin{equation}
\label{E:full_lin_vort_evolution_eq}
    \begin{split}
        u^\mu \partial_\mu \tilde{\omega}_{\alpha \beta} + \partial_\alpha u^\mu \tilde{\omega}_{\mu \beta} + \partial_\beta u^\mu \tilde{\omega}_{\alpha \mu} &= -\tilde{u}^\mu \partial_\mu \omega_{\alpha \beta} - \partial_\alpha \tilde{u}^\mu \omega_{\mu \beta} - \partial_\beta \tilde{u}^\mu \omega_{\alpha \mu}  \\
        & \hspace{10mm} + \frac{(\gamma-1)^2}{\gamma^2 \Gamma} \tilde{s} (\partial_\alpha r \partial_\beta s - \partial_\beta r \partial_\alpha s) \\
        & \hspace{10mm} + \frac{\gamma-1}{\gamma \Gamma} (\partial_\alpha \tilde{r} \partial_\beta s + \partial_\alpha r \partial_\beta \tilde{s} - \partial_\beta \tilde{r} \partial_\alpha s - \partial_\beta r \partial_\alpha \tilde{s})
    \end{split}
\end{equation}

\begin{remark}
For the following lemma, recall Remark \ref{R:H^2k_norm_equivalence} where we have by embedding theorems that $\mathcal{H}^{2k} \sim H^{2k ,k + \frac{1}{2(\gamma-1)}} \times H^{2k ,k + \frac{1}{2(\gamma-1)} - \frac{1}{2}} \times H^{2k ,k + \frac{1}{2(\gamma-1)}}$ where the weight on $\tilde{r}$ is less by $\frac{1}{2}$.
\end{remark}

\begin{lemma}[Estimates for $\tilde{\omega}$]
\label{L:Estimate_full_lin_vorticity}
For a solution $\tilde{\omega}$ to \eqref{E:full_lin_vort_evolution_eq} on an interval $[0,T]$ defined in this section, we have the following estimate
\begin{equation}
\label{E:Full_lin_vort_estimate}
\frac{d}{dt} \norm{\tilde{\omega}}_{H^{2k-1, k+ \frac{1}{2(\gamma-1)}}}^2 \lesssim \mathcal{C} \norm{(\tilde{s}, \tilde{r}, \tilde{u})}_{\mathcal{H}^{2k}}^2
\end{equation}
where $\mathcal{C}$ depends on the $L^\infty(\Omega_t)$ norms of up to $2k+1$ derivatives of $s,r$, and $u$.

\end{lemma}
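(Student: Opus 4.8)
The plan is to adapt the entropy estimate of Proposition \ref{P:Entropy Estimates} to the transport-type equation \eqref{E:full_lin_vort_evolution_eq}. First I would rewrite \eqref{E:full_lin_vort_evolution_eq} in the form $D_t \tilde{\omega}_{\alpha\beta} = -\partial_\alpha u^\mu \tilde{\omega}_{\mu\beta} - \partial_\beta u^\mu \tilde{\omega}_{\alpha\mu} + \mathcal{R}_{\alpha\beta}$, where $\mathcal{R}_{\alpha\beta}$ collects the source terms on the right of \eqref{E:full_lin_vort_evolution_eq}. Applying a generic spatial derivative $\partial^{2k-1}$ and commuting past $D_t$ with Lemma \ref{L:Commutator_2} gives
\[
D_t\!\left(\partial^{2k-1}\tilde{\omega}_{\alpha\beta}\right) \simeq \sum_{i=0}^{2k-1} B_i\,\partial^i\tilde{\omega}_{\alpha\beta} + \partial^{2k-1}\!\left(-\partial_\alpha u^\mu \tilde{\omega}_{\mu\beta} - \partial_\beta u^\mu \tilde{\omega}_{\alpha\mu} + \mathcal{R}_{\alpha\beta}\right),
\]
where Remark \ref{R:Solving_for_time_derivatives} has been used to rewrite the commutator in terms of purely spatial derivatives. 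Using the book-keeping of Lemma \ref{L:Book-keeping} together with \eqref{E:vorticity_facts_useful}, the worst contributions on the right are $\partial^{2k}\tilde{u}$ (from $\partial^{2k-1}(\partial_\alpha \tilde{u}^\mu\omega_{\mu\beta})$), $\partial^{2k}\tilde{s}$ (from $\partial^{2k-1}(\partial_\alpha r\,\partial_\beta\tilde{s})$), and $\partial^{2k}\tilde{r}$ (from $\partial^{2k-1}(\partial_\alpha\tilde{r}\,\partial_\beta s)$), all of which are $\mathcal{H}^{2k}$-critical, or for $\tilde{r}$ even subcritical, once the weight $r^{k+\frac{1}{2(\gamma-1)}}$ is attached.

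Next I would contract with $r^{2k+\frac{1}{\gamma-1}}\,\partial^{2k-1}\tilde{\omega}^{\alpha\beta}$ and, using $D_t r \simeq r$ exactly as in \eqref{E:D_t r equals r, convective derivative of r} (so that $D_t(r^{2k+\frac{1}{\gamma-1}}) = -(2k+\tfrac{1}{\gamma-1})(\gamma-1)\,r^{2k+\frac{1}{\gamma-1}}\partial_\mu u^\mu$), move the weight-derivative term onto the right so the left side becomes $\tfrac12 D_t\big(|r^{k+\frac{1}{2(\gamma-1)}}\partial^{2k-1}\tilde{\omega}_{\alpha\beta}|^2\big)$. Summing over $\alpha,\beta$ and over multi-indices of length $2k-1$, integrating over $\Omega_t$, and applying the moving-domain formula \eqref{E:Moving_Domain_formula}, I obtain, in the notation of \eqref{E:Entropy_calc_1}, an identity $\frac{d}{dt}\norm{\tilde{\omega}}_{H^{2k-1,k+\frac{1}{2(\gamma-1)}}}^2 \simeq J_1 + \cdots + J_4$, where $J_1$ is the weight-derivative term, $J_4$ the moving-domain term carrying $\partial_i(u^i/u^0)$, $J_2$ the commutator together with the lower-order-in-$\tilde{\omega}$ pieces $\partial^{2k-1}(\partial_\alpha u^\mu\tilde\omega_{\mu\beta})$, and $J_3$ the image $\partial^{2k-1}\mathcal{R}$.

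The terms $J_1$, $J_4$, and the top-order part of $J_2$ are bounded by $\norm{\partial u}_{L^\infty(\Omega_t)}\int_{\Omega_t}|r^{k+\frac{1}{2(\gamma-1)}}\partial^{2k-1}\tilde{\omega}|^2\,dx \lesssim \norm{\partial u}_{L^\infty(\Omega_t)}\norm{(\tilde{s},\tilde{r},\tilde{u})}_{\mathcal{H}^{2k}}^2$, since $r^{k+\frac{1}{2(\gamma-1)}}\partial^{2k-1}\tilde{\omega}$ is $\mathcal{H}^{2k}$-critical by \eqref{E:vorticity_facts_useful} and Lemma \ref{L:free_boundary_embedding_lemma} — this is the same computation as the norm bound $\norm{\tilde\omega}_{H^{2k-1,k+\frac{1}{2(\gamma-1)}}}\lesssim\norm{(\tilde s,\tilde r,\tilde u)}_{\mathcal{H}^{2k}}$ implicit in the definition of $E^{2k}_{\text{transport}}$. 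For $J_3$ (and the lower-order remainder of $J_2$) I apply Cauchy--Schwarz, factoring out $\big(\int_{\Omega_t}r^{2k+\frac{1}{\gamma-1}}|\partial^{2k-1}\tilde{\omega}|^2\big)^{1/2}\lesssim\norm{(\tilde{s},\tilde{r},\tilde{u})}_{\mathcal{H}^{2k}}$ and estimating the companion factor by $\norm{(\tilde{s},\tilde{r},\tilde{u})}_{\mathcal{H}^{2k}}$: for strictly subcritical companions one trades surplus powers of $r$ for derivatives via Lemma \ref{L:Free_boundary_add_r} and Corollary \ref{C:Trading_deriv_for_weight} (gaining factors of $\hat{\varepsilon}$), while for the critical companions $\partial^{2k}\tilde{u}$ and $\partial^{2k}\tilde{s}$ one uses the arithmetic identity $\frac{2-\gamma}{2(\gamma-1)}+\frac12+k = k+\frac{1}{2(\gamma-1)}$, which gives $\int_{\Omega_t}r^{2k+\frac{1}{\gamma-1}}|\partial^{2k}(\tilde{u},\tilde{s})|^2\,dx \le \norm{(\tilde{s},\tilde{r},\tilde{u})}_{\mathcal{H}^{2k}}^2$. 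In every companion factor the background coefficients involve at most $2k+1$ spatial derivatives of $s,r,u$ (the extra one from Remark \ref{R:Solving_for_time_derivatives}), which fixes the dependence of $\mathcal{C}$; collecting $J_1,\dots,J_4$ yields \eqref{E:Full_lin_vort_estimate}.

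The main obstacle is the order bookkeeping in the first step: one must check that differentiating \eqref{E:full_lin_vort_evolution_eq} $2k-1$ times never produces a genuinely $\mathcal{H}^{2k}$-supercritical term. This works because of the structure of the source in \eqref{E:full_lin_vort_evolution_eq} — each source term carries either an undifferentiated linearized variable ($\tilde{s}$ in the $\tfrac{(\gamma-1)^2}{\gamma^2\Gamma}\tilde{s}(\cdots)$ term) or a single derivative of a linearized variable paired with a background factor $\partial r$ or $\partial s$ — so $\partial^{2k-1}$ deposits at most $2k$ derivatives on $(\tilde{s},\tilde{r},\tilde{u})$, which is exactly what the weight $r^{k+\frac{1}{2(\gamma-1)}}$ can absorb. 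The transport commutator $[D_t,\partial^{2k-1}]$ contributes only strictly lower-order terms by Lemma \ref{L:Commutator_2}, so it is harmless, and since no smallness of $r$ is needed here the appearance of $\mathcal{H}^{2k}$-critical (rather than subcritical) source terms is acceptable.
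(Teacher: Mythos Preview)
Your proposal is correct and follows essentially the same approach as the paper: differentiate the transport equation \eqref{E:full_lin_vort_evolution_eq}, multiply by the weighted solution, integrate using the moving-domain formula, and estimate each resulting term via the $\mathcal{H}^{2k}$ order bookkeeping of Section~\ref{S:Creating the Book-keeping scheme}. The only cosmetic difference is that the paper contracts with $r^{2k+\frac{1}{\gamma-1}} G^{\alpha}{}_{\gamma}G^{\beta}{}_{\delta}\partial^{l}\tilde{\omega}^{\gamma\delta}$ using the Riemannian metric $G$ of \eqref{E:G_metric_definition} (which generates an extra harmless term $R_2$ from $D_t(G^{\alpha}{}_{\gamma}G^{\beta}{}_{\delta})$ and organizes the right side into eight pieces $R_1,\dots,R_8$), whereas you use Euclidean summation over $\alpha,\beta$ and organize into $J_1,\dots,J_4$ following the template of Proposition~\ref{P:Entropy Estimates}; since $G$ and the Euclidean metric are equivalent these yield the same estimate.
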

\begin{proof}
We begin by fixing a spatial multiindex $l$ such that $|l| \leq 2k-1$. We recall the notation $|\partial^l \tilde{\omega}|_G^2 = G^\alpha_\gamma G^\beta_\delta \partial^l \tilde{\omega}^{\gamma \delta} \partial^l \tilde{\omega}_{\alpha \beta}$ (where $l$ is not summed over) and $G$ is the Riemannian metric defined in \eqref{E:G_metric_definition}. Then, we apply $\partial^l$ to \eqref{E:full_lin_vort_evolution_eq} and contract against the expression $r^{2k+\frac{1}{(\gamma-1)}} G^\alpha_\gamma G^\beta_\delta \partial^l \tilde{\omega}^{\gamma \delta}$ . 

When contracting, we will use the quick observation that
\begin{equation}
\label{E:vorticity_esimtate_start_1}
\begin{split}
    r^{2k+\frac{1}{(\gamma-1)}} G^\alpha_\gamma G^\beta_\delta \partial^l \tilde{\omega}^{\gamma \delta} u^\mu \partial_\mu \partial^l \tilde{\omega}_{\alpha \beta} &= \frac{1}{2} r^{2k+\frac{1}{(\gamma-1)}} D_t (|\partial^l \tilde{\omega}|_G^2) -  r^{2k+\frac{1}{(\gamma-1)}} \frac{1}{2} D_t(G^\alpha_\gamma G^\beta_\delta) \partial^l \tilde{\omega}^{\gamma \delta} \partial^l \tilde{\omega}_{\alpha \beta} \\
    &= \frac{1}{2} D_t( r^{2k+\frac{1}{(\gamma-1)}} |\partial^l \tilde{\omega}|_G^2) - \frac{1}{2}(2k + \frac{1}{\gamma-1}) r^{2k+\frac{1}{(\gamma-1)}} \partial_\mu u^\mu |\partial^l \tilde{\omega}|_G^2 \\
    & \hspace{7mm} -  r^{2k+\frac{1}{(\gamma-1)}} \frac{1}{2} D_t(G^\alpha_\gamma G^\beta_\delta) \partial^l \tilde{\omega}^{\gamma \delta} \partial^l \tilde{\omega}_{\alpha \beta}.
\end{split}
\end{equation}
and only the first term in \eqref{E:vorticity_esimtate_start_1} will be kept on the LHS. This leads to the following long expression:

\begin{equation}
\label{E:Vorticity_long_expression}
    \begin{split}
           \frac{1}{2} D_t( r^{2k+\frac{1}{(\gamma-1)}} |\partial^l \tilde{\omega}|_G^2) &= \frac{1}{2}(2k + \frac{1}{\gamma-1}) r^{2k+\frac{1}{(\gamma-1)}} \partial_\mu u^\mu |\partial^l \tilde{\omega}|_G^2 \\
           & \hspace{7mm} + r^{2k+\frac{1}{(\gamma-1)}} \frac{1}{2} D_t(G^\alpha_\gamma G^\beta_\delta) \partial^l \tilde{\omega}^{\gamma \delta} \partial^l \tilde{\omega}_{\alpha \beta} \\
           & \hspace{7mm} - r^{2k+\frac{1}{(\gamma-1)}} G^\alpha_\gamma G^\beta_\delta \partial^l \tilde{\omega}^{\gamma \delta}  \partial^l \left(\partial_\alpha u^\mu \tilde{\omega}_{\mu \beta} + \partial_\beta u^\mu \tilde{\omega}_{\alpha \mu}  \right) \\
             & \hspace{7mm} - r^{2k+\frac{1}{(\gamma-1)}} G^\alpha_\gamma G^\beta_\delta \partial^l \tilde{\omega}^{\gamma \delta} \partial^l\left( \tilde{u}^\mu \partial_\mu \omega_{\alpha \beta}\right) \\
              & \hspace{7mm} - r^{2k+\frac{1}{(\gamma-1)}} G^\alpha_\gamma G^\beta_\delta \partial^l \tilde{\omega}^{\gamma \delta} \partial^l\left(\partial_\alpha  \tilde{u}^\mu \omega_{\mu \beta} \right)\\
               & \hspace{7mm} - r^{2k+\frac{1}{(\gamma-1)}} G^\alpha_\gamma G^\beta_\delta \partial^l \tilde{\omega}^{\gamma \delta} \partial^l\left(\partial_\beta  \tilde{u}^\mu \omega_{\alpha \mu} \right)\\
               & \hspace{7mm} + r^{2k+\frac{1}{(\gamma-1)}} G^\alpha_\gamma G^\beta_\delta \partial^l \tilde{\omega}^{\gamma \delta} \partial^l\left(\frac{(\gamma-1)^2}{\gamma^2 \Gamma} \tilde{s} (\partial_\alpha r \partial_\beta s - \partial_\beta r \partial_\alpha s) \right)\\
                & \hspace{7mm} + r^{2k+\frac{1}{(\gamma-1)}} G^\alpha_\gamma G^\beta_\delta \partial^l \tilde{\omega}^{\gamma \delta} \partial^l\left(\frac{\gamma-1}{\gamma \Gamma} (\partial_\alpha \tilde{r} \partial_\beta s + \partial_\alpha r \partial_\beta \tilde{s} - \partial_\beta \tilde{r} \partial_\alpha s - \partial_\beta r \partial_\alpha \tilde{s})  \right)\\
                &:= \sum_{i=1}^8 R_i
    \end{split}
\end{equation}
where each term on the RHS of \eqref{E:Vorticity_long_expression} will be handled separately. Then, using the function $f(x,t) = r^{2k+\frac{1}{(\gamma-1)}} |\partial^l \tilde{\omega}|_G^2$, we have:
\begin{equation}
\label{E:vorticity_estimates_integral}
\begin{split}
     \frac{d}{dt} \int_{\Omega_t} f \hspace{1mm} dx &= \int_{\Omega_t} f \partial_i \left(\frac{u^i}{u^0} \right) \hspace{1mm} dx + \int_{\Omega_t} \frac{1}{u^0} \sum_{i=1}^8 R_i \hspace{1mm} dx
\end{split}
\end{equation}
We plan to estimate each of the terms on the RHS of \eqref{E:vorticity_estimates_integral} by the $\mathcal{H}^{2k}$ norm which is also used in Theorem \ref{Th:Higher_order_energy_estimates}. This is done by analyzing the $\mathcal{H}^{2k}$ order (see Remark \ref{R:order_free_boundary_term}) and making use of Lemma \ref{L:Solving_for_time_derivatives} as needed for any time derivatives that appear.

We start by observing that the first term on the RHS of \eqref{E:vorticity_estimates_integral} is handled in an identical way as the term $J_4$ from \eqref{E:Entropy_calc_1} in the entropy estimates. We can use \eqref{E:full_linearized_vorticity_definition} and
\eqref{E:vorticity_facts_useful} to get the additional comment that
\begin{equation}
\label{E:vorticity_facts_useful_2}
\begin{split}
    \tilde{\omega}:= \hat{\omega} + \overline{\omega} \simeq \partial \tilde{u} + \partial \tilde{r} + r \partial \tilde{s}+ \tilde{s} + \tilde{r} + r \tilde{s}
\end{split}
\end{equation}
noting that, from an $\mathcal{H}^{2k}$ order perspective, $\tilde{\omega}$ contains terms with the same order as $\partial \tilde{u}$ or better. These terms primarily come from the reduced linearized vorticity $\hat{\omega}$, whereas $\overline{\omega} \sim \partial \tilde{h} + \tilde{h}$ and $\tilde{h}$ contains terms that are like $\tilde{r}$ at worst.

To finish estimating the first term on the RHS of \eqref{E:vorticity_estimates_integral}, we have
\begin{equation}
\label{E:R_0_term_vorticity_estimate}
\begin{split}
    \left|\int_{\Omega_t} r^{2k+\frac{1}{(\gamma-1)}} |\partial^l \tilde{\omega}|_G^2 \partial_i \left(\frac{u^i}{u^0} \right) \hspace{1mm} dx \right| &\lesssim \norm{\partial u}_{L^\infty(\Omega_t)} \int_{\Omega_t} r^{2k+\frac{1}{(\gamma-1)}} \left|G^\alpha_\gamma G^\beta_\delta \partial^l \tilde{\omega}^{\gamma \delta} \partial^l \tilde{\omega}_{\alpha \beta} \right| \hspace{1mm} dx \\
    &\lesssim \norm{\partial u}_{L^\infty(\Omega_t)} \int_{\Omega_t} r^{\frac{2-\gamma}{\gamma-1}} \left|r^{k+\frac{1}{2}}  \partial^l (\partial \tilde{u} + \partial \tilde{r}+ r \partial \tilde{s} + \tilde{s} + \tilde{r} + r\tilde{s})\right|^2 \hspace{1mm} dx
\end{split}
\end{equation}
where we applied \eqref{E:vorticity_facts_useful_2}. The point is that if we greatly expand the expression $G^\alpha_\gamma G^\beta_\delta \partial^l \tilde{\omega}^{\gamma \delta} \partial^l \tilde{\omega}_{\alpha \beta}$, only the terms with the \textit{worst} $\mathcal{H}^{2k}$ order need to be checked, since terms that are more subcritical have plenty of powers of $r$ to be estimated using the $\mathcal{H}^{2k}$ norm. In this integral, the terms with the worst order are $r^{k+\frac{1}{2}} \partial^{l+1} \tilde{u}$. Since $|l|\leq 2k-1 $, we see that the order of this term is exactly $0$ (critical) when $|l|=2k-1$ and subcritical when $|l| < 2k-1$. Thus, it can be estimated using the $\mathcal{H}^{2k}$ norm. For any cross terms in the expansion of $\left|  \partial^l (\partial \tilde{u} + \partial \tilde{r}+ r \partial \tilde{s} + \tilde{s} + \tilde{r} + r\tilde{s})\right|^2$, we simply apply Lemma \ref{L:Sum_orders} where the order of a product of is the sum of the orders of each term. For example, in the product $|(r^{k + \frac{1}{2(\gamma-1)}} \partial^{l+1} \tilde{u}) (r^{k + \frac{1}{2(\gamma-1)}} \partial^{l+1} \tilde{r})|$, we see that the order (i.e. when $\gamma=2$) is $0 + 1/2 = 1/2 \geq 0$ at worst when $|l|=2k-1$, so Lemma \ref{L:Sum_orders} implies that
\begin{equation}
    \int_{\Omega_t} |(r^{k + \frac{1}{2(\gamma-1)}} \partial^{l+1} \tilde{u}) (r^{k + \frac{1}{2(\gamma-1)}} \partial^{l+1} \tilde{r})| \hspace{1mm} dx \lesssim \norm{(\tilde{s}, \tilde{r}, \tilde{u})}_{\mathcal{H}^{2k}}^2 
\end{equation}
All other cross terms in $\left|  \partial^l (\partial \tilde{u} + \partial \tilde{r}+ r \partial \tilde{s} + \tilde{s} + \tilde{r} + r\tilde{s})\right|^2$ are handled using Lemma \ref{L:Sum_orders}. Thus, we get
\begin{equation}
      \left|\int_{\Omega_t} r^{\frac{2-\gamma}{\gamma-1}} |r^{k+\frac{1}{2}}\partial^l \tilde{\omega}|_G^2 \partial_i \left(\frac{u^i}{u^0} \right) \hspace{1mm} dx \right| \lesssim  \mathcal{C}_0 \norm{(\tilde{s}, \tilde{r}, \tilde{u})}_{\mathcal{H}^{2k}}^2 
\end{equation}
where $\mathcal{C}_0$ depends on the $L^\infty(\Omega_t)$ norms of up to $2k$ derivatives of $s,r$, and $u$.

We will proceed through each remaining term on the RHS of \eqref{E:vorticity_estimates_integral}. For the integrals containing $R_1$ and $R_2$, the estimate is quite similar to the first term since derivatives of $u$ will be removed, and $D_t (G^\alpha_\gamma G^\beta_\delta) = D_t((g^\alpha_\gamma + 2u^\alpha u_\gamma)(g^\beta_\delta + 2u^\beta u_\delta))$ is another expression containing derivatives of $u$. We have
\begin{equation}
    |R_1| \lesssim \norm{\partial u}_{L^\infty(\Omega_t)} \int_{\Omega_t} r^{\frac{2-\gamma}{\gamma-1}}  \left(k+ \frac{1}{2(\gamma-1)}\right) |r^{k+\frac{1}{2}}\partial^l \tilde{\omega}|_G^2 \hspace{1mm} dx \lesssim \mathcal{C}_1 \norm{(\tilde{s}, \tilde{r}, \tilde{u})}_{\mathcal{H}^{2k}}^2 
\end{equation}
and
\begin{equation}
\begin{split}
     |R_2| & \lesssim \int_{\Omega_t} r^{2k+\frac{1}{(\gamma-1)}} \frac{1}{2} |D_t(G^\alpha_\gamma G^\beta_\delta) \partial^l \tilde{\omega}^{\gamma \delta} \partial^l \tilde{\omega}_{\alpha \beta}| \hspace{1mm} dx \\
     & \lesssim  \norm{\partial u}_{L^\infty(\Omega_t)} \int_{\Omega_t} r^{\frac{2-\gamma}{\gamma-1}}  \left|r^{k+\frac{1}{2}}  \partial^l (\partial \tilde{u} + \partial \tilde{r}+ r \partial \tilde{s} + \tilde{s} + \tilde{r} + r\tilde{s})\right|^2 \hspace{1mm} dx \\
     &\lesssim \mathcal{C}_2 \norm{(\tilde{s}, \tilde{r}, \tilde{u})}_{\mathcal{H}^{2k}}^2 
\end{split}
\end{equation}
where we handled it in the same way as \eqref{E:R_0_term_vorticity_estimate}. 

For the $R_3$ integral, distributing $ \partial^l \left(\partial_\alpha u^\mu \tilde{\omega}_{\mu \beta} + \partial_\beta u^\mu \tilde{\omega}_{\alpha \mu}  \right) $ leads to terms that contain $\partial^l \tilde{\omega}$ at worst along with up to $2k$ derivatives of $u$. We can write this as
\begin{equation}
    \partial^l \left(\partial_\alpha u^\mu \tilde{\omega}_{\mu \beta} + \partial_\beta u^\mu \tilde{\omega}_{\alpha \mu}  \right) \simeq \sum_{n=0}^{|l|} \partial^n \tilde{\omega} \partial^{|l|-n+1} u 
\end{equation}
so that, upon estimating, we will have
\begin{equation}
    |R_3| \lesssim \norm{\partial^{2k} u}_{L^\infty(\Omega_t)} \int_{\Omega_t} r^{\frac{2-\gamma}{\gamma-1}}   \sum_{n=0}^{|l|} |r^{k+\frac{1}{2}} \partial^l \tilde{\omega}| |r^{k+ \frac{1}{2}}\partial^n \tilde{\omega}| \hspace{1mm} dx \\
    \lesssim \mathcal{C}_3 \norm{(\tilde{s}, \tilde{r}, \tilde{u})}_{\mathcal{H}^{2k}}^2 
\end{equation}
since the worst term in the summation only contains $2k-1$ derivatives of $\tilde{\omega}$.

For the $R_4$ integral, we have $\partial^l (\tilde{u}^\mu \partial_\mu \omega_{\alpha \beta})$ which will lead to terms with $\partial^{2k-1} \tilde{u}$ at worst and coefficients containing up to $2k+1$ derivatives of $u,r$, and $s$ coming from $\partial^l  (\partial_\mu \omega_{\alpha \beta})$. Since we have previously estimated up to $\partial^{2k} \tilde{u}$, the terms in $R_4$ will be more subcritical (and thus easier to estimate) than $R_1$ through $R_3$ which all contain $2k-1$ derivatives of $\tilde{\omega}$. We can write it as
\begin{equation}
\begin{split}
    |R_4| &\lesssim \norm{\partial^{2k+1} (u,r,s)}_{L^\infty(\Omega_t)} \int_{\Omega_t} r^{\frac{2-\gamma}{\gamma-1}}   | (r^{k+\frac{1}{2}} \partial^l \tilde{\omega}) (r^{k+\frac{1}{2}} \partial^l \tilde{u})| \hspace{1mm} dx \\
     & \lesssim  \mathcal{C}_4 \norm{(\tilde{s}, \tilde{r}, \tilde{u})}_{\mathcal{H}^{2k}}^2 
\end{split}
\end{equation}

For the $R_5$ and $R_6$ integrals, the argument is quite similar since we get up to $2k$ derivatives of $\tilde{u}$ and up to $2k$ derivatives of $\omega$. 

For the $R_7$, integral, we will have terms containing up to $2k-1$ derivatives of $\tilde{s}$ and $2k$ derivatives of $r$ and $s$. However, we know from computing $\mathcal{H}^{2k}$ orders that $\partial^{2k-1} \tilde{s}$ has the same order as $\partial^{2k-1} \tilde{u}$, so it can be estimated in the same way as $R_4$.

Finally, for the $R_8$ integral,  we will have terms containing up to $2k$ derivatives of $\tilde{s}$ and $\tilde{r}$, and up to $2k$ derivatives of $s$ and $r$. However, we know that the order of these terms will be comparable to $\partial^{2k} \tilde{u}$ at worst (and in fact, terms with $\partial^{2k} \tilde{r}$ are more subcritical by an order value of $1/2$). 

Thus, by estimating each of the terms in \eqref{E:vorticity_estimates_integral} separately and removing expressions involving $L^\infty$ norms of up to $2k+1$ derivatives of $s, r,$ and $u$, we combine the above estimates with \eqref{E:vorticity_estimates_integral} to get
\begin{equation}
    \frac{d}{dt} \int_{\Omega_t} r^{\frac{2-\gamma}{\gamma-1}} |r^{k+\frac{1}{2}} \partial^l \tilde{\omega}|_G^2 \hspace{1mm} dx \lesssim \mathcal{C} \norm{(\tilde{s}, \tilde{r}, \tilde{u})}_{\mathcal{H}^{2k}}^2 
\end{equation}
where $\mathcal{C}$ depends on up to $2k+1$ derivatives of $s, r,$ and $u$. Summing over the multiindex $l$ achieves the desired vorticity estimate using the $\mathcal{H}^{2k}$ norm.

\end{proof}
Lemma \ref{L:Estimate_full_lin_vorticity} leads nicely into the next theorem, and we note how this theorem compares with the transport energy defined in \eqref{E:E^2k_transport}.

\begin{theorem}[Linearized vorticity, Transport energy estimates]
\label{Th:Transport_Energy_estimates}
The following estimate holds for the linearized vorticity defined in \eqref{E:full_linearized_vorticity_definition}. 
\begin{equation}
 \norm{\hat{\omega}}_{H^{2k-1, k+ \frac{1}{2(\gamma-1)}}(\Omega_t)}^2 \lesssim  \norm{\tilde{\omega}_0}_{H^{2k-1, k+ \frac{1}{2(\gamma-1)}} (\Omega_0)}^2 + \varepsilon \norm{(\tilde{s}, \tilde{r}, \tilde{u})}_{\mathcal{H}^{2k}(\Omega_t) }^2 + \int_0^t \mathcal{C} \norm{(\tilde{s}, \tilde{r}, \tilde{u})}_{\mathcal{H}^{2k}(\Omega_\tau)}^2 \hspace{1mm} d \tau 
\end{equation}
where $\tilde{\omega}_0$ is the initial linearized vorticity and $\mathcal{C}$ is from Lemma \ref{L:Estimate_full_lin_vorticity}.
\end{theorem}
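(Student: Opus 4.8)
The plan is to obtain the bound for the reduced linearized vorticity $\hat\omega$ from the estimate for the full linearized vorticity $\tilde\omega$ already established in Lemma~\ref{L:Estimate_full_lin_vorticity}, via the splitting $\tilde\omega_{\alpha\beta}=\hat\omega_{\alpha\beta}+\overline\omega_{\alpha\beta}$ of \eqref{E:full_linearized_vorticity_definition} and the triangle inequality
\begin{equation*}
\norm{\hat\omega}_{H^{2k-1,\,k+\frac{1}{2(\gamma-1)}}(\Omega_t)}^2 \lesssim \norm{\tilde\omega}_{H^{2k-1,\,k+\frac{1}{2(\gamma-1)}}(\Omega_t)}^2 + \norm{\overline\omega}_{H^{2k-1,\,k+\frac{1}{2(\gamma-1)}}(\Omega_t)}^2 .
\end{equation*}
The point is that only $\tilde\omega$ satisfies a closed transport-type system \eqref{E:full_lin_vort_evolution_eq}, so its weighted $H^{2k-1}$ norm is controlled directly by Lemma~\ref{L:Estimate_full_lin_vorticity}, whereas the correction $\overline\omega$ is built solely from $\tilde h$ — hence from $\tilde r$ and $r\tilde s$, with extra powers of $r$ — and is therefore a strictly subcritical term costing only an $\varepsilon$-small multiple of $\norm{(\tilde s,\tilde r,\tilde u)}_{\mathcal{H}^{2k}}^2$.

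First I would integrate the differential inequality of Lemma~\ref{L:Estimate_full_lin_vorticity} in time from $0$ to $t$, which gives
\begin{equation*}
\norm{\tilde\omega}_{H^{2k-1,\,k+\frac{1}{2(\gamma-1)}}(\Omega_t)}^2 \lesssim \norm{\tilde\omega_0}_{H^{2k-1,\,k+\frac{1}{2(\gamma-1)}}(\Omega_0)}^2 + \int_0^t \mathcal{C}\,\norm{(\tilde s,\tilde r,\tilde u)}_{\mathcal{H}^{2k}(\Omega_\tau)}^2\, d\tau,
\end{equation*}
with $\tilde\omega_0=\tilde\omega|_{t=0}$ and $\mathcal{C}$ exactly the constant of that lemma; this already supplies the first and third terms of the claimed inequality.

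Next I would bound $\norm{\overline\omega}_{H^{2k-1,\,k+\frac{1}{2(\gamma-1)}}(\Omega_t)}^2$ at the fixed time $t$ purely by the order bookkeeping of Section~\ref{S:Creating the Book-keeping scheme}. Since $\overline\omega_{\alpha\beta}=\partial_\alpha(\tilde h u_\beta)-\partial_\beta(\tilde h u_\alpha)$ with $\tilde h\simeq \tilde r+r\tilde s$ and $\partial_\alpha\tilde h\simeq \partial\tilde r+r\partial\tilde s+\tilde r+\tilde s+r\tilde s$ by \eqref{E:vorticity_facts_useful}, applying $\partial^l$ with $|l|\leq 2k-1$ (and converting any $\partial_t$ falling on $\tilde h$ into spatial derivatives plus more subcritical terms via Remark~\ref{R:Solving_for_time_derivatives}) yields a sum of free-boundary terms whose worst members are of the form $r^{k+\frac{1}{2(\gamma-1)}}\partial^{2k}\tilde r$ and $r^{k+1+\frac{1}{2(\gamma-1)}}\partial^{2k}\tilde s$, with $O(1)$ coefficients depending on $\leq 2k$ derivatives of $s,r,u$. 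Comparing against the weights $r^{k+\frac{2-\gamma}{2(\gamma-1)}}$ on $\partial^{2k}\tilde r$ and $r^{k+\frac{1}{2(\gamma-1)}}$ on $\partial^{2k}\tilde s$ appearing in $\norm{\cdot}_{\mathcal{H}^{2k}}$ of \eqref{E:Higher_order_H^2k_norm}, one gains at least a full extra power of $r$ (here one uses $\frac{1}{2(\gamma-1)}-\frac{2-\gamma}{2(\gamma-1)}=\frac12$, valid since $\gamma>1$), so each such term is $\mathcal{H}^{2k}$-subcritical in the sense of Definition~\ref{D:definition_critical_subcritical_supercritical}. Pulling this power of $r$ out in $L^\infty(\Omega_t)$ and invoking Assumption~\ref{A:Assumption_smallness r} — equivalently, Lemma~\ref{L:Free_boundary_add_r} together with Corollary~\ref{C:Trading_deriv_for_weight} — gives
\begin{equation*}
\norm{\overline\omega}_{H^{2k-1,\,k+\frac{1}{2(\gamma-1)}}(\Omega_t)}^2 \lesssim \hat\varepsilon\,\norm{(\tilde s,\tilde r,\tilde u)}_{\mathcal{H}^{2k}(\Omega_t)}^2 \leq \varepsilon\,\norm{(\tilde s,\tilde r,\tilde u)}_{\mathcal{H}^{2k}(\Omega_t)}^2,
\end{equation*}
using $\hat\varepsilon<\varepsilon$ from Remark~\ref{R:Interplay between epsilons}. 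Combining the three displays proves the theorem.

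The only genuine difficulty is this middle step: one must check that when $\partial^l$, $|l|\leq 2k-1$, lands on $\overline\omega$, the top-order contribution really carries an extra power of $r$ relative to the matching slot of the $\mathcal{H}^{2k}$ norm. This rests on the $\tfrac12$-power gap between the $\tilde r$- and $\tilde s$-weights in \eqref{E:Higher_order_H^2k_norm} and on the fact that $\overline\omega$ never involves $\tilde u$, so it is automatically at least a half order better than the $\partial^{2k}\tilde u$ terms that render $\norm{\tilde\omega}^2$ only critical. Everything else is the triangle inequality, Lemma~\ref{L:Estimate_full_lin_vorticity}, and the order machinery already developed.
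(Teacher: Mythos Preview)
Your proposal is correct and follows essentially the same approach as the paper: split $\hat\omega=\tilde\omega-\overline\omega$, integrate Lemma~\ref{L:Estimate_full_lin_vorticity} in time for the $\tilde\omega$ piece, and use the subcriticality of $\overline\omega$ (via \eqref{E:vorticity_facts_useful} and Assumption~\ref{A:Assumption_smallness r}) to absorb it into $\hat\varepsilon\norm{(\tilde s,\tilde r,\tilde u)}_{\mathcal{H}^{2k}}^2$. One small wording slip: the gain on the $\partial^{2k}\tilde r$ term is only $\tfrac12$ a power of $r$ (as your own computation $\frac{1}{2(\gamma-1)}-\frac{2-\gamma}{2(\gamma-1)}=\frac12$ shows), not ``a full extra power,'' but this is exactly what Lemma~\ref{L:Free_boundary_add_r} needs, and the paper obtains the same $\hat\varepsilon$ factor.
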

\begin{proof}
We plan to use Lemma \ref{L:Estimate_full_lin_vorticity} and the following identity $\hat{\omega}= \tilde{\omega} - \overline{\omega}$ to split the estimate:
\begin{equation}
    \norm{\hat{\omega}}_{H^{2k-1, k+ \frac{1}{2(\gamma-1)}}(\Omega_t)}^2 \lesssim  \norm{\tilde{\omega}}_{H^{2k-1, k+ \frac{1}{2(\gamma-1)}}(\Omega_t)}^2 + \norm{\overline{\omega}}_{H^{2k-1, k+ \frac{1}{2(\gamma-1)}}(\Omega_t)}^2
\end{equation}
Now, from the definition of $\overline{\omega}$ and the simplifications outlined in \eqref{E:vorticity_facts_useful}, we get the following when applying a multiindex $l$ with $|l|\leq 2k-1$:
\begin{equation}
\begin{split}
     \partial^l \overline{\omega} &\simeq \partial^l (\partial (\tilde{r} + r \tilde{s}) + \tilde{r} + r \tilde{s}) \\
     &\simeq \partial^{l+1} \tilde{r} + r\partial^{l+1} \tilde{s} + \partial^l \tilde{s}
\end{split}
\end{equation}
where the additional terms are even more subcritical than the ones listed. Then, using the $L^2(r^{2k+\frac{1}{(\gamma-1)}})$ norm, we have 
\begin{equation}
\begin{split}
     \int_{\Omega_t} r^{\frac{2-\gamma}{\gamma-1}}|r^{k+\frac{1}{2}}\partial^l \overline{\omega}|_G^2 \hspace{1mm} dx & \lesssim \int_{\Omega_t} r^{\frac{2-\gamma}{\gamma-1}} |r^{k+\frac{1}{2}} \partial^{l+1} \tilde{r}|^2 \hspace{1mm} dx +  \int_{\Omega_t} r^{\frac{2-\gamma}{\gamma-1}} |r^{k+\frac{1}{2}+1} \partial^{l+1} \tilde{s}|^2 \hspace{1mm} dx \\
     & \hspace{5mm} + \int_{\Omega_t} r^{\frac{2-\gamma}{\gamma-1}} |r^{k+\frac{1}{2}} \partial^{l} \tilde{s}|^2 \hspace{1mm} dx  \\
     &\lesssim \hat{\varepsilon} \int_{\Omega_t} r^{\frac{2-\gamma}{\gamma-1}} |r^{k} \partial^{l+1} \tilde{r}|^2 \hspace{1mm} dx +  \hat{\varepsilon} \int_{\Omega_t} r^{\frac{2-\gamma}{\gamma-1}} |r^{k+\frac{1}{2}} \partial^{l+1} \tilde{s}|^2 \hspace{1mm} dx \\
     & \hspace{5mm} + \hat{\varepsilon} \int_{\Omega_t} r^{\frac{2-\gamma}{\gamma-1}} |r^{k-1+\frac{1}{2}} \partial^{l} \tilde{s}|^2 \hspace{1mm} dx  \\
\end{split}
\end{equation}
where we used the smallness of $r$ from Remark \ref{R:Omega_t_near_free_boundary}.
Summing over $|l| \leq 2k-1$ and counting the order for each term (where the worst terms take the form $r^{k} \partial^{2k} \tilde{r}, r^{k+\frac{1}{2}} \partial^{2k} \tilde{u}$, and $r^{k+\frac{1}{2} - 1} \partial^{2k-1} \tilde{s}$), we get the estimate
\begin{equation}
\label{E:hat_omega_estimate_by_H2k}
\begin{split}
     \norm{\overline{\omega}}_{H^{2k-1, k+ \frac{1}{2(\gamma-1)}}}^2
     &\lesssim \hat{\varepsilon} \norm{(\tilde{s}, \tilde{r}, \tilde{u})}_{\mathcal{H}^{2k}}^2
\end{split}
\end{equation}

By integrating Lemma \ref{L:Estimate_full_lin_vorticity}, we have
\begin{equation}
    \norm{\tilde{\omega}}_{H^{2k-1, k+ \frac{1}{2(\gamma-1)}} (\Omega_t)}^2 \lesssim  \norm{\tilde{\omega}_0}^2_{H^{2k-1, k+ \frac{1}{2(\gamma-1)}} (\Omega_0)} + \int_0^t \mathcal{C} \norm{(\tilde{s}, \tilde{r}, \tilde{u})}_{\mathcal{H}^{2k}(\Omega_\tau)}^2 \hspace{1mm} d \tau
\end{equation}
where $\tilde{\omega}_0$ can be solved for using the initial data for the linearized equation $(\tilde{s}_0, \tilde{r}_0, \tilde{u}_0)$. Combining the above equations together yields the desired result.
\end{proof}

\begin{remark}
    Theorem \ref{Th:Transport_Energy_estimates} will be used in our main result, Theorem \ref{Th:Main_Theorem_Estimates_in_H^2k}.
\end{remark}

\section{Elliptic and Div-Curl Estimates}
\label{S:Elliptic estimates}

Our goal is to bound the total energy \eqref{E:E^2k_full_linearized_energy} from above and below by the $\mathcal{H}^{2k}$ norm \eqref{E:Higher_order_H^2k_norm}. In this section, we focus on bounding \eqref{E:E^2k_full_linearized_energy} from below, and one of the key ingredients for the wave part \eqref{E:E^2k_wave} is elliptic estimates involving $\tilde{r}$ and the spatial divergence of $\tilde{u}$. The proofs in this section involve isolating a ``good'' spatial elliptic operator similar to \cite{DisconziIfrimTataru}, with the added difficulty in that we must explicitly split off time derivatives while isolating the critical terms and using Section \ref{S:Sovling_for_time_derivatives}. Additionally, we will often make use of Lemma \ref{L:Book-keeping} where identities with $D_t$ serve as another method for counting order.

Our analysis will focus on isolating the ``good'' spatial elliptic operators, as one can readily compare that the structure of these operators closely resembles the elliptic operators already treated in \cite{DisconziIfrimTataru}.

\subsection{Elliptic estimates for \texorpdfstring{$\tilde{r}$}{r}}

Upon taking $D_t$ of equation \eqref{E:Linearized_System_2_r_equation} and applying Theorem \ref{L:Book-keeping} as needed, we can rewrite $D_t^2 \tilde{r}$ in the following way:

\begin{equation}
\label{E:r_Elliptic_start}
    D_t^2 \tilde{r} \simeq L_1 \tilde{r} + \text{ additional terms}
\end{equation} 
with
\begin{equation}
\label{E:L_1_definition}
\begin{split}
     L_1 \tilde{r} &:= \frac{\gamma-1}{\Gamma + r} \proj^{\mu \nu} (r  \partial_\mu \partial_\nu \tilde{r} + \frac{1}{\gamma-1} \partial_\mu r \partial_\nu \tilde{r}) \\
     &= \frac{\gamma-1}{\Gamma + r} \proj^{i j} (r  \partial_i \partial_j \tilde{r} + \frac{1}{\gamma-1} \partial_i r \partial_j \tilde{r}) \\
     & \hspace{7mm} + \frac{\gamma-1}{\Gamma+r} \proj^{00}(r  \partial_t^2 \tilde{r} + \frac{1}{\gamma-1}\partial_t r \partial_t \tilde{r}) \\
      & \hspace{7mm} + \frac{2(\gamma-1)}{\Gamma+r} \proj^{i0}(r  \partial_i \partial_t \tilde{r})\\
     & \hspace{7mm} + \frac{1}{\Gamma+r} \proj^{0j}(\partial_t r \partial_j \tilde{r}) \\
     & \hspace{7mm} + \frac{1}{\Gamma+r} \proj^{i0}(\partial_i r \partial_t \tilde{r}) \\
\end{split}
\end{equation}
serving as the ``spacetime'' elliptic operator for $\tilde{r}$, and we have split the time derivatives from the spatial derivatives.
To see how the additional terms in \eqref{E:r_Elliptic_start} will be treated, as well as the higher order elliptic estimates, we refer the reader to section \ref{S:Subsubsection_initial commutators} where these additional terms are shown to be subcritical using our book-keeping scheme from Section \ref{S:Creating the Book-keeping scheme}. 

It is important to note that since we are estimating the wave energy from below by the $\mathcal{H}^{2k}$ norm, we cannot simply cite section \ref{S:Sovling_for_time_derivatives} for the time derivatives since the structure of the terms is quite important. Instead, we will make use of the following additional facts from the definition of $D_t$:
\begin{equation}
\label{E:Elliptic_start_D_t_facts}
\begin{split}
    \partial_t \tilde{r} &= \frac{1}{u^0} D_t \tilde{r} - \frac{u^i}{u^0} \partial_i \tilde{r} \\
    r \partial_t^2 \tilde{r} &= \frac{1}{(u^0)^2} r D_t^2 \tilde{r} - \frac{u^i u^j}{(u^0)^2} r \partial_i \partial_j \tilde{r} - 2 \frac{u^i}{u^0} r \partial_i \partial_t \tilde{r} - \frac{1}{(u^0)^2} D_t u^0 r \partial_t \tilde{r} - \frac{1}{(u^0)^2} D_t u^i r \partial_i \tilde{r} \\
    &= \frac{1}{(u^0)^2} r D_t^2 \tilde{r} + \textcolor{red}{\frac{u^i u^j}{(u^0)^2} r \partial_i \partial_j \tilde{r}} - 2 \frac{u^i}{(u^0)^2} r \partial_i (D_t \tilde{r}) + 2 \frac{u^i}{u^0} \partial_i \frac{u^j}{u^0} r \partial_j \tilde{r}   - \frac{1}{(u^0)^2} D_t u^0 r \partial_t \tilde{r} - \frac{1}{(u^0)^2} D_t u^i r \partial_i \tilde{r} \\
\end{split}
\end{equation}
where, by Remark \ref{R:order_free_boundary_term}, the only critical terms (for $k=1$) are in \textcolor{red}{red}. For the remaining terms, we can write
\begin{equation}
\begin{split}
    \frac{1}{\Gamma+r} \proj^{00} \partial_t r \partial_t \tilde{r} &=  \frac{1}{\Gamma+r} \proj^{00} \partial_t r \left(\frac{1}{u^0} D_t \tilde{r} - \frac{u^i}{u^0} \partial_i \tilde{r} \right) \\
    &= \frac{1}{(\Gamma+r)u^0} \proj^{00} \partial_t r D_t \tilde{r} \textcolor{red}{- \frac{1}{(\Gamma+r)u^0} \proj^{00} \partial_t r u^i \partial_i \tilde{r}}
\end{split}
\end{equation}
and 
\begin{equation}
\begin{split}
    \frac{2(\gamma-1)}{\Gamma+r} \proj^{i0}(r  \partial_i \partial_t \tilde{r}) &=     \frac{2(\gamma-1)}{\Gamma+r} u^i u^0 r \partial_i \left( \frac{1}{u^0} D_t \tilde{r} - \frac{u^j}{u^0} \partial_j \tilde{r}\right) \\
    &= \frac{2(\gamma-1)}{\Gamma+r} r u^i  \partial_i (D_t \tilde{r}) \textcolor{red}{- \frac{2(\gamma-1)}{\Gamma+r} r u^i u^j \partial_i \partial_j \tilde{r}} \\
    & \hspace{7mm} + \frac{2(\gamma-1)}{\Gamma+r} u^i u^0 r \left[D_t\tilde{r} \partial_i \frac{1}{u^0} - \partial_i \frac{u^j}{u^0} \partial_j \tilde{r} \right].
\end{split}
\end{equation}
and
\begin{equation}
\begin{split}
     \frac{1}{\Gamma+r} \proj^{i0}(\partial_i r \partial_t \tilde{r}) &=  \frac{1}{\Gamma+r} \proj^{i0}\partial_i r \left( \frac{1}{u^0} D_t \tilde{r} - \frac{u^j}{u^0} \partial_j \tilde{r}\right) \\
     &= \frac{1}{(\Gamma+r)u^0} \proj^{i0} \partial_i r D_t \tilde{r} \textcolor{red}{- \frac{1}{(\Gamma+r)u^0} \proj^{i0} \partial_i r u^j \partial_j \tilde{r}}.
\end{split}
\end{equation}
So, if we gather up all the \textcolor{red}{red} critical terms (which look like $r \partial^2 \tilde{r}$ or $\partial \tilde{r}$), we will get precisely, 
\begin{equation}
    \begin{split}
        L_1 \tilde{r} &= \textcolor{red}{\frac{\gamma-1}{\Gamma+r} \left(\proj^{ij} + \frac{((u^0)^2-1)u^i u^j}{(u^0)^2} - 2u^i u^j \right)  r \partial_i \partial_j \tilde{r}} \\
        & \hspace{5mm} + \textcolor{red}{ \frac{1}{\Gamma+r} g^{ij} \partial_i r \partial_j \tilde{r} + \frac{1}{\Gamma+r} \left(u^0 u^j - \frac{((u^0)^2-1)}{u^0} u^j \right) \partial_t r \partial_j \tilde{r}   
        } \\
        &\hspace{5mm} + \frac{\gamma-1}{\Gamma+r}  \left(\frac{1}{(u^0)^2} r D_t^2 \tilde{r} - 2 \frac{u^i}{(u^0)^2} r \partial_i (D_t \tilde{r}) + 2 \frac{u^i}{u^0} \partial_i \frac{u^j}{u^0} r \partial_j \tilde{r}   - \frac{1}{(u^0)^2} D_t u^0 r \partial_t \tilde{r} - \frac{1}{(u^0)^2} D_t u^i r \partial_i \tilde{r} \right) \\
        & \hspace{5mm} + \frac{1}{(\Gamma+r)u^0} \proj^{00} \partial_t r D_t \tilde{r} \\
        & \hspace{5mm} + \frac{2(\gamma-1)}{\Gamma+r} r u^i  \partial_i (D_t \tilde{r})  + \frac{2(\gamma-1)}{\Gamma+r} u^i u^0 r \left[D_t\tilde{r} \partial_i \frac{1}{u^0} - \partial_i \frac{u^j}{u^0} \partial_j \tilde{r} \right] \\
        & \hspace{5mm} + \frac{1}{(\Gamma+r)u^0} \proj^{i0} \partial_i r D_t \tilde{r}\\
    \end{split}
\end{equation}
and we can check that each of the black terms is subcritical with the help of Lemma \ref{L:Book-keeping} and Remark \ref{R:order_free_boundary_term}. For a discussion of how these subcritical terms are handled, see section \ref{S:Subsubsection_initial commutators} below. Finally, we use \eqref{E:Elliptic_start_D_t_facts} to simplify $\partial_t r$ in the red critical terms (which splits into a subcritical and a critical term). Combining, we end up with 
\begin{equation}
    \begin{split}
        L_1 \tilde{r} &= \textcolor{red}{\frac{\gamma-1}{\Gamma+r} H^{ij} (r \partial_i \partial_j \tilde{r} + \frac{1}{\gamma-1} \partial_i r \partial_j \tilde{r})} \\
        & \hspace{5mm} + \left(\text{black subcritical terms} \right)
    \end{split}
\end{equation}
where
\begin{equation}
\label{E:H^ij_definition}
    H^{ij} := \delta^{ij} - \frac{u^i u^j}{(u^0)^2},
\end{equation}

If we set the ``good'' spatial elliptic operator to be
\begin{equation}
\label{E:tilde_L_1_definition}
    \textcolor{red}{\tilde{L}_1 \tilde{r} := \frac{\gamma-1}{\Gamma+r} H^{ij} (r \partial_i \partial_j \tilde{r} + \frac{1}{\gamma-1} \partial_i r \partial_j \tilde{r})}
\end{equation}
then, we get the expression:

\begin{equation}
\label{E:L_1_simplified_expression}
\boxed{
        L_1 \tilde{r} = \textcolor{red}{\tilde{L}_1 \tilde{r}} + \left(\text{black subcritical terms} \right)
    }
\end{equation}
where the term in red is critical, and all the black sub-critical terms are given by:
\begin{equation*}
\begin{split}
    &= \frac{1}{\Gamma+r} \frac{1}{(u^0)^2} D_t r u^j \partial_j \tilde{r} \\
     &\hspace{5mm} + \frac{\gamma-1}{\Gamma+r}  \left(\frac{1}{(u^0)^2} r D_t^2 \tilde{r} - 2 \frac{u^i}{(u^0)^2} r \partial_i (D_t \tilde{r}) + 2 \frac{u^i}{u^0} \partial_i \frac{u^j}{u^0} r \partial_j \tilde{r}   - \frac{1}{(u^0)^2} D_t u^0 r \partial_t \tilde{r} - \frac{1}{(u^0)^2} D_t u^i r \partial_i \tilde{r} \right) \\
        & \hspace{5mm} + \frac{1}{(\Gamma+r)u^0} \proj^{00} \partial_t r D_t \tilde{r} \\
        & \hspace{5mm} + \frac{2(\gamma-1)}{\Gamma+r} r u^i  \partial_i (D_t \tilde{r})  + \frac{2(\gamma-1)}{\Gamma+r} u^i u^0 r \left[D_t\tilde{r} \partial_i \frac{1}{u^0} - \partial_i \frac{u^j}{u^0} \partial_j \tilde{r} \right] \\
        & \hspace{5mm} + \frac{1}{(\Gamma+r)u^0} \proj^{i0} \partial_i r D_t \tilde{r}\\
\end{split}
\end{equation*}
under the book-keeping scheme of Section \ref{S:Creating the Book-keeping scheme} (where we can use Remark \ref{R:order_free_boundary_term} and Lemma \ref{L:Book-keeping} as needed to see that these terms have order $\mathcal{O} = \frac{1}{2}$ at worst). 
See section \ref{S:Subsubsection_initial commutators} below for how these terms are handled. 

\begin{lemma}[Ellptic estimates for $\tilde{r}$]
\label{L:Elliptic_estimates_tilde_r}
The following estimates hold for $\tilde{r}$ and the ``good'' spatial elliptic part $\tilde{L}_1 \tilde{r}$:
\begin{equation}
\label{E:Elliptic_estimate_for_tilde_r}
    \norm{\tilde{r}}_{H^{2, \frac{1}{2(\gamma-1)} + \frac{1}{2}}} \lesssim  \norm{\tilde{L}_1 \tilde{r}}_{H^{0, \frac{1}{2(\gamma-1)} - \frac{1}{2}}} + \norm{\tilde{r}}_{L^2(r^{\frac{2-\gamma}{\gamma-1}})}
\end{equation}
\end{lemma}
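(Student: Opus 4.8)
The plan is to treat $\tilde{L}_1\tilde{r}$ as a uniformly elliptic degenerate operator of the form $\tfrac{\gamma-1}{\Gamma+r}H^{ij}(r\,\partial_i\partial_j\tilde{r}+\tfrac{1}{\gamma-1}\partial_i r\,\partial_j\tilde{r})$ and to recognize it as structurally identical (up to the $O(1)$ factor $\tfrac{\gamma-1}{\Gamma+r}$) to the model spatial elliptic operator analyzed in \cite{DisconziIfrimTataru}. First I would record that $H^{ij}=\delta^{ij}-\tfrac{u^iu^j}{(u^0)^2}$ is symmetric and, because $u$ is a forward timelike unit vector with $(u^0)^2=1+u^ju_j$, satisfies $\lambda|\xi|^2\le H^{ij}\xi_i\xi_j\le |\xi|^2$ for a constant $\lambda>0$ depending only on the $L^\infty$ bound on $u$; thus $H^{ij}$ is uniformly positive definite on $[0,T]$. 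Together with Assumption \ref{A:Assumption_smallness r}, which makes $\tfrac{\gamma-1}{\Gamma+r}$ an $O(1)$ coefficient bounded away from zero near the free boundary (cf. Remark \ref{R:weights_in_wave_energy_tilde_H_norm} and Remark \ref{R:O(1)_weights}), the operator $\tilde{L}_1$ falls into the class of weighted degenerate elliptic operators $r\Delta_H+(\text{first order})$ with the critical weight $r$ vanishing simply on $\Gamma_t$.

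The core of the argument is then the weighted elliptic regularity estimate for such operators. I would follow the approach in \cite{DisconziIfrimTataru}: write the equation $\tilde{L}_1\tilde{r}=F$, multiply by the appropriate weighted test function, integrate by parts over $\Omega_t$ (the boundary terms vanish since $r=0$ on $\Gamma_t$ and the weights carry a positive power of $r$), and use the positive-definiteness of $H^{ij}$ together with a Hardy-type inequality adapted to the weight $r\sim d(\cdot,\Gamma_t)$ to control $\norm{\tilde{r}}_{H^{2,\frac{1}{2(\gamma-1)}+\frac12}}$. Concretely, one gains two derivatives at the cost of one power of the weight: the second-order term $r\,\partial_i\partial_j\tilde{r}$ lives in the space with weight exponent $\tfrac{1}{2(\gamma-1)}+\tfrac12$ while $\tilde{L}_1\tilde{r}$ only needs weight exponent $\tfrac{1}{2(\gamma-1)}-\tfrac12$, which is exactly the $+1$ power of $r$ balance encoded in the statement and consistent with Corollary \ref{C:Trading_deriv_for_weight}. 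The first-order term $\tfrac{1}{\gamma-1}\partial_i r\,\partial_j\tilde{r}$ is handled because $\partial r=O(1)$ (since $r$ vanishes simply, $|\nabla r|$ is bounded above and below near $\Gamma_t$), so it is of strictly lower order relative to the principal part and is absorbed, with the leftover undifferentiated contribution producing the $\norm{\tilde{r}}_{L^2(r^{\frac{2-\gamma}{\gamma-1}})}$ term on the right-hand side.

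The step I expect to be the main obstacle is establishing the weighted a priori bound for the degenerate operator with the sharp pairing of derivative count and weight exponent — in particular verifying that the integration-by-parts/Hardy argument closes with the precise exponents $\tfrac{1}{2(\gamma-1)}\pm\tfrac12$ rather than losing a fractional power of $r$, and doing so in a way that is robust under the $H^{ij}$ coefficient rather than the Euclidean Laplacian. This is where one must invoke the embedding and trace lemmas from \cite{DisconziIfrimTataru} (and their analogues, Lemmas \ref{L:Free_boundary_add_r} and \ref{L:free_boundary_embedding_lemma} and Corollary \ref{C:Trading_deriv_for_weight}) to justify that near-boundary integrals with these weights are finite and that the boundary terms genuinely vanish. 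Away from the free boundary the estimate is standard interior elliptic regularity on a compactly contained subdomain; the two regions are glued with a partition of unity, and since the bulk region has $r$ bounded below, the weights there are harmless. Finally, the coefficients of $\tilde{L}_1$ (namely $u$, $\Gamma(s)$, $\partial r$) being $L^\infty$-controlled background quantities, all constants in the estimate depend only on the fixed data $(s,r,u)$, $\gamma$, and $T$, which matches the $\lesssim$ convention of Remark \ref{R:lesssim_symbol_notation}.
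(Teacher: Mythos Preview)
Your proposal is correct and takes essentially the same approach as the paper: both recognize that $\tilde{L}_1$ is structurally identical to the degenerate elliptic operator treated in \cite{DisconziIfrimTataru} (up to the $O(1)$ coefficient $\tfrac{\gamma-1}{\Gamma+r}$ and the specific metric $H^{ij}$, which indeed already appears there), and both defer the core weighted integration-by-parts argument to Lemma~5.3 and Corollary~5.5 of that reference while noting that derivatives falling on the extra coefficients $\tfrac{1}{\Gamma+r}$ and $H^{ij}$ produce harmless lower-order terms. The only refinement worth noting is that the paper organizes the argument explicitly as two chained inequalities passing through the intermediate norm $\norm{\tilde{r}}_{H^{1,\frac{1}{2(\gamma-1)}-\frac12}}$ (first gaining two derivatives at the cost of one $H^1$ error, then controlling that $H^1$ error by $\tilde{L}_1\tilde{r}$ plus the weighted $L^2$ norm), which is the precise mechanism from \cite{DisconziIfrimTataru} rather than a single Hardy-inequality step.
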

\begin{proof}
    After examining the structure of $\tilde{L}_1 \tilde{r}$, we see that the proof will closely follow \cite{DisconziIfrimTataru}  by proving two related inequalities:
    \begin{equation}
    \label{E:Inequality_goals_elliptic_r}
        \begin{split}
             \norm{\tilde{r}}_{H^{2, \frac{1}{2(\gamma-1)} + \frac{1}{2}}} &\lesssim  \norm{\tilde{L}_1 \tilde{r}}_{H^{0, \frac{1}{2(\gamma-1)} - \frac{1}{2}}} + \norm{\tilde{r}}_{H^{1, \frac{1}{2(\gamma-1)} - \frac{1}{2}}} \\
             \norm{\tilde{r}}_{H^{1, \frac{1}{2(\gamma-1)} - \frac{1}{2}}} &\lesssim  \norm{\tilde{L}_1 \tilde{r}}_{H^{0, \frac{1}{2(\gamma-1)} - \frac{1}{2}}} + \norm{\tilde{r}}_{L^2(r^{\frac{2-\gamma}{\gamma-1}})} \\
        \end{split}
    \end{equation}

Instead of repeating the proof of Lemma 5.3 in \cite{DisconziIfrimTataru}, we illustrate a few of the primary differences.
When derivatives hit the weight $\frac{1}{\Gamma+r}$, we see by \eqref{E:Gamma_definition} that
\begin{equation}
\label{E:Derivative_of_weight}
    \partial_k \left(\frac{1}{\Gamma+r} \right) = \frac{1}{(\Gamma+r)^2} \left(\frac{\gamma-1}{\gamma}\Gamma \partial_k s - \partial_k r \right)
\end{equation}
which only contains derivatives of $r$ and $s$. Similarly, when derivatives hit the metric $H^{ij}$, we get an expression that only contains derivatives of $u$ (and this metric also appears in \cite{DisconziIfrimTataru}). 

To show the next inequality in \eqref{E:Inequality_goals_elliptic_r}, we use a similar method as the proof of Corollary 5.5 in \cite{DisconziIfrimTataru}.
\end{proof}

\subsection{Elliptic estimates for \texorpdfstring{$\tilde{u}$}{u}}

Next, we will handle the $\tilde{u}$ equation. From \eqref{E:Linearized_System_6_v_equation}, we note that $D_t^2 \tilde{u}^\alpha$ will satisfy an equation involving one derivative of the spacetime divergence $\partial_\nu \tilde{u}^\nu$ (see \eqref{E:L_2_definition} for the definition of operator $L_2$). In view of Section \ref{S:Sovling_for_time_derivatives}, we would like to solve for time derivatives in terms of spatial derivatives and relate our analysis back to the spatial divergence of $\tilde{u}$ which we will denote by
\begin{equation}
\label{E:Def_spatial_div_u_tilde}
    \overrightarrow{\text{div }} \tilde{u} := \partial_j \tilde{u}^j
\end{equation}
In order to estimate our energies from below with the $\mathcal{H}^{2k}$ norm, we will need to do elliptic estimates involving $\overrightarrow{\text{div }} \tilde{u}$.
After extracting the spatial part, we will arrive at the ``good'' operator $\tilde{L}_2$ (see \eqref{E:good_L_2_definition}) which is properly suited for div-curl estimates. To complete the estimates, we will need to examine the spatial curl whose components are given by:
\begin{equation}
\label{E:Def_spatial_curl_u_tilde}
     (\overrightarrow{\text{curl }} \tilde{u})_{ij} := \partial_i \tilde{u}_j - \partial_j \tilde{u}_i.
\end{equation}
We plan to pair $\tilde{L}_2$ with the corresponding operator $\tilde{L}_3$ (see \eqref{E:L_3_definition} below), and we note that $\tilde{L}_3$ is roughly one spatial derivative of $\overrightarrow{\text{curl }} \tilde{u}$ with a weight $r$.

Our goal is Lemma \ref{L:Elliptic_div_curl_estimates_tilde_u}, but we will begin by highlighting key steps in the following computation to show how we correctly isolate the spatial components. Similar to the $\tilde{r}$ equation, we start by isolating the spacetime elliptic piece of $D_t^2 \tilde{u}^\alpha$ which will produce the following upon taking $D_t$ of \eqref{E:Linearized_System_6_v_equation}:
\begin{equation}
\label{E:Elliptic_u_equation}
        D_t^2 \tilde{u}^\alpha  \simeq \left(L_2 \tilde{u}\right)^\alpha + \text{ additional terms}
\end{equation}
where
\begin{equation}
\label{E:L_2_definition}
    \begin{split}
       \left(L_2 \tilde{u}\right)^\alpha &:= \frac{\gamma-1}{\Gamma+r} \proj^{\alpha \mu} \left(\partial_\mu (r \partial_\nu \tilde{u}^\nu) + \frac{1}{\gamma-1} \partial_\mu \tilde{u}^\nu \partial_\nu r\right) \\
        &\hspace{1mm}= \frac{\gamma-1}{\Gamma+r} \proj^{\alpha i} \left(\partial_i (r \partial_\nu \tilde{u}^\nu) + \frac{1}{\gamma-1} \partial_i \tilde{u}^\nu \partial_\nu r \right) +\frac{\gamma-1}{\Gamma+r} \proj^{\alpha 0} \left(\partial_t (r \partial_\nu \tilde{u}^\nu) + \frac{1}{\gamma-1} \partial_t \tilde{u}^\nu \partial_\nu r\right)  \\
    \end{split}
\end{equation}
and the additional terms will be shown to be subcritical (see section \ref{S:Subsubsection_initial commutators} for a discussion of these terms). Then, solving for
\begin{equation}
\label{E:solving for partial_t with partial_i}
    \partial_t = \frac{1}{u^0} D_t - \frac{u^i}{u^0} \partial_i
\end{equation}
 and gathering up similar terms, we have the useful identity
\begin{equation}
\begin{split}
    \proj^{\alpha \mu} \partial_\mu  &= \proj^{\alpha i} \partial_i + \proj^{\alpha 0} \partial_t \\
    &:= B^{\alpha i} \partial_i + \proj^{\alpha 0} \frac{1}{u^0} D_t
\end{split}
\end{equation}
where we defined:
\begin{equation}
    B^{\alpha i}:= g^{\alpha i} - g^{\alpha 0} \frac{u^i}{u^0}.
\end{equation}
Further, note the explicit identity
\begin{equation}
\label{E:G_B_interact}
    G_{\alpha \beta} B^{\alpha i} B^{\beta j} = H^{ij}.
\end{equation}
where $G_{\alpha \beta}$ is defined in \eqref{E:G_metric_definition}, and $H^{ij}$ is defined in \eqref{E:H^ij_definition}.

Using these identities, we arrive at the equivalent expression
\begin{equation}
\label{E:L_2_u_alpha_expression}
   \left(L_2 \tilde{u}\right)^\alpha = \textcolor{red}{\frac{\gamma-1}{\Gamma+r} B^{\alpha i} \left(\partial_i (r \partial_\nu \tilde{u}^\nu) + \frac{1}{\gamma-1} \partial_i \tilde{u}^\nu \partial_\nu r \right)} +\frac{\gamma-1}{\Gamma+r} \proj^{\alpha 0} \frac{1}{u^0} \left(D_t (r \partial_\nu \tilde{u}^\nu) + \frac{1}{\gamma-1} D_t \tilde{u}^\nu \partial_\nu r\right).
\end{equation}

\begin{remark}
\label{R:L_2_black_critical_and_subcritical}
    If we suppose that $k=1$ for computing order (see Remark \ref{R:order_free_boundary_term}), note that the terms in \textcolor{red}{red} have order $-\frac{1}{2}$ which are supercritical, but the terms in black have order $0$ which is a strictly better order. In the context of estimating $L_2 \tilde{u}^\alpha$, we will use the norm $H^{0,\frac{1}{2}}$ in which case we will have an extra $1/2$ power of $r$ in order to safely move these black terms to the other side. Thus, the terms in black can be treated in a very similar way as \eqref{E:L_1_simplified_expression} in the $\tilde{r}$ elliptic estimates.
\end{remark}

At this stage, we still have an expression for $L_2 \tilde{u}^\alpha$ involving the spacetime divergence $\partial_\nu \tilde{u}^\nu$. To further isolate the spatial divergence, we make use of the following identities by repeatedly splitting and removing time derivatives as needed. We apply \eqref{E:tilde_u_^0 identity} and \eqref{E:solving for partial_t with partial_i} to get
\begin{equation}
\begin{split}
    \partial_\nu \tilde{u}^\nu  &= \textcolor{red}{H^{jk} \partial_j \tilde{u}_k} + \frac{u^j}{(u^0)^2} D_t \tilde{u}_j + \tilde{u}^j \partial_t \frac{u_j}{u^0}.
\end{split}
\end{equation}
Similarly, we can apply the same identities such as \eqref{E:tilde_u_^0 identity} to replace any instances of $\tilde{u}^0$ with a spatial component in the expression $\partial_i \tilde{u}^\nu \partial_\nu r$. Splitting the sum over $\nu$ and solving for $\partial_t r$ using \eqref{E:solving for partial_t with partial_i}, we arrive at

\begin{equation}
    \begin{split}
        \partial_i \tilde{u}^\nu \partial_\nu r &= \textcolor{red}{\partial_i \tilde{u}^k \partial_k r} + \left(\textcolor{red}{- \frac{u_l u^m}{(u^0)^2} \partial_i \tilde{u}^l \partial_m r} + \frac{u_l}{(u^0)^2} \partial_i \tilde{u}^l D_t r - \frac{\tilde{u}^l u^m}{u^0} \partial_i \frac{u_l}{u^0} \partial_m r + \frac{\tilde{u}^l}{u^0} \partial_i \frac{u_l}{u^0} D_t r \right) \\
        &= \textcolor{red}{H^{km}\partial_i \tilde{u}_k \partial_m r } + \left(\frac{u_l}{(u^0)^2} \partial_i \tilde{u}^l D_t r - \frac{\tilde{u}^l u^m}{u^0} \partial_i \frac{u_l}{u^0} \partial_m r + \frac{\tilde{u}^l}{u^0} \partial_i \frac{u_l}{u^0} D_t r \right)
    \end{split}
\end{equation}
Plugging these identities into \eqref{E:L_2_u_alpha_expression}, we have
\begin{equation}
\begin{split}
   \left(L_2 \tilde{u}\right)^\alpha &= \textcolor{red}{\frac{\gamma-1}{\Gamma+r} B^{\alpha i} H^{jk} \left(\partial_i(r \partial_j \tilde{u}_k) + \frac{1}{\gamma-1} \partial_j r \partial_i \tilde{u}_k \right)} \\
    & \hspace{5mm} + \frac{\gamma-1}{\Gamma+r} B^{\alpha i} \partial_i \left(\frac{u^j}{(u^0)^2} r D_t \tilde{u}_j + r \tilde{u}^j \partial_t \frac{u_j}{u^0} \right) \\
    & \hspace{5mm} + \frac{1}{\Gamma+r} B^{\alpha i} \left(\frac{u_l}{(u^0)^2} \partial_i \tilde{u}^l D_t r - \frac{\tilde{u}^l u^m}{u^0} \partial_i \frac{u_l}{u^0} \partial_m r + \frac{\tilde{u}^l}{u^0} \partial_i \frac{u_l}{u^0} D_t r \right).
\end{split}
\end{equation}
If we define the ``good'' spatial elliptic part for the divergence of $\tilde{u}$ to be 
\begin{equation}
    \label{E:good_L_2_definition}
    \textcolor{red}{
     \left(\tilde{L}_2 \tilde{u} \right)^\alpha := \frac{\gamma-1}{\Gamma+r} B^{\alpha i} H^{jk} \left(\partial_i(r \partial_j \tilde{u}_k) + \frac{1}{\gamma-1} \partial_j r \partial_i \tilde{u}_k \right)},
\end{equation}
and observe Remark \ref{R:L_2_black_critical_and_subcritical},
we get the nice expression
\begin{equation}
\label{E:L_2_simplified_expression}
\boxed{
    \left(L_2 \tilde{u}\right)^\alpha = \textcolor{red}{ \left(\tilde{L}_2 \tilde{u} \right)^\alpha} + (\text{black critical and subcritical terms})
}
\end{equation}

Next, we plan to combine the $ \left(\tilde{L}_2 \tilde{u} \right)^\alpha$ with a corresponding curl term in order to prove the desired div-curl estimates. Consider the following
\begin{equation}
\label{E:L_3_definition}
     \left(\tilde{L}_3 \tilde{u} \right)^\alpha :=  \frac{\gamma-1}{\Gamma+r} B^{\alpha i} r^{- \frac{1}{\gamma-1}} H^{ml} \partial_l \left(r^{1 + \frac{1}{\gamma-1}} (\partial_m \tilde{u}_i - \partial_i \tilde{u}_m) \right)
\end{equation}

\begin{lemma}[Div-Curl estimates for $\tilde{u}$]
\label{L:Elliptic_div_curl_estimates_tilde_u}
The following estimates hold for $\tilde{u}$ and the ``good'' spatial elliptic parts $ \left(\tilde{L}_2 \tilde{u} \right)^\alpha$ and $ \left(\tilde{L}_3 \tilde{u} \right)^\alpha$:
\begin{equation}
\label{E:div_curl_tilde_u_estimate}
    \norm{\tilde{u}}_{H^{2, \frac{1}{2(\gamma-1)} + 1}} \lesssim  \norm{(\tilde{L}_2 +\tilde{L}_3 )\tilde{u}}_{H^{0, \frac{1}{2(\gamma-1)} }} + \norm{\tilde{u}}_{L^2(r^{\frac{1}{\gamma-1}}) }
\end{equation}
    
\end{lemma}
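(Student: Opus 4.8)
The plan is to follow the template of Lemma \ref{L:Elliptic_estimates_tilde_r} and reduce \eqref{E:div_curl_tilde_u_estimate} to the two successive inequalities
\begin{equation*}
\norm{\tilde{u}}_{H^{2, \frac{1}{2(\gamma-1)} + 1}} \lesssim \norm{(\tilde{L}_2+\tilde{L}_3)\tilde{u}}_{H^{0, \frac{1}{2(\gamma-1)}}} + \norm{\tilde{u}}_{H^{1, \frac{1}{2(\gamma-1)}}},
\end{equation*}
\begin{equation*}
\norm{\tilde{u}}_{H^{1, \frac{1}{2(\gamma-1)}}} \lesssim \norm{(\tilde{L}_2+\tilde{L}_3)\tilde{u}}_{H^{0, \frac{1}{2(\gamma-1)}}} + \norm{\tilde{u}}_{L^2(r^{\frac{1}{\gamma-1}})},
\end{equation*}
which, chained together, give the claim (note $L^2(r^{\frac{1}{\gamma-1}})=H^{0,\frac{1}{2(\gamma-1)}}$). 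The black critical/subcritical remainders appearing in \eqref{E:L_2_simplified_expression} and the additional terms in \eqref{E:Elliptic_u_equation} play no role here: this lemma concerns only the ``good'' operators $\tilde{L}_2$ and $\tilde{L}_3$.

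For the first inequality the crucial structural point is that both $(\tilde{L}_2\tilde{u})^\alpha$ and $(\tilde{L}_3\tilde{u})^\alpha$ carry the factor $B^{\alpha i}$ in the free spacetime index. Since the pointwise Euclidean and $G_{\alpha\beta}$-inner products on spacetime vectors are uniformly equivalent ($G$ is a Riemannian metric, see \eqref{E:G_metric_definition}), we may compute $\norm{(\tilde{L}_2+\tilde{L}_3)\tilde{u}}^2_{H^{0,\frac{1}{2(\gamma-1)}}}$ with $G_{\alpha\beta}$, and the identity \eqref{E:G_B_interact}, $G_{\alpha\beta}B^{\alpha i}B^{\beta j}=H^{ij}$, collapses it to the purely spatial integral
\begin{equation*}
\int_{\Omega_t}\Big(\tfrac{\gamma-1}{\Gamma+r}\Big)^2 r^{\frac{1}{\gamma-1}}\, H^{ii'}\big(\mathsf D_i\tilde{u}+\mathsf C_i\tilde{u}\big)\big(\mathsf D_{i'}\tilde{u}+\mathsf C_{i'}\tilde{u}\big)\,dx ,
\end{equation*}
where $\mathsf D_i\tilde{u}:=H^{jk}\big(\partial_i(r\partial_j\tilde{u}_k)+\tfrac{1}{\gamma-1}\partial_j r\,\partial_i\tilde{u}_k\big)$ is the weighted-divergence part of $\tilde{L}_2$ and $\mathsf C_i\tilde{u}:=r^{-\frac{1}{\gamma-1}}H^{ml}\partial_l\big(r^{1+\frac{1}{\gamma-1}}(\partial_m\tilde{u}_i-\partial_i\tilde{u}_m)\big)$ the weighted-curl part of $\tilde{L}_3$. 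I would then expand the product into the blocks $\mathsf D\mathsf D$, $\mathsf D\mathsf C$, $\mathsf C\mathsf C$ and integrate by parts once in the outer spatial derivative of each factor, exactly as in the $\tilde{r}$ computation and the div-curl estimate of \cite{DisconziIfrimTataru}. The leading parts of $\mathsf D\mathsf D$ and $\mathsf C\mathsf C$ produce coercive terms of the form $\int (\tfrac{\gamma-1}{\Gamma+r})^2 r^{\frac{1}{\gamma-1}+2}\, H\partial^2\tilde{u}\cdot H\partial^2\tilde{u}$ controlling the symmetric and the antisymmetric parts of the weighted Hessian respectively, while the leading part of the cross block $\mathsf D\mathsf C$ cancels through the antisymmetry of $\partial_m\tilde{u}_i-\partial_i\tilde{u}_m$ against the symmetric structure of $\partial_i\partial_j\tilde{u}_k$ (the classical reconstruction of $\nabla^2$ from gradient-of-divergence and curl-of-curl). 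All boundary contributions vanish because every weight is a strictly positive power of $r$ and $r=0$ on $\partial\Omega_t$. Using that $H^{ij}=\delta^{ij}-u^iu^j/(u^0)^2$ is uniformly positive definite (since $(u^0)^2=1+|u|^2$) and that $\tfrac{\gamma-1}{\Gamma+r}$ is bounded above and below near $\Gamma_t$ (Remark \ref{R:weights_in_wave_energy_tilde_H_norm}), these coercive terms dominate $\norm{\tilde{u}}^2_{H^{2,\frac{1}{2(\gamma-1)}+1}}$.

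The error terms generated along the way — those where a spatial derivative falls on $r$, on $H^{ij}$ (an expression in $\partial u$), or on $\tfrac{1}{\Gamma+r}$ (which by \eqref{E:Derivative_of_weight} brings only $\partial r,\partial s$), together with the subleading remainders of $\mathsf D\mathsf C$ — either contain one fewer derivative of $\tilde{u}$ or carry an extra power of $r$; by Lemma \ref{L:Free_boundary_add_r} and Corollary \ref{C:Trading_deriv_for_weight} each is dominated by $\hat{\varepsilon}\,\norm{\tilde{u}}^2_{H^{2,\frac{1}{2(\gamma-1)}+1}}$ or by $\norm{\tilde{u}}^2_{H^{1,\frac{1}{2(\gamma-1)}}}$, and a single Cauchy--Schwarz-with-$\varepsilon$ step (as in the proof of Lemma \ref{L:Elliptic_estimates_tilde_r}) handles the one genuinely mixed term; absorbing the $\hat{\varepsilon}$- and $\varepsilon$-small pieces to the left yields the first inequality. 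For the second inequality I would test $(\tilde{L}_2+\tilde{L}_3)\tilde{u}$ against a weighted multiple of $\partial_3\tilde{u}$ (mirroring the corresponding corollary in \cite{DisconziIfrimTataru}), integrate by parts once to transfer a derivative, use again the coercivity of $H^{ij}$ and the positivity of the weight to bound the pairing below by $c\,\norm{\tilde{u}}^2_{H^{1,\frac{1}{2(\gamma-1)}}}-C\norm{\tilde{u}}^2_{L^2(r^{\frac{1}{\gamma-1}})}$, and bound it above by $\varepsilon\norm{\tilde{u}}^2_{H^{1,\frac{1}{2(\gamma-1)}}}+C_\varepsilon\norm{(\tilde{L}_2+\tilde{L}_3)\tilde{u}}^2_{H^{0,\frac{1}{2(\gamma-1)}}}$.

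The main obstacle is the div-curl reconstruction itself. Unlike the scalar operator $\tilde{L}_1$, one must check that the cross block $\mathsf D\mathsf C$ really cancels at top order even though $\tilde{L}_2$ places the outer derivative on the divergence and $\tilde{L}_3$ places it on the curl carrying a different interior weight $r^{1+\frac{1}{\gamma-1}}$, and one must keep the $B$--$H$--$G$ index bookkeeping straight through two successive integrations by parts while verifying that every commutator term and every derivative-on-coefficient remainder is $\mathcal{H}^{2}$-subcritical (order $\ge\tfrac12$), so that it is absorbed by $\hat{\varepsilon}$ rather than competing with the coercive term. Conceptually this is the same mechanism as in \cite{DisconziIfrimTataru}, now carried out with the degenerate boundary weight $r$.
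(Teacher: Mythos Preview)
Your proposal is correct and follows essentially the same approach as the paper: both reduce to the two successive inequalities, use the identity $G_{\alpha\beta}B^{\alpha i}B^{\beta j}=H^{ij}$ to collapse $\norm{(\tilde{L}_2+\tilde{L}_3)\tilde{u}}_{H^{0,\frac{1}{2(\gamma-1)}}}^2$ to a purely spatial integral, expand and integrate by parts exploiting the symmetry of $(\tfrac{\gamma-1}{\Gamma+r})^2 H^{ia}H^{jk}H^{ln}$ to obtain the top-order cancellations and coercivity, and handle the second inequality by testing against $\partial_3\tilde{u}_a$ as in \cite{DisconziIfrimTataru}. Your $\mathsf{D}\mathsf{D}/\mathsf{D}\mathsf{C}/\mathsf{C}\mathsf{C}$ block description is just a slightly more conceptual packaging of the same index-relabeling cancellations the paper carries out.
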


\begin{proof}
The proof is completed in two steps similar to Lemma \ref{L:Elliptic_estimates_tilde_r}, and it closely follows Lemma 5.3 in \cite{DisconziIfrimTataru} where one should compare with our definitions of $\tilde{L}_2$ and $\tilde{L}_3$. Addtionally, we make explicit use of identity \eqref{E:tilde_u_^0 identity} since $\tilde{u}^\alpha u_\alpha =0$, and we refer the reader to Remark \ref{R:estimating_time_derivatives}

We start by writing: 
    \begin{equation}
\label{E:div_curl_start}
    \begin{split}
        \norm{(\tilde{L}_2 +\tilde{L}_3 )\tilde{u}}_{H^{0, \frac{1}{2(\gamma-1)} }}^2 &= \int_{\Omega_t} r^{\frac{1}{\gamma-1}} G_{\alpha \beta} [(\tilde{L}_2 + \tilde{L}_3) \tilde{u} ]^\alpha [(\tilde{L}_2 + \tilde{L}_3) \tilde{u} ]^\beta \hspace{1mm} dx \\
    \end{split}
    \end{equation}
Then, using identity \eqref{E:G_B_interact}, we get the following collection of terms in the integral:
\begin{equation}
    \begin{split}
        & \left(\frac{\gamma-1}{\Gamma+r}\right)^2 r^{\frac{1}{\gamma-1}} H^{ia} H^{jk} H^{ln} \left[ r \partial_i \partial_j \tilde{u}_k + \partial_i r \partial_j \tilde{u}_k + \frac{1}{\gamma-1} \partial_j r \partial_i \tilde{u}_k  +  r^{- \frac{1}{\gamma-1}} \partial_k \left(r^{1 + \frac{1}{\gamma-1}} (\partial_j \tilde{u}_i - \partial_i \tilde{u}_j) \right) \right] \\
        & \hspace{6mm} \times  \left[ r \partial_a \partial_l \tilde{u}_n + \partial_a r \partial_l \tilde{u}_n + \frac{1}{\gamma-1} \partial_l r \partial_a \tilde{u}_n +  r^{- \frac{1}{\gamma-1}} \partial_n \left(r^{1 + \frac{1}{\gamma-1}} (\partial_l \tilde{u}_a - \partial_a \tilde{u}_l) \right)\right] \\
    \end{split}
\end{equation}
which can be expanded to get the following:
\begin{equation}
\label{E:all_div_curl_terms}
    \begin{split}
        & \left(\frac{\gamma-1}{\Gamma+r}\right)^2 r^{\frac{1}{\gamma-1}} H^{ia} H^{jk} H^{ln} \\
        & \hspace{3mm} \times \left[ \textcolor{red}{r \partial_i \partial_j \tilde{u}_k} + \partial_i r \partial_j \tilde{u}_k + \frac{1}{\gamma-1} \partial_j r \partial_i \tilde{u}_k + \textcolor{red}{r\partial_k \partial_j \tilde{u}_i - r \partial_k \partial_i \tilde{u}_j} + \frac{\gamma}{\gamma-1}(\partial_k r \partial_j \tilde{u}_i - \partial_k r \partial_i \tilde{u}_j) \right] \\
        & \hspace{3mm} \times \left[\textcolor{red}{r \partial_a \partial_l \tilde{u}_n} + \partial_a r \partial_l \tilde{u}_n + \frac{1}{\gamma-1} \partial_l r \partial_a \tilde{u}_n + \textcolor{red}{r\partial_n \partial_l \tilde{u}_a - r \partial_n \partial_a \tilde{u}_l} + \frac{\gamma}{\gamma-1}(\partial_n r \partial_l \tilde{u}_a - \partial_n r \partial_a \tilde{u}_l)\right] \\
    \end{split}
\end{equation}
where each of the terms in \textcolor{red}{red} take the form $r \partial^2 \tilde{u}$. Then, we group the \textcolor{red}{red} terms and integrate by parts while using the symmetry present in the expression
\begin{equation}
\label{E:D_big_metric_definition}
    D^{ia,jk,ln} := \left(\frac{\gamma-1}{\Gamma+r}\right)^2 H^{ia} H^{jk} H^{ln}.
\end{equation}
In this form, we can apply the same ideas as \cite{DisconziIfrimTataru} where we first prove 
\begin{equation}
\label{E:div_curl_ineq_1}
     \norm{(\tilde{L}_2 +\tilde{L}_3 )\tilde{u}}_{H^{0, \frac{1}{2(\gamma-1)} }}^2 \gtrsim \norm{\tilde{u}}_{H^{2, \frac{1}{2(\gamma-1)}+1}}^2 - \norm{\tilde{u}}_{H^{1, \frac{1}{2(\gamma-1)}}}^2,
\end{equation}
and then integrate by parts with $\partial_3 \tilde{u}_a$ to prove the second inequality
\begin{equation}
\label{E:div_curl_ineq_2}
    \norm{(\tilde{L}_2 +\tilde{L}_3 )\tilde{u}}_{H^{0, \frac{1}{2(\gamma-1)} }}^2 \gtrsim \norm{\tilde{u}}_{H^{1, \frac{1}{2(\gamma-1)}}}^2 - \norm{\tilde{u}}_{L^2(r^{\frac{1}{\gamma-1}})}^2
\end{equation}
in which case \eqref{E:div_curl_ineq_1} and \eqref{E:div_curl_ineq_2} will combine to prove the desired result.

\end{proof}

For the last lemma in this section, we will ultimately need to connect the spacetime two-form $\hat{\omega}$ back to $\overrightarrow{\text{curl }} \tilde{u}$ which is a key quantity in our div-curl estimates for $\tilde{u}$.

\begin{lemma}[Relating vorticity to spatial curl]
\label{L:relating_vorticity_spatial_curl}
The following estimate holds for the reduced linearized vorticity:
\begin{equation}
\label{E:Relating vorticity to spatial curl}
    \norm{\hat{\omega}}^2_{H^{2k-1, k+ \frac{1}{2(\gamma-1)}}} \gtrsim \norm{\overrightarrow{\text{curl }} \tilde{u}}^2_{H^{2k-1, k+ \frac{1}{2(\gamma-1)}}} - \hat{\varepsilon}^2 \norm{\tilde{u}}^2_{H^{2k, k+ \frac{1}{2(\gamma-1)}}}
\end{equation}
where $\hat{\varepsilon}$ is small positive constant defined in Assumption \ref{A:Assumption_smallness r}, and $\overrightarrow{\text{curl }}  \tilde{u}$ is defined in \ref{E:Def_spatial_curl_u_tilde}.
\end{lemma}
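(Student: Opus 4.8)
The goal is to show that the reduced linearized vorticity $\hat\omega_{\alpha\beta} = \partial_\alpha(h\tilde u_\beta) - \partial_\beta(h\tilde u_\alpha)$ controls the spatial curl $(\overrightarrow{\mathrm{curl}}\,\tilde u)_{ij} = \partial_i\tilde u_j - \partial_j\tilde u_i$ up to an error that is of order $\hat\varepsilon^2$ times $\|\tilde u\|_{H^{2k,k+\frac{1}{2(\gamma-1)}}}^2$. The first step is algebraic: expand $\hat\omega_{\alpha\beta}$ using the product rule, writing $\hat\omega_{\alpha\beta} = h(\partial_\alpha\tilde u_\beta - \partial_\beta\tilde u_\alpha) + (\partial_\alpha h)\tilde u_\beta - (\partial_\beta h)\tilde u_\alpha$, so that the purely \emph{spatial} components read $\hat\omega_{ij} = h(\overrightarrow{\mathrm{curl}}\,\tilde u)_{ij} + (\partial_i h)\tilde u_j - (\partial_j h)\tilde u_i$. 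Since $h = \frac{\Gamma+r}{\Gamma}$ is bounded above and below by positive constants near the free boundary (by Remark \ref{R:weights_in_wave_energy_tilde_H_norm} and \eqref{E:weight_fact_1}), the leading term $h(\overrightarrow{\mathrm{curl}}\,\tilde u)_{ij}$ is comparable to $(\overrightarrow{\mathrm{curl}}\,\tilde u)_{ij}$; the remaining two terms are undifferentiated $\tilde u$ with coefficients $\partial h \simeq \partial r + r\partial s + \tilde{(\cdot)}\text{-free background}$, i.e.\ honest $L^\infty$ functions of $(s,r,u)$ and their first derivatives times $\tilde u$.

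\textbf{The order count.} The second step is to differentiate $2k-1$ times with a spatial multiindex $l$, $|l| \le 2k-1$, and estimate in the weighted norm $\|\cdot\|_{H^{2k-1,k+\frac{1}{2(\gamma-1)}}}$, i.e.\ with weight $r^{2k + \frac{1}{\gamma-1}}$ under the integral (equivalently, $L^2$ of $r^{k + \frac{1}{2(\gamma-1)}}\partial^l(\cdot)$). Applying $\partial^l$ to $\hat\omega_{ij} - h(\overrightarrow{\mathrm{curl}}\,\tilde u)_{ij} = (\partial_i h)\tilde u_j - (\partial_j h)\tilde u_i$, the worst term on the right distributes as $\sum_{n=0}^{|l|} \partial^{n}\tilde u \cdot \partial^{|l|-n+1}(s,r,u)$; the top term is $\partial^l$ hitting $\tilde u$, giving $r^{k+\frac{1}{2(\gamma-1)}}\partial^{2k-1}\tilde u$, which has $\mathcal{H}^{2k}$-order (at $\gamma=2$) equal to $k - (2k-1) + k - \frac12 = \frac12 > 0$, hence is subcritical and by Lemma \ref{L:Free_boundary_add_r} can be bounded by $\hat\varepsilon$ times $\|\tilde u\|_{\mathcal H^{2k}}^2$; in fact one power of $r$ survives from the weight on $h'$ being harmless, giving the sharper $\hat\varepsilon^2$ bound claimed — I would extract two half-powers of $r$ via two applications of Lemma \ref{L:Free_boundary_add_r} (or one application of Corollary \ref{C:Trading_deriv_for_weight} combined with the trivial bound $\|r\|_{L^\infty}\le\hat\varepsilon$). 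Lower-order terms in the sum carry strictly more derivatives of the background and fewer of $\tilde u$, so they are even more subcritical and absorbed identically. This yields
\begin{equation*}
    \| \partial^l(\hat\omega_{ij}) - \partial^l(h(\overrightarrow{\mathrm{curl}}\,\tilde u)_{ij})\|_{L^2(r^{2k+\frac{1}{\gamma-1}})}^2 \lesssim \hat\varepsilon^2 \|\tilde u\|_{H^{2k,k+\frac{1}{2(\gamma-1)}}}^2.
\end{equation*}

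\textbf{Conclusion and the main obstacle.} The third step combines the previous two: by the triangle inequality and the lower bound $h \ge c > 0$,
\begin{equation*}
    \|\overrightarrow{\mathrm{curl}}\,\tilde u\|_{H^{2k-1,k+\frac{1}{2(\gamma-1)}}}^2 \lesssim \|h\,\overrightarrow{\mathrm{curl}}\,\tilde u\|_{H^{2k-1,k+\frac{1}{2(\gamma-1)}}}^2 \lesssim \|\hat\omega\|_{H^{2k-1,k+\frac{1}{2(\gamma-1)}}}^2 + \hat\varepsilon^2 \|\tilde u\|_{H^{2k,k+\frac{1}{2(\gamma-1)}}}^2,
\end{equation*}
which is exactly \eqref{E:Relating vorticity to spatial curl} after rearranging (the implicit constants being absorbed as in the statement, since $h$ and its derivatives are $O(1)$ near $\Gamma$). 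The one genuinely delicate point — the ``main obstacle'' — is bookkeeping the commutator between $\partial^l$ and multiplication by the non-constant $h$: one must verify that every term produced by the Leibniz expansion of $\partial^l(h(\overrightarrow{\mathrm{curl}}\,\tilde u)_{ij})$ other than $h\,\partial^l(\overrightarrow{\mathrm{curl}}\,\tilde u)_{ij}$ is again of strictly subcritical $\mathcal{H}^{2k}$-order, so that it can be shunted into the $\hat\varepsilon^2$ error rather than competing with the leading curl term. This is handled by the order calculus of Definition \ref{D:definition_critical_subcritical_supercritical}, Lemma \ref{L:Sum_orders} and Lemma \ref{L:Order_of_operators} exactly as in the vorticity and entropy estimates (Lemma \ref{L:Estimate_full_lin_vorticity} and Proposition \ref{P:Entropy Estimates}), together with the observation from \eqref{E:vorticity_facts_useful} that $\partial h$ involves no undifferentiated-$\tilde u$-critical pieces. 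Time derivatives do not enter here since both $\hat\omega$ and $\overrightarrow{\mathrm{curl}}$ are defined with purely spatial derivatives, so Section \ref{S:Sovling_for_time_derivatives} is not needed for this lemma.
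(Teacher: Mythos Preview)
Your approach is essentially the same as the paper's --- expand $\hat\omega_{ij} = h(\overrightarrow{\mathrm{curl}}\,\tilde u)_{ij} + (\partial_i h)\tilde u_j - (\partial_j h)\tilde u_i$, then show the remainder is subcritical with an extra full power of $r$ to spare --- but you have skipped one step that the paper treats explicitly, and your final sentence actually misidentifies the issue.

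The object $\hat\omega_{\alpha\beta}$ is a \emph{spacetime} two-form, and the norm $\|\hat\omega\|_{H^{2k-1,k+\frac{1}{2(\gamma-1)}}}$ appearing in $E^{2k}_{\mathrm{transport}}$ is computed with the Riemannian metric $G_{\alpha\beta} = g_{\alpha\beta} + 2u_\alpha u_\beta$ on all sixteen components, not just the spatial nine. In particular $\hat\omega_{0i} = \partial_t(h\tilde u_i) - \partial_i(h\tilde u_0)$ genuinely contains a time derivative, so your claim that ``time derivatives do not enter here since both $\hat\omega$ and $\overrightarrow{\mathrm{curl}}$ are defined with purely spatial derivatives'' is false. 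What you need, and what the paper supplies, is the pointwise inequality
\[
\bigl|\partial^l \overrightarrow{\hat\omega}\bigr|_{\delta^{(3)}}^2 \;\lesssim\; \bigl|\partial^l \hat\omega\bigr|_G^2,
\]
which lets you pass from the full $G$-norm of $\hat\omega$ down to the Euclidean norm of only its spatial components $\hat\omega_{ij}$. The paper proves this by using that $G$ is Riemannian (hence equivalent to $\delta^{(4)}$), expanding $|\partial^l\hat\omega|_{\delta^{(4)}}^2$ by splitting Greek indices into $0$ and spatial, and observing that the leftover pieces are $\sum_i (\partial^l\hat\omega_{i0})^2 \ge 0$. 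Once this positivity step is in place, the rest of your argument (Leibniz expansion, order count giving a surplus power of $r$, and the triangle inequality with $h \gtrsim 1$) matches the paper.
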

\begin{proof}
Recalling the definition of $\hat{\omega}$ in \eqref{E:reduced_lin_vort_evolution_eq} which is a spacetime two-form, we observe the following. First, we define the \textit{spatial} reduced linearized vorticity $\overrightarrow{\hat{\omega}}$ to be a spatial two-form with components
\begin{equation*}
    \hat{\omega}_{ij} =\partial_i(h \tilde{u}_j) - \partial_j (h \tilde{u}_i) = h( \partial_i \tilde{u}_j - \partial_j \tilde{u}_i) + \tilde{u}_j \partial_i h - \tilde{u}_i \partial_j h,
\end{equation*}
Then, if we denote $\left|\cdot \right|_{\delta^{(3)}}^2$ to be the spatial Euclidean norm (squared), we have 
\begin{equation}
\left|\overrightarrow{\hat{\omega}}\right|_{\delta^{(3)}}^2:= \delta^{ij} \delta^{km} \hat{\omega}_{ik} \hat{\omega}_{jm}.
\end{equation}
For a generic multiindex $l$ with $|l| \leq 2k-1$, we claim that the following inequality holds:

\begin{equation}  
\label{E:Inequality_vorticity_extract_spatial}
\left|\partial^l \overrightarrow{\hat{\omega}}\right|_{\delta^{(3)}}^2 \lesssim \left|\partial^l \hat{\omega}\right|_G^2
\end{equation}
Indeed, since $G$ is a Riemannian metric on $\mathbb{R}^4$, it equivalent to $\delta$ on $\mathbb{R}^4$ with constants depending on $u$. This gives
\begin{equation}
    \begin{split}
        \left|\partial^l \hat{\omega}\right|_G^2 \gtrsim \left|\partial^l \hat{\omega}\right|_{\delta^{(4)}}^2  &= \delta^{\alpha \gamma} \delta^{\beta \delta}\partial^l \hat{\omega}_{\alpha \beta} \partial^l \hat{\omega}_{\gamma \delta} \\
        &= \delta^{i \gamma} \delta^{\beta \delta}\partial^l \hat{\omega}_{i \beta} \partial^l \hat{\omega}_{\gamma \delta} + \delta^{0 \gamma} \delta^{\beta \delta}\partial^l \hat{\omega}_{0 \beta} \partial^l \hat{\omega}_{\gamma \delta} \\
        &= \delta^{i j} \delta^{\beta \delta}\partial^l \hat{\omega}_{i \beta} \partial^l \hat{\omega}_{j \delta} + \textcolor{red}{\delta^{i 0} \delta^{\beta \delta}\partial^l \hat{\omega}_{i \beta} \partial^l \hat{\omega}_{0 \delta}} +  \delta^{\beta \delta}\partial^l \hat{\omega}_{0 \beta} \partial^l \hat{\omega}_{0 \delta} \\
        &= \delta^{i j} \delta^{k \delta}\partial^l \hat{\omega}_{i k} \partial^l \hat{\omega}_{j \delta} + \delta^{i j} \delta^{0 \delta}\partial^l \hat{\omega}_{i 0} \partial^l \hat{\omega}_{j \delta} +  \delta^{\beta \delta}\partial^l \hat{\omega}_{0 \beta} \partial^l \hat{\omega}_{0 \delta} \\
        &=  \delta^{i j} \delta^{k m}\partial^l \hat{\omega}_{i k} \partial^l \hat{\omega}_{j m} + \textcolor{red}{ \delta^{i j} \delta^{k 0}\partial^l \hat{\omega}_{i k} \partial^l \hat{\omega}_{j 0}} + \delta^{i j} \partial^l \hat{\omega}_{i 0} \partial^l \hat{\omega}_{j 0} +   \textcolor{purple}{\delta^{\beta \delta}\partial^l \hat{\omega}_{0 \beta} \partial^l \hat{\omega}_{0 \delta}} \\
        &= \left| \partial^l \overrightarrow{\hat{\omega}}\right|_{\delta^{(3)}}^2 + \delta^{i j} \partial^l \hat{\omega}_{i 0} \partial^l \hat{\omega}_{j 0} +   \delta^{ij}\partial^l \hat{\omega}_{0 i} \partial^l \hat{\omega}_{0 j}
    \end{split}
\end{equation}
where we split indices, the \textcolor{red}{red} terms are zero, and the \textcolor{purple}{purple} term is zero when $\beta=0$ or $\delta=0$ using the definition of $\hat{\omega}$. Then, using $\hat{\omega}_{ij} = - \hat{\omega}_{ji}$, we have 
\begin{equation}
\begin{split}
     \left| \partial^l \overrightarrow{\hat{\omega}}\right|_{\delta^{(3)}}^2 + \delta^{i j} \partial^l \hat{\omega}_{i 0} \partial^l \hat{\omega}_{j 0} +   \delta^{ij}\partial^l \hat{\omega}_{0 i} \partial^l \hat{\omega}_{0 j} &= \left| \partial^l \overrightarrow{\hat{\omega}}\right|_{\delta^{(3)}}^2 + \sum_{i=1}^3 (\partial^l \hat{\omega}_{i0})^2 \\
     &\geq  \left| \partial^l \overrightarrow{\hat{\omega}}\right|_{\delta^{(3)}}^2
\end{split}
\end{equation}
which completes the proof of \eqref{E:Inequality_vorticity_extract_spatial}. Multiplying \eqref{E:Inequality_vorticity_extract_spatial} by the weight $r^{2k + \frac{1}{\gamma-1}}$, integrating, and summing over the multiindex $l$ with $|l| \leq 2k-1$ produces the inequality
\begin{equation}
     \norm{\hat{\omega}}_{H^{2k-1, k+ \frac{1}{2(\gamma-1)}}}^2 \gtrsim  \norm{ \overrightarrow{\hat{\omega}}}_{H^{2k-1, k+ \frac{1}{2(\gamma-1)}}}^2.
\end{equation}
To finish the proof of Lemma \ref{L:relating_vorticity_spatial_curl}, we simply observe from \eqref{E:Def_spatial_curl_u_tilde} that 
\begin{equation}
    \begin{split}
        \delta^{i j} \delta^{k m}\partial^l \hat{\omega}_{i k} \partial^l \hat{\omega}_{j m} &= \delta^{i j} \delta^{k m}\partial^l \left[h (\overrightarrow{\text{ curl }} \tilde{u})_{ik} + \tilde{u}_k \partial_i h - \tilde{u}_i \partial_k h  \right] \partial^l \left[h (\overrightarrow{\text{ curl }} \tilde{u})_{jm} + \tilde{u}_m \partial_j h - \tilde{u}_j \partial_m h  \right] \\
        &\simeq \delta^{i j} \delta^{k m} (h  \partial^l (\overrightarrow{\text{ curl }} \tilde{u})_{ik} + \text{ additional terms})  \left(h \partial^l (\overrightarrow{\text{ curl }} \tilde{u})_{jm} + \text{ additional terms} \right)
    \end{split}
\end{equation}
We note that any terms where less than $|l|$ derivatives hit $\overrightarrow{\text{ curl }} \tilde{u}$ will be less critical. For each additional term, we will always have $2k-2$ or less derivatives of $\tilde{u}$, and thus we can can apply Corollary \ref{C:Trading_deriv_for_weight} to gain an extra weight and apply the standard smallness arguments with Cauchy-Schwarz as needed for cross terms (see Lemma \ref{L:Elliptic_div_curl_estimates_tilde_u} where similar cross terms are handled). 
Moreover, since $h \sim O(1)$ near the free boundary, and $\partial_i h = \frac{1}{\Gamma} \partial_i r + \frac{\gamma-1}{\gamma \Gamma} r \partial_i s$ does not contribute to the order, we can pull out derivatives or $r$ and $s$ in the $L^\infty(\Omega_t)$ norm. Thus, we have
\begin{equation}
\begin{split}
      \norm{\hat{\omega}}_{H^{2k-1, k+ \frac{1}{2(\gamma-1)}}}^2 & \gtrsim  \norm{\overrightarrow{\text{ curl }} \tilde{u} + (\text{smallness terms})}_{H^{2k-1, k+ \frac{1}{2(\gamma-1)}}}^2.
\end{split}
\end{equation}
which produces the desired inequality \eqref{E:Relating vorticity to spatial curl} after removing the terms with $\hat{\varepsilon}$.
\end{proof}

\subsection{Higher order commutators with the convective derivative}
\label{S:Subsubsection_initial commutators}

In this subsection, we plan to adapt the elliptic estimates to higher values of $k$ and show that all lower order and commutator terms can be treated using the correct weighted Sobolev norm.
First, we will prove Lemma \ref{L:Commutator_D_t_with_L_1_2_3} which handles the additional terms that appear when taking multiple convective derivatives of our system \eqref{E:Linearized_System_2} for $\tilde{r}$ and $\tilde{u}$. 
In Section \ref{S:Subsection_weighted_spatial_derivatives}, we will focus on commuting with weighted spatial derivatives where the $\tilde{r}$ equation will be discussed in detail. For $\tilde{u}$, the method is quite similar, and we state the important result in Lemma \ref{L:Higher_order_div_curl_estimates_tilde_u}.

Earlier in section \ref{S:Elliptic estimates}, recall that there were many ``additional terms'' or black subcritical terms that appeared when defining our ``good'' operators $\tilde{L}_1, \tilde{L}_2$, and $\tilde{L}_3$. 
In the following summarizing lemma, we will indicate how each of these terms is subcritical, as well as the interaction between $D_t$ and these operators so that we can eventually prove higher order elliptic estimates.

\begin{lemma}[Commutators between $D_t$ and $\tilde{L}_1, \tilde{L}_2, \tilde{L}_3$]
\label{L:Commutator_D_t_with_L_1_2_3}
Using the Book-keeping scheme of Lemma \ref{L:Book-keeping}, the following identities hold

\begin{align}
    \label{E:D_t_commutator_with_L_1}
    D_t^{2k} \tilde{r} &\simeq \tilde{L}_1 (D_t^{2k-2} \tilde{r}) + \left(\text{terms with $\mathcal{H}^{2k}$-order $\frac{1}{2}$ at worst}\right) \\
         \label{E:div_curl_simplication_1}
    D_t^{2k} \tilde{u}^\alpha &\simeq \tilde{L}_2 (D_t^{2k-2} \tilde{u}^\alpha) + \left(\text{terms with $\mathcal{H}^{2k}$-order $0$ at worst}\right) \\
    \label{E:D_t_commutator_with_L_3}
    D_t^{2k-2} (\tilde{L_3} \tilde{u})^\alpha &\simeq \tilde{L}_3 (D_t^{2k-2} \tilde{u}^\alpha) + \left(\text{terms with $\mathcal{H}^{2k}$-order $0$ at worst}\right)
\end{align}
where the order of the terms has been computed with Remark \ref{R:order_free_boundary_term}.
Moreover, $[\tilde{L_3}, D_t^{2k-2}] \tilde{u}$ contains terms which are critical/subcritical at worst.

\begin{proof}
     First, we plan to collect ``additional terms'' that appear in \eqref{E:r_Elliptic_start}. We begin by writing the structure of \eqref{E:r_Elliptic_start} in a manner similar to Lemma \ref{L:Book-keeping} where derivatives and powers of $r$ are examined. We get the following:

\begin{equation}
\begin{split}
\label{E:Higher_order_elliptic_start}
    D_t^2 \tilde{r} & \simeq L_1 \tilde{r} + r \partial(\tilde{s} + \tilde{r} + \tilde{u}) + r [D_t, \partial] \tilde{u} + \tilde{u} \\
     &\simeq L_1 \tilde{r} + \textcolor{blue}{r \partial \tilde{u} + r \partial \tilde{s} + \tilde{u}} + \textcolor{purple}{r \partial \tilde{r}}
\end{split}
\end{equation}

where $L_1$ is the operator defined in \eqref{E:L_1_definition}. If we account for the black subcritical terms that appear in \eqref{E:L_1_simplified_expression}, we will have
\begin{equation}
\label{E:proof_D_t^2_tilde_r_simplified}
     D_t^2 \tilde{r} \simeq \tilde{L}_1 \tilde{r} + ( \text{$\mathcal{H}^2$-order $\frac{1}{2}$ terms at worst}) + \textcolor{blue}{r \partial \tilde{u} + r \partial \tilde{s} + \tilde{u}} + \textcolor{purple}{r \partial \tilde{r}}
\end{equation}
If we compute the order of each of these terms at the $j=1$ level, we see that the \textcolor{blue}{blue} terms have order $\mathcal{O} = \frac{1}{2} >0$ and the \textcolor{purple}{purple} terms have order $\mathcal{O} =1 >0$. By Lemma \ref{L:Order_of_operators} and Remark \ref{R:Order_of_operators}, applying $D_t^{2k-2}$ to the \textcolor{blue}{blue} and \textcolor{purple}{purple} terms will produce corresponding subcritical terms with $\mathcal{H}^{2k}$-orders $\mathcal{O} = \frac{1}{2}$ at worst for level $k$. Thus, we will take $D_t^{2k-2}$ of \eqref{E:proof_D_t^2_tilde_r_simplified} to get
\begin{equation}
\begin{split}
    D_t^{2k} \tilde{r} &\simeq D_t^{2k-2} (\tilde{L}_1 \tilde{r}) + (\mathcal{O} = \frac{1}{2} \text{ terms at worst}) \\
    &= \tilde{L}_1 (D_t^{2k-2} \tilde{r}) + [D_t^{2k-2}, \tilde{L}_1] \tilde{r} + (\mathcal{O} = \frac{1}{2} \text{ terms at worst})
\end{split}
\end{equation}
To finish the proof of \eqref{E:D_t_commutator_with_L_1}, it remains to check the commutator $[D_t^{2k-2}, \tilde{L}_1] \tilde{r}$ which can be done with the help of Lemma \ref{L:Commutator_2} and an induction argument. We start by computing the first commutator with the help of Lemma \ref{L:Commutator_2}.
\begin{equation}
\begin{split}
     [D_t, \tilde{L}_1] \phi &= [D_t, \frac{\gamma-1}{\Gamma+r} H^{ij} r \partial_i \partial_j] \phi + [D_t, \frac{1}{\Gamma+r} H^{ij} \partial_i r \partial_j ] \phi \\
     &\simeq [D_t, r \partial^2] \phi + [D_t, \partial] \phi \\
     &\simeq r \partial^2 \phi + \partial \phi + r \partial \phi.
\end{split}
\end{equation}
recalling that $\partial$ indicates a generic spatial derivative, and we are ordering terms from the worst to the best order.
We have
\begin{equation}
\label{E:D_t^2_L_1_commutator_calculation}
\begin{split}
     [D_t^2, \tilde{L}_1] \phi &= D_t [D_t, \tilde{L}_1] \phi + [D_t, \tilde{L}_1]D_t \phi \\
        & \simeq   r \partial^2 (D_t \phi) + \partial (D_t \phi) + r \partial (D_t \phi) + r \partial^2 \phi + \partial \phi + r \partial \phi \\
\end{split}
\end{equation}
and we also note by Remark \ref{R:Order_of_operators}
and when $\phi = \tilde{r}$, we will get
\begin{equation}
\begin{split}
     [D_t^2, \tilde{L}_1] \tilde{r} &\simeq r \partial^2 (r \partial \tilde{u} + \tilde{u}) + \partial (r \partial \tilde{u} + \tilde{u}) + r \partial (r \partial \tilde{u} + \tilde{u})+ r \partial^2 \tilde{r}  + \partial \tilde{r} + r \partial \tilde{r} \\
        &\simeq (r^2 \partial^3 \tilde{u} + r \partial^2 \tilde{u} + \partial \tilde{u}) +r \partial^2 \tilde{r} + \partial \tilde{r} + r \partial \tilde{r} 
\end{split}
\end{equation}
Then since, the ``$k$'' value when computing order at this level is $2$, (i.e. from Remark \ref{R:order_free_boundary_term}) we see that each one of these terms has $\mathcal{H}^4$-order $\frac{1}{2}, 1$, and $2$ respectively (and they are all subcritical with $\mathcal{O}=\frac{1}{2}$ at worst). To complete the induction, suppose that for some $j\geq 2$, we have  $[D_t^{2j-2}, \tilde{L}_1]$ is subcritical (with order $\mathcal{O} = \frac{1}{2}$) at level $j$. Then, we compute using the commutator identities that
\begin{equation}
\label{E:Second_term_in_commutator_induction}
    [D_t^{2(j+1)-2}, \tilde{L}_1] \tilde{r} =  [D_t^{2j}, \tilde{L}_1] \tilde{r} = [D_t^2 D_t^{2j-2}, \tilde{L}_1] \tilde{r} = D_t^2 [D_t^{2j-2}, \tilde{L}_1] r + [D_t^2, \tilde{L}_1] D_t^{2j-2} \tilde{r}
\end{equation}
Since $[D_t^{2j-2}, \tilde{L}_1]$ is subcritical at level $j$, we can conclude using Remark \ref{R:Order_of_operators}  that the first term in \eqref{E:Second_term_in_commutator_induction} must have order $\mathcal{O} = \frac{1}{2}$ at the $j+1$ level. For the second term in \eqref{E:Second_term_in_commutator_induction}, it is easiest to compute the order at the $j+1$ level by counting derivatives and powers of $r$. We will have using Lemma \ref{L:Book-keeping} that
\begin{equation}
\begin{split}
     [D_t^2, \tilde{L}_1] D_t^{2j-2} \tilde{r} &\simeq r \partial^2 (D_t D_t^{2j-2} \tilde{r}) + \partial (D_t D_t^{2j-2} \tilde{r}) + r \partial (D_t D_t^{2j-2} \tilde{r}) + r \partial^2 D_t^{2j-2} \tilde{r} + \partial D_t^{2j-2} \tilde{r} + r \partial D_t^{2j-2} \tilde{r} \\
     &\simeq r \partial^2 (r^j \partial^{2j-1} \tilde{u}) + \partial (r^j \partial^{2j-1} \tilde{u}) + r \partial (r^j \partial^{2j-1} \tilde{u}) \\
     & \hspace{7mm} + r \partial^2 (r^{j-1} \partial^{2j-2} \tilde{r}) + \partial \left(r^{j-1} \partial^{2j-2} \tilde{r}\right) + r \partial \left(r^{j-1} \partial^{2j-2} \tilde{r} \right) \\
\end{split}
\end{equation}
noting that there are collection of terms in the simplification for $D_t^{2j-1} \tilde{r}$ which all have the same order. Thus, computing the total order at the $j+1$ level, we get that each of the terms has order $\mathcal{O} = \frac{1}{2}$ at worst. We have completed the induction and shown that $[D_t^{2k-2}, \tilde{L}_1]$ has order $\mathcal{O} = \frac{1}{2}$ at worst for level $k$, and this completes the proof of \eqref{E:D_t_commutator_with_L_1}. The proofs of \eqref{E:div_curl_simplication_1} and \eqref{E:D_t_commutator_with_L_3} are similar.

\end{proof}

\end{lemma}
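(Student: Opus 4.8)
The plan is to establish \eqref{E:D_t_commutator_with_L_1}, \eqref{E:div_curl_simplication_1}, and \eqref{E:D_t_commutator_with_L_3} in three parallel arguments, each of which has the same skeleton: (i) start from the second-order-in-$D_t$ identity (for $\tilde r$ this is \eqref{E:r_Elliptic_start}--\eqref{E:L_1_simplified_expression}; for $\tilde u$ this is \eqref{E:Elliptic_u_equation}--\eqref{E:L_2_simplified_expression}), written in the schematic bookkeeping form of Lemma \ref{L:Book-keeping}; (ii) compute the $\mathcal{H}^2$-order of every ``additional term'' and every ``black subcritical term'' at the base level $k=1$; (iii) apply $D_t^{2k-2}$ to the whole identity, using Lemma \ref{L:Order_of_operators}(3) together with Remark \ref{R:Order_of_operators} to conclude that those subcritical remainders stay subcritical (order $\ge \tfrac12$ for $\tilde r$, order $\ge 0$ for $\tilde u$) at the level-$k$ $\mathcal{H}^{2k}$ norm; and (iv) control the commutator $[D_t^{2k-2},\tilde L_j]$ by induction on $k$.

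First I would do the $\tilde r$ case in full, since the other two are stated to be ``similar.'' From \eqref{E:r_Elliptic_start} and \eqref{E:L_1_simplified_expression} I get the schematic form \eqref{E:proof_D_t^2_tilde_r_simplified}, namely $D_t^2\tilde r \simeq \tilde L_1\tilde r + (\mathcal{O}=\tfrac12\text{ terms}) + r\partial\tilde u + r\partial\tilde s + \tilde u + r\partial\tilde r$, and I would check via Remark \ref{R:order_free_boundary_term} that at $k=1$ the terms $r\partial\tilde u,\,r\partial\tilde s$ have order $\tfrac12$, $\tilde u$ has order $\tfrac12$, and $r\partial\tilde r$ has order $1$ — all strictly subcritical. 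Applying $D_t^{2k-2}$ and invoking Remark \ref{R:Order_of_operators} (a term that is $\mathcal{H}^2$-subcritical with order $\mathcal{O}$ stays $\mathcal{H}^{2k}$-subcritical after $D_t^{2(k-1)}$, dropping at most $(k-1)$ in ``$K-k$'' but gaining it back from the level shift — concretely $D_t^2T$ has $\mathcal{H}^{2k}$-order equal to the $\mathcal{H}^{2(k-1)}$-order of $T$, so iterating keeps order $\ge\tfrac12$), I obtain $D_t^{2k}\tilde r \simeq \tilde L_1(D_t^{2k-2}\tilde r) + [D_t^{2k-2},\tilde L_1]\tilde r + (\text{order }\tfrac12\text{ at worst})$. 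The remaining work is the commutator.

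For the commutator I would argue by induction on $k$, exactly as sketched in \eqref{E:D_t^2_L_1_commutator_calculation}. The base computation uses Lemma \ref{L:Commutator_2}: since $\tilde L_1$ is schematically $r\partial^2 + \partial$ (with $O(1)$ coefficients built from $\tfrac{\gamma-1}{\Gamma+r}$, $H^{ij}$, and $\partial r$, none of which contribute to order), $[D_t,\tilde L_1]\phi \simeq r\partial^2\phi + \partial\phi + r\partial\phi$, and then $[D_t^2,\tilde L_1]\phi = D_t[D_t,\tilde L_1]\phi + [D_t,\tilde L_1]D_t\phi$ gives the expression in \eqref{E:D_t^2_L_1_commutator_calculation}. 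Specializing $\phi=\tilde r$ and using Lemma \ref{L:Book-keeping} to replace $D_t\tilde r$ by $r\partial\tilde u + \tilde u$ one checks each resulting monomial has $\mathcal{H}^4$-order $\ge\tfrac12$. For the inductive step, $[D_t^{2j},\tilde L_1]\tilde r = D_t^2[D_t^{2j-2},\tilde L_1]\tilde r + [D_t^2,\tilde L_1]D_t^{2j-2}\tilde r$: the first summand is handled by the inductive hypothesis plus Remark \ref{R:Order_of_operators}, and the second by substituting the Lemma \ref{L:Book-keeping} normal form for $D_t^{2j-2}\tilde r$ (namely $\sum r^l\partial^{l+j-1}\tilde r$) into the schematic $[D_t^2,\tilde L_1]$ expression and counting powers of $r$ versus derivatives — every term comes out with $\mathcal{H}^{2(j+1)}$-order $\ge\tfrac12$. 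The $\tilde L_2$ and $\tilde L_3$ cases are identical in structure, the only bookkeeping difference being that the ``good'' order for $\tilde u$-type terms is $0$ rather than $\tfrac12$ (so the ``terms with order $0$ at worst'' phrasing in \eqref{E:div_curl_simplication_1}--\eqref{E:D_t_commutator_with_L_3}), and that $\tilde L_3$ carries the weight factor $r^{-1/(\gamma-1)}\partial(r^{1+1/(\gamma-1)}\cdot)$ whose $\partial$-expansion produces only $O(1)$ coefficients plus one genuine derivative, so it still has the schematic shape $r\partial^2 + \partial$.

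The main obstacle I expect is not any single computation but keeping the order-accounting airtight through the $D_t$-substitution step: $D_t$ applied to a free-boundary term is only $\mathcal{O}-\tfrac12$ in the worst case (Lemma \ref{L:Order_of_operators}(3)), so one must verify that when $D_t^{2k-2}$ lands on a term that is already only order $\tfrac12$ (resp.\ $0$), the accompanying level shift $k-1\mapsto k$ in the $\mathcal{H}^{2K}$-order formula (Lemma \ref{L:Order_of_operators}(4)) exactly compensates the worst-case $-\tfrac12$ per $D_t^2$ — i.e.\ that $D_t^2$ is order-neutral after the level bump, which is precisely the content of Remark \ref{R:Order_of_operators} but needs to be applied uniformly to every monomial produced. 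A secondary subtlety is that substituting the Lemma \ref{L:Book-keeping} normal forms introduces sums over $l$; one must note (as the excerpt does parenthetically) that all summands in a given $D_t^i\tilde r$ or $D_t^i\tilde u$ expansion share the same $\mathcal{H}^{2k}$-order, so checking the extremal one suffices.
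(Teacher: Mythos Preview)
Your proposal is correct and follows essentially the same route as the paper: establish the base identity $D_t^2\tilde r\simeq \tilde L_1\tilde r+(\text{order }\tfrac12)$ from \eqref{E:L_1_simplified_expression}, apply $D_t^{2k-2}$ using Remark~\ref{R:Order_of_operators} to preserve subcriticality, and then handle $[D_t^{2k-2},\tilde L_1]\tilde r$ by the same induction (splitting $[D_t^{2j},\tilde L_1]=D_t^2[D_t^{2j-2},\tilde L_1]+[D_t^2,\tilde L_1]D_t^{2j-2}$ and substituting the Lemma~\ref{L:Book-keeping} normal forms). The two subtleties you flag---the level-shift compensation in Lemma~\ref{L:Order_of_operators}(3)--(4) and the uniform order across the $\sum_l r^l\partial^{l+\cdot}$ expansions---are exactly the points the paper relies on as well.
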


\begin{remark}
When we go to estimate $E^{2k}_{\text{wave}}$ from below, we will observe that these subcritical/ critical terms will not cause any issue. First, consider estimating terms in the $\tilde{r}$ equation which are subcritical. In Lemma \ref{L:Higher_order_elliptic_estimates_tilde_r} below, there terms are ultimately handled with smallness arguments combined with Remark \ref{R:order_free_boundary_term}. A similar situation also happens for the $\tilde{u}$ equation. Although the expression for $D_t^{2k} \tilde{u}$ contains terms which are critical as well (and not just subcritical), we recall the  useful fact that the $\tilde{u}$ equation gets a multiplier of order $1/2$ when doing energy estimates. Subsequently, the $E^{2k}_{\text{wave}}$ energy has an extra $r^{1/2}$ weight and this will be perfect for applying the same smallness arguments for these extra terms.
\end{remark}

\subsection{Commutators with weighted spatial derivatives}
\label{S:Subsection_weighted_spatial_derivatives}

Our goal is to extend Lemmas \ref{L:Elliptic_estimates_tilde_r} and \ref{L:Elliptic_div_curl_estimates_tilde_u} to higher order Sobolev spaces. To do so, we will need to compute commutators between our good elliptic operators $\tilde{L}_1, \tilde{L}_2$, $\tilde{L}_3$, and the weighted spatial derivatives $r^{k-j} \partial^{2(k-j)}$. 

Let $k\in \mathbb{N}$ be fixed, and $j \in \mathbb{N}$ with $1 \leq j \leq k$. To begin, we will use the following notation:
\begin{equation}
    \label{E:Notation_for_higher_order_commutators}
    \begin{split}
         m&:= k-j \\
         L^{a,b} &:= r^a \partial^{b} \\
         \tilde{r}_{2j} &:= D_t^{2j} \tilde{r}
    \end{split}
\end{equation}
and we recall from \eqref{E:tilde_L_1_definition} that
\begin{equation*}
    \tilde{L}_1 = \frac{\gamma-1}{\Gamma+r} H^{ij} (r \partial_i \partial_j + \frac{1}{\gamma-1} \partial_i r \partial_j )
\end{equation*}

We plan to handle $\tilde{L}_1$ first, before moving on to $\tilde{L}_2$ and $\tilde{L}_3$. By Lemma \ref{L:Commutator_D_t_with_L_1_2_3}, we have that
\begin{equation}
\label{E:r_2j_equation_with_P}
    \tilde{r}_{2j} \simeq \tilde{L}_1 \tilde{r}_{2j-2} + P
\end{equation}
where $P$ are terms that have an order of $1/2$ or better at level $j$ (i.e. at the $\mathcal{H}^{2j}$ level). We plan to ultimately handle these terms using smallness arguments after we apply $L^{m, 2m}$ to \eqref{E:r_2j_equation_with_P} and analyze with the $H^{0, \frac{2-\gamma}{2(\gamma-1)}}$ norm.
Applying $L^{m, 2m}$ to both sides (where $L^{m,2m}$ is defined in \eqref{E:Notation_for_higher_order_commutators}), we have
\begin{equation}
    L^{m, 2m} \tilde{r}_{2j} \simeq \tilde{L}_1 L^{m, 2m} \tilde{r}_{2j-2} + [L^{m, 2m}, \tilde{L}_1] \tilde{r}_{2j-2} + L^{m,2m} P
\end{equation}

\begin{remark}
\label{R:Order after applying L to free boundary terms}
Observe what will happen to the order of the terms in $P$ when we apply $L^{m,2m}$. At the start, $P$ has terms with $\mathcal{H}^{2j}$ order $\frac{1}{2}$ (at worst). However, by repeatedly applying Lemma \ref{L:Order_of_operators}, we see that $\partial^{2m} P$ has $\mathcal{H}^{2j}$-order $\frac{1}{2} - 2m $. Then, $r^m \partial^{2m} P$ has $\mathcal{H}^{2j}$-order $\frac{1}{2} - 2m +m = \frac{1}{2} -m = \frac{1}{2} - (k-j)$. Then, if we want to calculate the order at level $k$ instead of level $j$ (and note that $k\geq j$), we see by Part 4 in Lemma \ref{L:Order_of_operators} that $r^m \partial^{2m} P$ must have $\mathcal{H}^{2k}$-order $\frac{1}{2} - (k-j) + (k-j) = \frac{1}{2}$. 
\end{remark}

By Remark \ref{R:Order after applying L to free boundary terms}, this means that at the $\mathcal{H}^{2k}$ level, $L^{m,2m} P$ will still be subcritical and we plan to handle these terms with smallness arguments. Thus, let us make use of the following notation:
\begin{equation}
\label{E:P_alpha_notation}
    P_{\mathcal{O} \geq 1/2} := \left(\text{terms with $\mathcal{H}^{2k}$ order $\mathcal{O} \geq \frac{1}{2}$}\right)
\end{equation}
which we will use to collect any perturbative/smallness terms along the way that can handled easily with respect to the $\mathcal{H}^{2k}$ norm. Using this notation, we have
\begin{equation}
\label{E:L_1_commutator_start}
     L^{m, 2m} \tilde{r}_{2j} \simeq \tilde{L}_1 L^{m, 2m} \tilde{r}_{2j-2} + [L^{m, 2m}, \tilde{L}_1] \tilde{r}_{2j-2} + P_{\mathcal{O} \geq 1/2}
\end{equation}
and the bulk of our analysis concerns proper handling of the commutator $[L^{m, 2m}, \tilde{L}_1] \tilde{r}_{2j-2}$.

Since the commutator is trivial for $m=0$, let's assume that $m\geq 1$, and we plan to apply an induction argument on $m$. As we will see in the coming pages, the critical terms in the commutator need to be handled in a particular way where we absorb a critical term into the elliptic operator $\tilde{L}_1$, and then the rest of the terms are subcritical. Note that if we had arrived at \eqref{E:L_1_commutator_start} with $L^{m-1,2(m-2)}$ instead of $L^{m,2m}$, this would produce 
\begin{equation}
\label{E:L_1_commutator_start_m-1_case}
     L^{m-1, 2(m-1)} \tilde{r}_{2j} \simeq \tilde{L}_1 L^{m-1, 2(m-1)} \tilde{r}_{2j-2} + [L^{m-1, 2(m-1)}, \tilde{L}_1] \tilde{r}_{2j-2} + P_{\mathcal{O} \geq 3/2}
\end{equation}
where the other terms, at worst, have $\mathcal{H}^{2k}$-order 1 (instead of critical in the $m$ case). 
For the inductive hypothesis, we need to assume that $L^{m-1, 2(m-1)}$ ``behaves well'' when we commute it with $\tilde{L}_1$ in that we have already absorbed the 
order 1 terms.
We will make the following inductive assumption (recalling from \eqref{E:Notation_for_higher_order_commutators} that $k=m+j$):
\begin{assumption}[Inductive Hypothesis on $m-1$]
\label{A:Inductive_hypothesis_for_L_1}
\end{assumption}
\begin{enumerate}
    \item The terms in $[L^{m-1, 2(m-1)}, \tilde{L}_1]\tilde{r}_{2j-2}$ all have $\mathcal{H}^{2(m-1+j)}$ order $\frac{1}{2}$, except for key order 0 terms which have already been handled by absorbing into $\tilde{L}_1 L^{m-1, 2(m-1)} \tilde{r}_{2j-2}$ with the analog of Proposition \ref{P:Proposition_Inequality_4_for_L_1} below. 
    \item The above assumption also holds for $\tilde{L}_1$ replaced by $\tilde{L}_1 + b \frac{\gamma-1}{\Gamma+r} H^{ij} \partial_i r \partial_j $ with $b \geq 0$.
\end{enumerate}  

\begin{remark}
    \label{R:Consequences_of_Inductive_Hyp_on_L_1}
    There are several consequences of Assumption \ref{A:Inductive_hypothesis_for_L_1} that we summarize:
    \begin{enumerate}
        \item The terms in $[L^{m-1, 2(m-1)}, \tilde{L}_1]\tilde{r}_{2j-2}$ have $\mathcal{H}^{2(m+j)} = \mathcal{H}^{2k}$ order $\frac{3}{2}$, i.e. they belong to $P_{\mathcal{O} \geq 3/2}$
        \item The terms in $[L^{m-1, 2(m-1)}, \tilde{L}_1] \partial \tilde{r}_{2j-2}$ belong to $P_{\mathcal{O} \geq 1/2}$.
        \item Suppose that $m \geq 2$. We claim that Assumption \ref{A:Inductive_hypothesis_for_L_1} on the $m-1$ case also provides information on lower order commutators. Indeed, we compute the following using two different ways:
        \begin{equation}
        \begin{split}
            [r\partial \left(L^{m-2, 2m-3} \right), \tilde{L_1}]\varphi &\simeq [L^{m-1, 2m-2} + L^{m-2, 2m-3} , \tilde{L}_1]\varphi = [L^{m-1, 2m-2}, \tilde{L}_1]\varphi + [ L^{m-2, 2m-3} , \tilde{L}_1]\varphi \\
             [r\partial \left(L^{m-2, 2m-3} \right), \tilde{L_1}]\varphi  & = r \partial [L^{m-2, 2m-3}, \tilde{L}_1] \varphi + [r \partial , \tilde{L}_1] L^{m-2, 2m-3}\varphi
        \end{split}
        \end{equation}
        Combining, we have
        \begin{equation}
            [L^{m-1, 2m-2}, \tilde{L}_1]\varphi \simeq r \partial [L^{m-2, 2m-3}, \tilde{L}_1] \varphi + [r \partial , \tilde{L}_1] L^{m-2, 2m-3}\varphi + [ L^{m-2, 2m-3} , \tilde{L}_1]\varphi.
        \end{equation}
        Thus, since we know that $[L^{m-1, 2(m-1)}, \tilde{L}_1]\tilde{r}_{2j-2}$ belongs to $P_{\mathcal{O} \geq 3/2}$ and $[L^{m-1, 2(m-1)}, \tilde{L}_1]\partial \tilde{r}_{2j-2}$ belongs to $P_{\mathcal{O} \geq 1/2}$, we can also conclude that $[L^{m-2, 2m-3}, \tilde{L}_1]\tilde{r}_{2j-2}$ belongs to $P_{\mathcal{O} \geq 3/2}$ and $[L^{m-2, 2m-3}, \tilde{L}_1]\partial \tilde{r}_{2j-2}$ belongs to $P_{\mathcal{O} \geq 1/2}$. By repeated use of this argument, a similar statement also holds for commutators involving $L^{l, m+l-1}$ where $1\leq l \leq m-1$.
    \end{enumerate}
\end{remark}

Now that we have finished discussing the consequences of our inductive hypothesis, we proceed with the commutator by computing the following
\begin{equation}
    \label{E:tilde_L_1_commutator_with_L}
    [L^{m,2m}, \tilde{L}_1] \tilde{r}_{2j-2} = L^{m, 2m} \tilde{L}_1 \tilde{r}_{2j-2} - \tilde{L}_1 L^{m, 2m} \tilde{r}_{2j-2}
\end{equation}
and
\begin{equation}
    \begin{split}
        L^{m, 2m} \tilde{L}_1 \tilde{r}_{2j-2} &= r^m \partial^{2m} \left(\frac{\gamma-1}{\Gamma+r} H^{ij} (r \partial_i \partial_j \tilde{r}_{2j-2} + \frac{1}{\gamma-1} \partial_i r \partial_j \tilde{r}_{2j-2} ) \right) \\
        &\simeq  \frac{\gamma-1}{\Gamma+r} H^{ij} r^m \partial^{2m} \left(r \partial_i \partial_j \tilde{r}_{2j-2} + \frac{1}{\gamma-1} \partial_i r \partial_j \tilde{r}_{2j-2} \right) + P_{\mathcal{O} \geq 1/2}\\
    \end{split}
\end{equation}
where we note that any terms where derivatives hit $\frac{\gamma-1}{\Gamma+r} H^{ij}$ get absorbed into $P_{\mathcal{O} \geq 1/2}$ (since these derivatives are not hitting either $r$ or $\tilde{r}_{2j-2}$ which are critical terms at worst). For analyzing the remaining terms, we continue
\begin{equation}
\begin{split}
     r^m\partial^{2m}\left(r \partial_i \partial_j \tilde{r}_{2j-2} + \frac{1}{\gamma-1} \partial_i r \partial_j \tilde{r}_{2j-2} \right) &\simeq r^m\sum_{l=0}^{2m} \partial^l r \partial^{2m-l} \partial_i \partial_j \tilde{r}_{2j-2} + \frac{1}{\gamma-1} \partial^l \partial_i r \partial^{2m-l} \partial_j \tilde{r}_{2j-2}
\end{split}
\end{equation}
where we absorbed combinatorial constants with $\simeq$. Now, we plan to separate out any of the terms that belong to $P_{\mathcal{O} \geq 1/2}$ from the critical terms which must be handled. When $\partial^{2m}$ hits $\tilde{L}_1$, we observe that the worst terms only occur when $2m$ derivatives hit $\tilde{r}_{2j-2}$, or $1$ derivative hits $r$ (in the first term) and $2m-1$ derivatives hit $\tilde{r}_{2j-2}$. The point is that after the first power of $r$ is removed, any subsequent derivatives that hit $\partial r$ do not actually contribute to the order. In fact, they are only taking away derivatives that could have hit $\tilde{r}_{2j-2}$. Indeed, we can check by computing the order. When $l=0$, we have that 
\begin{equation}
\begin{split}
     r^m r \partial^{2m} \partial_i \partial_j \tilde{r}_{2j-2}  & \simeq r^{m+1} \partial^{2m+2} (D_t^{2j-2} \tilde{r}) \text{ is $\mathcal{H}^{2k}$-critical ($\mathcal{O} = 0$)}  \\
     \frac{1}{\gamma-1}r^m \partial_i r \partial^{2m} \partial_j \tilde{r}_{2j-2} & \simeq r^m \partial^{2m+1}(D_t^{2j-2} \tilde{r}) \text{ is $\mathcal{H}^{2k}$-critical ($\mathcal{O} = 0$)}
\end{split}
\end{equation}
using Lemma \ref{L:Order_of_operators}.
When $l=1$, we see that
\begin{equation}
\begin{split}
     r^m \partial r \partial^{2m-1} \partial_i \partial_j \tilde{r}_{2j-2}  & \simeq r^m \partial^{2m+1} (D_t^{2j-2} \tilde{r}) \text{ is $\mathcal{H}^{2k}$-critical ($\mathcal{O} = 0$)}  \\
     \frac{1}{\gamma-1}r^m \partial \partial_i r \partial^{2m-1} \partial_j \tilde{r}_{2j-2} & \simeq r^m \partial^{2m}(D_t^{2j-2} \tilde{r}) \text{ is $\mathcal{H}^{2k}$-subcritical ($\mathcal{O} = 1$)}
\end{split}
\end{equation}
Thus, in the summation when $l\geq 2$, we will get
\begin{equation}
\begin{split}
     r^m \partial^l r \partial^{2m-l} \partial_i \partial_j \tilde{r}_{2j-2}  & \simeq r^m \partial^{\leq 2m} (D_t^{2j-2} \tilde{r}) \text{ is $\mathcal{H}^{2k}$-subcritical ($\mathcal{O} \geq 1$)}  \\
    \frac{1}{\gamma-1} r^m \partial^l \partial_i r \partial^{\leq 2m-l} \partial_j \tilde{r}_{2j-2} & \simeq r^m \partial^{\leq 2m-1}(D_t^{2j-2} \tilde{r}) \text{ is $\mathcal{H}^{2k}$-subcritical ($\mathcal{O} \geq 2$)}
\end{split}
\end{equation}
Summarizing our order analysis, we see that many of the terms in the summation will be absorbed into $P_{\mathcal{O} \geq 1/2}$, and we have:
\begin{equation}
\begin{split}
\label{E:L_1_Commutator_side_1}
     L^{m, 2m} \tilde{L}_1 \tilde{r}_{2j-2} & \simeq \frac{\gamma-1}{\Gamma+r} H^{ij} \left(\textcolor{blue}{r^{m+1} \partial^{2m} \partial_i \partial_j \tilde{r}_{2j-2}} + \textcolor{blue}{\frac{1}{\gamma-1}  r^m \partial_i r \partial^{2m} \partial_j \tilde{r}_{2j-2}} + r^m  \partial r \partial^{2m-1} \partial_i \partial_j \tilde{r}_{2j-2}   \right) \\
    & \hspace{7mm} + P_{\mathcal{O} \geq 1/2}
\end{split}
\end{equation}

For the other side of the commutator, we have

\begin{equation}
\label{E:L_1_Commutator_side_2}
\begin{split}
     \tilde{L}_1 \left(L^{m,2m} \tilde{r}_{2j-2} \right) &\simeq \frac{\gamma-1}{\Gamma+r} H^{ij} \left(r \partial_i \partial_j (r^m \partial^{2m} \tilde{r}_{2j-2})+ \frac{1}{\gamma-1} \partial_i r \partial_j(r^m \partial^{2m} \tilde{r}_{2j-2})  \right) \\
     &\simeq \frac{\gamma-1}{\Gamma+r} \big( H^{ij} \left(\textcolor{blue}{r^{m+1} \partial_i \partial_j \partial^{2m} \tilde{r}_{2j-2}} + 2r \partial_i(r^m) \partial_j \partial^{2m} \tilde{r}_{2j-2}  + \textcolor{purple}{r \partial_i \partial_j (r^m)\partial^{2m} \tilde{r}_{2j-2}}  \right) \\
     &\hspace{5mm} + \frac{1}{\Gamma+r} H^{ij} \partial_i r \left(\textcolor{blue}{r^m \partial_j \partial^{2m} \tilde{r}_{2j-2}} + \partial_j(r^m) \partial^{2m} \tilde{r}_{2j-2} \big) \right).
\end{split}
\end{equation}
Now, if we compute the order of the \textcolor{purple}{purple} term, we see that
\begin{equation*}
    \frac{\gamma-1}{\Gamma+r} H^{ij} r \partial_i \partial_j (r^m)\partial^{2m} \tilde{r}_{2j-2} \simeq r^{m-1} \partial^{2m} \tilde{r}_{2j-2} \text{ is $\mathcal{H}^{2k}$-subcritical ($\mathcal{O} \geq 2$)}
\end{equation*}
and thus, we can absorb it into $P_{\mathcal{O} \geq 1/2}$. Observing that the \textcolor{blue}{blue} terms cancel in \eqref{E:L_1_Commutator_side_1} and \eqref{E:L_1_Commutator_side_2}, we have
\begin{equation}
\begin{split}
     [L^{m,2m}, \tilde{L}_1] \tilde{r}_{2j-2}
     &= \frac{\gamma-1}{\Gamma+r} H^{ij} \left(  \partial r L^{m,2m-1} \partial_i \partial_j \tilde{r}_{2j-2} - 2m \partial_i r L^{m,2m} \partial_j \tilde{r}_{2j-2} - \frac{1}{\gamma-1} \partial_i r \partial_j r L^{m-1,2m} \tilde{r}_{2j-2} \right) \\
     & \hspace{7mm} + P_{\mathcal{O} \geq 1/2} \\
\end{split}
\end{equation}
where the first two terms take the form $L^{m, 2m+1} \tilde{r}_{2j-2}$.

Then, combining with \eqref{E:L_1_commutator_start}, and applying the $H^{0, \frac{2-\gamma}{2(\gamma-1)}}$ norm, we have 
\begin{equation}
\label{E:L_1_inequalities_starting_1}
\begin{split}
     \norm{\tilde{L}_1 L^{m,2m} \tilde{r}_{2j-2}}_{H^{0, \frac{2-\gamma}{2(\gamma-1)}}}  & \lesssim  \norm{L^{m,2m} \tilde{r}_{2j}}_{H^{0, \frac{2-\gamma}{2(\gamma-1)}}} +  \norm{L^{m,2m+1} \tilde{r}_{2j-2}}_{H^{0, \frac{2-\gamma}{2(\gamma-1)}}} \\
     &\hspace{5mm} + \norm{L^{m-1,2m} \tilde{r}_{2j-2}}_{H^{0, \frac{2-\gamma}{2(\gamma-1)}}}  + \norm{P_{\mathcal{O} \geq 1/2}}_{H^{0, \frac{2-\gamma}{2(\gamma-1)}}} \\
\end{split}
\end{equation}
For the LHS, we can apply Lemma \ref{L:Elliptic_estimates_tilde_r} to get
\begin{equation}
     \norm{\tilde{L}_1 L^{m,2m} \tilde{r}_{2j-2}}_{H^{0, \frac{2-\gamma}{2(\gamma-1)}}} \gtrsim  \norm{L^{m,2m} \tilde{r}_{2j-2}}_{H^{2, 1+ \frac{2-\gamma}{2(\gamma-1)}}} - \textcolor{blue}{ \norm{L^{m,2m} \tilde{r}_{2j-2}}_{H^{0, \frac{2-\gamma}{2(\gamma-1)}}}}
\end{equation}
Then, adding the \textcolor{blue}{blue} term to the other side of \eqref{E:L_1_inequalities_starting_1}, we have
\begin{equation}
\label{E:L_1_inequalities_starting_2}
\begin{split}
     \norm{ L^{m,2m} \tilde{r}_{2j-2}}_{H^{2, 1+\frac{2-\gamma}{2(\gamma-1)}}}  & \lesssim  \norm{L^{m,2m} \tilde{r}_{2j}}_{H^{0, \frac{2-\gamma}{2(\gamma-1)}}} +  \norm{L^{m,2m+1} \tilde{r}_{2j-2}}_{H^{0, \frac{2-\gamma}{2(\gamma-1)}}} \\
     &\hspace{5mm} + \norm{L^{m-1,2m} \tilde{r}_{2j-2}}_{H^{0, \frac{2-\gamma}{2(\gamma-1)}}} + \textcolor{blue}{\norm{L^{m,2m} \tilde{r}_{2j-2}}_{H^{0, \frac{2-\gamma}{2(\gamma-1)}}}} \\
     & \hspace{5mm} + \norm{P_{\mathcal{O} \geq 1/2}}_{H^{0, \frac{2-\gamma}{2(\gamma-1)}}} \\
\end{split}
\end{equation}

Now, by adding the following quantity on both sides of our inequality:
\begin{equation}
    \label{E:L_1_quanity added on both sides}
    \norm{\tilde{r}_{2j-2}}_{H^{0, \frac{2-\gamma}{2(\gamma-1)}}} + 
    \sum_{l=0}^{m-1} \norm{L^{l, m+l} \tilde{r}_{2j-2} }_{H^{2, 1+ \frac{2-\gamma}{2(\gamma-1)}}}
\end{equation}
we see using the help or Remark \ref{R:H^2k_norm_equivalence} that the LHS will be equivalent to
\begin{equation}
    \norm{\tilde{r}_{2j-2}}_{H^{2m+2,m+1 + \frac{2-\gamma}{2(\gamma-1)}}}
\end{equation}
For the RHS, we will get the following collection of terms which we number
\begin{equation}
\label{E:L_1_inequalities_starting_3}
\begin{split}
    & \norm{L^{m,2m} \tilde{r}_{2j}}_{H^{0, \frac{2-\gamma}{2(\gamma-1)}}} +  \norm{L^{m,2m+1} \tilde{r}_{2j-2}}_{H^{0, \frac{2-\gamma}{2(\gamma-1)}}} \\
     &\hspace{5mm} + \norm{L^{m-1,2m} \tilde{r}_{2j-2}}_{H^{0, \frac{2-\gamma}{2(\gamma-1)}}} + \textcolor{blue}{\norm{L^{m,2m} \tilde{r}_{2j-2}}_{H^{0, \frac{2-\gamma}{2(\gamma-1)}}}}  \\
     &\hspace{5mm} 
    \sum_{l=0}^{m-1} \norm{L^{l, m+l} \tilde{r}_{2j-2} }_{H^{2, 1+ \frac{2-\gamma}{2(\gamma-1)}}}  +  \norm{\tilde{r}_{2j-2}}_{H^{0, \frac{2-\gamma}{2(\gamma-1)}}} + \norm{P_{\mathcal{O} \geq 1/2}}_{H^{0, \frac{2-\gamma}{2(\gamma-1)}}} \\
    &:= \norm{L^{m,2m} \tilde{r}_{2j}}_{H^{0, \frac{2-\gamma}{2(\gamma-1)}}} + \sum_{a=0}^6 R_6
\end{split}
\end{equation}
using $R_1$ through $R_6$. 

For the terms $R_1, R_2,$ and $R_3$, we first observe that
\begin{equation}
\begin{split}
    \norm{L^{m,2m+1} \tilde{r}_{2j-2}}_{H^{0, \frac{2-\gamma}{2(\gamma-1)}}}  & + \norm{L^{m-1,2m} \tilde{r}_{2j-2}}_{H^{0, \frac{2-\gamma}{2(\gamma-1)}}} \\
    & + \textcolor{blue}{\norm{L^{m,2m} \tilde{r}_{2j-2}}_{H^{0, \frac{2-\gamma}{2(\gamma-1)}}}} \lesssim \norm{L^{m-1, 2m-1}\tilde{r}_{2j-2}}_{H^{2,1+ \frac{2-\gamma}{2(\gamma-1)}}} \\
    &\hspace{4.1cm} = \norm{L^{m-1, 2m-2} \partial \tilde{r}_{2j-2}}_{H^{2,1+ \frac{2-\gamma}{2(\gamma-1)}}} \\
\end{split}
\end{equation}
Then, we claim that the following inequality holds:
\begin{proposition}
\label{P:Proposition_Inequality_4_for_L_1}
\begin{equation}
\begin{split}
     \norm{L^{m-1, 2m-2} \partial \tilde{r}_{2j-2}}_{H^{2,1+ \frac{2-\gamma}{2(\gamma-1)}}} & \lesssim  \norm{L^{m-1, 2m-2} \partial \tilde{r}_{2j}}_{H^{0, \frac{2-\gamma}{2(\gamma-1)}}} \\
     & \hspace{5mm} +  \norm{L^{m-1, 2m-2} \partial \tilde{r}_{2j-2}}_{H^{0, \frac{2-\gamma}{2(\gamma-1)}}} + \norm{P_{\mathcal{O} \geq 1/2}}_{H^{0, \frac{2-\gamma}{2(\gamma-1)}}}
\end{split}
\end{equation}    
\begin{proof}
    To prove this proposition, we rely on the smallness of $\partial_3 r$ (similar to the work in \cite{DisconziIfrimTataru}) as well as a key step in which we we must absorb a term into $\tilde{L}_1$, creating the updated elliptic operator $\widehat{\tilde{L}_1}$. Similar to \cite{DisconziIfrimTataru}, we rely on localizing in the neighborhood of a boundary point such that 
    \begin{equation}
    \label{E:Localization_assumptions_L_1_and_more}
        |\partial' r| \lesssim A, \hspace{5mm} |\partial_3 r - 1| \lesssim A, \hspace{5mm} |H^{3, i'}| \lesssim A
    \end{equation}
    where $A \ll 1$ is small constant, and primed indices will range over $x^1$ and $x^2$ (i.e. $\partial' r \simeq \partial_1 r, \partial_2 r$). Note that a key assumption is present on the off-diagonal components of $H$ (and a similar simplification can also be found in \cite{DisconziIfrimTataru}). The proof is quite similar, and we only need to show the related inequalities
\begin{equation}
    \label{E:Inequality_4_and_6_combined}
         \begin{split}
        \norm{L^{m-1, 2m-2} \partial_3 \tilde{r}_{2j-2} }_{H^{2, 1+\frac{2-\gamma}{2(\gamma-1)}}} & \lesssim  \norm{L^{m-1, 2m-2} \partial_3 \tilde{r}_{2j} }_{H^{0, \frac{2-\gamma}{2(\gamma-1)}}} +  \norm{L^{m-1, 2m-2} \partial' \tilde{r}_{2j-2} }_{H^{2, 1+ \frac{2-\gamma}{2(\gamma-1)}}} \\
        & \hspace{5mm} + \norm{P_{\mathcal{O} \geq 1/2}}_{H^{0, \frac{2-\gamma}{2(\gamma-1)}}}
    \end{split}
    \end{equation}
    and 
     \begin{equation}
    \label{E:Inequality_5_key_step_in_proof}
        \begin{split}
        \norm{L^{m-1, 2m-2} \partial' \tilde{r}_{2j-2} }_{H^{2, 1+\frac{2-\gamma}{2(\gamma-1)}}} & \lesssim  \norm{L^{m-1, 2m-2} \partial' \tilde{r}_{2j} }_{H^{0, \frac{2-\gamma}{2(\gamma-1)}}} \\
        & \hspace{5mm} +\textcolor{blue}{  \norm{L^{m-1, 2m-2} \partial' \tilde{r}_{2j-2} }_{H^{0, \frac{2-\gamma}{2(\gamma-1)}}}} + \norm{P_{\mathcal{O} \geq 1/2}}_{H^{0, \frac{2-\gamma}{2(\gamma-1)}}}
    \end{split}
    \end{equation}

\end{proof}
\end{proposition}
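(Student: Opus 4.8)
\textbf{Proof proposal for Proposition \ref{P:Proposition_Inequality_4_for_L_1}.}

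The plan is to mimic the localization argument of \cite{DisconziIfrimTataru} but carried out at the level of the weighted higher-order operator $L^{m-1,2m-2}\partial$ composed with $\tilde L_1$, keeping careful track of $\mathcal H^{2k}$ order so that every stray term lands in $P_{\mathcal O\geq 1/2}$. First I would reduce the claimed inequality to the two sub-inequalities \eqref{E:Inequality_4_and_6_combined} and \eqref{E:Inequality_5_key_step_in_proof}: writing a generic first-order spatial derivative as a combination of $\partial_3$ and the tangential derivatives $\partial'$, the estimate for $\partial\tilde r_{2j-2}$ follows by adding the $\partial_3$ estimate \eqref{E:Inequality_4_and_6_combined} and the tangential estimate \eqref{E:Inequality_5_key_step_in_proof}, then absorbing the blue lower-order term on the left using the trading-derivatives-for-weight Corollary \ref{C:Trading_deriv_for_weight} together with the smallness of $\hat\varepsilon$ (or of $\varepsilon$), exactly as in the passage from the two inequalities in \eqref{E:Inequality_goals_elliptic_r} to \eqref{E:Elliptic_estimate_for_tilde_r} in Lemma \ref{L:Elliptic_estimates_tilde_r}.

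The core of the argument is the tangential-to-normal step. I would apply a generic $\partial$ to \eqref{E:r_2j_equation_with_P}, compute $[\partial,\tilde L_1]\tilde r_{2j-2}$ directly: the terms where $\partial$ hits $r$ or $\tilde r_{2j-2}$ cancel against $\tilde L_1\partial\tilde r_{2j-2}$, and the one remaining term is $\frac{\gamma-1}{\Gamma+r}H^{ij}\,\partial r\,\partial_i\partial_j\tilde r_{2j-2}$ (the term where $\partial$ lands on $\partial_i r$ is $\mathcal H^{2j}$-subcritical and joins $\partial P$). For $\partial=\partial'$ the factor $\partial' r$ is $O(A)$ by \eqref{E:Localization_assumptions_L_1_and_more}, so the whole commutator is absorbed into $\partial'P$, giving $\tilde L_1\partial'\tilde r_{2j-2}\simeq \partial'\tilde r_{2j}+\partial'P$; applying $L^{m-1,2m-2}$, invoking part (2) of the inductive Assumption \ref{A:Inductive_hypothesis_for_L_1} to commute cleanly modulo $P_{\mathcal O\geq 1/2}$, taking the $H^{0,\frac{2-\gamma}{2(\gamma-1)}}$ norm and applying Lemma \ref{L:Elliptic_estimates_tilde_r} yields \eqref{E:Inequality_5_key_step_in_proof}. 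For $\partial=\partial_3$ one has $\partial_3 r\approx 1$, so the commutator term $\frac{\gamma-1}{\Gamma+r}H^{33}\partial_3 r\,\partial_3^2\tilde r_{2j-2}$ is \emph{critical} and cannot be discarded; the trick (following \cite{DisconziIfrimTataru}) is to use $|H^{3i'}|\lesssim A$ and $|\partial_3 r-1|\lesssim A$ to rewrite it, up to $P$-terms, as $\frac{\gamma-1}{\Gamma+r}H^{ij}\partial_i r\,\partial_j\partial_3\tilde r_{2j-2}$, which is exactly the term that can be \emph{absorbed} into $\tilde L_1$ to produce the modified operator $\widehat{\tilde L_1}\varphi:=\frac{\gamma-1}{\Gamma+r}H^{ij}\bigl(r\partial_i\partial_j\varphi+(\tfrac{1}{\gamma-1}+1)\partial_i r\,\partial_j\varphi\bigr)$. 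Since $\widehat{\tilde L_1}=\tilde L_1+b\frac{\gamma-1}{\Gamma+r}H^{ij}\partial_i r\,\partial_j$ with $b=1\geq 0$, part (2) of Assumption \ref{A:Inductive_hypothesis_for_L_1} applies to it, and the elliptic estimate of Lemma \ref{L:Elliptic_estimates_tilde_r} still holds for $\widehat{\tilde L_1}$ (the extra term is manifestly coercive, by the same integration-by-parts as in that proof). After applying $L^{m-1,2m-2}$, commuting through $\widehat{\tilde L_1}$ modulo $P_{\mathcal O\geq 1/2}$ (by Remark \ref{R:Order after applying L to free boundary terms}, $L^{m-1,2m-2}\partial_3 P$ has $\mathcal H^{2k}$-order $\geq 1/2$), and applying the elliptic estimate, I obtain $\norm{L^{m-1,2m-2}\partial_3\tilde r_{2j-2}}_{H^{2,1+\frac{2-\gamma}{2(\gamma-1)}}}$ bounded by $\norm{L^{m-1,2m-2}\partial_3\tilde r_{2j}}_{H^{0,\frac{2-\gamma}{2(\gamma-1)}}}$, $\norm{L^{m-1,2m-2}(\partial')^2\tilde r_{2j-2}}_{H^{0,\frac{2-\gamma}{2(\gamma-1)}}}$, and $\norm{P_{\mathcal O\geq 1/2}}_{H^{0,\frac{2-\gamma}{2(\gamma-1)}}}$; the middle term is controlled by $\norm{L^{m-1,2m-2}\partial'\tilde r_{2j-2}}_{H^{2,1+\frac{2-\gamma}{2(\gamma-1)}}}$ using Lemma \ref{L:Disconzi_Ifrim_Tataru_1}, giving \eqref{E:Inequality_4_and_6_combined}.

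The main obstacle I anticipate is bookkeeping discipline rather than a new idea: one must verify that at \emph{every} place where a derivative falls on a coefficient ($\frac{\gamma-1}{\Gamma+r}$, $H^{ij}$, $\partial_i r$) or where $L^{m-1,2m-2}$ is distributed past $\tilde L_1$, the resulting term genuinely has $\mathcal H^{2k}$-order $\geq 1/2$ so that it belongs to $P_{\mathcal O\geq 1/2}$ and can be handled by smallness. This requires repeated, careful use of Lemma \ref{L:Order_of_operators} (parts 1--4), Remark \ref{R:Order after applying L to free boundary terms}, and the consequences of the inductive hypothesis listed in Remark \ref{R:Consequences_of_Inductive_Hyp_on_L_1} (in particular that $[L^{m-1,2m-2},\tilde L_1]\partial\tilde r_{2j-2}\in P_{\mathcal O\geq 1/2}$, which is what lets us close the commutators one derivative down). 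A secondary technical point is confirming that the localization assumptions \eqref{E:Localization_assumptions_L_1_and_more} are compatible with the partition-of-unity / boundary-straightening setup already used for the base case $\tilde L_1$ in Lemma \ref{L:Elliptic_estimates_tilde_r}, so that the constant $A$ can be taken as small as needed uniformly in $j$ and $m$; this is exactly parallel to \cite{DisconziIfrimTataru} and introduces no genuinely new difficulty. Once \eqref{E:Inequality_4_and_6_combined} and \eqref{E:Inequality_5_key_step_in_proof} are in hand, combining them and absorbing the blue terms completes the proof of Proposition \ref{P:Proposition_Inequality_4_for_L_1}.
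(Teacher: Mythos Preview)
Your proposal is correct and follows essentially the same route as the paper's proof: reduce to the two sub-inequalities \eqref{E:Inequality_4_and_6_combined} and \eqref{E:Inequality_5_key_step_in_proof}, compute $[\partial,\tilde L_1]\tilde r_{2j-2}\simeq \frac{\gamma-1}{\Gamma+r}H^{ij}\partial r\,\partial_i\partial_j\tilde r_{2j-2}$, dispose of the $\partial'$ case by smallness of $\partial' r$, and for $\partial_3$ use the localization \eqref{E:Localization_assumptions_L_1_and_more} to rewrite the critical commutator term as $\frac{\gamma-1}{\Gamma+r}H^{ij}\partial_i r\,\partial_j\partial_3\tilde r_{2j-2}$ and absorb it into $\widehat{\tilde L_1}$ before applying $L^{m-1,2m-2}$, the inductive hypothesis, and the elliptic estimate. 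One small correction: you do \emph{not} need to absorb the blue $H^{0,\frac{2-\gamma}{2(\gamma-1)}}$ term into the left-hand side at the end, since the statement of the proposition keeps $\norm{L^{m-1,2m-2}\partial\tilde r_{2j-2}}_{H^{0,\frac{2-\gamma}{2(\gamma-1)}}}$ on the right; simply substituting \eqref{E:Inequality_5_key_step_in_proof} into \eqref{E:Inequality_4_and_6_combined} and adding \eqref{E:Inequality_5_key_step_in_proof} once more already gives the desired bound.
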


\begin{corollary}[Corollary for Proposition \ref{P:Proposition_Inequality_4_for_L_1}]
\label{C:Corollary_estimates_for_partial_r_2j-2}
Let $l \in \mathbb{N}$ with $1 \leq l \leq m-1$. The following inequality also holds

\begin{equation}
\begin{split}
     \norm{L^{l, m+l-1} \partial \tilde{r}_{2j-2}}_{H^{2,1+ \frac{2-\gamma}{2(\gamma-1)}}} & \lesssim  \norm{L^{l, m+l-1} \partial \tilde{r}_{2j}}_{H^{0, \frac{2-\gamma}{2(\gamma-1)}}} \\
     & \hspace{5mm} +  \norm{L^{l, m+l-1} \partial \tilde{r}_{2j-2}}_{H^{0, \frac{2-\gamma}{2(\gamma-1)}}} + \norm{P_{\mathcal{O} \geq 1/2}}_{H^{0, \frac{2-\gamma}{2(\gamma-1)}}}
\end{split}
\end{equation}
\begin{proof}
    To prove this corollary, we simply need to follow the proof of Proposition \ref{P:Proposition_Inequality_4_for_L_1} combined with the last consequence in Remark \ref{R:Consequences_of_Inductive_Hyp_on_L_1}. Anywhere the inductive hypothesis is used on the commutator $L^{m-1,2m-2}$, we know that a similar fact holds for $L^{l, m+l-1}$ by Remark \ref{R:Consequences_of_Inductive_Hyp_on_L_1}.
\end{proof}

\end{corollary}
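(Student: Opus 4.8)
The plan is to re-run the proof of Proposition \ref{P:Proposition_Inequality_4_for_L_1} essentially verbatim, replacing the weighted operator $L^{m-1,\,2m-2}$ by $L^{l,\,m+l-1}$ everywhere it appears, and to certify each step using Remark \ref{R:Consequences_of_Inductive_Hyp_on_L_1}. Concretely, I would first apply a generic first-order spatial derivative $\partial$ to the relation $\tilde{r}_{2j} \simeq \tilde{L}_1 \tilde{r}_{2j-2} + P$ from \eqref{E:r_2j_equation_with_P}, obtaining
\[
  \partial \tilde{r}_{2j} \simeq \tilde{L}_1 \partial \tilde{r}_{2j-2} + [\partial, \tilde{L}_1]\tilde{r}_{2j-2} + \partial P,
\]
and simplify $[\partial, \tilde{L}_1]\tilde{r}_{2j-2}$: the principal part cancels against $\tilde{L}_1\partial\tilde{r}_{2j-2}$, so only the single term $\tfrac{\gamma-1}{\Gamma+r}H^{ij}\,\partial r\,\partial_i\partial_j\tilde{r}_{2j-2}$ survives modulo terms absorbed into $\partial P$ (and, by Lemma \ref{L:Order_of_operators}, $\partial P$ has $\mathcal{H}^{2j}$-order at least $-\tfrac12$). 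Localizing near a boundary point so that $|\partial' r|\lesssim A$, $|\partial_3 r - 1|\lesssim A$, $|H^{3,i'}|\lesssim A$ as in \eqref{E:Localization_assumptions_L_1_and_more}, I would split the $\partial'$ and $\partial_3$ directions exactly as in Proposition \ref{P:Proposition_Inequality_4_for_L_1}: for $\partial'$ the surviving commutator term carries a small factor $\partial' r$ and is absorbed, while for $\partial_3$ the term $\tfrac{\gamma-1}{\Gamma+r}H^{33}\partial_3 r\,\partial_3^2\tilde{r}_{2j-2}$ is rewritten, using the smallness of $\partial' r$ and of the off-diagonal $H$-components, as $\tfrac{\gamma-1}{\Gamma+r}H^{ij}\partial_i r\,\partial_j\partial_3\tilde{r}_{2j-2}$ modulo $\partial_3 P$ and absorbed into the modified operator $\widehat{\tilde{L}_1} = \tilde{L}_1 + \tfrac{\gamma-1}{\Gamma+r}H^{ij}\partial_i r\,\partial_j$, which still satisfies a Lemma \ref{L:Elliptic_estimates_tilde_r}-type elliptic estimate.

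Next I would apply $L^{l,\,m+l-1}$ to both sides and commute it past $\tilde{L}_1$ (respectively $\widehat{\tilde{L}_1}$); this is where Remark \ref{R:Consequences_of_Inductive_Hyp_on_L_1} does the work. Writing $L^{l,\,m+l-1} = r\partial\cdot L^{l-1,\,m+l-2}$ and iterating
\[
  [L^{l,\,m+l-1}, \tilde{L}_1]\varphi \simeq r\partial\,[L^{l-1,\,m+l-2}, \tilde{L}_1]\varphi + [r\partial, \tilde{L}_1]\,L^{l-1,\,m+l-2}\varphi,
\]
the third consequence in Remark \ref{R:Consequences_of_Inductive_Hyp_on_L_1} (obtained by descending downward from the $m-1$ case) shows that $[L^{l,\,m+l-1}, \tilde{L}_1]\tilde{r}_{2j-2}$ lies in $P_{\mathcal{O}\geq 3/2}$ and $[L^{l,\,m+l-1}, \tilde{L}_1]\partial\tilde{r}_{2j-2}$ lies in $P_{\mathcal{O}\geq 1/2}$, with the lone order-$0$ obstruction already disposed of by absorbing it into $\tilde{L}_1 L^{l,\,m+l-1}\tilde{r}_{2j-2}$; the same holds for $\widehat{\tilde{L}_1}$ by part (2) of Assumption \ref{A:Inductive_hypothesis_for_L_1}. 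Moreover, since $\partial P$ has $\mathcal{H}^{2j}$-order at least $-\tfrac12$, the order-counting of Remark \ref{R:Order after applying L to free boundary terms} gives that $L^{l,\,m+l-1}\partial P$ has $\mathcal{H}^{2k}$-order at least $\tfrac12$, hence lies in $P_{\mathcal{O}\geq 1/2}$. Thus, modulo $P_{\mathcal{O}\geq 1/2}$-terms, applying $L^{l,\,m+l-1}$ yields $\tilde{L}_1 L^{l,\,m+l-1}\partial'\tilde{r}_{2j-2} \simeq L^{l,\,m+l-1}\partial'\tilde{r}_{2j}$ and $\widehat{\tilde{L}_1} L^{l,\,m+l-1}\partial_3\tilde{r}_{2j-2} \simeq L^{l,\,m+l-1}\partial_3\tilde{r}_{2j} + \tfrac{\gamma-1}{\Gamma+r}H^{i'j'}\partial_3 r\,\partial_{i'}\partial_{j'}L^{l,\,m+l-1}\tilde{r}_{2j-2}$, the last term being controlled by $\norm{L^{l,\,m+l-1}\partial'\tilde{r}_{2j-2}}_{H^{2,\,1+\frac{2-\gamma}{2(\gamma-1)}}}$.

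Finally, I would take the $H^{0,\,\frac{2-\gamma}{2(\gamma-1)}}$ norm and invoke the elliptic estimates for $\tilde{L}_1$ and $\widehat{\tilde{L}_1}$ (Lemma \ref{L:Elliptic_estimates_tilde_r}, using part (2) of Assumption \ref{A:Inductive_hypothesis_for_L_1} for $\widehat{\tilde{L}_1}$), which upgrade the $H^{0,\dots}$ norm of $L^{l,\,m+l-1}\partial\tilde{r}_{2j-2}$ to its $H^{2,\,1+\frac{2-\gamma}{2(\gamma-1)}}$ norm at the cost of a lower-order term that is moved to the right-hand side. Combining the $\partial_3$ inequality (which trades $\partial_3$ for $\partial'$) with the $\partial'$ inequality (which is self-absorbing up to $\norm{L^{l,\,m+l-1}\partial\tilde{r}_{2j}}_{H^{0,\dots}}$ and $P_{\mathcal{O}\geq 1/2}$) and summing over the derivative directions gives the stated bound. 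I expect the main obstacle to be precisely the bookkeeping in the second paragraph: one must verify that the downward descent in $l$ in Remark \ref{R:Consequences_of_Inductive_Hyp_on_L_1} preserves, at each intermediate level $1\le l\le m-1$, exactly the single order-$0$ term needed for absorption into $\widehat{\tilde{L}_1}$, with all remaining commutator terms genuinely in $P_{\mathcal{O}\geq 3/2}$, so that the elliptic estimate can be closed; the rest is a faithful transcription of the $l = m-1$ argument.
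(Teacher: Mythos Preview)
Your proposal is correct and follows essentially the same approach as the paper: the paper's proof is a one-line pointer instructing the reader to re-run Proposition~\ref{P:Proposition_Inequality_4_for_L_1} with $L^{m-1,2m-2}$ replaced by $L^{l,m+l-1}$, using the third consequence of Remark~\ref{R:Consequences_of_Inductive_Hyp_on_L_1} to justify the commutator estimates at the lower level, which is precisely what you have carried out in detail.
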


Now, we can return to \eqref{E:L_1_inequalities_starting_1} - \eqref{E:L_1_inequalities_starting_3} and summarize our results for what we have on the LHS and RHS now that we have estimated the terms $R_1, R_2$, and $R_3$ using Proposition \ref{P:Proposition_Inequality_4_for_L_1}. Simplifying $L^{m-1, 2m-2} \partial = L^{m-1, 2m-1} $, this yields
\begin{equation}
\label{E:main_inequality_tilde_r_reductions}
\begin{split}
     \norm{\tilde{r}_{2j-2}}_{H^{2m+2, m+1+ \frac{2-\gamma}{2(\gamma-1)}}} &\lesssim  \norm{L^{m,2m} \tilde{r}_{2j}}_{H^{0, \frac{2-\gamma}{2(\gamma-1)}}} + \norm{L^{m-1, 2m-1} \tilde{r}_{2j}}_{H^{0, \frac{2-\gamma}{2(\gamma-1)}}} \\
     &\hspace{5mm}+ \textcolor{red}{\norm{L^{m-1, 2m-1} \tilde{r}_{2j-2}}_{H^{0, \frac{2-\gamma}{2(\gamma-1)}}}}
    + \textcolor{red}{\sum_{l=0}^{m-1} \norm{L^{l, m+l} \tilde{r}_{2j-2} }_{H^{2, 1+ \frac{2-\gamma}{2(\gamma-1)}}}}  \\
    & \hspace{5mm} +  \norm{\tilde{r}_{2j-2}}_{H^{0, \frac{2-\gamma}{2(\gamma-1)}}} + \norm{P_{\mathcal{O} \geq 1/2}}_{H^{0, \frac{2-\gamma}{2(\gamma-1)}}}
\end{split}
\end{equation}
where only the \textcolor{red}{red} terms need to be handled, and we will be keeping $R_5$ and $R_6$ from before on the RHS. For the first red term, we have
\begin{equation}
\label{E:first_red_term_start}
\begin{split}
     \textcolor{red}{\norm{L^{m-1, 2m-1} \tilde{r}_{2j-2}}_{H^{0, \frac{2-\gamma}{2(\gamma-1)}}}} & \lesssim \norm{L^{m-1, 2m-2} \tilde{r}_{2j-2}}_{H^{1, \frac{2-\gamma}{2(\gamma-1)}}} \\
     & \lesssim \norm{ \tilde{L}_1 (L^{m-1, 2m-2} \tilde{r}_{2j-2})}_{H^{0, \frac{2-\gamma}{2(\gamma-1)}}}
\end{split}
\end{equation}
where we used the elliptic estimate for $\tilde{L}_1$ in the last line. Then, we can use \eqref{A:Inductive_hypothesis_for_L_1} to commute with $L^{m-1, 2(m-1)}$ which will only produce additional terms in $P_{\mathcal{O} \geq 1/2}$. Finally, we use \eqref{E:r_2j_equation_with_P} to express $\tilde{L}_1 \tilde{r}_{2j-2}$ in terms of $\tilde{r}_{2j}$ plus additional $P$ terms. We have
\begin{equation}
\label{E:first_red_term_end}
    \begin{split}
          \textcolor{red}{\norm{L^{m-1, 2m-1} \tilde{r}_{2j-2}}_{H^{0, \frac{2-\gamma}{2(\gamma-1)}}}} & \lesssim  \norm{ L^{m-1, 2m-2}(\tilde{L}_1  \tilde{r}_{2j-2}) + P_{\mathcal{O} \geq 1/2}}_{H^{0, \frac{2-\gamma}{2(\gamma-1)}}} \\
          &\lesssim  \norm{ L^{m-1, 2m-2}(  \tilde{r}_{2j} + P) }_{H^{0, \frac{2-\gamma}{2(\gamma-1)}}} + \norm{P_{\mathcal{O} \geq 1/2}}_{H^{0, \frac{2-\gamma}{2(\gamma-1)}}} \\
          &\lesssim \norm{ L^{m-1, 2m-2} \tilde{r}_{2j} }_{H^{0, \frac{2-\gamma}{2(\gamma-1)}}} + \norm{P_{\mathcal{O} \geq 1/2}}_{H^{0, \frac{2-\gamma}{2(\gamma-1)}}} \\
          &\lesssim \norm{\tilde{r}_{2j} }_{H^{2m,m+ \frac{2-\gamma}{2(\gamma-1)}}} + \norm{P_{\mathcal{O} \geq 1/2}}_{H^{0, \frac{2-\gamma}{2(\gamma-1)}}} \\
    \end{split}
\end{equation}
which is an important estimate for the first red term.

For the final \textcolor{red}{red} term, we start by rewriting and then applying Corollary \ref{C:Corollary_estimates_for_partial_r_2j-2}. 
\begin{equation}
\begin{split}
     \textcolor{red}{\sum_{l=0}^{m-1} \norm{L^{l, m+l-1} \partial \tilde{r}_{2j-2} }_{H^{2, 1+ \frac{2-\gamma}{2(\gamma-1)}}}} &\lesssim \sum_{l=0}^{m-1} \left( \norm{L^{l, m+l-1} \partial \tilde{r}_{2j} }_{H^{0, \frac{2-\gamma}{2(\gamma-1)}}} + \norm{L^{l, m+l-1} \partial \tilde{r}_{2j-2} }_{H^{0, \frac{2-\gamma}{2(\gamma-1)}}} \right) \\
     & \lesssim  \sum_{l=0}^{m-1} \left( \norm{L^{l, m+l} \tilde{r}_{2j} }_{H^{0, \frac{2-\gamma}{2(\gamma-1)}}} + \norm{L^{l, m+l} \tilde{r}_{2j-2} }_{H^{0, \frac{2-\gamma}{2(\gamma-1)}}} \right) \\
     &\lesssim \norm{\tilde{r}_{2j}}_{H^{2m, m+ \frac{2-\gamma}{2(\gamma-1)}}} +  \sum_{l=0}^{m-1} \norm{L^{l, m+l-1} \tilde{r}_{2j-2} }_{H^{1, \frac{2-\gamma}{2(\gamma-1)}}} \\
\end{split}
\end{equation}
where, in the last line, we used an inequality similar to \ref{E:first_red_term_start} for the $\tilde{r}_{2j-2}$ terms. For the remaining sum, it remains to apply the elliptic estimate for $\tilde{L}_1$ and then commute $\tilde{L}_1$ with $L^{l, m+l-1}$ using Remark \ref{R:Consequences_of_Inductive_Hyp_on_L_1}. We have
\begin{equation}
\label{E:main_inequality_tilde_r_reductions_conclusion}
\begin{split}
     \sum_{l=0}^{m-1} \left(\norm{L^{l, m+l-1} \tilde{r}_{2j-2} }_{H^{1, \frac{2-\gamma}{2(\gamma-1)}}} \right) &\lesssim \sum_{l=0}^{m-1} \norm{\tilde{L}_1 (L^{l, m+l-1} \tilde{r}_{2j-2}) }_{H^{0, \frac{2-\gamma}{2(\gamma-1)}}}  \\
     & \lesssim \norm{ P_{\mathcal{O} \geq 1/2} }_{H^{0, \frac{2-\gamma}{2(\gamma-1)}}} + \sum_{l=0}^{m-1} \norm{ L^{l, m+l-1}  (\tilde{r}_{2j} + P)}_{H^{0, \frac{2-\gamma}{2(\gamma-1)}}} \\
     &\lesssim \norm{\tilde{r}_{2j} }_{H^{2m,m+ \frac{2-\gamma}{2(\gamma-1)}}}+  \norm{ P_{\mathcal{O} \geq 1/2} }_{H^{0, \frac{2-\gamma}{2(\gamma-1)}}} \\
\end{split}
\end{equation}
where we followed the same argument as \eqref{E:first_red_term_end} with $L^{m-1, 2m-1}$ replaced by $L^{l, m+l-1}$.

Finally combining \eqref{E:main_inequality_tilde_r_reductions} - \eqref{E:main_inequality_tilde_r_reductions_conclusion}, we are now ready to prove our higher order elliptic estimates for the $\tilde{r}$ equation.

\begin{lemma}[Higher Order Elliptic estimates for $\tilde{r}$]
\label{L:Higher_order_elliptic_estimates_tilde_r}
Under the conditions for Lemma \ref{L:Elliptic_estimates_tilde_r}, the following inequality holds
\begin{equation}
     \norm{\tilde{r}}_{H^{2k, k+ \frac{2-\gamma}{2(\gamma-1)}}} \lesssim \sum_{j=0}^k \norm{D_t^{2j} \tilde{r}}_{H^{0, \frac{2-\gamma}{2(\gamma-1)}}} + C_{\hat{\varepsilon}} \norm{(\tilde{s}, \tilde{r}, \tilde{u})}_{\mathcal{H}^{2k}}
\end{equation}
where  $C_{\hat{\varepsilon}} \ll 1$ depends on $\hat{\varepsilon}$.

\begin{proof}
Combining \eqref{E:main_inequality_tilde_r_reductions} - \eqref{E:main_inequality_tilde_r_reductions_conclusion} and summarizing our work in this section, we get the following inequality:
\begin{equation}
\begin{split}
     \norm{\tilde{r}_{2j-2}}_{H^{2m+2, m+1+ \frac{2-\gamma}{2(\gamma-1)}}} &\lesssim  \norm{L^{m,2m} \tilde{r}_{2j}}_{H^{0, \frac{2-\gamma}{2(\gamma-1)}}} + \norm{L^{m-1, 2m-1} \tilde{r}_{2j}}_{H^{0, \frac{2-\gamma}{2(\gamma-1)}}} + \norm{\tilde{r}_{2j}}_{H^{2m, m + \frac{2-\gamma}{2(\gamma-1)}}}\\
    & \hspace{5mm} +  \norm{\tilde{r}_{2j-2}}_{H^{0, \frac{2-\gamma}{2(\gamma-1)}}} + \norm{P_{\mathcal{O} \geq 1/2}}_{H^{0, \frac{2-\gamma}{2(\gamma-1)}}} \\
    & \lesssim \norm{\tilde{r}_{2j}}_{H^{2m, m + \frac{2-\gamma}{2(\gamma-1)}}} + \norm{\tilde{r}_{2j-2}}_{H^{0, \frac{2-\gamma}{2(\gamma-1)}}} + \norm{P_{\mathcal{O} \geq 1/2}}_{H^{0, \frac{2-\gamma}{2(\gamma-1)}}} \\
    &\lesssim \norm{\tilde{r}_{2j}}_{H^{2m, m + \frac{2-\gamma}{2(\gamma-1)}}} + \norm{\tilde{r}_{2j-2}}_{H^{0, \frac{2-\gamma}{2(\gamma-1)}}} + \hat{\varepsilon} \norm{(\tilde{s}, \tilde{r}, \tilde{u})}_{\mathcal{H}^{2k}} \\
\end{split}
\end{equation}
which we showed using an induction argument on $m \geq 1$, and the $P_{\mathcal{O} \geq 1/2}$ terms are treated using \eqref{E:P_alpha_notation} and Remark \ref{R:order_free_boundary_term}. 
Then, we recall from \eqref{E:Notation_for_higher_order_commutators} that $m= k-j$, so when $m=k-1$, we have $j=1$ and
\begin{equation}
     \norm{\tilde{r}}_{H^{2k, k+ \frac{2-\gamma}{2(\gamma-1)}}} \lesssim \norm{D_t^2 \tilde{r}}_{H^{2k-2, k-1 + \frac{2-\gamma}{2(\gamma-1)}}} + \norm{\tilde{r}}_{H^{0, \frac{2-\gamma}{2(\gamma-1)}}} + \hat{\varepsilon} \norm{(\tilde{s}, \tilde{r}, \tilde{u})}_{\mathcal{H}^{2k}}
\end{equation}
Iterating, we obtain successive inequalities all the way up to $\norm{D_t^{2k} \tilde{r}}_{H^{0, \frac{2-\gamma}{2(\gamma-1)}}}$. Combining together, we have 
\begin{equation}
\label{E:tilde_r_coercivity_proof_finish}
     \norm{\tilde{r}}_{H^{2k, k+ \frac{2-\gamma}{2(\gamma-1)}}} \lesssim \sum_{j=0}^k \norm{D_t^{2j} \tilde{r}}_{H^{0, \frac{2-\gamma}{2(\gamma-1)}}} + C_{\hat{\varepsilon}} \norm{(\tilde{s}, \tilde{r}, \tilde{u})}_{\mathcal{H}^{2k}}
\end{equation}
where $C_{\hat{\varepsilon}} \ll 1$ depends on $\hat{\varepsilon}$. This completes the proof. 
\end{proof}
    
\end{lemma}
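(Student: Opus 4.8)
The plan is to reduce $\norm{\tilde r}_{H^{2k,k+\frac{2-\gamma}{2(\gamma-1)}}}$ to a sum of $H^0$-norms of the convective derivatives $D_t^{2j}\tilde r$ plus perturbative terms, by peeling off pairs of spatial derivatives (with matching powers of $r$) and trading them, via the elliptic estimate for $\tilde L_1$, against two convective derivatives at a time. Concretely, the work already carried out in Section~\ref{S:Subsection_weighted_spatial_derivatives} produces, for each $j$ with $1\le j\le k$ and $m=k-j$, the chain of inequalities \eqref{E:L_1_commutator_start}, \eqref{E:L_1_inequalities_starting_1}--\eqref{E:L_1_inequalities_starting_3} and, after estimating $R_1,R_2,R_3$ with Proposition~\ref{P:Proposition_Inequality_4_for_L_1} and its Corollary~\ref{C:Corollary_estimates_for_partial_r_2j-2}, the master inequality \eqref{E:main_inequality_tilde_r_reductions}--\eqref{E:main_inequality_tilde_r_reductions_conclusion}. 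The proof of the lemma is then essentially an assembly of these, organized as a double iteration: an inner induction on $m\ge 1$ establishing the ``good commutation'' Assumption~\ref{A:Inductive_hypothesis_for_L_1} and the resulting estimate
\begin{equation*}
\norm{\tilde r_{2j-2}}_{H^{2m+2,\,m+1+\frac{2-\gamma}{2(\gamma-1)}}}\lesssim \norm{\tilde r_{2j}}_{H^{2m,\,m+\frac{2-\gamma}{2(\gamma-1)}}}+\norm{\tilde r_{2j-2}}_{H^{0,\frac{2-\gamma}{2(\gamma-1)}}}+\hat\varepsilon\,\norm{(\tilde s,\tilde r,\tilde u)}_{\mathcal H^{2k}},
\end{equation*}
followed by an outer iteration over $j=1,\dots,k$ that telescopes these down to $\sum_{j=0}^k\norm{D_t^{2j}\tilde r}_{H^{0,\frac{2-\gamma}{2(\gamma-1)}}}$.

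First I would state the base case: Lemma~\ref{L:Elliptic_estimates_tilde_r} gives the desired control of $\norm{\tilde r}_{H^{2,\frac{1}{2(\gamma-1)}+\frac12}}$ by $\norm{\tilde L_1\tilde r}_{H^{0,\frac{1}{2(\gamma-1)}-\frac12}}+\norm{\tilde r}_{L^2(r^{\frac{2-\gamma}{\gamma-1}})}$, and Lemma~\ref{L:Commutator_D_t_with_L_1_2_3} (equation \eqref{E:D_t_commutator_with_L_1}) identifies $\tilde L_1\tilde r_{2j-2}$ with $\tilde r_{2j}$ up to terms of $\mathcal H^{2j}$-order at least $\tfrac12$, which after application of $L^{m,2m}$ remain $\mathcal H^{2k}$-subcritical (Remark~\ref{R:Order after applying L to free boundary terms}). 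Next I would carry out the inductive step on $m$: assuming $L^{m-1,2(m-1)}$ commutes ``cleanly'' with $\tilde L_1$ (and with the enlarged operator $\tilde L_1+b\frac{\gamma-1}{\Gamma+r}H^{ij}\partial_i r\partial_j$), I compute $[L^{m,2m},\tilde L_1]\tilde r_{2j-2}$ as in \eqref{E:L_1_Commutator_side_1}--\eqref{E:L_1_Commutator_side_2}, observe the cancellation of the $r^{m+1}\partial^{2m+2}$ terms, and isolate the residual $L^{m,2m+1}\tilde r_{2j-2}$ and $L^{m-1,2m}\tilde r_{2j-2}$ pieces; these are then reabsorbed using Proposition~\ref{P:Proposition_Inequality_4_for_L_1}, whose own proof rests on the localization \eqref{E:Localization_assumptions_L_1_and_more} and the absorption of one critical term into $\widehat{\tilde L_1}$. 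Adding the quantity \eqref{E:L_1_quanity added on both sides} to both sides and invoking the norm-equivalence of Remark~\ref{R:H^2k_norm_equivalence} turns the left side into $\norm{\tilde r_{2j-2}}_{H^{2m+2,m+1+\frac{2-\gamma}{2(\gamma-1)}}}$, while the red terms in \eqref{E:main_inequality_tilde_r_reductions} are dispatched by the elliptic estimate for $\tilde L_1$ combined with \eqref{E:r_2j_equation_with_P}, as in \eqref{E:first_red_term_start}--\eqref{E:main_inequality_tilde_r_reductions_conclusion}.

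Finally I would run the outer iteration. Specializing the master inequality to $m=k-1$ (so $j=1$) yields $\norm{\tilde r}_{H^{2k,k+\frac{2-\gamma}{2(\gamma-1)}}}\lesssim\norm{D_t^2\tilde r}_{H^{2k-2,k-1+\frac{2-\gamma}{2(\gamma-1)}}}+\norm{\tilde r}_{H^{0,\frac{2-\gamma}{2(\gamma-1)}}}+\hat\varepsilon\,\norm{(\tilde s,\tilde r,\tilde u)}_{\mathcal H^{2k}}$; applying the same bound to $D_t^2\tilde r$ at level $m=k-2$, and so on, produces a finite chain terminating in $\norm{D_t^{2k}\tilde r}_{H^{0,\frac{2-\gamma}{2(\gamma-1)}}}$, and summing gives \eqref{E:tilde_r_coercivity_proof_finish} with $C_{\hat\varepsilon}\ll 1$ (the accumulated $\hat\varepsilon$ from the finitely many steps, plus the $P_{\mathcal O\ge 1/2}$ contributions, controlled by $\mathcal H^{2k}$ via Remark~\ref{R:order_free_boundary_term} and Lemma~\ref{L:Free_boundary_add_r}). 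The main obstacle is the commutator bookkeeping in the inner induction: one must show that $[L^{m,2m},\tilde L_1]$ produces only one genuinely critical term — the $\frac{\gamma-1}{\Gamma+r}H^{ij}\partial_i r\,\partial_j(L^{m,2m}\partial\text{-type})\tilde r_{2j-2}$ piece — which cannot be absorbed by smallness and instead must be folded into the elliptic operator itself via the localization argument, while everything else drops into $P_{\mathcal O\ge 1/2}$; keeping the orders straight after repeated applications of $D_t$ and $r^m\partial^{2m}$, and verifying that the enlarged operator $\widehat{\tilde L_1}$ still satisfies an elliptic estimate of the form in Lemma~\ref{L:Elliptic_estimates_tilde_r}, is where all the real content lies.
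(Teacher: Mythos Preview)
Your proposal is correct and follows essentially the same approach as the paper. You have accurately identified the double iteration structure (inner induction on $m$ to establish the master inequality, outer iteration over $j$ to telescope down to $H^0$-norms of $D_t^{2j}\tilde r$), you reference exactly the same chain of inequalities \eqref{E:L_1_commutator_start}--\eqref{E:main_inequality_tilde_r_reductions_conclusion} and the same supporting results (Proposition~\ref{P:Proposition_Inequality_4_for_L_1}, Corollary~\ref{C:Corollary_estimates_for_partial_r_2j-2}), and you correctly pinpoint the genuine content as lying in the commutator bookkeeping and the absorption of the lone critical term into the enlarged operator $\widehat{\tilde L_1}$.
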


\begin{remark}
    \label{R:Connecting_to_E2k_wave_energy}
    We observe that the RHS of the inequality in Lemma \ref{L:Higher_order_elliptic_estimates_tilde_r} closely resembles the higher order wave energy $E^{2k}_{\text{wave}}$ found in \eqref{E:E^2k_wave}. After obtaining the matching estimates for $D_t^{2j} \tilde{u}$, we will have the full $E^{2k}_{\text{wave}}$ energy on the RHS and we will almost have the proof of Theorem \ref{Th:E^2k_equiv_H^2k} on the equivalence between our full energy \eqref{E:E^2k_full_linearized_energy} and the $\mathcal{H}^{2k}$ norm \eqref{E:Higher_order_H^2k_norm}.
\end{remark}

Next, we apply a similar argument for the $\tilde{u}$ equation and the operators $L_2$ with ``good'' spatial parts $\tilde{L}_2$ and $\tilde{L}_3$.  We observe that much of the analysis will be quite similar with $\tilde{L}_1$ replaced by $\tilde{L}_2$, and $H^{0,\frac{2-\gamma}{2(\gamma-1)}}$ replaced by $H^{0,\frac{1}{2}+\frac{2-\gamma}{2(\gamma-1)}} = H^{0,\frac{1}{2(\gamma-1)}} $. Following the arguments of Section \ref{S:Subsection_weighted_spatial_derivatives}, we will need to also incorporate the $\tilde{L}_3$ operator at each step in order to apply Lemma \ref{L:Elliptic_div_curl_estimates_tilde_u}. We will state the desired Lemma here for convenience:

\begin{lemma}[Higher order div-curl estimates for $\tilde{u}$]
\label{L:Higher_order_div_curl_estimates_tilde_u}

Under the conditions for Lemma \ref{L:Elliptic_div_curl_estimates_tilde_u}, the following inequality holds
\begin{equation}
  \norm{\tilde{u}}_{H^{2k, k+ \frac{1}{2(\gamma-1)}}} \lesssim \left(\sum_{j=0}^k \norm{D_t^{2j} \tilde{u}}_{H^{0, \frac{1}{2(\gamma-1)}}}\right)+ \norm{\overrightarrow{\text{curl}} \hspace{1mm} \tilde{u}}_{H^{2k-1, k+ \frac{1}{2(\gamma-1)}}} + D_{\hat{\varepsilon}} \norm{(\tilde{s}, \tilde{r}, \tilde{u})}_{\mathcal{H}^{2k}}
\end{equation}
where $D_{\hat{\varepsilon}} \ll 1$ depends on $\hat{\varepsilon}$.

\end{lemma}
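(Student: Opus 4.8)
\textbf{Proof sketch for Lemma \ref{L:Higher_order_div_curl_estimates_tilde_u}.}

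The plan is to mirror the argument already carried out in detail for $\tilde r$ in Section \ref{S:Subsection_weighted_spatial_derivatives}, replacing the single operator $\tilde L_1$ by the pair $\tilde L_2,\tilde L_3$ and changing the base weight from $H^{0,\frac{2-\gamma}{2(\gamma-1)}}$ to $H^{0,\frac12+\frac{2-\gamma}{2(\gamma-1)}}=H^{0,\frac{1}{2(\gamma-1)}}$ (the extra half power of $r$ reflecting the order-$\frac12$ multiplier that the $\tilde u$ equation carries). First I would record, using Lemma \ref{L:Commutator_D_t_with_L_1_2_3}, the identities $(\tilde u_{2j})^\alpha\simeq \tilde L_2(\tilde u_{2j-2})^\alpha + P$ and $D_t^{2j-2}(\tilde L_3\tilde u)^\alpha\simeq \tilde L_3(\tilde u_{2j-2})^\alpha+P$ with $P$ of $\mathcal H^{2j}$-order $0$ (so, after applying $L^{m,2m}$ as in Remark \ref{R:Order after applying L to free boundary terms}, of $\mathcal H^{2k}$-order $0$; these go into the class $P_{\mathcal O\ge 0}$). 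Applying $L^{m,2m}$ and commuting, one obtains the two inequalities
\begin{equation*}
\norm{\tilde L_2 L^{m,2m}\tilde u_{2j-2}}_{H^{0,\frac{1}{2(\gamma-1)}}}\lesssim \norm{L^{m,2m}\tilde u_{2j}}_{H^{0,\frac{1}{2(\gamma-1)}}}+\norm{L^{m,2m+1}\tilde u_{2j-2}}_{H^{0,\frac{1}{2(\gamma-1)}}}+\norm{L^{m-1,2m}\tilde u_{2j-2}}_{H^{0,\frac{1}{2(\gamma-1)}}}+\norm{P_{\mathcal O\ge0}}_{H^{0,\frac{1}{2(\gamma-1)}}}
\end{equation*}
and the analogous one with $\tilde L_3$ on the left and $L^{m,2m}D_t^{2j-2}(\tilde L_3\tilde u)$ replacing the first term on the right. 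The commutator computation $[L^{m,2m},\tilde L_2]\tilde u_{2j-2}$ (and likewise for $\tilde L_3$) is structurally identical to the $[L^{m,2m},\tilde L_1]\tilde r_{2j-2}$ computation: the ``blue'' principal terms cancel between $L^{m,2m}\tilde L_2\tilde u_{2j-2}$ and $\tilde L_2 L^{m,2m}\tilde u_{2j-2}$, and what remains has the form $L^{m,2m+1}\tilde u_{2j-2}+L^{m-1,2m}\tilde u_{2j-2}+P_{\mathcal O\ge0}$.

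Next I would add the two inequalities and invoke the base div–curl estimate, Lemma \ref{L:Elliptic_div_curl_estimates_tilde_u}, on the left to lower-bound $\norm{(\tilde L_2+\tilde L_3)L^{m,2m}\tilde u_{2j-2}}_{H^{0,\frac{1}{2(\gamma-1)}}}$ by $\norm{L^{m,2m}\tilde u_{2j-2}}_{H^{2,1+\frac{1}{2(\gamma-1)}}}-\norm{L^{m,2m}\tilde u_{2j-2}}_{L^2(r^{\frac{1}{\gamma-1}})}$. Then, exactly as in \eqref{E:L_1_quanity added on both sides}, I would add $\norm{\tilde u_{2j-2}}_{H^{0,\frac{1}{2(\gamma-1)}}}+\sum_{l=0}^{m-1}\norm{L^{l,m+l}\tilde u_{2j-2}}_{H^{2,1+\frac{1}{2(\gamma-1)}}}$ to both sides; by Remark \ref{R:H^2k_norm_equivalence} the left side becomes $\norm{\tilde u_{2j-2}}_{H^{2m+2,m+1+\frac{1}{2(\gamma-1)}}}$. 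The right-hand side now has the same six families of terms $Q_1,\dots,Q_6$ as in the $\tilde r$ argument, plus the two distinguished terms $\norm{L^{m,2m}\tilde u_{2j}}_{H^{0,\frac{1}{2(\gamma-1)}}}$ and $\norm{L^{m,2m}D_t^{2j-2}(\tilde L_3\tilde u)}_{H^{0,\frac{1}{2(\gamma-1)}}}$. The terms $Q_1,\dots,Q_4$ are handled by the $\tilde u$-analog of Proposition \ref{P:Proposition_Inequality_4_for_L_1}, whose proof is the same: localize near a boundary point so that $|\partial' r|,|\partial_3 r-1|,|H^{3,i'}|\lesssim A$, absorb one critical term from $[\partial_3,\tilde L_2]\tilde u_{2j-2}$ into a modified operator $\widehat{\tilde L_2}$ (and correspondingly $\widehat{\tilde L_3}$), note that the modified operators still satisfy the div–curl estimate since the added term is of the form $b\,\frac{\gamma-1}{\Gamma+r}B^{\alpha i}H^{jk}\partial_j r\,\partial_i$ with $b\ge0$, and run the same inductive scheme on $m$ under the analog of Assumption \ref{A:Inductive_hypothesis_for_L_1}. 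The term $Q_5=\sum_{l=0}^{m-1}\norm{L^{l,m+l}\tilde u_{2j-2}}_{H^{2,1+\frac{1}{2(\gamma-1)}}}$ and $Q_6$ (lower order) are reabsorbed as in \eqref{E:main_inequality_tilde_r_reductions_conclusion} by applying the div–curl estimate, commuting $\tilde L_2+\tilde L_3$ past $L^{l,m+l-1}$ via the inductive hypothesis, and using \eqref{E:tilde_u_2j_start} to trade back for $\tilde u_{2j}$; everything that is not one of the two distinguished terms ends up absorbed into $\norm{\tilde u_{2j}}_{H^{2m,m+\frac{1}{2(\gamma-1)}}}+\hat\varepsilon\norm{(\tilde s,\tilde r,\tilde u)}_{\mathcal H^{2k}}$.

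The one genuinely new ingredient—and the step I expect to be the main obstacle—is controlling the distinguished term $\norm{L^{m,2m}D_t^{2j-2}(\tilde L_3\tilde u)}_{H^{0,\frac{1}{2(\gamma-1)}}}$, which must be converted into a quantity controlled by $E^{2k}_{\text{transport}}$ rather than left as a spurious right-hand side. For this I would use the simplification $\tilde L_3\tilde u\simeq r\,\partial\,\overrightarrow{\text{curl }}\tilde u+\overrightarrow{\text{curl }}\tilde u$ coming from \eqref{E:L_3_definition}, together with Lemma \ref{L:Book-keeping} to expand $D_t^{2j-2}$, yielding $D_t^{2j-2}(\tilde L_3\tilde u)\simeq P+\sum_{l=0}^{j-1}\bigl(r^{l+1}\partial^{l+j}\overrightarrow{\text{curl }}\tilde u+r^l\partial^{l+j-1}\overrightarrow{\text{curl }}\tilde u\bigr)$; applying $L^{m,2m}$ and using $m+j=k$ gives $L^{m,2m}D_t^{2j-2}(\tilde L_3\tilde u)\simeq P_{\mathcal O\ge0}+\sum_{i=0}^{k}r^i\partial^{k+i-1}\overrightarrow{\text{curl }}\tilde u$, i.e.\ at most $2k-1$ derivatives of $\overrightarrow{\text{curl }}\tilde u$ with at most $k$ powers of $r$, so that $\norm{L^{m,2m}D_t^{2j-2}(\tilde L_3\tilde u)}_{H^{0,\frac{1}{2(\gamma-1)}}}\lesssim \norm{P_{\mathcal O\ge0}}_{H^{0,\frac{1}{2(\gamma-1)}}}+\norm{\overrightarrow{\text{curl }}\tilde u}_{H^{2k-1,k+\frac{1}{2(\gamma-1)}}}$. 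The care needed here is in tracking which terms (those with more than one derivative landing on $r$, or on the coefficients $\frac{\gamma-1}{\Gamma+r}$, $B^{\alpha i}$, $H^{jk}$) get swept into $P_{\mathcal O\ge0}$, using Lemma \ref{L:Order_of_operators} and Remark \ref{R:order_free_boundary_term} at each step. Finally, collecting all estimates gives, for each $j$,
\begin{equation*}
\norm{\tilde u_{2j-2}}_{H^{2m+2,m+1+\frac{1}{2(\gamma-1)}}}\lesssim \norm{\tilde u_{2j}}_{H^{2m,m+\frac{1}{2(\gamma-1)}}}+\norm{\overrightarrow{\text{curl }}\tilde u}_{H^{2k-1,k+\frac{1}{2(\gamma-1)}}}+\norm{\tilde u_{2j-2}}_{H^{0,\frac{1}{2(\gamma-1)}}}+\hat\varepsilon\norm{(\tilde s,\tilde r,\tilde u)}_{\mathcal H^{2k}};
\end{equation*}
taking $m=k-1$ (so $j=1$) and iterating the resulting chain of inequalities down to $\norm{D_t^{2k}\tilde u}_{H^{0,\frac{1}{2(\gamma-1)}}}$, then absorbing the $\hat\varepsilon$ term to the left (possible since $\hat\varepsilon\ll 1$ by Assumption \ref{A:Assumption_smallness r}), yields
\begin{equation*}
\norm{\tilde u}_{H^{2k,k+\frac{1}{2(\gamma-1)}}}\lesssim\Bigl(\sum_{j=0}^k\norm{D_t^{2j}\tilde u}_{H^{0,\frac{1}{2(\gamma-1)}}}\Bigr)+\norm{\overrightarrow{\text{curl }}\tilde u}_{H^{2k-1,k+\frac{1}{2(\gamma-1)}}}+D_{\hat\varepsilon}\norm{(\tilde s,\tilde r,\tilde u)}_{\mathcal H^{2k}},
\end{equation*}
with $D_{\hat\varepsilon}\ll1$, which is the claim.
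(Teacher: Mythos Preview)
Your proposal is correct and follows essentially the same approach as the paper: you mirror the $\tilde r$ argument with $\tilde L_1$ replaced by the pair $\tilde L_2,\tilde L_3$ and weight shifted to $H^{0,\frac{1}{2(\gamma-1)}}$, use the analog of Proposition \ref{P:Proposition_Inequality_4_for_L_1} with modified operators $\widehat{\tilde L_2},\widehat{\tilde L_3}$, and handle the new $\tilde L_3$ contribution via $\tilde L_3\tilde u\simeq r\,\partial\,\overrightarrow{\text{curl }}\tilde u+\overrightarrow{\text{curl }}\tilde u$ together with Lemma \ref{L:Book-keeping} to reduce $\norm{L^{m,2m}D_t^{2j-2}(\tilde L_3\tilde u)}_{H^{0,\frac{1}{2(\gamma-1)}}}$ to $\norm{\overrightarrow{\text{curl }}\tilde u}_{H^{2k-1,k+\frac{1}{2(\gamma-1)}}}$. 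The iteration and absorption of the $\hat\varepsilon$ terms proceed exactly as in the paper.
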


\begin{remark}
    \label{R:Connecting_to_E2k_wave_energy_and_E2k_transport}
    We observe that the RHS of the inequality in Lemma \ref{L:Higher_order_div_curl_estimates_tilde_u} closely resembles the higher order wave energy $E^{2k}_{\text{wave}}$ found in \eqref{E:E^2k_wave} plus an additional term that depends on $\overrightarrow{\text{curl}} \hspace{1mm} \tilde{u}$. However, in view of Lemma \ref{L:relating_vorticity_spatial_curl}, this term is precisely bounded by $\norm{\hat{\omega}}_{H^{2k-1, k+ \frac{1}{2(\gamma-1)}}}$ which is the key vorticity piece of $E^{2k}_{\text{transport}}$. We note that Lemma \ref{L:Higher_order_div_curl_estimates_tilde_u} is used in conjuction with Lemma \ref{L:Higher_order_elliptic_estimates_tilde_r} to prove Theorem \ref{Th:E^2k_equiv_H^2k} on the equivalence between our full energy \eqref{E:E^2k_full_linearized_energy} and the $\mathcal{H}^{2k}$ norm \eqref{E:Higher_order_H^2k_norm}.
\end{remark}

\section{Energy Equivalence}
\label{S:Energy_equivalence}

\begin{theorem}[Equivalence between $E^{2k}$ and $\mathcal{H}^{2k}$ norms]
\label{Th:E^2k_equiv_H^2k}
Let $(\tilde{s}, \tilde{r}, \tilde{v})$ be smooth functions in $\overline{\Omega}$. If $r$ is positive and uniformly non-degenerate on $\Gamma$, then 
\begin{equation*}
    E^{2k}(\tilde{s}, \tilde{r}, \tilde{u}) \approx \norm{(\tilde{s}, \tilde{r}, \tilde{u})}_{\mathcal{H}^{2k}}^2
\end{equation*}
where the equivalence depends on $\gamma$ and up to $2k$ derivatives of $s,r,$ and $u$.
\begin{proof}
    Let's begin with the $\lesssim$ direction. Using our book-keeping scheme from Section \ref{S:Creating the Book-keeping scheme}, it will be quite straightforward to track the order of each of the terms as they appear. We plan to use Remark \ref{R:H^2k_norm_equivalence} in which the $\mathcal{H}^{2k}$ norm for $(\tilde{s}, \tilde{r}, \tilde{u})$ is shown to be equivalent to the $H^{2k, \frac{2-\gamma}{2(\gamma-1)}+ \frac{1}{2} + k} \times H^{2k, \frac{2-\gamma}{2(\gamma-1)} +k } \times H^{2k, \frac{2-\gamma}{2(\gamma-1)}+ \frac{1}{2} + k}$ norm.
    
    We start with $E^{2k}_{\text{transport}}$, which we recall as
    \begin{equation}
        \begin{split}
            E^{2k}_{\text{transport}}(\tilde{s}, \tilde{r}, \tilde{u}) &= \norm{\hat{\omega}}_{H^{2k-1,\frac{2-\gamma}{2(\gamma-1)}+ \frac{1}{2} + k }}^2 + \norm{\tilde{s}}_{H^{2k, \frac{2-\gamma}{2(\gamma-1)}+ \frac{1}{2} + k}}^2 \\
        \end{split}
    \end{equation}
    First, it is clear from Remark \ref{R:H^2k_norm_equivalence} that
    \begin{equation}
    \label{E:E2k_transport_entropy_estimate}
        \norm{\tilde{s}}_{H^{2k, \frac{2-\gamma}{2(\gamma-1)}+ \frac{1}{2} + k}}^2 \lesssim \norm{(\tilde{s}, \tilde{r}, \tilde{u})}_{\mathcal{H}^{2k}}^2
    \end{equation}
    For the vorticity part of $E^{2k}_{\text{transport}}$, we look at the order of each of the terms and observe that
    \begin{equation}
    \begin{split}
        \hat{\omega}_{\alpha \beta} &= \partial_\alpha (h \tilde{u}_\beta) -  \partial_\beta (h \tilde{u}_\alpha) \\
        &= h (\partial_\alpha \tilde{u}_\beta - \partial_\beta \tilde{u}_\alpha) +  \tilde{u}_\beta \partial_\alpha h  - \tilde{u}_\alpha \partial_\beta h  \\
        &\simeq \partial \tilde{u} + \tilde{u}
    \end{split}
    \end{equation}
    using our book-keeping scheme and recalling that $h = \frac{\Gamma+r}{\Gamma}$ which is $O(1)$ near the free boundary, as well as $\partial h$. We plan to estimate $\hat{\omega}$ using a similar argument as the estimate for $\overline{\omega}$ in \eqref{E:hat_omega_estimate_by_H2k}. Applying a multiindex $l$ with $|l|\leq 2k-1$ and counting the number of derivatives and powers of $r$, we have 
    \begin{equation}
        \partial^l \hat{\omega} \simeq \partial^{l+1} \tilde{u} + \partial^l \tilde{u}
    \end{equation}
    Applying the $L^2(r^{\frac{2-\gamma}{\gamma-1}+ 2k +1}) = H^{0,\frac{2-\gamma}{2(\gamma-1)}+ \frac{1}{2} + k }$ norm, we have
    \begin{equation}
\begin{split}
     \int_{\Omega_t} r^{\frac{2-\gamma}{\gamma-1}}|r^{k+\frac{1}{2}}\partial^l \hat{\omega}|_G^2 \hspace{1mm} dx & \lesssim \int_{\Omega_t} r^{\frac{2-\gamma}{\gamma-1}} |r^{k+\frac{1}{2}} \partial^{l+1} \tilde{u}|^2 \hspace{1mm} dx +  \int_{\Omega_t} r^{\frac{2-\gamma}{\gamma-1}} |r^{k+\frac{1}{2}} \partial^{l} \tilde{u}|^2 \hspace{1mm} dx \\
\end{split}
\end{equation}
    Summing over $l$, we obtain the $H^{2k-1,\frac{2-\gamma}{2(\gamma-1)}+ \frac{1}{2} + k }$ norm, and we will get terms that are only critical at worst, i.e. they will have the form $r^{k + \frac{1}{2}} \partial^{2k} \tilde{u}$ modulo coefficients depending on derivatives of $s$ and $r$, and the second intergral produces subcritical terms of the form $r^{k + \frac{1}{2}} \partial^{2k-1} \tilde{u}$ at worst. Each of these can easily be estimated via the $\mathcal{H}^{2k}$ norm, so we have
    \begin{equation}
        \norm{\hat{\omega}}_{H^{2k-1,\frac{2-\gamma}{2(\gamma-1)}+ \frac{1}{2} + k }}^2  \lesssim \norm{(\tilde{s}, \tilde{r}, \tilde{u})}_{\mathcal{H}^{2k}}^2.
    \end{equation}
    and combining with \eqref{E:E2k_transport_entropy_estimate} yields 
    \begin{equation}
    \label{E:E2k_transport_estimate_by_H2k}
          E^{2k}_{\text{transport}}(\tilde{s}, \tilde{r}, \tilde{u}) \lesssim \norm{(\tilde{s}, \tilde{r}, \tilde{u})}_{\mathcal{H}^{2k}}^2.
    \end{equation}

    For $E^{2k}_{\text{wave}}$, we first use Remark \ref{R:weights_in_wave_energy_tilde_H_norm} to simplfy
    \begin{equation}
    \label{E:E2k_wave_bound_from_above_start}
    \begin{split}
         E^{2k}_{\text{wave}}(\tilde{s}, \tilde{r}, \tilde{u}) &= \sum_{j=0}^{k} \norm{(D_t^{2j} \tilde{r}, D_t^{2j} \tilde{u}  )}_{\widetilde{\mathcal{H}}}^2 \\
         &\simeq \sum_{j=0}^{k} \left( \norm{D_t^{2j} \tilde{r}}_{H^{0,\frac{2-\gamma}{2(\gamma-1)} }}^2 + \norm{D_t^{2j} \tilde{u}}_{H^{0,\frac{2-\gamma}{2(\gamma-1)}+\frac{1}{2} }}^2 \right)
    \end{split}
    \end{equation}
    using the fact that norms for $\tilde{\mathcal{H}}, (L^2(r^{\frac{2-\gamma}{\gamma-1}}) \times L^2(r^{\frac{2-\gamma}{\gamma-1} +1} ))$, and $(H^{0,\frac{2-\gamma}{2(\gamma-1)}} \times H^{0,\frac{2-\gamma}{2(\gamma-1)}+\frac{1}{2} })$ are equivalent.
    Then, we make frequent use of Lemma \ref{L:Book-keeping} which allows us to greatly simplify convective derivatives. Taking $0\leq j \leq k$, we have
    \begin{equation}
         \norm{D_t^{2j} \tilde{r}}_{H^{0,\frac{2-\gamma}{2(\gamma-1)} }}^2 \simeq \nnorm{ \sum_{l=0}^j r^l \partial^{l+j} \tilde{r} }_{H^{0,\frac{2-\gamma}{2(\gamma-1)} }}^2 \lesssim \norm{\tilde{r}}_{H^{2j, \frac{2-\gamma}{2(\gamma-1)} +j }}^2
    \end{equation}
    noting that each term appearing in the summation will be critical at the $j$ level. Then, further applying our embedding lemmas, we get 
    \begin{equation}
    \begin{split}
        \norm{\tilde{r}}_{H^{2j, \frac{2-\gamma}{2(\gamma-1)} +j }}^2 &\lesssim  \norm{\tilde{r}}_{H^{2j+k-j, \frac{2-\gamma}{2(\gamma-1)} +j + k -j }}^2 \\
        &=  \norm{\tilde{r}}_{H^{k+j, \frac{2-\gamma}{2(\gamma-1)} +k }}^2 \\
        & \lesssim \norm{\tilde{r}}_{H^{2k, \frac{2-\gamma}{2(\gamma-1)} +k }}^2 \lesssim \norm{(\tilde{s}, \tilde{r}, \tilde{u})}_{\mathcal{H}^{2k}}^2
    \end{split}
    \end{equation}
    noting that $0 \leq j \leq k$ and we can apply Corollary \ref{C:Trading_deriv_for_weight} as needed when $j<k$. Thus, the $\tilde{r}$ part of $E^{2k}_{\text{wave}}$ is bounded by the desired $\mathcal{H}^{2k}$ norm. For the $\tilde{u}$ part, we can similarly invoke Lemma \ref{L:Book-keeping} again, where the only difference is that we now have terms involving both $\tilde{u}$ and $\tilde{r}$. However, the extra $r^{1/2}$ weight is exactly what we need to handle any of the terms that appear:
    \begin{equation}
    \begin{split}
    \label{E:E2k_wave_bound_from_above_finish}
         \norm{D_t^{2j} \tilde{u}}_{H^{0,\frac{2-\gamma}{2(\gamma-1)} + \frac{1}{2}}}^2 &\simeq \nnorm{ \sum_{l=0}^{j} r^{l} \partial^{l+j} \tilde{u} + \sum_{i=0}^{j -1} r^i \partial^{i+j} \tilde{r} }_{H^{0,\frac{2-\gamma}{2(\gamma-1)}+ \frac{1}{2} }}^2 \\
         & \lesssim \norm{(\tilde{s}, \tilde{r}, \tilde{u})}_{\mathcal{H}^{2k}}^2 \\
    \end{split}
    \end{equation}
    and we observe that the $\tilde{r}$ terms actually have an extra $r^{1/2}$ in terms of bounding by the $\mathcal{H}^{2k}$ norm. Combining \eqref{E:E2k_wave_bound_from_above_start}-\eqref{E:E2k_wave_bound_from_above_finish} produces
    \begin{equation}
        E^{2k}_{\text{wave}}(\tilde{s}, \tilde{r}, \tilde{u}) \lesssim \norm{(\tilde{s}, \tilde{r}, \tilde{u})}_{\mathcal{H}^{2k}}^2.
    \end{equation}
    Thus, we combine with \eqref{E:E2k_transport_estimate_by_H2k} and the $\lesssim$ direction for Theorem \ref{Th:E^2k_equiv_H^2k} is complete.

    For the $\gtrsim$ direction, we make use of many previously established lemmas. First, by the norm equivalence from Remark \ref{R:H^2k_norm_equivalence}, and Lemmas 
    \ref{L:Higher_order_elliptic_estimates_tilde_r}, and \ref{L:Higher_order_div_curl_estimates_tilde_u}, we have
    \begin{equation}
    \begin{split}
    \label{E:Energy_equivalence_key_step_1}
         \norm{(\tilde{s}, \tilde{r}, \tilde{u})}_{\mathcal{H}^{2k}}^2 & \approx  \norm{\tilde{r}}_{H^{2k, \frac{2-\gamma}{2(\gamma-1)}+k }}^2+ \norm{\tilde{u}}_{H^{2k, \frac{2-\gamma}{2(\gamma-1)}+ \frac{1}{2} +k }}^2  + \norm{\tilde{s}}_{H^{2k, \frac{2-\gamma}{2(\gamma-1)}+ \frac{1}{2} +k }}^2 \\
         & \lesssim \sum_{j=0}^k \left(\norm{D_t^{2j} \tilde{r}}^2_{H^{0,\frac{2-\gamma}{2(\gamma-1)}}} + \norm{D_t^{2j} \tilde{u}}^2_{H^{0,\frac{2-\gamma}{2(\gamma-1)}+ \frac{1}{2}}} \right) \\
         & \hspace{5mm }+ \norm{\overrightarrow{\text{curl}} \hspace{1mm} \tilde{u}}_{H^{2k-1, k+ \frac{1}{2(\gamma-1)}}}^2 + \norm{\tilde{s}}_{H^{2k, \frac{2-\gamma}{2(\gamma-1)}+ \frac{1}{2} +k }}^2\\
         & \hspace{5mm} + (C_{\hat{\varepsilon}}^2 +D_{\hat{\varepsilon}}^2 ) \norm{(\tilde{s}, \tilde{r}, \tilde{u})}_{\mathcal{H}^{2k}}^2 \\
    \end{split}
    \end{equation}
    where $C_{\hat{\varepsilon}}, D_{\hat{\varepsilon}} \ll 1$.
    Then, we observe by examining $E^{2k}_{\text{wave}}$ in \eqref{E:E^2k_wave} that
    \begin{equation}
    \label{E:E2k_wave_from_above_H_tilde_norm_weights}
        \sum_{j=0}^k \left(\norm{D_t^{2j} \tilde{r}}^2_{H^{0,\frac{2-\gamma}{2(\gamma-1)}}} + \norm{D_t^{2j} \tilde{u}}^2_{H^{0,\frac{2-\gamma}{2(\gamma-1)}+ \frac{1}{2}}} \right) \lesssim \sum_{j=0}^{k} \norm{(D_t^{2j} \tilde{r}, D_t^{2j} \tilde{u}  )}_{\widetilde{\mathcal{H}}}^2 = E^{2k}_{\text{wave}}(\tilde{s}, \tilde{r}, \tilde{u})
    \end{equation}
    as any weights involving $\Gamma$ can be removed in view of Remark \ref{R:weights_in_wave_energy_tilde_H_norm}. Then, combining \eqref{E:Energy_equivalence_key_step_1}, \eqref{E:E2k_wave_from_above_H_tilde_norm_weights}, and applying Lemma \ref{L:relating_vorticity_spatial_curl} for the term with $\overrightarrow{\text{curl}} \hspace{1mm} \tilde{u}$, we have
\begin{equation}
    \begin{split}
         \norm{(\tilde{s}, \tilde{r}, \tilde{u})}_{\mathcal{H}^{2k}}^2 &\lesssim E^{2k}_{\text{wave}}(\tilde{s}, \tilde{r}, \tilde{u}) +  \norm{\hat{\omega}}_{H^{2k-1,\frac{2-\gamma}{2(\gamma-1)}+ \frac{1}{2} + k }}^2  +   \norm{\tilde{s}}_{H^{2k, \frac{2-\gamma}{2(\gamma-1)}+ \frac{1}{2} +k }}^2 \\\
         &\hspace{5mm} + \hat{\varepsilon}^2\norm{\tilde{u}}_{H^{2k, \frac{2-\gamma}{2(\gamma-1)}+ \frac{1}{2} +k }}^2 + (C_{\hat{\varepsilon}}^2 +D_{\hat{\varepsilon}}^2) \norm{(\tilde{s}, \tilde{r}, \tilde{u})}_{\mathcal{H}^{2k}}^2 \\
         &\lesssim E^{2k}_{\text{wave}}(\tilde{s}, \tilde{r}, \tilde{u}) + E^{2k}_{\text{transport}}(\tilde{s}, \tilde{r}, \tilde{u}) + (C_{\hat{\varepsilon}}^2 +D_{\hat{\varepsilon}}^2 + \hat{\varepsilon}^2) \norm{(\tilde{s}, \tilde{r}, \tilde{u})}_{\mathcal{H}^{2k}}^2
    \end{split} 
\end{equation}
Absorbing the constants $(C_{\hat{\varepsilon}}^2 +D_{\hat{\varepsilon}}^2 + \hat{\varepsilon}^2)\ll 1$ into the LHS produces
\begin{equation}
    E^{2k}(\tilde{s}, \tilde{r}, \tilde{u})=E^{2k}_{\text{wave}}(\tilde{s}, \tilde{r}, \tilde{u}) + E^{2k}_{\text{transport}}(\tilde{s}, \tilde{r}, \tilde{u}) \gtrsim  \norm{(\tilde{s}, \tilde{r}, \tilde{u})}_{\mathcal{H}^{2k}}^2
\end{equation}
which completes the proof of the $\gtrsim$ direction. Thus, the proof of Theorem \ref{Th:E^2k_equiv_H^2k} is complete.
\end{proof} 

\end{theorem}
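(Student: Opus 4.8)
The plan is to prove the two directions of the norm equivalence $E^{2k}(\tilde s,\tilde r,\tilde u)\approx\norm{(\tilde s,\tilde r,\tilde u)}_{\mathcal H^{2k}}^2$ separately, with the forward direction $E^{2k}\lesssim\mathcal H^{2k}$ being essentially a bookkeeping exercise and the reverse direction being where the real work (all previously established elliptic/div-curl/vorticity machinery) is assembled. Throughout I will lean on Remark \ref{R:H^2k_norm_equivalence}, which identifies the $\mathcal H^{2k}$ norm with the product weighted-Sobolev norm $H^{2k,\frac{2-\gamma}{2(\gamma-1)}+\frac12+k}\times H^{2k,\frac{2-\gamma}{2(\gamma-1)}+k}\times H^{2k,\frac{2-\gamma}{2(\gamma-1)}+\frac12+k}$, so that ``order'' in the sense of Definition \ref{D:definition_critical_subcritical_supercritical} becomes the operative tool.

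For the $\lesssim$ direction I would handle $E^{2k}_{\mathrm{transport}}$ and $E^{2k}_{\mathrm{wave}}$ in turn. The entropy part of $E^{2k}_{\mathrm{transport}}$ is immediate from the norm equivalence. For the vorticity part, I would expand $\hat\omega_{\alpha\beta}=h(\partial_\alpha\tilde u_\beta-\partial_\beta\tilde u_\alpha)+\tilde u_\beta\partial_\alpha h-\tilde u_\alpha\partial_\beta h$, note $h=\frac{\Gamma+r}{\Gamma}$ and $\partial h$ are $O(1)$ near the free boundary (Remark \ref{R:weights_in_wave_energy_tilde_H_norm}), so $\partial^l\hat\omega\simeq\partial^{l+1}\tilde u+\partial^l\tilde u$ for $|l|\le 2k-1$; the worst term $r^{k+1/2}\partial^{2k}\tilde u$ is exactly $\mathcal H^{2k}$-critical, giving the bound by the same argument used for $\overline\omega$ in \eqref{E:hat_omega_estimate_by_H2k}. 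For $E^{2k}_{\mathrm{wave}}$, I would first replace the $\widetilde{\mathcal H}$ norm by the equivalent $H^{0,\frac{2-\gamma}{2(\gamma-1)}}\times H^{0,\frac{2-\gamma}{2(\gamma-1)}+\frac12}$ norm (Remark \ref{R:weights_in_wave_energy_tilde_H_norm}), then invoke Lemma \ref{L:Book-keeping} to express $D_t^{2j}\tilde r\simeq\sum_{l=0}^j r^l\partial^{l+j}\tilde r$ and $D_t^{2j}\tilde u\simeq\sum_l r^l\partial^{l+j}\tilde u+\sum_i r^i\partial^{i+j}\tilde r$; each resulting term is $\mathcal H^{2j}$-critical, hence bounded by $\norm{\cdot}_{H^{2j,\frac{2-\gamma}{2(\gamma-1)}+j}}$, and Corollary \ref{C:Trading_deriv_for_weight} lifts this to $\mathcal H^{2k}$ for $j<k$ (the extra $r^{1/2}$ in the $\tilde u$ weight absorbs the $\tilde r$-type terms appearing in $D_t^{2j}\tilde u$).

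For the $\gtrsim$ direction, the strategy is to run the higher-order elliptic and div-curl estimates backwards. Using Remark \ref{R:H^2k_norm_equivalence} to split into the three product norms, Lemma \ref{L:Higher_order_elliptic_estimates_tilde_r} bounds $\norm{\tilde r}$ by $\sum_{j=0}^k\norm{D_t^{2j}\tilde r}_{H^{0,\frac{2-\gamma}{2(\gamma-1)}}}+C_{\hat\varepsilon}\norm{(\tilde s,\tilde r,\tilde u)}_{\mathcal H^{2k}}$, and Lemma \ref{L:Higher_order_div_curl_estimates_tilde_u} bounds $\norm{\tilde u}$ by $\sum_{j=0}^k\norm{D_t^{2j}\tilde u}_{H^{0,\frac{1}{2(\gamma-1)}}}+\norm{\overrightarrow{\mathrm{curl}}\,\tilde u}_{H^{2k-1,k+\frac{1}{2(\gamma-1)}}}+D_{\hat\varepsilon}\norm{(\tilde s,\tilde r,\tilde u)}_{\mathcal H^{2k}}$. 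The $D_t^{2j}$-sums are exactly $E^{2k}_{\mathrm{wave}}$ once weights involving $\Gamma$ are removed (Remark \ref{R:weights_in_wave_energy_tilde_H_norm}), the curl term is controlled by $\norm{\hat\omega}_{H^{2k-1,k+\frac{1}{2(\gamma-1)}}}+\hat\varepsilon^2\norm{\tilde u}_{H^{2k,k+\frac{1}{2(\gamma-1)}}}$ via Lemma \ref{L:relating_vorticity_spatial_curl}, and the entropy norm is part of $E^{2k}_{\mathrm{transport}}$ outright. Collecting, $\norm{(\tilde s,\tilde r,\tilde u)}_{\mathcal H^{2k}}^2\lesssim E^{2k}_{\mathrm{wave}}+E^{2k}_{\mathrm{transport}}+(C_{\hat\varepsilon}^2+D_{\hat\varepsilon}^2+\hat\varepsilon^2)\norm{(\tilde s,\tilde r,\tilde u)}_{\mathcal H^{2k}}^2$, and since all three small constants are $\ll 1$ by Assumption \ref{A:Assumption_smallness r}, the last term absorbs into the left side.

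The main obstacle is not any single inequality but ensuring the absorption step is legitimate: one must check that every application of the higher-order lemmas contributes only $\hat\varepsilon$- or $C_{\hat\varepsilon}$-weighted remainder terms of $\mathcal H^{2k}$-order at least $\tfrac12$ (so they genuinely fall under $P_{\mathcal O\ge 1/2}$ and can be moved left), and that the curl term really is bounded by $E^{2k}_{\mathrm{transport}}$ up to $\hat\varepsilon^2\norm{\tilde u}_{\mathcal H^{2k}}$ — which in turn relies on Lemma \ref{L:relating_vorticity_spatial_curl} and the fact that $\hat\varepsilon^2$ is small enough to survive the absorption alongside $C_{\hat\varepsilon}^2+D_{\hat\varepsilon}^2$. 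The bookkeeping in the $\lesssim$ direction, while routine, also requires care that the $\tilde r$-type terms appearing inside $D_t^{2j}\tilde u$ are bounded using the $\tilde u$-slot weight (which is $\frac12$ heavier than the $\tilde r$-slot weight), precisely the observation flagged after \eqref{E:Higher_order_H^2k_norm}.
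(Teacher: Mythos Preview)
Your proposal is correct and follows essentially the same approach as the paper's proof: both directions are handled identically, invoking the same lemmas (Remark \ref{R:H^2k_norm_equivalence}, Lemma \ref{L:Book-keeping}, Lemmas \ref{L:Higher_order_elliptic_estimates_tilde_r} and \ref{L:Higher_order_div_curl_estimates_tilde_u}, Lemma \ref{L:relating_vorticity_spatial_curl}) in the same order, with the same absorption of small constants at the end. Your identification of the key care points---the absorption step and the extra $r^{1/2}$ weight on the $\tilde u$-slot handling the $\tilde r$-type terms in $D_t^{2j}\tilde u$---matches the paper's treatment exactly.
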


\section{Higher order wave energy estimates}
\label{S:Higher_order_energy_estimates}

Using the book-keeping scheme from Section \ref{S:Creating the Book-keeping scheme}, let's estimate each of the terms on the RHS of \eqref{E:Linearized_System_6} with the following Theorem. Note that the $B_{2k}$ equation will get multiplied by $ \frac{1}{\gamma-1} r^{\frac{2-\gamma}{\gamma-1}} D_t^{2k} \tilde{r}$, and the $C^\alpha_{2k}$ equation will get contracted with $ (\Gamma+r) r^{\frac{1}{\gamma-1}} G_{\alpha \beta} D_t^{2k} \tilde{u}^\beta$. 

We plan to integrate and estimate using the $\mathcal{H}^{2k}$ norm
which we recall:
\begin{equation*}
\begin{split}
     \norm{(\tilde{s}, \tilde{r}, \tilde{u})}_{\mathcal{H}^{2k}}^2
     &= \sum_{|\alpha| =0}^{2k} \sum_{\substack{a=0 \\ |\alpha|-a \leq k}}^k \norm{r^{\frac{2-\gamma}{2(\gamma -1)} + \frac{1}{2} +a } \partial^{\alpha } \tilde{s}}_{L^2}^2 +  \norm{r^{\frac{2-\gamma}{2(\gamma -1)} +a } \partial^{\alpha } \tilde{r}}_{L^2}^2 + \norm{r^{\frac{2-\gamma}{2(\gamma -1)} + \frac{1}{2} +a  } \partial^{\alpha } \tilde{u}}_{L^2}^2 \\
\end{split}
\end{equation*}

The following book-keeping remark is useful when taking convective derivatives of the many expressions that appear:
\begin{remark}
\label{R:Book_keeping_proj_Gamma_etc}
    Using the fact that $D_t (\Gamma(s)) = 0$ from \eqref{E:System_s_equation}, we have the following observations:
\begin{equation*}
\begin{split}
    & D_t \left( \frac{1}{\Gamma+r} \right) = - \frac{1}{(\Gamma+r)^2} D_t r\simeq \left(\frac{1}{\Gamma+r}\right)^2 r 
\end{split}
\end{equation*}
where we applied \eqref{E:D_t r equals r, convective derivative of r}. Thus, taking multiple convective derivatives of our weight $\frac{1}{\Gamma+r}$ leads to gaining additional powers of $r$, and thus makes the terms more subcritical. In view of Remarks \ref{D:definition_critical_subcritical_supercritical} and \ref{R:simeq_symbol_definition}, we will often ignore these terms and use the $\simeq$ symbol so that we can highlight the key critical/supercritical terms that cause difficulty when estimating.  
\end{remark}

\begin{theorem}[Higher Order energy estimates]
\label{Th:Higher_order_energy_estimates}
Let $(s,r,u)$ be a solution to \eqref{E:System} that exists on some time interval $[0,T]$. Assume that $s,r,u$ and their $2k+1$ order derivatives are bounded in the $L^\infty(\Omega_t)$ norm for each $t \in [0,T]$ and $r$ vanishes simply on the free boundary. Then, the following estimate holds for solutions $( \tilde{s}, \tilde{r}, \tilde{u})$ to the higher order linearized equations \eqref{E:Linearized_System_6}:
\begin{equation}
    \left|\frac{d}{dt} E^{2k}_{\text{wave}} (\tilde{s}, \tilde{r}, \tilde{u}) \right| \lesssim B \norm{(\tilde{s}, \tilde{r}, \tilde{u})}_{\mathcal{H}^{2k}}^2
\end{equation}
where $B$ is a function that depends on up to $2k+1$ derivatives of $s, r$, and $u$.

\begin{proof}
The proof relies on Section \ref{S:Basic_energy_estimate} and the basic energy estimate in Proposition \ref{P:Basic_Energy_Inequality}. Along the way, we will make use of Lemmas  \ref{L:Sum_orders}, \ref{L:Commutator}, \ref{L:Book-keeping}, and \ref{L:Order_of_operators}. 

We start multiplying \eqref{E:Linearized_System_6_r_equation} and \eqref{E:Linearized_System_6_v_equation} by their respective multipliers $\frac{1}{\gamma-1} r^{\frac{2-\gamma}{\gamma-1}} D_t^{2k} \tilde{r}$ and $(\Gamma+r) r^{\frac{1}{\gamma-1}} G_{\alpha \beta} D_t^{2k} \tilde{u}^\beta$. Then, we compare with \eqref{E:Basic_energy_estimate_tilde_r_equation} and \eqref{E:Basic_energy_estimate_tilde_u_equation} and observe that we will be following the Proof of Proposition \ref{P:Basic_Energy_Inequality} with $\tilde{r}$ and $\tilde{u}$ replaced by $D_t^{2k} \tilde{r}$ and $D_t^{2k} \tilde{u}$. Integrating over $\Omega_t$ and applying the moving domain formula \eqref{E:Moving_Domain_formula}, it remains to show that all of the terms on the RHS can be estimated using the $\mathcal{H}^{2k}$ norm. More specifically, we plan to show that 
\begin{subequations}
\label{E:higher_order_Wave_estimates_goal}
\begin{align}
\label{E:higher_order_Wave_estimates_1}
    \int_{\Omega_t} r^{\frac{2-\gamma}{\gamma-1}} \left|B_{2k} \frac{1}{\gamma-1}  D_t^{2k} \tilde{r}\right| \hspace{1mm} dx 
    &\lesssim \mathcal{C}_1 \norm{(\tilde{s}, \tilde{r}, \tilde{u})}_{\mathcal{H}^{2k}}^2 \\
\label{E:higher_order_Wave_estimates_2}
        \int_{\Omega_t} r^{\frac{2-\gamma}{\gamma-1}} 
    \left|C_{2k}^\alpha (\Gamma+r) r G_{\alpha \beta} D_t^{2k} \tilde{u}^\beta \right| \hspace{1mm} dx &\lesssim \mathcal{C}_2 \norm{(\tilde{s}, \tilde{r}, \tilde{u})}_{\mathcal{H}^{2k}}^2
\end{align}
\end{subequations}
where $B_{2k}$ and $C_{2k}^\alpha$ are long expressions defined in \eqref{E:G_and_H_definition}.
Recall from Remarks \ref{R:order_free_boundary_term} and \ref{R:Sum_orders} that when calculating the order of a given term where $\gamma>1$ is arbitrary, we simply ignore powers of $r$ coming from the weight $r^{\frac{2-\gamma}{\gamma-1}}$, since our estimates are built around the weighted space $L^2(r^{\frac{2-\gamma}{\gamma-1}}) = H^{0,\frac{2-\gamma}{2(\gamma-1)} }$. Also, recall Lemma \ref{L:Sum_orders} and Remark \ref{R:Sum_orders} where estimates involving $L^1(r^{\frac{2-\gamma}{\gamma-1}})$ are used extensively.

We will begin by showing \eqref{E:higher_order_Wave_estimates_1}. First, observe that the multiplier $D_t^{2k} \tilde{r}$ is $\mathcal{H}^{2k}$ critical since by Lemma \ref{L:Book-keeping}, we have
\begin{equation*}
    D_t^{2k} \tilde{r} \simeq (r^k \partial^{2k} \tilde{r} + r^{k-1} \partial^{2k-1} \tilde{r} +  ... + \partial^k \tilde{r}) + ... 
\end{equation*}
where one can quickly check the order of the terms listed as being $\mathcal{H}^{2k}$ critical, and all remaining terms are subcritical.

For the first term in \eqref{E:higher_order_Wave_estimates_1} after expanding $B_{2k}$ using \eqref{E:G_and_H_definition}, we get
\begin{equation}
\begin{split}
     D_t^{2k} \tilde{r} D_t^{2k} g &= D_t^{2k} \tilde{r} D_t^{2k} (\tilde{r} \partial_\mu u^\mu) \\
     &= D_t^{2k} \tilde{r} \sum_{i=0}^{2k} D_t^i \tilde{r} D_t^{2k-i}(\partial_\mu u^\mu) \\
\end{split}
\end{equation}
Then, by computing the order of terms in Lemma \ref{L:Book-keeping}, we see that $D_t^i \tilde{r}$ only contains $\mathcal{H}^{2k}$-critical terms when $i=2k$. After integrating in $L^1(r^{\frac{2-\gamma}{\gamma-1}})$, the product can estimated by making use of Lemma \ref{L:Sum_orders} since we have the product of two terms which are both critical:
\begin{equation}
      \int_{\Omega_t} r^{\frac{2-\gamma}{\gamma-1}}\left| D_t^{2k} \tilde{r} \sum_{i=0}^{2k} D_t^i \tilde{r} D_t^{2k-i}(\partial_\mu u^\mu) \right| \hspace{1mm} dx \lesssim C_1 \norm{(\tilde{s}, \tilde{r}, \tilde{u})}_{\mathcal{H}^{2k}}^2 
\end{equation}
where $C_1$ depends on the $L^\infty$ norms of up to $2k+1$ derivatives of $(s,r,u)$. Notice that in the coefficient expression $D_t^{2k-i}(\partial_\mu u^\mu)$, we can use Lemma \ref{L:Solving_for_time_derivatives} to solve for the time derivative of $u$ in terms of spatial derivatives. Additionally, we can solve for $D_t^{2k-i} (\partial u)$ in terms of spatial derivatives of $u$ with additional powers of $r$ in a similar way as Lemma \ref{L:Book-keeping} be returning to \eqref{E:System_v_equation}.

For the second term in \eqref{E:higher_order_Wave_estimates_1}, we get
\begin{equation}
\begin{split}
    D_t^{2k} \tilde{r} \sum_{i=0}^{2k-1} \left(D_t^i \tilde{u}^\mu \partial_\mu \left(D_t^{2k-i} r \right)- D_t^{2k-i} r  \partial_\mu \left(D_t^i  \tilde{u}^\mu \right) \right)
\end{split}
\end{equation}
where each piece will be handled separately. For the first piece with $D_t^i \tilde{u}$, we see by Lemma \ref{L:Commutator} that $D_t^i \tilde{u}$ only contains critical terms at worst when $i = 2k-1$ , and all other terms are subcritical. Thus, the product can be estimated using Lemma \ref{L:Sum_orders} since $D_t^{2k} \tilde{r}$ is also critical:
\begin{equation}
      \int_{\Omega_t} r^{\frac{2-\gamma}{\gamma-1}} \left|D_t^{2k} \tilde{r} \sum_{i=0}^{2k-1} D_t^i \tilde{u}^\mu \partial_\mu \left(D_t^{2k-i} r \right) \right| \hspace{1mm} dx  \lesssim  C_{2,1} \norm{(\tilde{s}, \tilde{r}, \tilde{u})}_{\mathcal{H}^{2k}}^2
\end{equation}
For the second piece, with $ \partial_\mu(D_t^i \tilde{u}^\mu)$ we see that $D_t^{2k-i} r$ contains one power of $r$ by \eqref{E:D_t r equals r, convective derivative of r}, and up to $2k-i$ derivatives of $r$ and $u$. Similar to the first piece, it suffices to check when $i=2k-1$ in which case $D_t^{2k-1} \tilde{u}$ contains terms that are $\mathcal{H}^{2k}$ critical, and terms that are subcritical with order $\alpha = \frac{1}{2}$. Simplifying the expression when $i=2k-1$ and solving for time derivatives in terms of spatial derivatives with Lemma \ref{L:Solving_for_time_derivatives}, we will get
\begin{equation}
\label{E:r_partial_(critical)}
\begin{split}
      D_t r \partial_\mu (D_t^{2k-1} \tilde{u}^\mu) &\simeq r \partial \left(\left(\text{terms with order $\alpha=0$}\right) + \left(\text{terms with order $\alpha=\frac{1}{2}$}\right) \right) 
      \end{split}
\end{equation}
Then, by applying Lemma \ref{L:Order_of_operators}, we see that applying $r \partial$ to any free boundary term will produce a new term with the same order, i.e. here we have $\alpha =0$ and $\alpha = \frac{1}{2}$ respectively. Thus, the second piece can be estimated using Lemma \ref{L:Sum_orders}
\begin{equation}
    \int_{\Omega_t} r^{\frac{2-\gamma}{\gamma-1}}\left|D_t^{2k} \tilde{r} \sum_{i=0}^{2k-1} D_t^{2k-i}r \partial_\mu \left(D_t^{i} \tilde{u} \right) \right| \hspace{1mm} dx \lesssim C_{2,2} \norm{(\tilde{s}, \tilde{r}, \tilde{u})}_{\mathcal{H}^{2k}}^2
\end{equation}

For the third term in \eqref{E:higher_order_Wave_estimates_1}, the estimate is quite similar to before as $D_t^i \tilde{u}$ only contains critical terms when $i = 2k-1$. By Lemma \ref{L:Commutator}, we get
\begin{equation}
    \begin{split}
        [\partial_\mu, D_t^{2k-i}] r &= C_{2k-i-1}^\nu \partial_\nu (D_t^{2k-i-1} r) +  C_{2k-i-2}^\nu \partial_\nu (D_t^{2k-i-2} r) \\
        & \hspace{5mm}+ ... \\
        & \hspace{5mm}+ C_1^\nu \partial_\nu (D_t r) \\
        & \hspace{5mm}+ C_0^\nu \partial_\nu r \\
    \end{split}
\end{equation}
which consists of derivatives of $u$ and $r$. Then, by Lemma \ref{L:Sum_orders}, the estimate will look like
\begin{equation}
\begin{split}
   \int_{\Omega_t} r^{\frac{2-\gamma}{\gamma-1}} \left| D_t^{2k} \tilde{r} \sum_{i=0}^{2k-1} D_t^i \tilde{u}^\mu [D_t^{2k-i}, \partial_\mu]r \right| \hspace{1mm} dx & \lesssim C_3 \norm{(\tilde{s}, \tilde{r}, \tilde{u})}_{\mathcal{H}^{2k}}^2
\end{split}
\end{equation}
since we have the product of terms that are both $\mathcal{H}^{2k}$-critical at worst.

To estimate the $G_{2k}$ fourth term, we will use the commutator identities from Lemma \ref{L:Commutator}:
\begin{equation}
\begin{split}
     D_t^{2k} \tilde{r} \sum_{i=1}^{2k} D_t^{2k-i} r [D_t^i, \partial_\mu] \tilde{u}^\mu &\simeq D_t^{2k} \tilde{r} \sum_{i=1}^{2k} D_t^{2k-i} r \left(\sum_{j=0}^{i-1} C^\nu_j \partial_\nu (D_t^j \tilde{u}^\mu) \right)
\end{split}
 \end{equation}
 
Now, the worst possible terms only occur when $i=2k$, in which case we will get $D_t^0 r = r$ and $D_t^{2k}\tilde{r} \left(r \partial (D_t^{2k-1} \tilde{u}) \right) $. This term can be handled in a similar way as \eqref{E:r_partial_(critical)} since we get the product of terms which are both critical. In fact, when $i \leq 2k-1$, we get that all terms are subcritical and easily estimated. Thus, we have
\begin{equation}
    \int_{\Omega_t} r^{\frac{2-\gamma}{\gamma-1}} \left|  D_t^{2k} \tilde{r} \sum_{i=1}^{2k} D_t^{2k-i} r [D_t^i, \partial_\mu] \tilde{u}^\mu \right| \hspace{1mm} dx \lesssim C_4 \norm{(\tilde{s}, \tilde{r}, \tilde{u})}_{\mathcal{H}^{2k}}^2
\end{equation}
Combining the previous inequalities proves \eqref{E:higher_order_Wave_estimates_1}.

Let's turn now to the $C_{2k}^\alpha$ terms occuring in \eqref{E:higher_order_Wave_estimates_2}. Recall from \eqref{E:higher_order_Wave_estimates_2} that the $C_{2k}^\alpha$ has the multiplier $(\Gamma + r )r G_{\alpha \beta }D_t^{2k} \tilde{u}^\beta$. In general, we observe using Lemma \ref{L:Book-keeping} that the multiplier $r D_t^{2k} \tilde{u}^\beta$ has $\mathcal{H}^{2k}$ subcritical terms with order $1/2$ at worst since
\begin{equation*}
\begin{split}
     D_t^{2k} \tilde{u} & \simeq \sum_{l=0}^{k} r^{l} \partial^{l+k} \tilde{u} + \sum_{j=0}^{k -1} r^j \partial^{j+k} \tilde{r} \\
    & \simeq \left(\text{$\alpha = -1/2$ supercritical terms} \right) + \left(\text{$\alpha = 0$ critical terms} \right) \\
    \implies r D_t^{2k} \tilde{u} & \simeq \sum_{l=0}^{k} r^{l+1} \partial^{l+k} \tilde{u} + \sum_{j=0}^{k -1} r^{j+1} \partial^{j+k} \tilde{r}\\
    &\simeq \left(\text{$\alpha = 1/2$ subcritical terms} \right) + \left(\text{$\alpha = 1$ subcritical terms} \right) \\
\end{split}  
\end{equation*}

For the first term in \eqref{E:higher_order_Wave_estimates_2}, we get
\begin{equation}
    \begin{split}
        (\Gamma + r )r G_{\alpha \beta}  D_t^{2k} \tilde{u}^\beta D_t^{2k} h^\alpha 
    \end{split}
\end{equation}
and we recall the from \eqref{E:Linearized_f_g_h} that
\begin{equation}
\begin{split}
     D_t^{2k} h^\alpha &= D_t^{2k} \left(-\tilde{u}^\mu \partial_\mu u^\alpha - \frac{1}{\Gamma +r}(\tilde{u}^\alpha u^\mu + u^\alpha \tilde{u}^\mu)\partial_\mu r + \frac{\Gamma'}{(\Gamma+r)^2} \tilde{s} \proj^{\alpha \mu}\partial_\mu r + \frac{1}{(\Gamma+r)^2} \tilde{r} \proj^{\alpha \mu} \partial_\mu r \right) \\
     &\simeq -\sum_{i=0}^{2k} D_t^i \tilde{u}^\mu D_t^{2k-i}(\partial_\mu u^\alpha) - \frac{1}{\Gamma +r} \sum_{i=0}^{2k} D_t^i (\tilde{u}^\alpha u^\mu + u^\alpha \tilde{u}^\mu) D_t^{2k-i}(\partial_\mu r) \\
     & \hspace{1cm} + \frac{\Gamma'}{(\Gamma +r)^2}\sum_{i=0}^{2k} D_t^i \tilde{s} D_t^{2k-i} \left(\proj^{\alpha \mu} \partial_\mu r \right)
     + \frac{1}{(\Gamma +r)^2}\sum_{i=0}^{2k} D_t^i \tilde{r} D_t^{2k-i} \left(\proj^{\alpha \mu} \partial_\mu r \right) \\
\end{split}
\end{equation}
where we recall that $D_t^{2k} \left(\frac{1}{\Gamma+r}\right) \simeq -\frac{1}{(\Gamma+r)^2} D_t^{2k} r$ using Remark \ref{R:Book_keeping_proj_Gamma_etc}, and this only contributes additional powers of $r$. After observing $D_t^{2k} h^\alpha$, we see that it contains terms of the form $D_t^{2k} (\tilde{u}, \tilde{s}, \tilde{r})$ which are order $-\frac{1}{2}$ supercritical at worst. However, those terms all get multiplied by $r D_t^{2k} \tilde{u}$ which has order $\frac{1}{2}$. Thus, by Lemma \ref{L:Sum_orders}, these terms are also estimated using the $\mathcal{H}^{2k}$ norm and we can solve for any expressions like $D_t^{2k-i}(\proj^{\alpha \mu} \partial_\mu r)$ in terms of spatial derivatives of $u, r,$ and $s$.

For the second term in \eqref{E:higher_order_Wave_estimates_2}, we get the following:
\begin{equation}
        (\Gamma +r) r G_{\alpha \beta} D_t^{2k} \tilde{u}^\beta \sum_{i=0}^{2k-1} D_t^{2k-i} \left(\frac{1}{\Gamma+r} \proj^{\alpha \mu} \right) \partial_\mu \left(D_t^i \tilde{r}\right) 
\end{equation}
Now, when $i=2k-1$, $D_t^{2k-1} \tilde{r}$ contains terms that have order $\frac{1}{2}$ at worst by Remarks \ref{D:definition_critical_subcritical_supercritical} and \ref{R:Order_of_operators}. This implies that $\partial (D_t^{2k-1} \tilde{r})$ will contain terms that are order $-\frac{1}{2}$ at worst by Lemma \ref{L:Order_of_operators}. However, the multiplier $rD_t^{2k} \tilde{u}$has order $\frac{1}{2}$, which is just enough to allow us to apply Lemma \ref{L:Sum_orders}. Thus, we can estimate this term using the $\mathcal{H}^{2k}$ norm. 

For the third and final term in \eqref{E:higher_order_Wave_estimates_2}, we can follow the analysis of the second term and use Lemma \ref{L:Commutator} to get
\begin{equation}
    \begin{split}
       & (\Gamma +r) r G_{\alpha \beta} D_t^{2k} \tilde{u}^\beta \sum_{i=1}^{2k} D_t^{2k-i} \left(\frac{1}{\Gamma+r} \proj^{\alpha \mu} \right) [\partial_\mu , D_t^i] \tilde{r} \\
         & \hspace{1cm} \simeq (\Gamma +r) G_{\alpha \beta} r D_t^{2k} \tilde{u}^\beta \sum_{i=1}^{2k} D_t^{2k-i} \left(\frac{1}{\Gamma+r} \proj^{\alpha \mu} \right) \left(\sum_{j=0}^{i-1} C^\nu_j \partial_\nu (D_t^j \tilde{r}) \right) \\
    \end{split}
\end{equation}

Since the worst terms appear when $i=2k$, we see that once again $D_t^{2k-1} \tilde{r}$ has terms with order $\frac{1}{2}$ at worst, thus, $\partial (D_t^{2k-1} \tilde{r})$ has terms with $-\frac{1}{2}$ order. However, the multiplier has order $\frac{1}{2}$, and this allows us to apply Lemma $\ref{L:Sum_orders}$ as usual. This completes estimate \eqref{E:higher_order_Wave_estimates_2}, and all higher order terms on the RHS in \eqref{E:Linearized_System_6} have been handled. 
\end{proof}
\end{theorem}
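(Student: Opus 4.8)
The plan is to reduce, for each $0\le j\le k$, the evolution of $\norm{(D_t^{2j}\tilde r,D_t^{2j}\tilde u)}_{\widetilde{\mathcal H}}^2$ to the basic energy identity of Section~\ref{S:Basic_energy_estimate} applied to the pair $(D_t^{2j}\tilde r,D_t^{2j}\tilde u)$ in place of $(\tilde r,\tilde u)$, and then to estimate the additional forcing terms $B_{2j},C^\alpha_{2j}$ of \eqref{E:Linearized_System_6} by the $\mathcal H^{2k}$ norm. Fixing $j$, I would multiply \eqref{E:Linearized_System_6_r_equation} (with $k$ replaced by $j$) by $\tfrac1{\gamma-1}r^{\frac{2-\gamma}{\gamma-1}}D_t^{2j}\tilde r$ and contract \eqref{E:Linearized_System_6_v_equation} with $(\Gamma+r)r^{\frac1{\gamma-1}}G_{\alpha\beta}D_t^{2j}\tilde u^\beta$. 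As in \eqref{E:Basic_energy_estimate_tilde_r_equation}--\eqref{E:Basic_energy_estimate_tilde_u_equation} the cross terms assemble into a perfect spatial divergence $\partial_\mu(r^{\frac1{\gamma-1}}D_t^{2j}\tilde r\,D_t^{2j}\tilde u^\mu)$; integrating over $\Omega_t$ and applying the moving-domain formula \eqref{E:Moving_Domain_formula}, the boundary contribution vanishes because $r$ vanishes simply on $\Gamma_t$, and the surviving $\partial_t$-term is moved to the left and absorbed by the uniform smallness of $r$ (Assumption~\ref{A:Assumption_smallness r}), exactly as at the end of the proof of Proposition~\ref{P:Basic_Energy_Inequality}. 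One small bookkeeping point: $D_t^{2j}\tilde u$ is no longer exactly $g$-orthogonal to $u$, but $u_\alpha D_t^{2j}\tilde u^\alpha$ is a Leibniz sum of $D_t^{<2j}\tilde u$ against derivatives of $u$, hence lower order, and is moved into the forcing.

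The genuinely homogeneous contributions — those involving $\partial_i(u^i/u^0)$ or $\partial_\mu u^\mu$ produced by the $D_t f$ manipulation and the moving-domain formula, together with the $D_tG_{\alpha\beta}$ correction in $D_t\abs{D_t^{2j}\tilde u}_G^2$ — are bounded by Cauchy--Schwarz exactly as in the proof of Proposition~\ref{P:Basic_Energy_Inequality}, giving $\lesssim\mathcal C\,\norm{(D_t^{2j}\tilde r,D_t^{2j}\tilde u)}_{\widetilde{\mathcal H}}^2\le\mathcal C\,E^{2k}_{\text{wave}}\lesssim\mathcal C\,\norm{(\tilde s,\tilde r,\tilde u)}_{\mathcal H^{2k}}^2$, the last step by the $\lesssim$ direction of Theorem~\ref{Th:E^2k_equiv_H^2k} (or directly from Lemma~\ref{L:Book-keeping}). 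Thus everything reduces to the two forcing estimates \eqref{E:higher_order_Wave_estimates_1}--\eqref{E:higher_order_Wave_estimates_2} at level $j=k$; the levels $j<k$ are strictly easier, since $D_t^{2j}\tilde r$, $rD_t^{2j}\tilde u$ and the sources $B_{2j},C^\alpha_{2j}$ carry fewer derivatives and, by part~(4) of Lemma~\ref{L:Order_of_operators}, passing from $\mathcal H^{2j}$ to $\mathcal H^{2k}$ improves the order by $k-j\ge0$.

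For \eqref{E:higher_order_Wave_estimates_1}--\eqref{E:higher_order_Wave_estimates_2} I would first record that $D_t^{2k}\tilde r$ is $\mathcal H^{2k}$-critical at worst while $rD_t^{2k}\tilde u$ is $\mathcal H^{2k}$-subcritical of order $\tfrac12$ at worst (Lemma~\ref{L:Book-keeping}), the extra $r$ in the $\tilde u$ multiplier — built into the $\widetilde{\mathcal H}$ norm \eqref{E:H^0_norm} — being essential. Then I would expand $B_{2k},C^\alpha_{2k}$ term by term using \eqref{E:G_and_H_definition}: solve every time derivative for spatial derivatives by Lemma~\ref{L:Solving_for_time_derivatives} (heeding Remark~\ref{R:Solving_for_time_derivatives}, that $\partial_t\tilde r$ must not be replaced by anything containing $\partial\tilde u$), reduce the commutators $[D_t^N,\partial]$ via Lemma~\ref{L:Commutator_2} and $[\partial,D_t^N]$ via Lemma~\ref{L:Commutator}, simplify $D_t^i$ of $(\tilde s,\tilde r,\tilde u)$ via Lemma~\ref{L:Book-keeping}, and absorb all derivatives of $(s,r,u)$, of $\proj$, and of $\Gamma,(\Gamma+r)^{-1}$ into $L^\infty$ coefficients (Remark~\ref{R:Book_keeping_proj_Gamma_etc}, each $D_t$ on $(\Gamma+r)^{-1}$ merely supplying further powers of $r$). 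For each resulting product $T_1T_2$ of a source factor and a multiplier, I would compute the $\mathcal H^{2k}$-orders of $T_1$ and $T_2$ separately and invoke Lemma~\ref{L:Sum_orders}: as long as their sum is $\ge0$, the $L^1(r^{\frac{2-\gamma}{\gamma-1}})$ integral is $\lesssim\norm{(\tilde s,\tilde r,\tilde u)}_{\mathcal H^{2k}}^2$. The only borderline cases arise when the summation index equals $2k$ or $2k-1$ (for smaller indices the source factor is strictly subcritical): in the $B_{2k}$ estimate these pair the critical $D_t^{2k}\tilde r$ with another critical factor (order $0+0$), using crucially that every $D_t^{2k-i}r$ carries a full power of $r$ by \eqref{E:D_t r equals r, convective derivative of r}; in the $C^\alpha_{2k}$ estimate they pair the order-$\tfrac12$ multiplier $rD_t^{2k}\tilde u$ with order-$(-\tfrac12)$ factors such as $\partial(D_t^{2k-1}\tilde r)$ or $D_t^{2k}(\tilde u,\tilde s,\tilde r)$ (again summing to $0$). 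Adding the two forcing estimates to the homogeneous bound and summing over $0\le j\le k$ then yields $\abs{\tfrac{d}{dt}E^{2k}_{\text{wave}}}\lesssim B\,\norm{(\tilde s,\tilde r,\tilde u)}_{\mathcal H^{2k}}^2$ with $B$ controlled by $L^\infty$ norms of up to $2k+1$ derivatives of $(s,r,u)$.

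The main obstacle is the combinatorial verification in the last step: after fully expanding $B_{2k}$ and $C^\alpha_{2k}$, solving for time derivatives, and unwinding the commutators, one must check that \emph{every} resulting term has $\mathcal H^{2k}$-order summing to a nonnegative value against its multiplier, i.e. that no term is $\mathcal H^{2k}$-supercritical. The genuinely delicate point is the $\tilde u$ equation, where factors such as $\partial(D_t^{2k-1}\tilde r)$ are critical-times-supercritical and are rescued only by the extra $r^{1/2}$ weight the $\widetilde{\mathcal H}$ norm places on $\abs{\tilde u}^2$, together with the gain $D_t r\simeq r$ accompanying each $D_t$ that falls on the background $r$; absent either feature the estimate would fail.
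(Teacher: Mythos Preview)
Your proposal is correct and follows essentially the same approach as the paper: reduce to the basic energy identity of Section~\ref{S:Basic_energy_estimate} with $(D_t^{2j}\tilde r,D_t^{2j}\tilde u)$ in place of $(\tilde r,\tilde u)$, then control the forcing terms $B_{2k},C^\alpha_{2k}$ by the order-counting machinery of Lemmas~\ref{L:Sum_orders}, \ref{L:Commutator}, \ref{L:Book-keeping}, and \ref{L:Order_of_operators}, with the key dichotomy that $D_t^{2k}\tilde r$ is critical while $rD_t^{2k}\tilde u$ is order-$\tfrac12$ subcritical. Your remark that $u_\alpha D_t^{2j}\tilde u^\alpha$ is not exactly zero but only lower order is a useful bookkeeping point the paper leaves implicit.
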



\section{Main Theorem}
\label{S:Main_Theorem}

Combining the previous sections, we have reached our final result:

\begin{theorem}[Estimates in $\mathcal{H}^{2k}$]
\label{Th:Main_Theorem_Estimates_in_H^2k}
Let $(s,r,u)$ be a smooth solution to \eqref{E:System} that exists on some time interval $[0,T]$, and for which the physical vacuum boundary condition \eqref{E:Physical_boundary_ideal_gas} holds. Let $(\tilde{s}_0, \tilde{r}_0, \tilde{u}_0)$ be initial data to system \eqref{E:Linearized_System_2}. Then, there exists a constant $\mathcal{C}$ depending only on $s, r, u,$ and $T$ such that, if $(\tilde{s}, \tilde{r}, \tilde{u}) \in C^{\infty}(\overline{\mathscr{D}})$ is a solution to \eqref{E:Linearized_System_2} on $[0,T]$, then
\begin{equation}
    \norm{(\tilde{s}, \tilde{r}, \tilde{u})}_{\mathcal{H}^{2k}(\Omega_t)} \lesssim \mathcal{C} \norm{(\tilde{s}_0, \tilde{r}_0, \tilde{u}_0)}_{\mathcal{H}^{2k}(\Omega_0)}
\end{equation}
where $\mathcal{H}^{2k}(\Omega_t)$ is defined in \eqref{E:Higher_order_H^2k_norm}.
\begin{proof}
Combining Proposition \ref{P:Entropy Estimates} and Theorem \ref{Th:Higher_order_energy_estimates}, we get the following
\begin{equation}
    \frac{d}{dt} \left(\norm{\tilde{s}}^2_{H^{2k, k + \frac{2-\gamma}{2(\gamma-1)}+ \frac{1}{2} }}+  E^{2k}_{\text{wave}}(\tilde{s}, \tilde{r}, \tilde{u}) \right) \lesssim \mathcal{C}_1 \norm{(\tilde{s}, \tilde{r}, \tilde{u})}^2_{\mathcal{H}^{2k}} .
\end{equation}
where $\mathcal{C}_1$ is a constant depending on $L^\infty(\Omega_t)$ norm of up to $2k+1$ derivatives of $s,r$, and $u$. 
Then, after adding $\frac{d}{dt} \norm{\hat{\omega}}_{H^{2k-1, k+ \frac{2-\gamma}{2(\gamma-1)}+ \frac{1}{2}}}^2$ to both sides and recalling \eqref{E:E^2k_transport} and \eqref{E:E^2k_full_linearized_energy}, we have
\begin{equation}
    \frac{d}{dt} E^{2k}_{\text{total}}(\tilde{s}, \tilde{r}, \tilde{u}) \lesssim \mathcal{C}_1 \norm{(\tilde{s}, \tilde{r}, \tilde{u})}^2_{\mathcal{H}^{2k}} + \frac{d}{dt} \norm{\hat{\omega}}_{H^{2k-1, k+ \frac{2-\gamma}{2(\gamma-1)}+ \frac{1}{2}}}^2.
\end{equation}
Integrating both sides and using Theorem \ref{Th:E^2k_equiv_H^2k} on the energy equivalence, we have
\begin{equation}
     \norm{(\tilde{s}, \tilde{r}, \tilde{u})}^2_{\mathcal{H}^{2k}(\Omega_t)} -  \norm{(\tilde{s}_0, \tilde{r}_0, \tilde{u}_0)}^2_{\mathcal{H}^{2k}(\Omega_0)} \lesssim \int_0^t \mathcal{C}_1 \norm{(\tilde{s}, \tilde{r}, \tilde{u})}^2_{\mathcal{H}^{2k}(\Omega_\tau)} \hspace{1mm} d\tau + \norm{\hat{\omega}}_{H^{2k-1, k+ \frac{2-\gamma}{2(\gamma-1)}+ \frac{1}{2}}(\Omega_t)}^2 .
\end{equation}
Applying Theorem \ref{Th:Transport_Energy_estimates} for the $\hat{\omega}$ term and rearranging, we get
\begin{equation}
\begin{split}
     \norm{(\tilde{s}, \tilde{r}, \tilde{u})}^2_{\mathcal{H}^{2k}(\Omega_t)} &\lesssim  \norm{(\tilde{s}_0, \tilde{r}_0, \tilde{u}_0)}^2_{\mathcal{H}^{2k}(\Omega_0)} + \int_0^t \mathcal{C}_1 \norm{(\tilde{s}, \tilde{r}, \tilde{u})}^2_{\mathcal{H}^{2k}(\Omega_\tau)} \hspace{1mm} d\tau \\
     & \hspace{5mm} + \norm{\tilde{\omega}_0}_{H^{2k-1, k+ \frac{1}{2(\gamma-1)}} (\Omega_0)}^2 + \hat{\varepsilon} \norm{(\tilde{s}, \tilde{r}, \tilde{u})}_{\mathcal{H}^{2k}(\Omega_t) }^2 + \int_0^t \mathcal{C}_2 \norm{(\tilde{s}, \tilde{r}, \tilde{u})}_{\mathcal{H}^{2k}(\Omega_\tau)}^2 \hspace{1mm} d \tau \\
\end{split}
\end{equation}
Then, combining the initial data $\tilde{\omega}_0$ with the initial data term $ \norm{(\tilde{s}_0, \tilde{r}_0, \tilde{u}_0)}^2_{\mathcal{H}^{2k}(\Omega_0)}$, and soaking the term with $\hat{\varepsilon}$ to the LHS, we arrive at
\begin{equation}
     \norm{(\tilde{s}, \tilde{r}, \tilde{u})}^2_{\mathcal{H}^{2k}(\Omega_t)} \lesssim \frac{1}{1- \hat{\varepsilon}} \left( \norm{(\tilde{s}_0, \tilde{r}_0, \tilde{u}_0)}^2_{\mathcal{H}^{2k}(\Omega_0)} + \int_0^t \mathcal{C}_3 \norm{(\tilde{s}, \tilde{r}, \tilde{u})}_{\mathcal{H}^{2k}(\Omega_\tau)}^2 \hspace{1mm} d \tau  \right)
\end{equation}
Finally, the desired result is a straightforward application of Gronwall's inequality.

\end{proof}
\end{theorem}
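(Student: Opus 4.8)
The plan is to assemble the estimates proved in the previous sections into a single differential inequality for the full energy $E^{2k}$ of \eqref{E:E^2k_full_linearized_energy} and then close it via Gronwall. First I would combine the entropy estimate (Proposition \ref{P:Entropy Estimates}), which controls $\frac{d}{dt}\norm{\tilde{s}}^2_{H^{2k,k+\frac{2-\gamma}{2(\gamma-1)}+\frac12}}$, with the higher order wave energy estimate (Theorem \ref{Th:Higher_order_energy_estimates}), which controls $\frac{d}{dt}E^{2k}_{\text{wave}}$, to obtain
\[
\frac{d}{dt}\left( \norm{\tilde{s}}^2_{H^{2k,k+\frac{2-\gamma}{2(\gamma-1)}+\frac12}} + E^{2k}_{\text{wave}}(\tilde{s}, \tilde{r}, \tilde{u}) \right) \lesssim \mathcal{C}_1 \norm{(\tilde{s}, \tilde{r}, \tilde{u})}^2_{\mathcal{H}^{2k}},
\]
with $\mathcal{C}_1$ depending on the $L^\infty$ norms of up to $2k+1$ derivatives of $(s,r,u)$.

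Next I would add $\frac{d}{dt}\norm{\hat{\omega}}^2_{H^{2k-1,k+\frac{2-\gamma}{2(\gamma-1)}+\frac12}}$ to both sides so that the left-hand side becomes $\frac{d}{dt}E^{2k}$ (recalling \eqref{E:E^2k_transport} and \eqref{E:E^2k_full_linearized_energy}), integrate in $t$, and then handle the resulting boundary term $\norm{\hat{\omega}(t)}^2$ on the right not by absorption but via the transport energy estimate of Theorem \ref{Th:Transport_Energy_estimates}: this bounds $\norm{\hat{\omega}(t)}^2$ by $\norm{\tilde{\omega}_0}^2$, a small multiple $\varepsilon\,\norm{(\tilde{s}, \tilde{r}, \tilde{u})(t)}^2_{\mathcal{H}^{2k}}$, and a time integral $\int_0^t \mathcal{C}_2\,\norm{(\tilde{s}, \tilde{r}, \tilde{u})(\tau)}^2_{\mathcal{H}^{2k}(\Omega_\tau)}\, d\tau$. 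Since the initial vorticity $\tilde{\omega}_0$ is determined by $(\tilde{s}_0, \tilde{r}_0, \tilde{u}_0)$ through \eqref{E:full_linearized_vorticity_definition}, the term $\norm{\tilde{\omega}_0}^2$ may be folded into $\norm{(\tilde{s}_0, \tilde{r}_0, \tilde{u}_0)}^2_{\mathcal{H}^{2k}(\Omega_0)}$.

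At this point I would invoke the energy equivalence of Theorem \ref{Th:E^2k_equiv_H^2k}, $E^{2k}(\tilde{s}, \tilde{r}, \tilde{u}) \approx \norm{(\tilde{s}, \tilde{r}, \tilde{u})}^2_{\mathcal{H}^{2k}}$, to pass from $E^{2k}$ to the $\mathcal{H}^{2k}$ norm. The estimate then reads, schematically,
\[
\norm{(\tilde{s}, \tilde{r}, \tilde{u})(t)}^2_{\mathcal{H}^{2k}(\Omega_t)} \lesssim \norm{(\tilde{s}_0, \tilde{r}_0, \tilde{u}_0)}^2_{\mathcal{H}^{2k}(\Omega_0)} + \varepsilon\,\norm{(\tilde{s}, \tilde{r}, \tilde{u})(t)}^2_{\mathcal{H}^{2k}(\Omega_t)} + \int_0^t \mathcal{C}_3\,\norm{(\tilde{s}, \tilde{r}, \tilde{u})(\tau)}^2_{\mathcal{H}^{2k}(\Omega_\tau)}\, d\tau.
\]
Because $\varepsilon<\tfrac12$ by Assumption \ref{A:Assumption_smallness r} and Remark \ref{R:Interplay between epsilons}, the middle term is absorbed into the left at the cost of a factor $(1-\varepsilon)^{-1}$, leaving a Gronwall-ready inequality whose constants depend only on $\gamma$, $T$, and the $L^\infty$-in-time, up-to-$(2k+1)$-derivative norms of the background $(s,r,u)$; these are finite by hypothesis, so Gronwall's inequality yields $\norm{(\tilde{s}, \tilde{r}, \tilde{u})}_{\mathcal{H}^{2k}(\Omega_t)} \lesssim \mathcal{C}\,\norm{(\tilde{s}_0, \tilde{r}_0, \tilde{u}_0)}_{\mathcal{H}^{2k}(\Omega_0)}$.

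The main obstacle is not any single calculation but the self-referential bookkeeping: the right-hand sides of Theorems \ref{Th:Higher_order_energy_estimates} and \ref{Th:Transport_Energy_estimates} are phrased in the $\mathcal{H}^{2k}$ norm rather than in $E^{2k}$ itself, so the equivalence of Theorem \ref{Th:E^2k_equiv_H^2k} must be used in both directions, and one must verify that after all equivalences are applied the coefficient of the term being absorbed is genuinely below $1$. This is exactly what the hierarchy of small constants fixed in Assumption \ref{A:Assumption_smallness r} and Remark \ref{R:Interplay between epsilons} is designed to guarantee; with that in hand, the proof is a routine synthesis of the previously established results.
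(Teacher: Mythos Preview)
Your proposal is correct and follows essentially the same route as the paper: combine Proposition \ref{P:Entropy Estimates} and Theorem \ref{Th:Higher_order_energy_estimates}, add the vorticity term to assemble $\frac{d}{dt}E^{2k}$, integrate, invoke Theorem \ref{Th:Transport_Energy_estimates} for $\hat{\omega}$, use the energy equivalence of Theorem \ref{Th:E^2k_equiv_H^2k}, absorb the small $\hat{\varepsilon}$-term, and close with Gronwall. Your closing remark about the small-constant hierarchy is precisely the point the paper relies on implicitly when absorbing the $\hat{\varepsilon}\norm{(\tilde{s},\tilde{r},\tilde{u})}^2_{\mathcal{H}^{2k}}$ term.
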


\newpage

\section{Appendix for Lemma \ref{L:Solving_for_time_derivatives}}
\label{S:Appendix}

Here, we record some of the computations for proving Lemma \ref{L:Solving_for_time_derivatives}.
Using \eqref{E:System} directly, we have
\begin{equation}
\label{E:start_solving_for_time}
    \begin{split}
        \partial_t s &= - \frac{u^i}{u^0} \partial_i s \\
        \partial_t r &= - \frac{u^i}{u^0} \partial_i r - \frac{\gamma-1}{u^0} r \partial_t u^0 - \frac{\gamma-1}{u^0} r \partial_i u^i \\
        \partial_t u^\alpha  &= - \frac{u^i}{u^0} \partial_i u^\alpha - \frac{1}{(\Gamma+r)u^0} \proj^{\alpha 0} \partial_t r -  \frac{1}{(\Gamma+r)u^0} \proj^{\alpha i} \partial_i r
    \end{split}
\end{equation}

Starting from \eqref{E:start_solving_for_time},  we can further solve for $\partial_t u^0$ by plugging in $\alpha=0$ into the last equation. If we set $a_1 = u^0 - \frac{\gamma-1}{(\Gamma+r)u^0}r \proj^{00}$, then 
\begin{equation}
\label{E:partial_t u^0}
\begin{split}
     \partial_t u^0 &= \frac{1}{a_1} \left(-u^i \partial_i u^0 + \frac{\gamma-1}{(\Gamma+r)u^0}r \proj^{00} \partial_i u^i + \frac{1}{(\Gamma+r)u^0}\proj^{00} u^i \partial_i r - \frac{1}{\Gamma+r} \proj^{0i} \partial_i r \right) \\
     &= -\frac{1}{a_1} u^i \partial_i u^0 - \frac{1}{a_1(\Gamma+r)u^0} u^i \partial_i r + \frac{(\gamma-1)((u^0)^2-1)}{a_1(\Gamma+r)u^0}r \partial_i u^i \\
     &:= C_1^i \partial_i u^0 + C_2^i \partial_i r + C_3 r \partial_i u^i \\
     &\simeq \partial u^0 + \partial r + r \partial u \\
     \implies \partial_t \tilde{u}^0 &= C_1^i \partial_i \tilde{u}^0+ C_2^i \partial_i \tilde{r} + C_3 r \partial_i \tilde{u}^i + C_3\tilde{r} \partial_i u^i + \widetilde{C_1^i} \partial_i u^0  + \widetilde{C_2^i} \partial_i r  + \widetilde{C_3} r \partial_i u^i\\
\end{split}
\end{equation}
We have the computations
\begin{align}
    a_1 &= u^0 - \frac{(\gamma-1)((u^0)^2-1)}{(\Gamma+r)u^0}r  \\
    \implies \tilde{a}_1 &= \tilde{u}^0 - \frac{(\gamma-1)((u^0)^2-1)}{(\Gamma+r)u^0} \tilde{r} \\
    & \hspace{5mm}-  r \left( \frac{2(\Gamma+r)(\gamma-1)u^0 \tilde{u}^0 -(\gamma-1)((u^0)^2-1)[(\Gamma' \tilde{s} + \tilde{r})u^0 + (\Gamma +r)\tilde{u}^0]  }{(\Gamma+r)^2 (u^0)^2} \right)
\end{align}
and
\begin{align}
    C_1^i := -\frac{u^i}{a_1} &\implies \widetilde{C_1^i} = \frac{u^i \tilde{a}_1 - a_1 \tilde{u}^i}{(a_1)^2}\\ 
    C_2^i := \frac{1}{(\Gamma+r)u^0} C_1^i  &\implies \widetilde{C_2^i} = \frac{1}{(\Gamma+r)u^0} \widetilde{C_1^i} - \frac{[(\Gamma' \tilde{s} + \tilde{r})u^0 + (\Gamma +r)\tilde{u}^0]}{(\Gamma+r)^2 (u^0)^2} C_1^i  \\ 
    C_3 := \frac{(\gamma-1)((u^0)^2-1)}{u^i} C_2^i  &\implies \widetilde{C_3} = \frac{(\gamma-1)((u^0)^2-1)}{u^i} \widetilde{C_2^i} \\
    \hspace{2mm} + \frac{u^i 2(\gamma-1) \tilde{u}^0 -(\gamma-1)((u^0)^2-1) \tilde{u}^i  }{(u^i)^2} C_2^i.
\end{align}

Note that after analyzing all of the terms (with $k=1$ here), we see that a large number are subcritical, and we can simplify:
\begin{align}
    \partial_t \tilde{u}^0 &\simeq \partial \tilde{u}^0 + \partial \tilde{r} + r \partial \tilde{u} + \tilde{r} \partial u \\
     &\simeq ( \text{$\mathcal{H}^2$ super-critical of order $-\frac{1}{2}$ }) + (\text{$\mathcal{H}^2$ critical}) + (\text{$\mathcal{H}^2$ sub-critical of order $\frac{1}{2}$})
\end{align}
Similiary, we can simplify $\partial_t r$ and $\partial_t \tilde{r}$ in the following way:
\begin{equation}
\begin{split}
    \partial_t r &= - \frac{u^i}{u^0} \partial_i r - \frac{\gamma-1}{u^0}r \partial_i u^i -  \frac{\gamma-1}{ u^0} r \left(C_1^i \partial_i u^0 + C_2^i \partial_i r + C_3 r \partial_i u^i  \right) \\
    \implies \partial_t \tilde{r} &= - \frac{u^i}{u^0} \partial_i \tilde{r} - \frac{\gamma-1}{u^0}r \partial_i \tilde{u}^i -  \frac{\gamma-1}{ u^0} r \left(\partial_t \tilde{u}^0 \right) \\
    & \hspace{9mm} + \frac{u^i \tilde{u}^0 - u^0 \tilde{u}^i }{(u^0)^2} \partial_i r + \left(\frac{(\gamma-1)}{(u^0)^2} \tilde{u}^0 r - \frac{\gamma-1}{u^0} \tilde{r}\right) \partial_i u^i + \left(\frac{(\gamma-1)}{(u^0)^2} \tilde{u}^0 r - \frac{\gamma-1}{u^0} \tilde{r}\right) \partial_t u^0
    \\
    & \simeq   \partial \tilde{r} + r \partial \tilde{u} + r \partial \tilde{u}^0 + r \partial \tilde{r} + r^2 \partial \tilde{u} + (\text{additional subcritical terms}) \\          
     \partial_t u^j &= - \frac{1}{(\Gamma+r)u^0} \proj^{j 0} \left[\partial_t r\right]- \frac{u^i}{u^0} \partial_i u^j -  \frac{1}{(\Gamma+r)u^0} \proj^{j i} \partial_i r \\
     & \simeq -[\partial_t r] - \partial u - \partial r
\end{split}
\end{equation}
The expressions for $\partial_t s$ and $\partial_t \tilde{s}$ take the easiest form, as 
\begin{equation}
    \begin{split}
          \partial_t s &= - \frac{u^i}{u^0} \partial_i s \\ 
          \partial_t \tilde{s} &=  - \frac{u^i}{u^0} \partial_i \tilde{s} + \frac{u^i \tilde{u}^0 - u^0 \tilde{u}^i }{(u^0)^2} \partial_i s
    \end{split}
\end{equation}

We also see that $(u^0)^2 = 1 + u^j u_j$, so
\begin{equation}
\label{E:solve_for_partial_i_u^0}
\begin{split}
        \tilde{u}^0 &= \frac{u_j}{u^0} \tilde{u}^j \\
     \partial_i \tilde{u}^0 &= \frac{u_j}{u^0} \partial_i \tilde{u}^j + \tilde{u}^j \partial_i \frac{u_j}{u_0}\\
\end{split}
\end{equation}

Thus, linearizing our expression for $\partial_t u^j$, we get

\begin{align}
    \partial_t \tilde{u}^j & \simeq [\partial_t \tilde{r}] + \partial \tilde{u} + \partial \tilde{r} \\
    &\simeq \partial \tilde{u} + \partial \tilde{r} + (r \partial \tilde{u} + r \partial \tilde{r} + r^2 \partial \tilde{u} + (\text{additional subcritical terms}))
\end{align}

Starting from \eqref{E:start_solving_for_time},  we can further solve for $\partial_t u^0$ by plugging in $\alpha=0$ into the last equation. If we set $a_1 = u^0 - \frac{\gamma-1}{(\Gamma+r)u^0}r \proj^{00}$, then 
\begin{equation}
\begin{split}
     \partial_t u^0 &= \frac{1}{a_1} \left(-u^i \partial_i u^0 + \frac{\gamma-1}{(\Gamma+r)u^0}r \proj^{00} \partial_i u^i + \frac{1}{(\Gamma+r)u^0}\proj^{00} u^i \partial_i r - \frac{1}{\Gamma+r} \proj^{0i} \partial_i r \right) \\
     &= -\frac{1}{a_1} u^i \partial_i u^0 - \frac{1}{a_1(\Gamma+r)u^0} u^i \partial_i r + \frac{(\gamma-1)((u^0)^2-1)}{a_1(\Gamma+r)u^0}r \partial_i u^i \\
     &:= C_1^i \partial_i u^0 + C_2^i \partial_i r + C_3 r \partial_i u^i \\
     &\simeq \partial u^0 + \partial r + r \partial u \\
     \implies \partial_t \tilde{u}^0 &= C_1^i \partial_i \tilde{u}^0+ C_2^i \partial_i \tilde{r} + C_3 r \partial_i \tilde{u}^i + C_3\tilde{r} \partial_i u^i + \widetilde{C_1^i} \partial_i u^0  + \widetilde{C_2^i} \partial_i r  + \widetilde{C_3} r \partial_i u^i\\
\end{split}
\end{equation}
We have the computations
\begin{align}
    a_1 &= u^0 - \frac{(\gamma-1)((u^0)^2-1)}{(\Gamma+r)u^0}r  \\
    \implies \tilde{a}_1 &= \tilde{u}^0 - \frac{(\gamma-1)((u^0)^2-1)}{(\Gamma+r)u^0} \tilde{r} \\
    & \hspace{5mm}-  r \left( \frac{2(\Gamma+r)(\gamma-1)u^0 \tilde{u}^0 -(\gamma-1)((u^0)^2-1)[(\Gamma' \tilde{s} + \tilde{r})u^0 + (\Gamma +r)\tilde{u}^0]  }{(\Gamma+r)^2 (u^0)^2} \right)
\end{align}
and
\begin{align}
    C_1^i := -\frac{u^i}{a_1} &\implies \widetilde{C_1^i} = \frac{u^i \tilde{a}_1 - a_1 \tilde{u}^i}{(a_1)^2}\\ 
    C_2^i := \frac{1}{(\Gamma+r)u^0} C_1^i  &\implies \widetilde{C_2^i} = \frac{1}{(\Gamma+r)u^0} \widetilde{C_1^i} - \frac{[(\Gamma' \tilde{s} + \tilde{r})u^0 + (\Gamma +r)\tilde{u}^0]}{(\Gamma+r)^2 (u^0)^2} C_1^i  \\ 
    C_3 := \frac{(\gamma-1)((u^0)^2-1)}{u^i} C_2^i  &\implies \widetilde{C_3} = \frac{(\gamma-1)((u^0)^2-1)}{u^i} \widetilde{C_2^i} \\
    \hspace{2mm} + \frac{u^i 2(\gamma-1) \tilde{u}^0 -(\gamma-1)((u^0)^2-1) \tilde{u}^i  }{(u^i)^2} C_2^i.
\end{align}

Note that after analyzing all of the terms (with $k=1$ here), we see that a large number are subcritical, and we can simplify:
\begin{align}
    \partial_t \tilde{u}^0 &\simeq \partial \tilde{u}^0 + \partial \tilde{r} + r \partial \tilde{u} + \tilde{r} \partial u \\
     &\simeq ( \text{$\mathcal{H}^2$ super-critical of order $-\frac{1}{2}$ }) + (\text{$\mathcal{H}^2$ critical}) + (\text{$\mathcal{H}^2$ sub-critical of order $\frac{1}{2}$})
\end{align}
Similiary, we can simplify $\partial_t r$ and $\partial_t \tilde{r}$ in the following way:
\begin{equation}
\begin{split}
    \partial_t r &= - \frac{u^i}{u^0} \partial_i r - \frac{\gamma-1}{u^0}r \partial_i u^i -  \frac{\gamma-1}{ u^0} r \left(C_1^i \partial_i u^0 + C_2^i \partial_i r + C_3 r \partial_i u^i  \right) \\
    \implies \partial_t \tilde{r} &= - \frac{u^i}{u^0} \partial_i \tilde{r} - \frac{\gamma-1}{u^0}r \partial_i \tilde{u}^i -  \frac{\gamma-1}{ u^0} r \left(\partial_t \tilde{u}^0 \right) \\
    & \hspace{9mm} + \frac{u^i \tilde{u}^0 - u^0 \tilde{u}^i }{(u^0)^2} \partial_i r + \left(\frac{(\gamma-1)}{(u^0)^2} \tilde{u}^0 r - \frac{\gamma-1}{u^0} \tilde{r}\right) \partial_i u^i + \left(\frac{(\gamma-1)}{(u^0)^2} \tilde{u}^0 r - \frac{\gamma-1}{u^0} \tilde{r}\right) \partial_t u^0
    \\
    & \simeq   \partial \tilde{r} + r \partial \tilde{u} + r \partial \tilde{u}^0 + r \partial \tilde{r} + r^2 \partial \tilde{u} + (\text{additional subcritical terms}) \\          
     \partial_t u^j &= - \frac{1}{(\Gamma+r)u^0} \proj^{j 0} \left[\partial_t r\right]- \frac{u^i}{u^0} \partial_i u^j -  \frac{1}{(\Gamma+r)u^0} \proj^{j i} \partial_i r \\
     & \simeq -[\partial_t r] - \partial u - \partial r
\end{split}
\end{equation}
The expressions for $\partial_t s$ and $\partial_t \tilde{s}$ take the easiest form, as 
\begin{equation}
    \begin{split}
          \partial_t s &= - \frac{u^i}{u^0} \partial_i s \\ 
          \partial_t \tilde{s} &=  - \frac{u^i}{u^0} \partial_i \tilde{s} + \frac{u^i \tilde{u}^0 - u^0 \tilde{u}^i }{(u^0)^2} \partial_i s
    \end{split}
\end{equation}

We also see that $(u^0)^2 = 1 + u^j u_j$, so
\begin{equation}
\begin{split}
        \tilde{u}^0 &= \frac{u_j}{u^0} \tilde{u}^j \\
     \partial_i \tilde{u}^0 &= \frac{u_j}{u^0} \partial_i \tilde{u}^j + \tilde{u}^j \partial_i \frac{u_j}{u_0}\\
\end{split}
\end{equation}

Thus, linearizing our expression for $\partial_t u^j$, we get
\begin{align}
    \partial_t \tilde{u}^j & \simeq [\partial_t \tilde{r}] + \partial \tilde{u} + \partial \tilde{r} \\
    &\simeq \partial \tilde{u} + \partial \tilde{r} + (r \partial \tilde{u} + r \partial \tilde{r} + r^2 \partial \tilde{u} + (\text{additional subcritical terms}))
\end{align}

\bibliographystyle{plain}
\bibliography{references.bib}

\begin{thebibliography}{10}

\bibitem{Caffarelli-1979}
Luis~A. Caffarelli and Avner Friedman.
\newblock Regularity of the free boundary for the one-dimensional flow of gas in a porous medium.
\newblock {\em Amer. J. Math.}, 101(6):1193--1218, 1979.

\bibitem{Choquet-BruhatBook-2009}
Yvonne Choquet-Bruhat.
\newblock {\em General relativity and the {E}instein equations}.
\newblock Oxford Mathematical Monographs. Oxford University Press, Oxford, 2009.

\bibitem{disconzi2023recent}
Marcelo~M. Disconzi.
\newblock Recent developments in mathematical aspects of relativistic fluids, 2023.

\bibitem{DisconziIfrimTataru}
Marcelo~M. Disconzi, Mihaela Ifrim, and Daniel Tataru.
\newblock The relativistic {E}uler equations with a physical vacuum boundary: {H}adamard local well-posedness, rough solutions, and continuation criterion.
\newblock {\em Archive for Rational Mechanics and Analysis}, 2022.
\newblock 55 Pages.

\bibitem{DisconziKukavicaTuffaha}
Marcelo~M. Disconzi, Igor Kukavica, and Amjad Tuffaha.
\newblock A {L}agrangian interior regularity result for the incompressible free boundary {E}uler equation with surface tension.
\newblock {\em SIAM J. Math. Anal.}, 51(5):3982--4022, 2019.

\bibitem{Hadzic-Shkoller-Speck-2019}
Mahir Had\v{z}i\'{c}, Steve Shkoller, and Jared Speck.
\newblock A priori estimates for solutions to the relativistic {E}uler equations with a moving vacuum boundary.
\newblock {\em Comm. Partial Differential Equations}, 44(10):859--906, 2019.

\bibitem{Ifrim-Tataru-2020}
Mihaela Ifrim and Daniel Tataru.
\newblock The compressible euler equations in a physical vacuum: A comprehensive eulerian approach.
\newblock {\em Annales de l'Institut Henri Poincar{\'e} C, Analyse non lin{\'e}aire}, 2020.

\bibitem{Jang-LeFloch-Masmoudi-2016}
Juhi Jang, Philippe~G. LeFloch, and Nader Masmoudi.
\newblock Lagrangian formulation and a priori estimates for relativistic fluid flows with vacuum.
\newblock {\em J. Differential Equations}, 260(6):5481--5509, 2016.

\bibitem{LeFloch-Ukai-2009}
Philippe~G. LeFloch and Seiji Ukai.
\newblock A symmetrization of the relativistic {E}uler equations with several spatial variables.
\newblock {\em Kinet. Relat. Models}, 2(2):275--292, 2009.

\bibitem{Makino-Ukai-1995-II}
Tetu Makino and Seiji Ukai.
\newblock Local smooth solutions of the relativistic {E}uler equation. {II}.
\newblock {\em Kodai Math. J.}, 18(2):365--375, 1995.

\bibitem{Matu-1997}
\v{S}\'{a}rka Matu\v{s}ocirc{u}-Ne\v{c}asov\'{a}, Mari Okada, and Tetu Makino.
\newblock Free boundary problem for the equation of spherically symmetric motion of viscous gas. {III}.
\newblock {\em Japan J. Indust. Appl. Math.}, 14(2):199--213, 1997.

\bibitem{Miao-Shahshahani2024}
Shuang Miao and Sohrab Shahshahani.
\newblock Well-posedness for the free boundary hard phase model in general relativity.
\newblock {\em Advances in Mathematics}, 443:109614, 2024.

\bibitem{Miao-Shahshahani-Wu-2020-arxiv}
Shuang Miao, Sohrab Shahshahani, and Sijue Wu.
\newblock Well-posedness of the free boundary hard phase fluids in minkowski background and its newtonian limit.
\newblock {\em arXiv:2003.02987 [math.AP]}, 2020.

\bibitem{Oppenheimer:1939ne}
J.R. Oppenheimer and G.M. Volkoff.
\newblock {On Massive neutron cores}.
\newblock {\em Phys. Rev.}, 55:374--381, 1939.

\bibitem{RezzollaZanottiBookRelHydro-2013}
Luciano Rezzolla and Olindo Zanotti.
\newblock {\em Relativistic Hydrodynamics}.
\newblock Oxford University Press, New York, 2013.

\bibitem{Shatah-2008}
Jalal Shatah and Chongchun Zeng.
\newblock Geometry and a priori estimates for free boundary problems of the {E}uler equation.
\newblock {\em Comm. Pure Appl. Math.}, 61(5):698--744, 2008.

\bibitem{Tolman:1934za}
Richard~C. Tolman.
\newblock {Effect of imhomogeneity on cosmological models}.
\newblock {\em Proc. Nat. Acad. Sci.}, 20:169--176, 1934.

\bibitem{Tolman:1939jz}
Richard~C. Tolman.
\newblock {Static solutions of Einstein's field equations for spheres of fluid}.
\newblock {\em Phys. Rev.}, 55:364--373, 1939.

\bibitem{Trakhinin-2009}
Yuri Trakhinin.
\newblock Local existence for the free boundary problem for nonrelativistic and relativistic compressible {E}uler equations with a vacuum boundary condition.
\newblock {\em Comm. Pure Appl. Math.}, 62(11):1551--1594, 2009.

\bibitem{Zhang-2008}
Ping Zhang and Zhifei Zhang.
\newblock On the free boundary problem of three-dimensional incompressible {E}uler equations.
\newblock {\em Comm. Pure Appl. Math.}, 61(7):877--940, 2008.

\end{thebibliography}

\end{document}